\newlist{steps}{enumerate}{1}
\setlist[steps, 1]{label = Step \arabic*:}
\DeclareRobustCommand\widecheck[1]{{\mathpalette\@widecheck{#1}}}
\def\@widecheck#1#2{%
   \setbox\z@\hbox{\m@th$#1#2$}%
   \setbox\tw@\hbox{\m@th$#1%
      {%
         \vrule\@width\z@\@height\ht\z@
         \vrule\@height\z@\@width\wd\z@}$}%
   \dp\tw@-\ht\z@
   \@tempdima\ht\z@ \advance\@tempdima2\ht\tw@ \divide\@tempdima\thr@@
   \setbox\tw@\hbox{%
      \raise\@tempdima\hbox{\scalebox{1}[-1]{\lower\@tempdima\box\tw@}}}%
   {\ooalign{\box\tw@ \cr \box\z@}}}
\theoremstyle{plain}
\newtheorem{thm}{Theorem}[section]
\crefname{thm}{Theorem}{Theorems}
\Crefname{thm}{Theorem}{Theorems}
\newtheorem{prop}[thm]{Proposition}
\crefname{prop}{Proposition}{Propositions}
\Crefname{prop}{Proposition}{Propositions}
\newtheorem{lem}[thm]{Lemma}
\crefname{lem}{Lemma}{Lemmas}
\Crefname{lem}{Lemma}{Lemmas}
\newtheorem{cor}[thm]{Corollary}
\crefname{cor}{Corollary}{Corollaries}
\Crefname{cor}{Corollary}{Corollaries}
\newtheorem {claim}[thm]{Claim}
\crefname{claim}{Claim}{Claims}
\Crefname{claim}{Claim}{Claims}
\crefname{property}{Property}{Properties}
\Crefname{property}{Property}{Properties}
\newtheorem{problem}[thm]{Problem}
\crefname{problem}{Problem}{Problems}
\Crefname{problem}{Problem}{Problems}
\crefname{conjecture}{Conjecture}{Conjecture}
\Crefname{conjecture}{Conjecture}{Conjecture}
\theoremstyle{definition}
\newtheorem{defn}[thm]{Definition}
\crefname{defn}{Definition}{Definitions}
\Crefname{defn}{Definition}{Definitions}
\crefname{notation}{Notation}{Notations}
\Crefname{notation}{Notation}{Notations}
\crefname{convention}{Convention}{Conventions}
\Crefname{convention}{Convention}{Conventions}
\crefname{cond}{Condition}{Conditions}
\Crefname{cond}{Condition}{Conditions}
\crefname{assum}{Assumption}{Assumptions}
\Crefname{assum}{Assumption}{Assumptions}
\crefname{conj}{Conjecture}{Conjectures}
\Crefname{conj}{Conjecture}{Conjectures}
\crefname{claim1}{Claim}{Claims}
\Crefname{claim1}{Claim}{Claims}
\newtheorem{ques}[thm]{Question}
\Crefname{ques}{Question}{Question}
\crefname{ques}{Question}{Question}
\theoremstyle{remark}
\newtheorem{rem}[thm]{Remark}
\crefname{rem}{Remark}{Remarks}
\Crefname{rem}{Remark}{Remarks}
\newtheorem{ex}[thm]{Example}
\crefname{ex}{Example}{Examples}
\Crefname{ex}{Example}{Examples}
\crefname{section}{Section}{Sections}
\Crefname{section}{Section}{Sections}
\crefname{subsection}{Subsection}{Subsections}
\Crefname{subsection}{Subsection}{Subsections}
\crefname{figure}{Figure}{Figures}
\Crefname{figure}{Figure}{Figures}
\newtheorem*{acknowledgement}{Acknowledgement}
\newcommand{\spinc}{\text{spin}^c}
\newcommand{\Z}{\mathbb{Z}}
\newcommand{\Q}{\mathbb{Q}}
\newcommand{\CP}{\mathbb{CP}}
\newcommand{\pt}{\mathrm{pt}}
\newcommand{\fraks}{\mathfrak{s}}
\newcommand{\frakt}{\mathfrak{t}}
\newcommand{\Diff}{\mathrm{Diff}}
\newcommand{\Homeo}{\mathrm{Homeo}}
\newcommand{\del}{\partial}
\newcommand{\Coker}{\mathop{\mathrm{Coker}}\nolimits}
\newcommand{\rank}{\mathop{\mathrm{rank}}\nolimits}
\newcommand{\id}{\mathrm{id}}
\newcommand{\C}{\mathbb{C}}
\newcommand{\s}{\mathfrak{s}}
\newcommand{\R}{\mathbb R}
\newcommand{\F}{\mathbb{F}_2}
\def\ker{\operatorname{Ker}}
\def\dim{\operatorname{dim}}
\def\rank{\operatorname{rank}}
\def\Th{\operatorname{Th}}
\def\id{\operatorname{id}}
\newcommand{\mbar}[1]{{\ooalign{\hfil#1\hfil\crcr\raise.167ex\hbox{--}}}}
\def\wt{\widetilde}
    \def\HMt{%
       \setbox0=\hbox{$\widehat{\mathit{HM}}$}
       \setbox1=\hbox{$\mathit{HM}$}
       \dimen0=1.1\ht0
       \advance\dimen0 by 1.17\ht1
       \smash{\mskip2mu\raise\dimen0\rlap{%
          \begin{turn}{180}
              {$\widehat{\phantom{\mathit{HM}}}$}
           \end{turn}} \mskip-2mu    
                \mathit{HM}
                    }{\vphantom{\widehat{\mathit{HM}}}}{}}
\newcommand{\blue}[1]{\textcolor{blue}{#1}}
\title{From diffeomorphisms to exotic phenomena in small 4-manifolds}
\author{Hokuto Konno}
\address{Graduate School of Mathematical Sciences, the University of Tokyo, 3-8-1 Komaba, Meguro, Tokyo 153-8914, Japan}
\email{konno@ms.u-tokyo.ac.jp}
\author{Abhishek Mallick}
\address{Department of Mathematics, Rutgers University, Hill Center, Busch Campus, 110 Frelinghuysen Road
Piscataway, NJ 08854, USA}
\email{abhishek.mallick@rutgers.edu}
\author{Masaki Taniguchi} 
\address{Department of Mathematics, Graduate School of Science, Kyoto University, Kitashirakawa Oiwake-cho, Sakyo-ku, Kyoto 606-8502, Japan}
\email{taniguchi.masaki.7m@kyoto-u.ac.jp}
\begin{document}

\maketitle

\begin{abstract}
We provide an approach to study exotic phenomena in relatively small 4-manifolds that captures many different exotic behaviors under one umbrella. These phenomena include exotic smooth structures on 4-manifolds with $b_2=1$, examples of strong corks, and exotic codimension-$1$ embeddings into $\C P^2 \# - \C P^2$ that survive external stabilization. 
We also give a new way to detect a homeomorphism of a 4-manifold that is not topologically isotopic to any diffeomorphism and give lower bounds of relative genera of certain knots. Our primary tools are constraints on diffeomorphisms of 4-manifolds obtained from families Seiberg--Witten theory.

\end{abstract}

\tableofcontents

\section{Introduction}
The study of exotic behavior, that is, the distinction between the smooth and topological category on $4$-manifolds is a central topic in low-dimensional topology. Typically, detecting the occurrence of exotic phenomena on compact orientable small 4-manifolds is a more challenging topic compared to $4$-manifolds with large Euler characteristic. The primary tool for studying such exotic behavior has been various $4$-manifold invariants such as the Donaldson polynomial invariant \cite{Do90}, Seiberg--Witten invariant \cite{witten1994monopoles} and Heegaard Floer mixed invariant \cite{OS_4manifolds}. Using such $4$-manifold invariants,  many authors have produced small exotic $4$-manifolds starting with the work of Akbulut \cite{Ak91_cork, Ak91}. More recently, new tools such as {\it involutive variants of 
Floer homology} and {\it the families gauge theory} have proven to be useful in the pursuit of various exotic behaviors. 
The purpose of this article is to combine and contrast these latter-mentioned tools with an eye towards application to exotic phenomenons in $4$-dimension. 

\begin{figure}[h!]
\center
\includegraphics[scale=0.7]{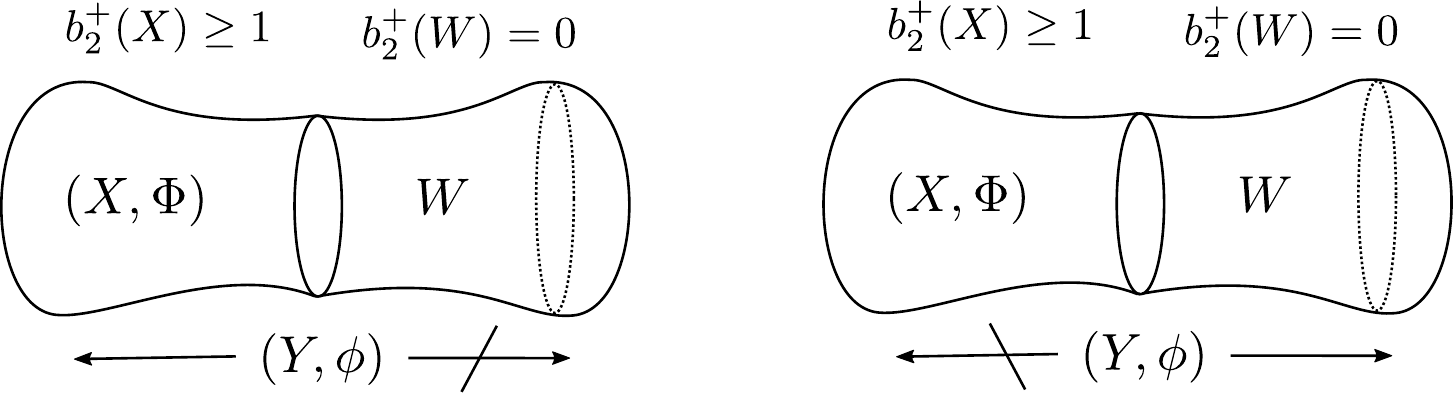}
\caption{A schematic description of the slogan of this article. The dashed part of $W$ indicates that $W$ could have another boundary component. The arrows indicate (non)-extension of $\phi$.}\label{intro_slogan}
\end{figure}
\noindent
There is an overarching slogan in our approach. This can loosely be described as follows. Suppose a $4$-manifold is decomposed as two separate $4$-manifolds, $X$ with $b^{+}(X) \geq 1$ and $W$ with $b^{+}(W)=0$, which are glued along an integer homology sphere $Y$. Now if there is a diffeomorphism $\phi$ on $Y$, which smoothly extends over $X$, (satisfying certain conditions) then $\phi$ \textit{cannot} smoothly extend over $W$ and vice versa, see Figure~\ref{intro_slogan}.

The results in this article can be broadly categorized into $5$ different sections. Firstly, combining families Seiberg--Witten theory \cite{KT22} and the involutive Heegaard Floer homology \cite{DHM20}, we give a recipe to cook up exotic 4-manifold with $b_2=1$ and use it to construct new examples of such manifolds. In particular, our approach does not refer to any $4$-manifold invariant. Our second application is regarding strong corks, which are fundamental objects in the study of exotic smooth structures of closed $4$-manifolds. The first example of such a strong cork was produced by  Lin, Ruberman and Saveliev in \cite{LRS18} using monopole Floer homology. Later, using Heegaard Floer homology, Dai, Hedden and the second author in \cite{DHM20} developed a Floer-theoretic invariant for strong cork detection using the action of the symmetry on Floer homology and the local equivalence formulation from \cite{HMZ}.  In this article, we show that families gauge theory detects strong corks. In contrast to the previous approaches, our method for detecting strong corks does not directly refer to any action of the symmetry on the Floer homology (nor do we use any local equivalence formulation), nevertheless, it is sufficient to recover most of the strong cork examples in the current literature \cite{DHM20}. Thirdly, we obtain results concerning the exotic embedding of $3$-manifolds in $4$-manifolds. More specifically, following up on a recent work by the first and the third author with Mukherjee~\cite{KMT22}, we show the existence of an exotic, codimension-1 embedding into $\mathbb{C}P^2 \# -\mathbb{C}P^2$ that survives external stabilization. This answers a posed question in \cite{KMT22}, and is currently the \textit{smallest} known $4$-manifold with this property. In contrast, in \cite{KMT22} existence of such exotic embedding was shown into $\#_2 (S^2\times S^2)$ and $\#_3 (\mathbb{C}P^2\# -\mathbb{C}P^2)$. 
As the fourth topic, using our formalism, we give new examples of homeomorphisms that are not isotopic to any diffeomorphisms on certain small 4-manifolds. Baraglia~\cite{Ba19} provided a necessary condition for homeomorphisms of such 4-manifolds to be isotopic to diffeomorphisms. Our result shows that his constraint is \textit{not} a sufficient condition.
Lastly, we give lower bounds of relative genera of certain knots in 4-manifolds with $b^+  \leq 2$. This generalizes Bryan's equivariant 10/8 inequality \cite{Br98}. Our strategy to give genus bounds is also the use of diffeomorphisms and families Seiberg--Witten theory.

In most of the applications mentioned above, we prove and repeatedly use some version of the slogan. Let us now expand on our results:



\subsection{Exotic 4-manifolds with $b_2=1$} 
Two $4$-manifolds (with or without boundary) are said to be exotic copies of each other if they are homeomorphic but not diffeomorphic. We focus on compact (orientable) 4-manifolds with $b_2=1$. For such 4-manifolds, exotic structures have been mainly discovered by making use of 4-manifold invariants. The process requires a pair of embeddings of exotic 4-manifolds into another known exotic, closed 4-manifolds which are distinguished by either one of the $4$-manifold invariants, or by the adjunction inequality, for example, see \cite{Ak91, AM97,Ak99a,AY10, AY13, KS13,Y15}. 
 

In this article, we give a criterion for constructing exotic $4$-manifolds (with boundary) such that it has $b_2=1$. We then use this to provide many examples of exotic $4$-manifolds with $b_2=1$ that were previously not known to be exotic. Our method does not rely on the existence of (effective) embeddings and 4-manifold invariants. Instead, we use families Seiberg--Witten theory, involutive Heegaard Floer theory (and filtered instanton theory) and also Akbulut--Ruberman's technique \cite{AR16}.  

Before stating our result, we will need to establish some terminology. We will denote the instanton Fr\o yshov invariant \cite{Fr02} of an oriented homology 3-sphere $Y$ by $h(Y)$. The analog of the Fr\o yshov invariant in Heegaard Floer homology is the $d$-invariant \cite{ozsvath2003absolutely}. Both of these homology cobordism invariants are integer-valued (for integer homology spheres) and conjecturally equal. Given a knot $K$, we will be interested in the quantity $d(S^{3}_{+1}(K))$. Typically in the literature, we define
\[
V_0(K):=-\frac{1}{2} d(S^{3}_{+1}(K)).
\]
The quantity $V_0(K)$ is in fact a knot concordance invariant defined by Rasmussen (see \cite{Rasmussen}).  
 Another input for us will be symmetric knots, such as the strongly invertible knots. A knot $K$ is said to be \textit{strongly invertible} if there exists an orientation-preserving involution $\tau$ of $S^3$ which preserves the knot set-wise but reverses the orientation of the knot. Given such a strongly invertible knot $(K,\tau)$, in \cite{DMS} Dai, Stoffregen and the second author constructed a numerical invariant $\underline{V}^{\tau}_0(K)$ of the equivariant knot concordance group. We refer readers to Subsection~\ref{HF_intro} for the explicit definition of the invariant $\underline{V}^{\tau}_0(K)$. 
Lastly, for a knot $K$ in $S^3$ and a positive integer $n \geq 2$, we define  a compact, smooth, oriented, negative-definite 4-manifold $W_n(K)$ as the 4-dimensional cobordism in Figure~\ref{thm_1_fig} minus a small open neighborhood of an arc connecting a point in $S^3_{1/n}(K)$ and a point in $S^3_{1/(n-1)} (K)$. Note that we have 
\[
\partial W_n(K) =  -S^3_{1/n}(K) \# S^3_{1/(n-1)}(K) 
\]
and $b_2(W_n)=1$. It is checked in \cite{NST19} that $W_n(K)$ is simply connected. 
\begin{figure}[h!]
\center
\includegraphics[scale=1.0]{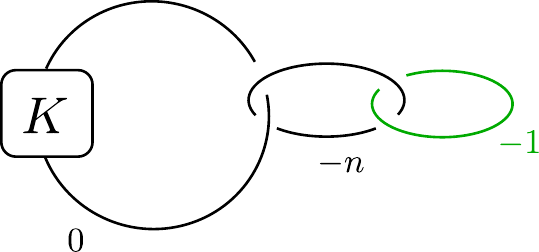}
\caption{The $(-1)$-framed $2$-handle cobordism used to define the manifold $W_n$, depicted in green.}\label{thm_1_fig}
\end{figure}
We are now in place to state the theorem:

\begin{thm}\label{main:exotic}
Let $n$ be a positive integer.
Let $K$ be a strongly invertible knot in $S^3$ satisfying the following conditions 
\[
\underline{V}^{\tau}_{0}(K) > V_0(K) > 0 \text{ and } h(S^3_1(K))<0.
\]
Then, the 4-manifold $W_{2n}(K)$ contains a pair of codimension-0 smooth compact manifolds $M_n(K)$ and $M_n'(K)$ that satisfy that:
\begin{enumerate} [label=(\alph*)]
\item $M_n(K)$ and $M_n'(K)$ are exotic, i.e. homeomorphic to each other but not diffeomorphic.
\item $M_n(K)$ and $M_n'(K)$
are homotopy equivalent to $W_{2n}(K)$. In particular, $M_n(K)$ and $M_n'(K)$ are simply connected and $b_2=1$.
\item $\del M_n(K)$ and $\del M_n'(K)$ are diffeomorphic to each other, and they are smoothly homology cobordant to $\del W_{2n}(K)$.
\end{enumerate}
Moreover, if $n \neq n'$, then $\partial M_n(K)$ is not homeomorphic to $\partial M_{n'}(K)$.
\end{thm}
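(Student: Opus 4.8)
The plan is to realize the ``slogan'' of the introduction concretely. I would first build the candidate submanifolds $M_n(K)$ and $M_n'(K)$ inside $W_{2n}(K)$ by a cut-and-reglue construction along a homology sphere in the interior. Concretely, the $1/n$-surgery $S^3_{1/n}(K)$ and $S^3_{1/(n-1)}(K)$ appear as separating hypersurfaces in the Figure~\ref{thm_1_fig} cobordism; using the strong inversion $\tau$ on $K$ one gets an induced involution on $S^3_{1/m}(K)$ for each $m$, and the two manifolds $M_n(K)$, $M_n'(K)$ should be the ``identity-glued'' and ``$\tau$-glued'' versions of a piece of $W_{2n}(K)$ cut along one of these hypersurfaces (this is the Akbulut--Ruberman \cite{AR16} cork-twist type construction). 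By construction $M_n(K)$ and $M_n'(K)$ are homotopy equivalent to $W_{2n}(K)$, hence simply connected with $b_2=1$, giving (b); and since a twist along an involution is a homeomorphism (indeed the involution extends to a homeomorphism of a collar, by topological isotopy / the relevant $4$-dimensional topology input), $M_n(K)$ and $M_n'(K)$ are homeomorphic and have diffeomorphic boundary, giving the homeomorphism half of (a) and the first half of (c). The homology cobordance of $\partial M_n(K)$ to $\partial W_{2n}(K)$ is a routine consequence of the surgery description.

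The core of the argument is showing $M_n(K)\not\cong M_n'(K)$ (the nondiffeomorphism in (a)), and this is where the families Seiberg--Witten and involutive Heegaard Floer inputs enter. Suppose for contradiction there were a diffeomorphism; this would yield a diffeomorphism $\phi$ of the homology sphere $Y=\partial M_n(K)$ that extends smoothly over the $b^+=0$ side but is, up to smooth isotopy, the involution $\tau$ (or composable with it) on the separating hypersurface. The hypotheses $\underline{V}^\tau_0(K) > V_0(K) > 0$ and $h(S^3_1(K))<0$ are designed precisely to obstruct this: the Heegaard Floer inequalities provide (via the $d$-invariant and its $\tau$-equivariant refinement $\underline V^\tau_0$) an obstruction to the $\tau$-twisted gluing bounding the same way as the untwisted one, while $h(S^3_1(K))<0$ feeds the families Seiberg--Witten / instanton Frøyshov constraint on diffeomorphisms extending over the $b^+>0$ piece. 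I would run the contradiction exactly as in the slogan: the hypothetical diffeomorphism produces $\phi$ on $Y$ extending over one side, families SW theory forces it to \emph{not} extend over the other side, but the assumed diffeomorphism of $M_n(K)$ with $M_n'(K)$ would give such an extension --- contradiction. The main obstacle, and the step I expect to require the most care, is packaging the families SW invariant of the relevant mapping torus (or the one-parameter moduli space over the path from $\id$ to the twist) so that the numerical inputs $V_0, \underline V^\tau_0, h$ appear as the obstruction; this is the technical heart and presumably occupies the bulk of the paper's earlier sections.

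Finally, for the ``moreover'' clause I would argue that $\partial M_n(K) = -S^3_{1/2n}(K)\# S^3_{1/(2n-1)}(K)$ (up to homology cobordism / with the precise surgery coefficients coming from $W_{2n}$), and distinguish $\partial M_n(K)$ from $\partial M_{n'}(K)$ for $n\neq n'$ by a homeomorphism invariant of the $3$-manifold --- most naturally the first homology with its linking form, or the Casson--Walker invariant, or simply the set of surgery slopes recovered from the (say) Turaev torsion or the $d$-invariants of the summands, since $\{1/2n, 1/(2n-1)\}$ determines $n$. I would cite the simple-connectivity claim for $W_n(K)$ from \cite{NST19} and the computation of $\partial W_n(K)$ given in the excerpt, and note that none of this uses the exotic input, so it is genuinely routine. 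The one thing to check carefully here is that the cut-and-reglue does not change the boundary, which follows because the gluing locus is an interior hypersurface; I would state this explicitly.
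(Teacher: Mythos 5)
Your outline gets the broad shape right — invoke an Akbulut--Ruberman-type statement to reduce the theorem to showing a boundary diffeomorphism extends as a homeomorphism but not as a diffeomorphism, and then run the ``slogan'' with families Seiberg--Witten theory on a $b^+\ge 1$ piece. But there are two genuine gaps.

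First, the role of the hypothesis $h(S^3_1(K))<0$ is misattributed, and as a consequence the ``moreover'' clause is \emph{not} routine as you claim. You write that ``$h(S^3_1(K))<0$ feeds the families Seiberg--Witten / instanton Fr\o{}yshov constraint on diffeomorphisms.'' In the paper's proof this hypothesis plays no role in the non-extension argument at all: the families SW step uses $V_0(K)>0$ (via $d(S^3_{1/m}(K))=-2V_0(K)=2\delta$), and the involutive Heegaard Floer step uses $\underline{V}^\tau_0(K)>V_0(K)$. The instanton hypothesis is reserved entirely for the final sentence of the theorem: $\partial M_n(K)$ is only known up to smooth homology cobordism to $\partial W_{2n}(K)=-S^3_{1/2n}(K)\#S^3_{1/(2n-1)}(K)$, so to distinguish $\partial M_n$ from $\partial M_{n'}$ one must produce a \emph{homology cobordism} invariant that separates the $\partial W_{2n}$'s. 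The invariants you propose (linking form, Casson--Walker, Turaev torsion) either vanish on homology spheres or are not homology cobordism invariants, so none of them closes this step. The paper instead cites the linear independence of $\{S^3_{1/n}(K)\}$ in $\Theta^3_\Z$ (Nozaki--Sato--Taniguchi's instanton result, \cref{NST}), which is exactly where $h(S^3_1(K))<0$ is needed and which is far from ``genuinely routine.''

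Second, the non-extension argument requires a case split that you have elided. After setting up the cobordism $\widetilde W_{2n}(K)$ with intersection form $(-1)$, a hypothetical extension $g$ acts on $H^2(\widetilde W_{2n})\cong\Z$ either by $+1$ or $-1$, and the two cases call for different machinery. If $g_*=+1$, one caps off with a $b^+=1$ manifold $X_{2n}(K)$ on which the strong involution extends reversing the orientation of $H^+$ and preserving a spin structure, and applies \cref{theo: main theo} to contradict $V_0(K)>0$. If $g_*=-1$, one precomposes with the equivariant $\mathrm{spin}^c$-reversing diffeomorphism $h$, reducing to a chain of equivariant $\mathrm{spin}^c$-preserving negative-definite cobordisms and a large-surgery local map to contradict $\underline{V}^\tau_0(K)>V_0(K)$ via the monotonicity of $\underline d_\tau$. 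Your sketch correctly anticipates that both theories enter, but without the $g_*=\pm1$ dichotomy it is not clear why you need \emph{both} $\underline{V}^\tau_0(K)>V_0(K)$ and $V_0(K)>0$; a reader following your outline would have no reason to believe one couldn't drop one of these hypotheses.
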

\noindent
As advertised before, note that the assumptions of \cref{main:exotic} do {\it not} need the non-triviality of 4-manifold invariants such as Seiberg--Witten invariant, nor do we use it in the proof. Instead, we need to assume some conditions only on the knot. Moreover, since, in the proof of \cref{main:exotic}, we use Akbulut--Ruberman's technique \cite{AR16} which involves non-existence results of diffeomorphisms on certain 4-manifolds, we prove certain non-existence results of diffeomorphisms on 4-manifolds with $b_2=1$.
See \cref{non extension} for the details. 
\begin{rem}
Following \cite{HM}, we have the following relation between the two invariants appearing in the hypothesis of Theorem~\ref{main:exotic}
 \[
 \underline{V}^{\tau}_{0}(K) \geq V_0(K).
 \]
Hence, the condition on the hypothesis requires that the above inequality is strict. Moreover, if we assume that the conjecture that the instanton Fr\o yshov invariant coincides with and the Heegaard Floer $d$-invariant (up to multiplication by $-1/2$), is true, then the condition on $h$ and $V_0$ in the hypothesis are equivalent:
\[
V_0(K) > 0 \Leftrightarrow h(S^{3}_1(K)) < 0.
\]

\end{rem}
Although at the first glance, the hypothesis may sound restrictive, nevertheless we have infinitely many knots satisfying the assumptions. Indeed, for example we have:

\begin{ex}\label{example exotic}
Let $\tau$ be the strong involution on $T_{2,2n+1}$, the $(2,2n+1)$-torus knots described in Figure~\ref{trefoil_connect_intro}. 
    For any odd positive integer $n$ \footnote{This parameter should not be confused with the parameter $n$ from $W_n$.}, one can take a sequence of strongly invertible knots 
    \[
    (K_n, \tau_n)  = (T_{2,2n+1} \# T_{2,2n+1}, \tau \# \tau) , \ n \in \Z_{>0}
    \]
    as examples satisfying all assumptions in \cref{main:exotic}. Moreover, 
    we can also distinguish $M_2(K_n)$ and $M_2(K_m)$ for different choices of $n$ and $m$ using filtered instanton theory.  See \cref{distinguish more precisely}. 
\end{ex}
\begin{figure}[h!]
\center
\includegraphics[scale=0.5]{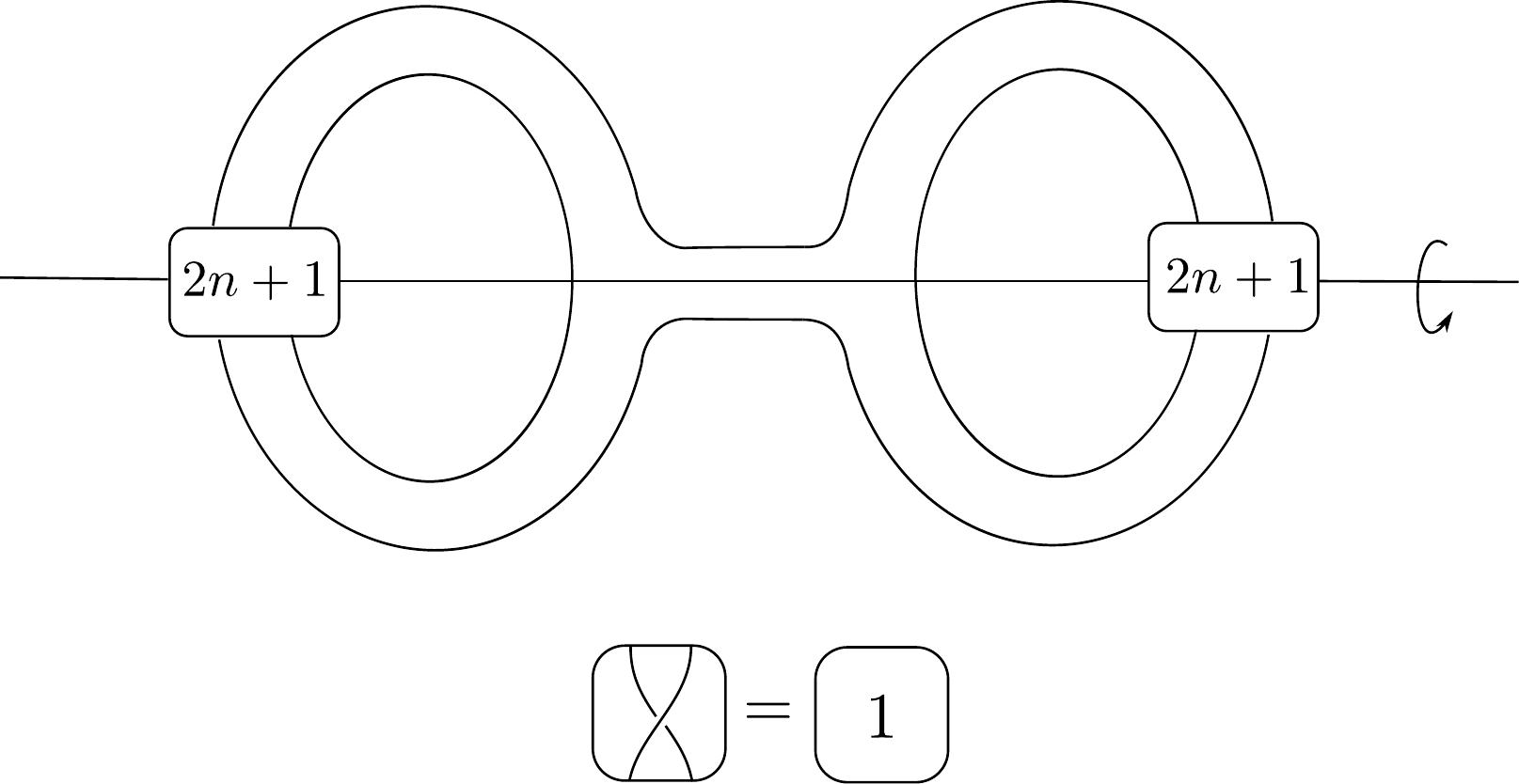}
\caption{The knot $K_n= T_{2,2n+1} \# T_{2,2n+1}$, with the  strong involution $\tau \# \tau$, induced from the strong involution $\tau$ on $T_{2,2n+1}$.}\label{trefoil_connect_intro}
\end{figure}
\noindent



While the hypothesis of Theorem~\ref{main:exotic} makes no reference to families gauge theory, the proof crucially uses it while also borrowing techniques from involutive Heegaard Floer homology  \cite{DHM20} and filtered instanton Floer homology \cite{NST19}. We refer readers to Section~\ref{proof_main_exotic} for the proof.

\subsection{Families Seiberg--Witten theory detects strong corks} 

Corks are one of the fundamental objects in the study of the exotic $4$-manifolds. Indeed, by the work of  Matveyev \cite{MAt} and Curtis-Freedman-Hsiang-Stong  \cite{CFHS}, it is well-known that any two different exotic smooth structures on a closed simply-connected $4$-manifold are related by a \textit{cork-twist}. Recently, a more generalized version of corks was introduced by Lin, Ruberman and Saveliev \cite{LRS18}, called the strong cork. We recall the definition of the (strong) cork below.
\begin{defn}
Let $(Y, \tau, W)$ be a tuple, where $\tau$ is an orientation-preserving smooth involution on an integer homology sphere $Y$ and $Y$ is a boundary of a contractible, smooth, compact manifold $W$. The triple $(Y, \tau, W)$ is said to be a \textit{cork} if the involution $\tau$ on $Y$ does \text{not} extend over $W$ as a diffeomorphism. Moreover, consider $(Y,\tau)$ as before, if $\tau$ does not extend over \textit{any} $\mathbb{Z}_2$-homology ball, $W$, as a diffeomorphism, then $(Y,\tau)$ is called a \textit{strong cork}.
\end{defn}
 \noindent
 Sometimes, we will be sloppy and call a pair $(Y,\tau)$ a strong cork, if $\tau$ does not extend over any $\mathbb{Z}_2$-homology ball, without requiring that $Y$ bounds a contractible manifold. As in the case of exotic manifold detection, most existing literature on cork-detection uses the following technique. Firstly, one finds an effective embedding of the cork inside a closed $4$-manifold and then one shows that one of the $4$-manifold invariant changes under cork-twist \cite{Ak91_cork}. On the other hand, such methods are incapable of detecting strong corks. 
An alternative to this approach was recently studied in \cite{LRS18} by Lin, Ruberman and Saveliev, who showed that the Akbulut cork is strong. Later in \cite{DHM20} the second author jointly with Dai and Hedden gave a plethora of new examples of strong corks. This was achieved by constructing an invariant coming out of the involutive Heegaard Floer homology, capable of detecting strong corks. In particular, the construction of the invariant used the action of the cork-twist symmetry on the Heegaard Floer homology of the boundary and the local equivalence formulation by Hendricks--Manolescu--Zemke \cite{HMZ}. 

In this article, we will use families Seiberg--Witten gauge theory to produce examples of strong corks. Our method of detecting strong corks is independent of that used in \cite{LRS18} and \cite{DHM20}. In particular, we do \textit{not} use any action of symmetry on the Floer homology of the boundary nor do we use the local equivalence formulation.

Before diving into the statement of the obstruction, we need to recall a few definitions. Given an oriented 4-manifold $X$, we denote by $H^+(X)$, a choice of a maximal-dimensional positive-definite subspace of $H^2(X;\R)$.
Given an orientation-preserving diffeomorphism $\Phi : X \to X$, one may define the notion that $\Phi$ preserves or reverses an orientation of $H^+(X)$, independent of the choices of  $H^+(X)$ (see \cref{subsection A constraint from families Seiberg--Witten theory} for more details).
We now make the following two definitions:
\begin{defn}
Let $X^4$ be an oriented $4$-manifold, with or without boundary,
and $\Phi$ be an orientation-preserving diffeomorphism on $X$. We say that $\Phi$ is $H^{+}$-\textit{preserving} or $H^{+}$-\textit{reversing} according to whether $\Phi$ preserves or reverses orientation of $H^{+}(X)$.
\end{defn}

\begin{defn}\label{spinc_cobordism}
Let $(X,\Phi)$ be as before. We say that $\Phi$ is $\spinc$-\textit{preserving for $\s$}, if $\s$ is a $\spinc$-structure on $X$ that is preserved by $\Phi$ i.e. $\Phi(\s)=\s$. On the other hand, we say that $\Phi$ is $\spinc$-\textit{reversing} (or conjugating) for $\s$, if $\s$ is a $\spinc$-structure on $X$ that is conjugated by $\Phi$, i.e. $\Phi(\s)= \bar{\s}$.
\end{defn}
\noindent
We are now in place to state our obstruction:
\begin{thm}\label{strong_cork}
Let $(Y , \phi)$ be an oriented homology 3-sphere with an orientation-preserving (not necessarily order 2) diffeomorphism $\phi$. Suppose that $Y$ bounds a spin$^c$ 4-manifold $(X, \mathfrak{s})$ with $b^+(X)=1$ and $b_1(X)=0$. 
Now if $\Phi$ is any orientation-preserving, smooth extension of $\phi$ so that $\Phi$ is $H^{+}$-reversing, and $\spinc$-preserving for some $\spinc$-structure $\s$ together with
\[
\frac{c_1(\fraks)^2-\sigma(X)}{8} > 0,
\]
then $(Y, \phi)$ is a strong cork. 
\end{thm}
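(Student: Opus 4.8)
The plan is to argue by contradiction, converting an extension of $\phi$ over a $\Z_2$-homology ball into a closed $4$-manifold that violates the constraint from families Seiberg--Witten theory recalled in \cref{subsection A constraint from families Seiberg--Witten theory}. So suppose $\phi$ extends over a $\Z_2$-homology ball $Z$ with $\partial Z = Y$ as a diffeomorphism $\psi$, and form the closed oriented $4$-manifold $M := X \cup_Y (-Z)$. Since $Z$ is a $\Z_2$-homology ball and $Y$ an integer homology sphere, a Mayer--Vietoris computation gives $H^1(M;\R)=0$, a restriction isomorphism $H^2(M;\R)\cong H^2(X;\R)$ compatible with the rel-boundary intersection forms, and $H^2(M;\Z)\cong H^2(X;\Z)\oplus H^2(Z;\Z)$; in particular $b^+(M)=1$, $b_1(M)=0$, and $\sigma(M)=\sigma(X)$. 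Being a $\Z_2$-homology ball, $Z$ is spin, so the unique $\spinc$-structure $\s|_Y$ extends to a $\spinc$-structure $\s_Z$ on $Z$; gluing $\s_Z$ to $\s$ gives $\s_M\in\Spinc(M)$ with $c_1(\s_M)^2=c_1(\s)^2$, hence
\[
\frac{c_1(\s_M)^2-\sigma(M)}{8}=\frac{c_1(\s)^2-\sigma(X)}{8}>0.
\]

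Next I would glue $\Phi$ and $\psi$, which both restrict to $\phi$ along $Y$, to an orientation-preserving diffeomorphism $\Phi_M:M\to M$ with $\Phi_M|_X=\Phi$. Under the isomorphism $H^2(M;\R)\cong H^2(X;\R)$ the action of $\Phi_M^*$ corresponds to $\Phi^*$, so since $\Phi$ is $H^+$-reversing the diffeomorphism $\Phi_M$ is $H^+$-reversing as well. The one point needing care is that $\Phi_M$ need not literally fix $\s_M$: along $Z$ it may permute the $\spinc$-structures on $M$ restricting to $\s$ on $X$, and this set is a torsor over $\ker\big(H^2(M;\Z)\to H^2(X;\Z)\big)\cong H^2(Z;\Z)$, a finite group of \emph{odd} order (this is exactly where the hypothesis "$\Z_2$-homology ball" is used). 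Thus the $\Phi_M^*$-orbit of $\s_M$ has odd size $k$, and I replace $\Phi_M$ by $\Phi_M^{k}$: it is still orientation-preserving, it is $H^+$-reversing because $(\Phi_M^*)^k$ acts on the line $H^+(M)$ by $(-1)^k=-1$, and now $(\Phi_M^{k})^*\s_M=\s_M$, i.e. it is $\spinc$-preserving for $\s_M$. (When $Z$ is contractible, as for genuine corks, $k=1$ and $\Phi_M$ already works.)

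Now $M$ is a closed oriented $4$-manifold with $b^+(M)=1$, $b_1(M)=0$, carrying a $\spinc$-structure $\s_M$ with $(c_1(\s_M)^2-\sigma(M))/8>0$ and an orientation-preserving diffeomorphism $\Phi_M^{k}$ that is simultaneously $H^+$-reversing and $\spinc$-preserving for $\s_M$. This is precisely the situation forbidden by the constraint from families Seiberg--Witten theory in \cref{subsection A constraint from families Seiberg--Witten theory} --- heuristically, an $H^+$-reversing, $\spinc$-preserving diffeomorphism identifies the two chamber Seiberg--Witten invariants of $\s_M$, while the inequality on $(c_1(\s_M)^2-\sigma(M))/8$ forces the unperturbed moduli space to have nonnegative expected dimension, so the $b^+=1$ wall-crossing number $\pm1$ cannot vanish. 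This contradiction shows $\phi$ extends over no $\Z_2$-homology ball, i.e. $(Y,\phi)$ is a strong cork.

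Modulo the already-established families Seiberg--Witten constraint, essentially all of the above is routine bookkeeping; the only genuinely delicate point is making the glued diffeomorphism \emph{exactly} preserve a $\spinc$-structure with the required index, and I expect the odd-order orbit argument to be the step one must get right (together with checking that passing to a power does not destroy $H^+$-reversal, which works precisely because the order is odd).
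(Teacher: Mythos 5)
Your overall strategy matches the paper's: glue $X$ to a hypothetical $\Z_2$-homology ball $Z$ along $Y$, push the hypotheses to the glued manifold, and contradict the families Seiberg--Witten constraint from \cref{subsection A constraint from families Seiberg--Witten theory}. However, there is a genuine gap in the ``odd orbit'' step. The fact that the set of $\spinc$-structures on $M$ restricting to $\s$ on $X$ is a torsor over the odd-order group $H^2(Z;\Z)$ does \emph{not} imply that the $\langle\Phi_M^\ast\rangle$-orbit of your chosen $\s_M$ has odd size; it only implies that at least one orbit does. For instance, if $H^2(Z;\Z)\cong\Z/3$ and $\Phi_M^\ast$ acts on the torsor affinely by $a\mapsto 1-a$, the orbit of the base point has size two, and then $\Phi_M^2$ is $H^+$-\emph{preserving} rather than $H^+$-reversing, so the argument collapses.

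The fix is in fact simpler than the detour you attempted. Since $Z$ is a $\Z_2$-homology ball, $H^1(Z;\Z_2)=0$, so $Z$ has a unique spin structure; any diffeomorphism of $Z$ automatically preserves it, since the pullback of a spin structure is a spin structure. Choosing $\s_M$ so that it restricts to this spin structure on $Z$ and to $\s$ on $X$, and using the Mayer--Vietoris injection $H^2(M;\Z)\hookrightarrow H^2(X;\Z)\oplus H^2(Z;\Z)$, you get $\Phi_M^\ast\s_M=\s_M$ on the nose, with no power of $\Phi_M$ required. This is exactly what the paper's proof records (the extension preserves the unique spin structure on $W_0$). One further technical point you elide: \cref{theo: main theo} is stated for a $4$-manifold with rational homology sphere boundary and requires the diffeomorphism to restrict to the identity on that boundary. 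For the closed $M$ you therefore need to isotope $\Phi_M$ to fix a small $4$-disk inside $Z$ and then puncture there, so the boundary becomes $S^3$ and $\delta(S^3)=0$ delivers the contradiction; the paper does this explicitly by puncturing $W_0$ at a point fixed by $\tilde\Phi$.
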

\noindent
We state another obstruction, where the hypothesis is somewhat `conjugated' to the above:
\begin{thm}\label{strong_cork_conjugation}
Let $(Y , \phi)$ be an oriented homology 3-sphere with an orientation-preserving (not necessarily order 2) diffeomorphism $\phi$. Suppose that $Y$ bounds a spin$^c$ 4-manifold $(X, \mathfrak{s})$ with $b^+(X)=1$ and $b_1(X)=0$. 
Now if $\Phi$ is any orientation-preserving, smooth extension of $\phi$ so that $\Phi$ is $H^{+}$-preserving, and $\spinc$-reserving for some $\spinc$-structure $\s$ together with 
\[
\frac{c_1(\fraks)^2-\sigma(X)}{8} > 0,
\]
then $(Y, \phi)$ is a strong cork.
\end{thm}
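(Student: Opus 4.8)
The plan is to argue by contradiction, reducing everything to the constraint on diffeomorphisms of closed $4$-manifolds with $b^+=1$ proved in \cref{subsection A constraint from families Seiberg--Witten theory}. So suppose, contrary to the conclusion, that $\phi$ extends to an orientation-preserving diffeomorphism $\Psi$ of some $\mathbb{Z}_2$-homology ball $W$ with $\partial W = Y$. After an isotopy of $\Psi$ supported in a collar of $\partial W$, we may assume $\Phi$ and $\Psi$ agree with $\phi$ on a common collar of $Y$; they then glue to an orientation-preserving diffeomorphism $\hat\Phi := \Phi \cup_Y \Psi$ (orientation-preserving since $\Phi$ and $\Psi$ are) of the closed oriented $4$-manifold $Z := X \cup_Y (-W)$.

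Next I would carry out the homological bookkeeping for $Z$. Since $H_*(W;\mathbb{Z}_2)\cong H_*(\pt;\mathbb{Z}_2)$, universal coefficients force $H_*(W;\mathbb{Q})\cong H_*(\pt;\mathbb{Q})$, so $W$ is a rational homology ball. A Mayer--Vietoris computation over $\mathbb{Q}$, using that $Y$ is a homology sphere, identifies the intersection form of $Z$ with that of $(X,\partial X)$; hence $b^+(Z)=b^+(X)=1$, $b_1(Z)=b_1(X)=0$, and $\sigma(Z)=\sigma(X)$. The $\spinc$-structure $\mathfrak{s}$ extends over the homology sphere $Y$ and, since $W$ is $\mathbb{Z}_2$-acyclic, over $W$; fixing such an extension gives a $\spinc$-structure $\mathfrak{s}_Z$ on $Z$ with $c_1(\mathfrak{s}_Z)^2 = c_1(\mathfrak{s})^2$, so that
\[
\frac{c_1(\mathfrak{s}_Z)^2 - \sigma(Z)}{8} \;=\; \frac{c_1(\mathfrak{s})^2 - \sigma(X)}{8} \;>\; 0 .
\]
Moreover $\hat\Phi$ acts on $H^2(Z;\mathbb{R})\cong H^2(X;\mathbb{R})$ exactly as $\Phi$ acts on $H^2(X;\mathbb{R})$, so $\hat\Phi$ is again $H^+$-preserving; and because $W$ is $\mathbb{Z}_2$-acyclic, the extension of the $\spinc$-structure on $Y$ across $W$ is, for the purposes of the relevant $\mathbb{Z}_2$-valued families invariant, unique, so $\hat\Phi$ is again $\spinc$-reversing for $\mathfrak{s}_Z$ because $\Phi$ is for $\mathfrak{s}$.

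Now the constraint of \cref{subsection A constraint from families Seiberg--Witten theory} applies to $(Z,\hat\Phi,\mathfrak{s}_Z)$: $Z$ is closed with $b^+=1$ and $b_1=0$, and $\hat\Phi$ is an orientation-preserving, $H^+$-preserving, $\spinc$-reversing diffeomorphism, so that constraint is incompatible with $\tfrac{c_1(\mathfrak{s}_Z)^2-\sigma(Z)}{8}>0$ --- a contradiction. Hence $\phi$ extends over no $\mathbb{Z}_2$-homology ball, i.e.\ $(Y,\phi)$ is a strong cork. The proof of \cref{strong_cork} is word for word the same, with ``$H^+$-preserving and $\spinc$-reversing'' replaced throughout by ``$H^+$-reversing and $\spinc$-preserving''.

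I expect the delicate point to be the $\spinc$-bookkeeping in the gluing step --- verifying that ``$\spinc$-reversing for $\mathfrak{s}$'' survives to $\hat\Phi$ on the glued manifold $Z$ --- and this is exactly where the hypothesis ``$\mathbb{Z}_2$-homology ball'' (rather than integral homology ball or contractible manifold) does its work: in the $\spinc$-conjugating setting the families Seiberg--Witten count is only defined with $\mathbb{Z}_2$-coefficients, and over a $\mathbb{Z}_2$-acyclic piece the $\spinc$- and orientation data feeding that count is unambiguous. A secondary point requiring care is that $\phi$, hence $\hat\Phi$, is not assumed to have order two, so the constraint must be applied to the mapping-torus family rather than to a $\mathbb{Z}/2$-quotient; this is already built into the statement of that constraint.
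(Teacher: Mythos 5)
Your proof is correct and follows essentially the same route as the paper's: assume $\phi$ extends over some $\Z_2$-homology ball $W$, glue $W$ to $X$ along $Y$ to obtain a closed $4$-manifold, concatenate the spin$^c$ structures and diffeomorphisms, and apply the charge-conjugation families Seiberg--Witten constraint (\cref{thm: charge conj}) to get a contradiction. The only difference is that the paper's one-line proof (claiming to be ``verbatim'' to that of \cref{strong_cork}) glosses over the fact that \cref{thm: charge conj} is stated for closed manifolds --- so one should not puncture $W$ as in the $\delta$-based proof of \cref{strong_cork} --- whereas you correctly work with the closed double and spell out the Mayer--Vietoris and spin$^c$-gluing bookkeeping.
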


\noindent
\noindent
Readers familiar with \cite{DHM20} will see later that the Theorem~\ref{strong_cork_conjugation} plays the part of $\iota \circ \tau$-local equivalence class while Theorem~\ref{strong_cork} substitute for $\tau$-local equivalence class. Indeed, \cref{strong_cork_conjugation} shall be proven by using charge conjugation symmetry in Seiberg--Witten theory, which corresponds to $\iota$-map in Heegaard Floer theory.  However, we remark that there is no obvious relation between the families Seiberg--Witten theory and involutive Heegaard Floer homology.  

While the reader may find the hypotheses for both Theorem~\ref{strong_cork} and Theorem~\ref{strong_cork_conjugation} somewhat stringent, we are still able to recover almost all of the examples of existing strong corks (from \cite{DHM20}) in the literature using them. We list them below:
\begin{thm}\cite{DHM20} \label{cork_detection} Following $3$-manifolds equipped with the specified involution are all strong cork:

\begin{enumerate}[label=(\alph*)]
  \setlength\itemsep{1em}

\item For  $k$ positive and odd, and the symmetries displayed in Figure~\ref{intro_corks}, any $1/k$-surgery on the slice knots $\overline{9}_{41}, \overline{9}_{46}, 10_{35}, \overline{10}_{75}, 10_{155}, 11_{n49}$.

\item The manifolds $(M_n,\tau)$ as displayed in Figure~\ref{intro_corks_2}, with $n>0$.

\item The manifolds $(W_n,\tau)$ as displayed in Figure~\ref{intro_corks_2}, with $n>0$ and odd.

\item  For $k$ positive and odd, and $(K_{-n,n+1},\tau, \sigma)$ as in Figure~\ref{intro_corks_3} 
\[
\begin{cases}
(S^3_{1/k}(\overline{K}_{-n, n+1}), \tau) \; \text{and} \; (S^3_{1/k}(\overline{K}_{-n, n+1}), \sigma) &\text{if } n \text{ is odd}\\
(S^3_{1/k}(K_{-n, n+1}),\tau) \; \text{and} \; (S^3_{1/k}(K_{-n, n+1}),\sigma) &\text{if } n \text{ is even}.
\end{cases}
\]

\end{enumerate}
\end{thm}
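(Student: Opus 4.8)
The plan is to deduce every case of \cref{cork_detection} from \cref{strong_cork} or \cref{strong_cork_conjugation}. For a pair $(Y,\phi)$ on the list it suffices to exhibit a spin$^c$ $4$-manifold $(X,\mathfrak{s})$ with $b^+(X)=1$, $b_1(X)=0$, $\partial X=Y$, and an orientation-preserving diffeomorphism $\Phi\colon X\to X$ extending $\phi$, so that either $\Phi$ is $H^+$-reversing and $\spinc$-preserving for $\mathfrak{s}$, or $\Phi$ is $H^+$-preserving and $\spinc$-reversing for $\mathfrak{s}$, and in either case $(c_1(\mathfrak{s})^2-\sigma(X))/8>0$; the strong cork conclusion is then immediate from the corresponding theorem. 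For the surgery families (a) and (d) the knots involved are slice, so $Y$ additionally bounds a contractible $4$-manifold and $(Y,\phi)$ is a genuine cork; the new content is that $\phi$ cannot be extended over any $\mathbb{Z}_2$-homology ball.

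I would construct the data $(X,\mathfrak{s},\Phi)$ from equivariant handle decompositions. For (a) and (d), start from a strongly invertible diagram of the knot $K$ --- running the argument separately for each of the two commuting symmetries $\tau,\sigma$ in \cref{intro_corks_3} in case (d) --- and realize the extension of the strong inversion to $S^3_{1/k}(K)$ as the restriction of a diffeomorphism of a $4$-manifold built by attaching $2$-handles along curves permuted by the symmetry; the involution structure then prescribes how $\Phi$ acts on $H_2$. For $(M_n,\tau)$, $(W_n,\tau)$ in (b), (c) and for $(K_{-n,n+1},\tau)$, $(K_{-n,n+1},\sigma)$ in (d), the symmetric Kirby pictures of \cref{intro_corks_2} and \cref{intro_corks_3} already display an equivariant $4$-manifold together with $\Phi$, so one reads off the intersection form $Q_X$ and the action $\Phi_*$ directly from them. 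Given $Q_X$ and $\Phi_*$, the remaining work is finite: find a characteristic class lying in the $(+1)$- or $(-1)$-eigenspace of $\Phi^*$ (according to whether one aims at \cref{strong_cork} or \cref{strong_cork_conjugation}) with square exceeding $\sigma(X)$, take $\mathfrak{s}$ to be the corresponding spin$^c$ structure, and decide whether $\Phi$ preserves or reverses orientation on the line $H^+(X)$.

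Which of the two theorems to target in a given case is dictated by the dictionary in the paragraph after \cref{strong_cork_conjugation}: the $H^+$-reversing, $\spinc$-preserving case plays the role of a nontrivial $\tau$-local equivalence class and the $H^+$-preserving, $\spinc$-conjugating case the role of a nontrivial $\iota\circ\tau$-local class, so one matches each example in \cite{DHM20} to the case corresponding to the invariant that detects it there. The numerical inequality $(c_1(\mathfrak{s})^2-\sigma(X))/8>0$ is the Seiberg--Witten shadow of the positivity of the relevant $d$- or $\bar{d}$-type quantity in \cite{DHM20}; once it is verified for one representative of each family, the $1/k$-surgered and connected-sum members follow from stability of the hypotheses under the relevant cobordisms and (equivariant) boundary connected sums, in parallel with the proof of \cref{main:exotic}.

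The step I expect to be the real obstacle is producing, for each family, a single equivariant model $(X,\mathfrak{s},\Phi)$ meeting all of the hypotheses at once. Because $b^+(X)=1$, the behavior of $\Phi$ on $H^+(X)$ and on $c_1(\mathfrak{s})$ are linked: if the intersection form is odd then every characteristic vector pairs nontrivially with the positive direction, so one cannot have $\Phi$ reverse $H^+(X)$ while fixing (or, in the conjugating case, negating) a characteristic class unless $\Phi^*$ acts on $H_2(X)$ in a genuinely non-diagonal way and $b^-(X)$ is large enough to accommodate a characteristic class of suitably negative square with $c_1(\mathfrak{s})^2>\sigma(X)$. In particular the naive one- and two-handle traces of the surgeries, and the double branched covers of slice disks in $B^4$, do not satisfy the hypotheses, so the equivariant handle configuration has to be chosen so that the $(\pm1)$-eigenlattices of the symmetry have the right shape. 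Once suitable generators are in hand, the rest of \cref{cork_detection} is bookkeeping over the list.
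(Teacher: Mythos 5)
The overall reduction to \cref{strong_cork} and \cref{strong_cork_conjugation} is the right framing, and you correctly diagnose the central difficulty: you must manufacture a single equivariant spin$^c$ bound $(X,\mathfrak{s},\Phi)$ of the cork with $b^+=1$ and $(c_1(\mathfrak{s})^2-\sigma(X))/8>0$, while the obvious candidates (surgery traces, double branched covers of slice disks, and the symmetric Kirby pictures taken at face value) do not meet the $H^+$-eigenspace and characteristic-class constraints simultaneously. But your proposal stops there; the sentence ``once suitable generators are in hand, the rest is bookkeeping'' is precisely where the content is, and you do not supply the construction.

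The paper resolves this by interpolating through a Brieskorn sphere. For each family one finds (i) an equivariant, negative-definite, $\spinc$-preserving or $\spinc$-reversing cobordism $W$ from the cork boundary to $-\Sigma(2,3,7)$, $-\Sigma(2,3,6n+1)$, or $-\Sigma(2,2n+1,4n+3)$, imported directly from the explicit cobordisms in \cite{DHM20}; and (ii) a spin plumbed $4$-manifold $X'$ bounded by that Brieskorn sphere with intersection form $-E_8\oplus H$ (Saveliev), over which the relevant involution $\tau$ extends reversing the orientation of $H^+$ (\cref{lemma_2}). Setting $X=X'\cup W$ gives $b^+(X)=1$ (the hyperbolic summand carries all of $H^+$), $\sigma(X)<0$, and a $\spinc$ structure with $(c_1(\mathfrak{s})^2-\sigma(X))/8 = 1 > 0$, while the eigenspace constraints you worry about are automatic because $W$ is negative definite. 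A further ingredient you omit, and which the paper cannot do without, is the mapping-class-group bookkeeping on the Brieskorn end (\cref{lemma_1}, \cref{lemma_3}): the diffeomorphism that the DHM cobordism restricts to on $\Sigma(p,q,r)$ must be identified, up to isotopy, with the one extending over the plumbing, and this uses that $\pi_0\,\mathrm{Diff}(\Sigma(p,q,r))=\Z/2$ together with the prior non-extension results \cref{thm: nonext b+1} and \cref{thm: nonext b+1_2} to pin down which element each symmetry represents. Without the Brieskorn intermediary and without this mapping-class identification, the argument does not close; as written your proposal names the obstruction but does not overcome it.
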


\begin{figure}[h!]
\center
\includegraphics[scale=0.2]{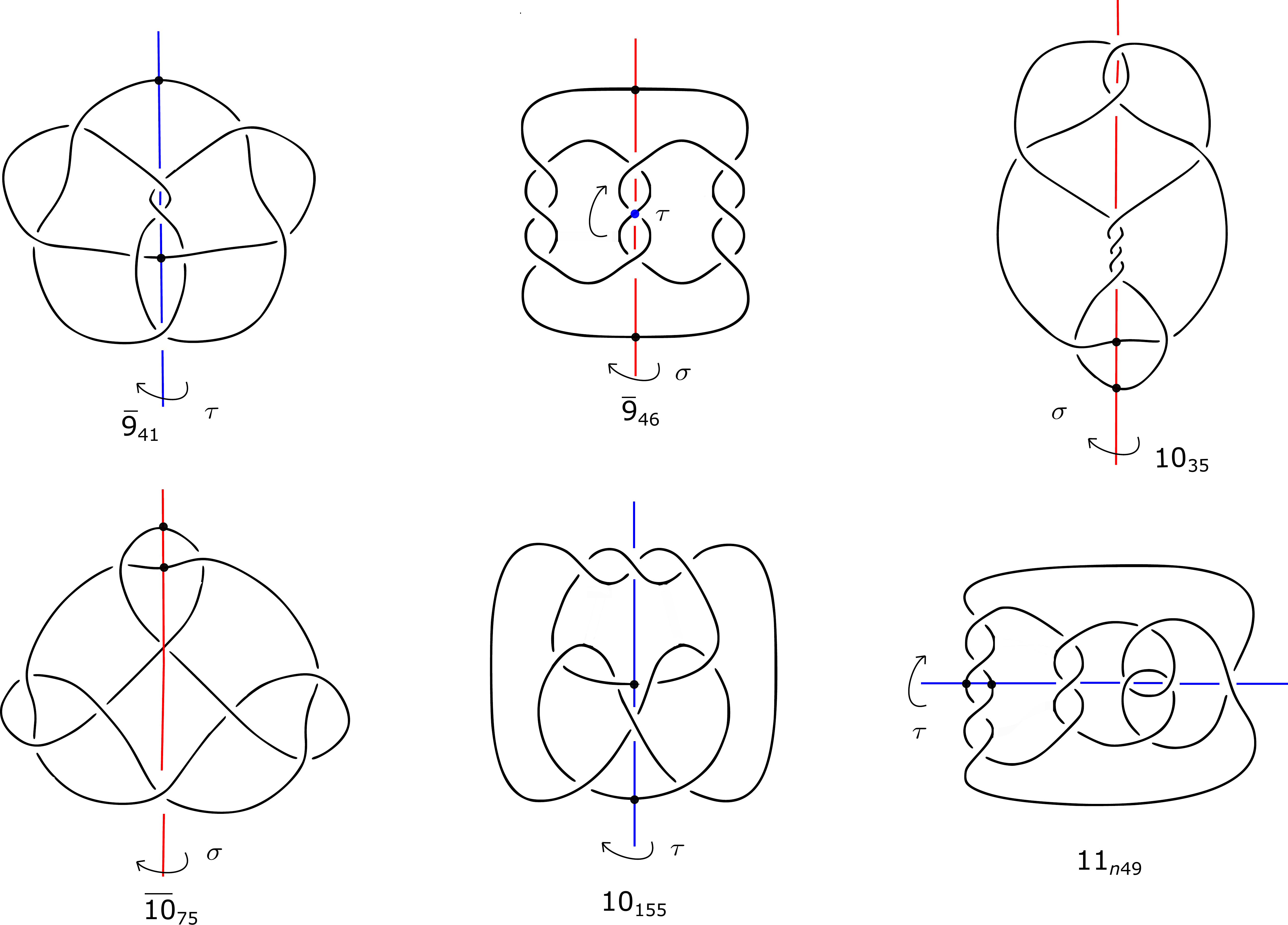}
\caption{The slice knots used in Theorem~\ref{cork_detection}. Figure credit: \cite[Figure 2]{DHM20}.}\label{intro_corks}
\end{figure}

\begin{figure}[h!]
\center
\includegraphics[scale=0.3]{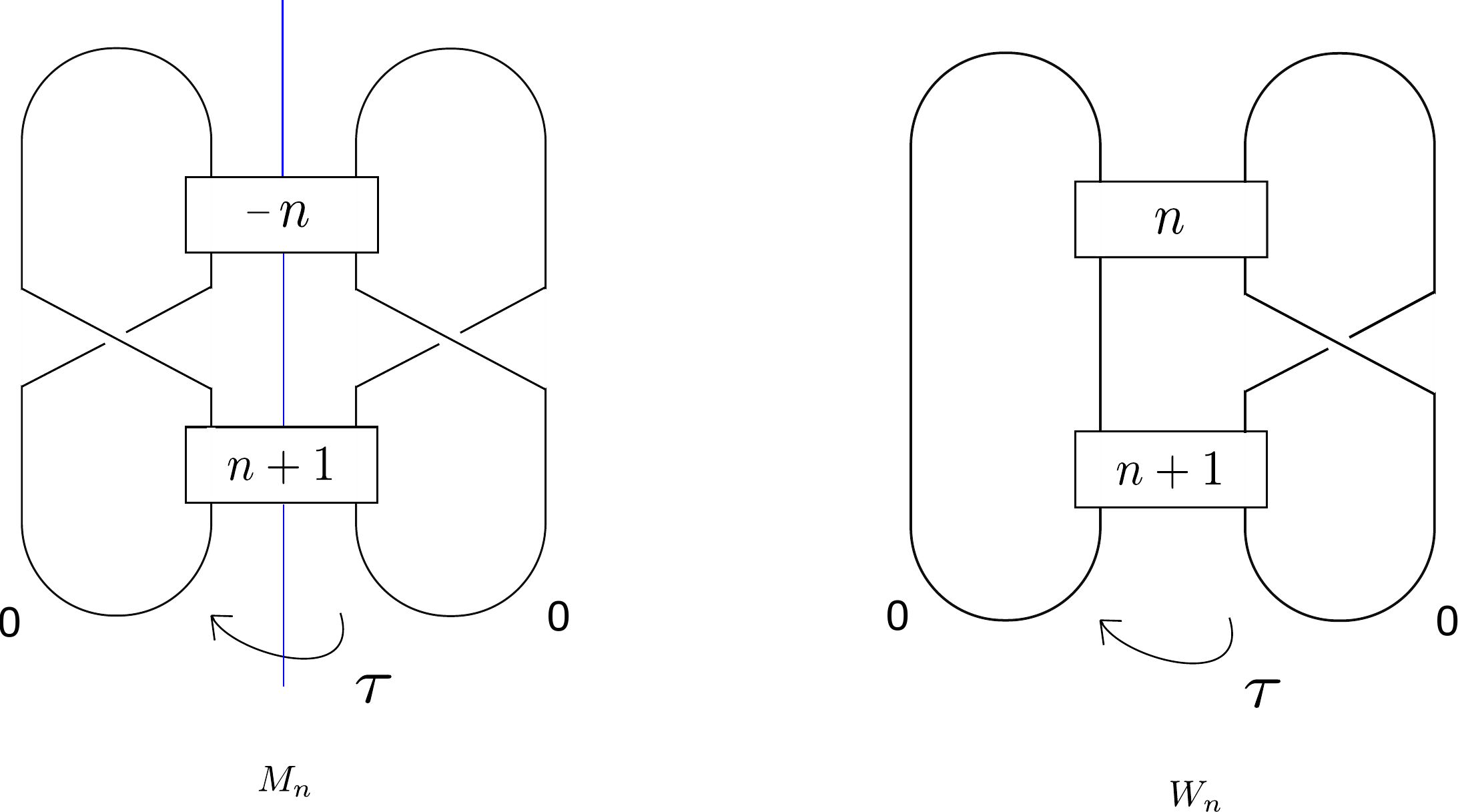}
\caption{The manifolds $W_n$ and $M_n$ used in Theorem~\ref{cork_detection}.}\label{intro_corks_2}
\end{figure}

\begin{figure}[h!]
\center
\includegraphics[scale=0.5]{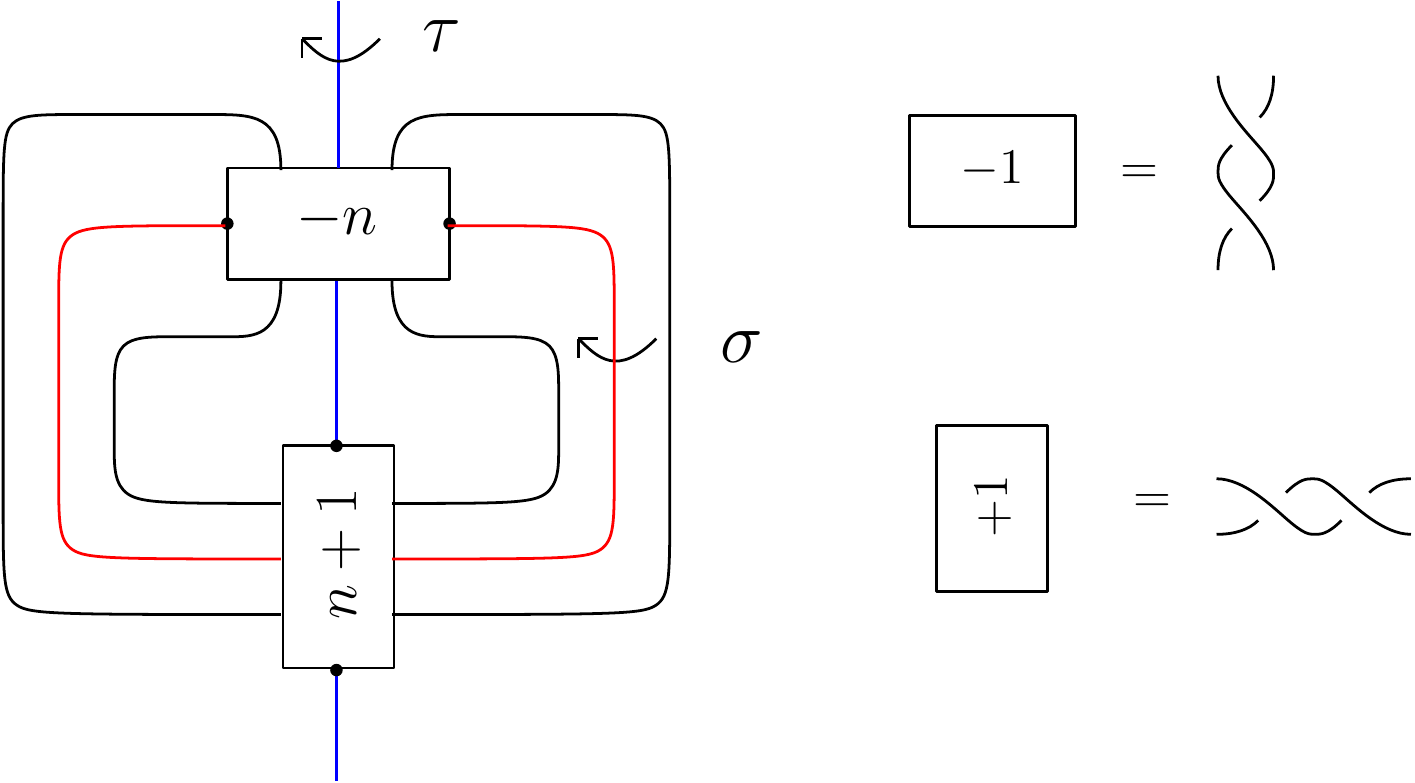}
\caption{The slice knots $K_{-n,n+1}$ used in Theorem~\ref{cork_detection}.}\label{intro_corks_3}
\end{figure}

\noindent
This recovers \cite[Theorem D]{LRS18} and \cite[Theorem 1.10, 1.11, 1.12, 1.13]{DHM20}. We again stress that we do not use the action of the symmetry on the Floer homology (although we still draw from the equivariant cobordisms constructed in \cite{DHM20} for our proof). The advantage of our method as opposed to that in \cite{DHM20} is that it is often hard to compute the action of the symmetry on the Heegaard Floer homology. Indeed, the action is known only for a handful of knots (and hence for surgery on them) \cite{DMS, mallick2022knot}. In contrast, our hypothesis concerns the intersection form of the bound and being able to extend diffeomorphisms in a certain way, which can sometimes be easily checked from the Kirby diagrams, and hence is potentially more user-friendly. However, we do not have an explicit example illuminating this. See Remark~\ref{comparison} for a description of an ad hoc situation.

\subsection{Exotic embeddings of 3-manifolds}

Recently, exotic embeddings of 3-manifolds into 4-manifolds have been studied via various tools \cite{BD19, Wa20,  IKMT22, KMT22}.
Let us first recall the definitions of exotic embeddings used in this paper: 
\newpage
\begin{defn}
\label{def: exo submfd}
Let $Y$ be a 3-manifold.
We say that two smooth embeddings $i_1,i_2 :Y\to X$ into a smooth 4-manifold $X$ are (strongly) {\it exotic} if \begin{itemize}
    \item[(i)] there is a topological ambient isotopy $H_t: X\times [0,1] \to X$ such that $H_1\circ i_1= i_2$,
    \item[(ii)] there is no such smooth isotopy,
    \item[(iii)] the complements of $Y_1$ and $Y_2$ are diffeomorphic, i.e there exists a diffeomorphism $f:X\to X$ such that $f\circ i_1 = i_2$. 
\end{itemize} 
\end{defn}

\begin{rem}
One can consider a weaker version of the notion of exotic embedding, obtained by just dropping (iii).
We can easily provide examples of exotic embeddings in this weaker sense.
For example, see \cite{KMT22}. 
\end{rem}

In \cite{KMT22}, exotic embeddings of homology 3-spheres into $\#_2 (S^2\times S^2)$ and $\#_3 (\mathbb{C}P^2\# -\mathbb{C}P^2)$ have been constructed. 
\noindent
In the same paper, the following question was posted: 

\begin{ques}[\cite{KMT22}]
Does there exist a closed 4-manifold smaller than one in \cite[Theorem 1.4/Theorem 1.7]{KMT22} that supports exotic codimension-1 embeddings?
More concretely, do $S^4$, $S^2\times S^2$, or $\C P^2\#(-\C P^2)$ admit exotic codimension-1 embeddings? 
\end{ques}
\noindent

In this paper, we provide a pair of exotic embeddings into $\mathbb{C}P^2\# -\mathbb{C}P^2$: 

\begin{thm}\label{exotic embeddings}
There is a pair of exotic (strong) embeddings of $\Sigma (2,3,7) $ into  $ \C P^2 \# -\C P^2$. Moreover, these are still exotic after taking the connected sum of any connected smooth 4-manifold, attached outside the images of the embeddings. 
\end{thm}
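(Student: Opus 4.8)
The plan is to realize the pair of embeddings as boundaries of a cork-style decomposition and then to apply the families Seiberg--Witten obstruction of \cref{strong_cork} (or its conjugate \cref{strong_cork_conjugation}) together with the slogan depicted in Figure~\ref{intro_slogan}. First I would fix a suitable strong cork $(Y,\tau)$ with $Y$ an integer homology sphere that embeds in $\C P^2\#-\C P^2$ separating it as $X\cup_Y W$, where $W$ is a $\Z_2$-homology ball (the cork) and $X=(\C P^2\#-\C P^2)\setminus \Int W$ carries $b^+(X)=1$; the natural candidate here is a surgery description making $\Sigma(2,3,7)$ appear as a separating hypersurface, using the known embedding $\Sigma(2,3,7)\hookrightarrow \C P^2\#-\C P^2$ coming from the standard Kirby picture for the Brieskorn sphere. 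The two embeddings $i_1,i_2$ are then $i_1=$ the given embedding and $i_2=f\circ i_1$, where $f$ is the diffeomorphism of $\C P^2\#-\C P^2$ obtained by cutting along $Y$ and regluing by the cork involution $\tau$ (extended by the identity on $X$); condition (iii) of \cref{def: exo submfd} holds by construction, and topological ambient isotopy (condition (i)) follows because cork twists act trivially on the topological category (Freedman / Kreck--type uniqueness for the relevant homeomorphism type, after checking that $f$ is topologically isotopic to the identity).

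The heart of the matter is condition (ii): showing no smooth ambient isotopy carries $i_1$ to $i_2$. I would argue this by contradiction in the familiar way: a smooth ambient isotopy $H_t$ with $H_1\circ i_1=i_2$ would, after an isotopy supported near $Y$, produce a self-diffeomorphism of the complement pieces intertwining the two sides, and in particular would force the cork involution $\tau$ to extend as a diffeomorphism over some smooth $\Z_2$-homology ball bounded by $Y$ — contradicting that $(Y,\tau)$ is a strong cork. To verify that $(Y,\tau)$ really is a strong cork I would invoke \cref{strong_cork}: exhibit a spin$^c$ $4$-manifold $(X',\mathfrak s)$ with $b^+(X')=1$, $b_1(X')=0$, bounded by $Y$, over which $\tau$ extends to an $H^+$-reversing, $\spinc$-preserving diffeomorphism $\Phi$ with $(c_1(\mathfrak s)^2-\sigma(X'))/8>0$. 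For $\Sigma(2,3,7)$ the relevant bounding manifold and involution can be taken from the equivariant cobordism constructions of \cite{DHM20} (or directly from an equivariant plumbing/Mazur-type picture), and the positivity of $(c_1(\mathfrak s)^2-\sigma)/8$ is the numerical input one checks on the Kirby diagram.

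For the stabilization statement — the embeddings remain exotic after connect-summing any connected smooth $4$-manifold $N$ away from the images — I would observe that the strong-cork obstruction is insensitive to such stabilization: if $i_1,i_2$ became smoothly isotopic in $(\C P^2\#-\C P^2)\# N$, then $\tau$ would extend over a $\Z_2$-homology ball connect-summed with $N$; but the families Seiberg--Witten quantity used in \cref{strong_cork} is computed on $X'\# N$ (or rather survives the connected sum, since $b^+$, $b_1$, and $(c_1^2-\sigma)/8$ of the relevant spin$^c$ manifold are unchanged by gluing in a homology ball and the obstruction only sees the extension problem for $\tau$), so the contradiction persists. The main obstacle I anticipate is the topological step (i): verifying that the cork-twisted diffeomorphism $f$ of $\C P^2\#-\C P^2$ is topologically isotopic to the identity — hence that $i_1$ and $i_2$ are topologically ambiently isotopic — which requires Freedman-theoretic input about the mapping class group of $\C P^2\#-\C P^2$ and care that the isotopy can be chosen ambient and to fix the complement; the gauge-theoretic non-smoothability step, by contrast, is a direct appeal to \cref{strong_cork}.
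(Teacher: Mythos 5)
There is a genuine gap: your proposed decomposition $\C P^2\#-\C P^2 = X\cup_Y W$ with $Y=\Sigma(2,3,7)$ and $W$ a $\Z_2$-homology ball cannot exist, because $\Sigma(2,3,7)$ does not bound any $\Z_2$-homology ball. Indeed, $\Sigma(2,3,7)=S^3_{-1}(T_{2,3})$ has Rokhlin invariant $\mu=1$: one sees this either from $\mu(S^3_{1/n}(K))\equiv n\cdot\mathrm{Arf}(K)\pmod 2$ with $\mathrm{Arf}(T_{2,3})=1$, or from the paper's own spin filling of $-\Sigma(2,3,7)$ with intersection form $-E_8\oplus H$ (so $\sigma=-8$). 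But a $\Z_2$-homology ball is spin with $\sigma=0$, forcing $\mu=0$. So the strong-cork framework simply does not apply to $\Sigma(2,3,7)$: there is no homology-ball side to the putative splitting, and your contradiction in step (ii) -- that a smooth ambient isotopy would let $\tau$ extend over a $\Z_2$-homology ball -- never gets off the ground. (There is also a secondary issue: for the cork twist to be a self-diffeomorphism $f$ of $\C P^2\#-\C P^2$, you need $\tau$ to extend smoothly over $X$, which is an extra hypothesis you would have to arrange and verify.)

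The paper's actual argument is of a different kind. It writes $\C P^2\#-\C P^2$ as the double $D(X_1)$ of the $(-1)$-trace $X_1$ of the trefoil, so that $\partial X_1 = \Sigma(2,3,7)$ and $b^+(X_1)=0$, $b^-(X_1)=1$ (both halves of the splitting are $\pm X_1$, neither is a homology ball). The two embeddings are the inclusion $\iota$ and $g\circ\iota$, where $g$ is the connected sum of complex conjugations. Topological ambient isotopy is established via Boyer's classification of simply connected topological $4$-manifolds with boundary, Freedman's realization, \cref{lem: topological isotopy fixed disk}, and Perron--Quinn. To obstruct smooth isotopy, they suppose one exists and extract a diffeomorphism $g'$ of $X_1$ acting by $-1$ on $H^2$ with $g'|_{Y}=\id$; they then cap $X_1$ off with a spin $4$-manifold $X_2$ bounded by $-Y$ with intersection form $-E_8\oplus H$, extend $g'$ by the identity, and apply the charge-conjugation families Seiberg--Witten inequality (\cref{thm: charge conj}, not \cref{strong_cork}) to the resulting closed manifold with $b^+=1$. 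The contradiction is $0<(c_1^2-\sigma)/8\le 0$. Your "slogan"-level intuition is on the right track, but the actual non-extension statement is a $\spinc$-conjugating/charge-conjugation one on a $b^+=1$ closed manifold, not a strong-cork non-extension over a homology ball.
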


The argument of the proof of \cref{exotic embeddings} enables us to construct more exotic embeddings. The proof of \cref{exotic embeddings} uses families Seiberg--Witten theory. We also observe \cref{exotic embeddings} can be derived also from only using involutive Heegaard Floer theory. See \cref{inv exotic emb}. 
\begin{rem}
In \cite{KMT22}, it was pointed out that the existence of exotic embeddings of 3-manifolds with trivial mapping class groups into $S^4$ disproves the four-dimensional Smale conjecture. When 3-manifolds have trivial mapping class groups, then exotic embeddings become exotic submanifolds. 
Thus, it is interesting to ask whether one can replace the 3-manifold $\Sigma(2,3,7)$ in \cref{exotic embeddings} with 3-manifolds with trivial mapping class group. 
\end{rem}

\subsection{Homeomorphisms not isotopic to any diffeomorphisms}

Given a smooth 4-manifold $X$, it is natural to ask the connectivity of the inclusion map $i : \Diff(X) \hookrightarrow \Homeo(X)$ from the diffeomorphism group to the homeomorphism group, equipped with the $C^\infty$-topology and the $C^0$-topology respectively.
We shall focus on the problem of whether the induced map
\[
i_\ast : \pi_0(\Diff(X)) \to \pi_0(\Homeo(X))
\]
is surjective.
For $f \in \Homeo(X)$, the topological mapping class $[f]$ lies in the image of $i_\ast$ if and only if $f$ is topologically isotopic to some diffeomorphism.
Given $[f] \in \pi_0(\Homeo(X))$, we say that $[f]$ is {\it realized by  a diffeomorphism} when $[f]$ lies in the image of $i_\ast$.

The map $i_\ast$ is often surjective by Wall's theorem~\cite{Wa64},
but there exist 4-manifolds with non-surjective $i_\ast$.
The smallest 4-manifolds for which $i_\ast$ are known to non-surjective are homotopy $\C P^2\#10(-\C P^2)$.
Friedman--Morgan~\cite{FM88} proved that $i_\ast$ is not surjective for $\C P^2\#n(-\C P^2)$ if $n\geq10$, while $i_\ast$ is surjective for $n<10$ \cite{Wa64}.
More generally, if $X$ is a homotopy $\C P^2\#n(-\C P^2)$ for $n\geq10$, it follows from a result by Baraglia~\cite{Ba19} that $[f] \in \pi_0(\Homeo(X))$ is realized by a diffeormophism only if $f$ preserves orientation of $H^+(X)$.
It is natural to ask whether the converse is true.
We answer this question in the negative:

\begin{thm}
\label{thm: non-realizable homeo}
Let $X$ be a smooth 4-manifold that is homotopy equivalent to $\C P^2\#n(-\C P^2)$ for some $n \geq 10$.
Then there is a homeomorphism $f$ of $X$ such that:
\begin{itemize}
\item[(i)] $f$ preserves orientation of $H^+(X)$, and
\item[(ii)] $[f] \in \pi_0(\Homeo(X))$ is not realized by a diffeomorphism, i.e., $f$ is not topologically isotopic to any diffeomorohism.
\end{itemize}
\end{thm}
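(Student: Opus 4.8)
The plan is to separate the statement into (i) an arithmetic construction of a suitable isometry of the intersection form, which is realized by a homeomorphism, and (ii) a non-existence statement for diffeomorphisms supplied by the families Seiberg--Witten constraints of the paper. Since $X$ is homotopy equivalent to $\CP^2\#n(-\CP^2)$, it is a closed, simply connected, smooth $4$-manifold with $b^+(X)=1$, $b_1(X)=0$ and odd indefinite intersection form $Q_X\cong\langle 1\rangle\oplus n\langle -1\rangle$; in particular $\spinc$-structures on $X$ are determined by their first Chern classes. The first step is an arithmetic lemma: for every $n\geq 10$ there is an isometry $\psi$ of $(H^2(X;\Z),Q_X)$ and a $\spinc$-structure $\mathfrak{s}$ on $X$ such that $\psi$ preserves the orientation of $H^+(X)$ (i.e.\ $\langle\psi\omega,\omega\rangle>0$ for a positive vector $\omega$), $\psi$ conjugates $\mathfrak{s}$, meaning $\psi^{*}c_1(\mathfrak{s})=-c_1(\mathfrak{s})$, and $\frac{c_1(\mathfrak{s})^2-\sigma(X)}{8}>0$. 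A model that already works for $n=10$ is $c_1(\mathfrak{s})=3h+e_1+\dots+e_{10}$, which has $\frac{c_1(\mathfrak{s})^2-\sigma(X)}{8}=1$, together with $\psi=-\rho$ where $\rho$ is an $H^{+}$-reversing isometry fixing $c_1(\mathfrak{s})$ (e.g.\ with $\rho(h)=-19h+6(e_1+\dots+e_{10})$, extended over the lattice by standard indefinite-lattice arguments); for $n>10$ one stabilizes $\psi$ by the identity on the extra $\langle -1\rangle$-summands. The constraint $\frac{c_1^2-\sigma}{8}>0$ together with $\psi$ being orientation- and $H^{+}$-preserving while conjugating $\mathfrak{s}$ is exactly what forces $n\geq 10$, which explains the hypothesis.

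By Freedman's realization of isometries of closed simply connected topological $4$-manifolds, choose an orientation-preserving homeomorphism $f\colon X\to X$ with $f_{*}=\psi$ on $H_2(X)$. Then (i) holds by construction, and $f$ is not topologically isotopic to the identity because $\psi\neq\id$ (it negates the nonzero class $c_1(\mathfrak{s})$), so the statement is non-vacuous in view of Quinn's identification of $\pi_0(\Homeo(X))$ with $\Aut(H_2(X),Q_X)$. (Alternatively, and more in the spirit of the rest of the paper, one can realize $f$ geometrically from a strong cork $(Y,\tau)$ whose involution extends over a $b^+=1$ filling to a diffeomorphism of the kind in \cref{strong_cork_conjugation}: embed it in $X$ with $\Z_2$-homology-ball complement so that the decomposition $X\cong Z\cup_Y C$ realizes the homeomorphism type, and glue $\tau$'s topological extension over $C$ to the extending diffeomorphism over $Z$; this produces the same isotopy class and makes the link with \cref{strong_cork_conjugation} transparent.) Now suppose toward a contradiction that $[f]\in\pi_0(\Homeo(X))$ is realized by a diffeomorphism, i.e.\ there is a diffeomorphism $g\colon X\to X$ topologically isotopic to $f$. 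Then $g_{*}=\psi$, so $g$ is an orientation-preserving diffeomorphism of the closed $4$-manifold $X$ with $b^{+}(X)=1$, $b_1(X)=0$, which is $H^{+}$-preserving and which conjugates $\mathfrak{s}$, with $\frac{c_1(\mathfrak{s})^2-\sigma(X)}{8}>0$.

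The final, and hardest, step is to rule out such a $g$. This is precisely the closed-manifold incarnation of the families Seiberg--Witten constraint that powers \cref{strong_cork} and \cref{strong_cork_conjugation}; see \cref{subsection A constraint from families Seiberg--Witten theory} and \cref{non extension}. The crucial feature — and the reason the argument applies to \emph{every} smooth structure on $X$, not just the standard one — is that the obstruction is insensitive to the smooth structure: the charge conjugation symmetry of the Seiberg--Witten equations (the $\iota$-map analogue mentioned after \cref{strong_cork_conjugation}), together with the universal $b^{+}=1$ wall-crossing formula, forces a parity contradiction for a diffeomorphism that is simultaneously $H^{+}$-preserving and $\spinc$-conjugating once $\frac{c_1^2-\sigma}{8}>0$ (so that the parametrized moduli space has the expected nonnegative dimension). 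I expect the main obstacle to be the bookkeeping here: arranging the dimension and parity of the (charge-conjugation-equivariant) parametrized moduli space over a generic path of metrics and perturbations so that the families invariant is well defined and forced to be nonzero, while the conjugation and wall-crossing relations force it to vanish. Granting this input, no diffeomorphism is topologically isotopic to $f$, which is (ii), and the theorem follows.
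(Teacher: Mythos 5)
Your overall approach is the same as the paper's: build an isometry $\psi$ of $(H^2(X;\Z),Q_X)$ and a $\spinc$ structure $\fraks$ so that $\psi$ preserves the orientation of $H^+(X)$, $\psi^*c_1(\fraks)=-c_1(\fraks)$, and $\tfrac{c_1(\fraks)^2-\sigma(X)}{8}>0$; realize $\psi$ by a homeomorphism $f$ via Freedman; and then appeal to a families Seiberg--Witten constraint to rule out a diffeomorphism inducing $\psi$. The paper does exactly this, using the cleaner decomposition $Q_X\cong -E_8\oplus H\oplus(n-9)\langle -1\rangle$, with $A=\id_{-E_8\oplus H}\oplus(-\id)$ on the remaining factors and $c$ the characteristic supported on those $(n-9)$ factors, which makes the $H^+$-preservation and $\spinc$-conjugation immediate and gives $\tfrac{c^2-\sigma}{8}=1$.

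There are, however, two genuine gaps in your write-up. First, your stabilization for $n>10$ fails: if you keep $c=3h+e_1+\dots+e_{10}$ and simply extend $\psi$ by the identity on the extra $\langle-1\rangle$-summands, then $c$ is no longer a \emph{characteristic} vector of $\langle1\rangle\oplus n\langle-1\rangle$ (it pairs to $0$ with $e_{11}$, not to $\langle e_{11},e_{11}\rangle\equiv1\pmod2$), so it is not the $c_1$ of any $\spinc$ structure. The fix is to change $c$ as well, e.g.\ take $c=3h+e_1+\dots+e_n$ (still $\tfrac{c^2-\sigma}{8}=1$) together with $\psi=R_c$, the reflection in $c^\perp$, which preserves $H^+(X)$ because $c^2<0$ and clearly sends $c$ to $-c$; note that $R_c$ is what your $\psi=-\rho$ unwinds to anyway, and is far simpler to write down (your explicit $\rho(h)=-19h+6\sum e_i$ also carries a sign inconsistency with $c=3h+\sum e_i$ as written — it matches $c=3h-\sum e_i$). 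Second, and more substantively, you leave the crucial Seiberg--Witten step as a ``granting this input'' placeholder, speculating about wall-crossing and parity of parametrized moduli spaces. That input is not missing: it is precisely \cref{thm: charge conj}, stated and proved in \cref{subsection A constraint from families Seiberg--Witten theory} via a Bauer--Furuta-type finite-dimensional approximation and a Borsuk--Ulam argument (\cref{prop: BU}), and it applies to any closed smooth $4$-manifold with $b_1=0$, $b^+=1$ — so it already covers every smooth structure on $X$, which you worried about. A complete proof simply cites \cref{thm: charge conj} at this point, as the paper does.
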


\begin{rem}
The smallest spin 4-manifolds for which $i_\ast$ are known to be non-surjective are homotopy $K3$.
This is proven by Donaldson~\cite{Do90} for the $K3$ surface, and for a general homotopy $K3$ this follows from a later result by Morgan--Szab{\'o}~\cite{MS97}.
Again the orientation of $H^+$ is crucial in the realization problem.
More precisely, 
given a homeomorphism $f$ of a homotopy $K3$, the topological mapping class $[f]$ is realized by a diffeomorphism only if $f$ preserves orientation of $H^+$.
For the $K3$ surface, it follows from results by Matumoto~\cite{Matu86} and Borcea~\cite{Bor86} that the converse is also true: if $f$ preserves orientation of $H^+(K3)$, then $[f]$ is realized by a diffeomorphism.
\end{rem}

\subsection{Relative genus bounds from diffeomorphisms}\label{intro_genus}
Studying various notions of sliceness for knots in $S^3$ is a fundamental problem from the viewpoint of the 4-dimensional aspect of knot theory. Let $X$ be a closed, oriented, connected, smooth 4-manifold and $K$ be a knot in $S^3$. We focus on the {\it $X$-slice genera for knots}, which we recall here.  
\begin{defn} Let $x$ be an element in $H_2(X;\Z) \cong H_2( X \setminus \operatorname{int} D^4, \partial; \Z)$. 
The {\it smooth (resp. topological) $X$-genus} $g_{X,x}(K)$ ($g^{\text{top}}_{X,x}(K)$) of a knot $K$ in $S^3$ for $X$ is the minimal genus of smoothly (resp. locally flatly) and properly embedded  surfaces in a punctured $X$ which are bounded by $K$ and representing $x$. 
\end{defn}

For a definite 4-manifold $X$, there are a lot of effective lower bounds of $g_X(K)$ derived from Heegaard Floer theory, Khovanov homology theory, and gauge theoretic Floer theory. For example, the Milnor conjecture claiming 
\[
g_{S^4,0}(T_{p,q}) = u(T_{p,q})= \frac{1}{2}(p-1)(p-1)
\]
has been proven in many different theories (for example, see \cite{KM93, OS03, Ra10, KM13, BH22, DISST22}), 
where $T_{p,q}$ is the positive torus knot of type $(p,q)$ for coprime integers $p$ and $q$ and $u(K)$ is the unknotting number of a knot $K$.
For indefinite 4-manifolds, there are fewer ways to give effective lower bounds of $g_{X,x}(K)$. When we assume a $4$-manifold $X$ has a symplectic structure or non-vanishing gauge theoretic invariant (under the additional assumption of $b^+ \geq 2$), several adjunction type inequalities are known \cite{MR06, MML20, IMT21, Ba22, IKT22}. 
 For indefinite 4-manifolds with vanishing gauge theoretic invariant, using 10/8 type inequalities, a few genus bounds are also known \cite{Br98, MML20, Ka22,  KMT21, Mo22}. In this paper, we use families Fr\o yshov inequality to produce lower bounds of smooth genera $g_{X}(K)$ for both definite and indefinite manifolds.

In \cite{dai20222}, using involutive Heegaard Floer theory, it was proven that the $(2,1)$-cable of the figure-eight knot is not smoothly slice. Recently, in \cite{ACMPS23}, a genus bound from Bryan's 10/8 inequality \cite{Br98} for embedded surfaces in $X= \#_2 \CP^2$ with the cohomology class $(2,6)$:  
\begin{align}\label{Bryan}
g_{\#_2 \C P^2 , (2,6)} (U ) \geq 10,
\end{align}
was used to reprove the non-sliceness of the $(2,1)$-cable of figure-eight. Using the families Seiberg--Witten theory, we will now generalize Bryan's inequality. 

In order to describe our result, it is convenient to use the following homomorphism from the knot concordance group $\mathcal{C}$:
\[
{\bf \Sigma } : \mathcal{C} \to \Theta^{\tau} _{\Z_2}, 
\]
where $\Theta^{\tau} _{\Z_2}$ is the \textit{ $\Z_2$-homology bordism group of diffeomorphisms}. This group $\Theta^{\tau} _{\Z_2}$ is defined similarly as in \cite[Definition 2.2]{DHM20}.
The map ${\bf \Sigma}$ is given as 
\[
{\bf \Sigma} ([K]) := [(\Sigma(K), \tau)], 
\]
where $\tau$ is the covering involution on the double branched covering space $\Sigma(K)$ along $K$. Note that the map ${\bf \Sigma}$ defines a homomorphism.

We generalize \eqref{Bryan} to the following: 
\begin{thm}\label{genus application}
Let $K$ be a knot whose concordance class lies in $\operatorname{Ker} {\bf \Sigma}$ with $\sigma(K)=0$, where $\sigma(K)$ denotes the knot signature. 
Then we have
\[
g_{\#_2 \C P^2 , (2,6)} (K) \geq 10. 
\]
\noindent
\end{thm}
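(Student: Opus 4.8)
The plan is to deduce the bound from a $10/8$-type (Bryan) inequality for a \emph{closed} spin $4$-manifold carrying a smooth involution with two-dimensional fixed-point set, a manifold manufactured from $K$ by a double-branched-cover construction. The role of the hypotheses is clean: $[K]\in\operatorname{Ker}{\bf\Sigma}$ is exactly what lets one cap off the boundary of the branched cover, and $\sigma(K)=0$ is exactly what pins down its signature; the arithmetic is then tuned to the class $(2,6)$ as in Bryan's work \cite{Br98} and its use in \cite{ACMPS23}.

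First I would set $g:=g_{\#_2\C P^2,(2,6)}(K)$ and fix a smoothly and properly embedded genus-$g$ surface $F$ in the punctured $\#_2\C P^2$ with $\partial F=K\subset S^3$ and $[F]=(2,6)$. Since $(2,6)=2\cdot(1,3)$ and the mod-$2$ reduction of $(1,3)$ equals $w_2(\#_2\C P^2)$, the class $[F]$ is mod-$2$ Poincar\'e dual to $w_2$, so the double branched cover $\pi\colon N\to \#_2\C P^2\setminus\operatorname{int}D^4$ along $F$ is defined; it is spin, has $\partial N=\Sigma_2(S^3,K)=\Sigma(K)$, and carries the covering involution $\tau_N$ with $\operatorname{Fix}(\tau_N)=\pi^{-1}(F)\cong F$. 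Next, since $[K]\in\operatorname{Ker}{\bf\Sigma}$, the class $[(\Sigma(K),\tau)]$ is trivial in $\Theta^\tau_{\Z_2}$, i.e.\ $(\Sigma(K),\tau)$ bounds an equivariant $\Z_2$-homology ball: there are a smooth $\Z_2$-homology $4$-ball $W$ with $\partial W=\Sigma(K)$ and a smooth involution $\tau_W$ of $W$ restricting to $\tau$. Gluing, $Z:=N\cup_{\Sigma(K)}W$ is a closed smooth $4$-manifold with a smooth involution $\widehat\tau:=\tau_N\cup\tau_W$ whose fixed-point set $\widehat F=F\cup\operatorname{Fix}(\tau_W)$ is a closed surface.

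Then I would compute the topology of $Z$. Because $W$ is a $\Z_2$-homology ball, $H^1(Z;\Z_2)=0$, so $Z$ is spin with a unique (hence $\widehat\tau$-invariant) spin structure, and $\sigma(W)=0$, $\chi(W)=1$. Applying the signature formula for closed double branched covers to $\Sigma_2(\#_2\C P^2,F\cup\Sigma_K)$, where $\Sigma_K\subset D^4$ is a pushed-in Seifert surface of $K$, and peeling off $\Sigma_2(D^4,\Sigma_K)$ (whose signature is $\sigma(K)=0$) by Novikov additivity, one gets
\[
\sigma(Z)=\sigma(N)=2\sigma(\#_2\C P^2)-\frac{1}{2}\,(2,6)\cdot(2,6)=4-20=-16 .
\]
From $\chi(Z)=\chi(N)+1=\bigl(2\chi(\#_2\C P^2\setminus\operatorname{int}D^4)-\chi(F)\bigr)+1=6+2g$ and $b_1(Z)=0$ we obtain $b_2(Z)=4+2g$, hence $b^+(Z)=g-6$ and $b^-(Z)=g+10$; moreover the normal Euler number $\widehat F\cdot\widehat F$ equals $20$ (half of $(2,6)\cdot(2,6)$).

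Finally I would feed $(Z,\widehat\tau)$ into the families Fr\o yshov inequality: $Z$ is spin with $\widehat\tau$-invariant spin structure, $\widehat\tau$ has two-dimensional fixed-point set $\widehat F$ with $\widehat F\cdot\widehat F=20$, and for $g\ge 7$ one has $b^+(Z)\ge 1$, so the inequality applies and yields
\[
b^+(Z)\ \ge\ \frac{-\sigma(Z)}{8}+2\ =\ 4,
\]
the extra $+1$ over Furuta's $10/8$ bound being exactly the $\Z/2$-equivariant improvement of \cite{Br98}. Since $b^+(Z)=g-6$, this forces $g\ge 10$; the remaining cases $g\le 6$ are already excluded by the ordinary $10/8$ inequality (or just by $b^+(Z)\ge 0$), so the proof is complete modulo the families Fr\o yshov inequality. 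The hard part is precisely this last input: proving the equivariant (families) $10/8$ inequality $b^+(Z)\ge -\sigma(Z)/8+2$ for $(Z,\widehat\tau)$, i.e.\ showing that the finite-dimensional approximation of the (families) monopole map carries the extra $\Z/2$-symmetry induced by $\widehat\tau$ on top of the $\operatorname{Pin}(2)$-symmetry, and extracting the representation-theoretic gain, together with the compatibility of the spin lift of $\widehat\tau$ with $\widehat F$ (for which the relevant inputs are that $Z$ is spin and $\widehat F\cdot\widehat F=20$). Everything else is bookkeeping; note in particular that the genus of $\operatorname{Fix}(\tau_W)$ --- which $[K]\in\operatorname{Ker}{\bf\Sigma}$ does not control --- never enters, as it affects neither $\sigma(Z)$, $b^\pm(Z)$, $\widehat F\cdot\widehat F$, nor the hypotheses of the inequality.
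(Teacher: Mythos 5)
Your proposal is conceptually close in spirit -- double branched cover, cap it off using the hypothesis, and deduce the bound from a $10/8$-type improvement -- but there is a genuine gap at exactly the step the paper's argument is designed to avoid, and the paper's actual route is different.

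The gap: you write that because $[K]\in\operatorname{Ker}{\bf\Sigma}$, there is a $\Z_2$-homology ball $W$ with $\partial W=\Sigma(K)$ carrying a smooth \emph{involution} $\tau_W$ restricting to $\tau$. But $\Theta^\tau_{\Z_2}$ (following \cite[Definition 2.2]{DHM20}) is the bordism group of pairs $(Y,\phi)$ with $\phi$ an orientation-preserving \emph{diffeomorphism}; a bordism consists of a $\Z_2$-homology cobordism equipped with a diffeomorphism restricting appropriately on the ends, and this diffeomorphism is \emph{not} required to be of order two. Thus $[K]\in\operatorname{Ker}{\bf\Sigma}$ only gives you a smooth diffeomorphism $f_W$ on the cobordism extending $\tau$, which may have infinite order. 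Consequently $\widehat\tau=\tau_N\cup f_W$ need not be an involution of $Z$, and Bryan's equivariant $10/8$ inequality \cite{Br98} (and Montague's refinement \cite{Mo22}) cannot be invoked: those results crucially require an honest $\Z/2$-action on the spin $4$-manifold so that one has the enlarged symmetry group $\mathrm{Pin}(2)\times_{\Z_2}\Z_4$ acting on the monopole map. This is not merely a technicality: the paper explicitly flags it as the point of the whole argument, remarking immediately after \cref{genus application} that the proof of Bryan's inequality ``uses honest $\Z_2$-equivariant involutions (i.e.\ diffeomorphisms which are order $2$) on the cobordisms,'' whereas ``our argument works for any diffeomorphisms on $\Z_2$-homology cobordisms (without any restrictions on the order).''

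What the paper actually does: it applies the families Fr\o yshov and Manolescu-type inequalities (\cref{theo: main theo}, \cref{theo: main theo2}, from \cite{KT22}) to the mapping torus of the diffeomorphism $\widehat\tau$ on $Z=\Sigma(S)\cup_{\Sigma(K)}W$. Because those inequalities are formulated for families over $S^1$ parametrized by an arbitrary diffeomorphism that reverses orientation of $H^+$, no order-two assumption is needed; the only symmetry used is $\mathrm{Pin}(2)$ (plus the extra $\Z/2$ from the family), not $\mathrm{Pin}(2)\times_{\Z_2}\Z_4$. Concretely, the paper observes that $[K]\in\operatorname{Ker}{\bf\Sigma}$ forces $\alpha(K)=\beta(K)=\gamma(K)=\delta(K)=0$, computes $i(K,[S],X)=3$ for $X=\#_2\CP^2$, $[S]=(2,6)$, $\sigma(K)=0$, and then invokes the third bullet of \cref{general genus bound} (the $b^+(X)=2$ case) to get $g(S)\geq \tfrac14[S]^2 - \tfrac12\sigma(K)=10$. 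That chain --- \cref{general genus bound} $\Leftarrow$ \cref{general genus bounds} $\Leftarrow$ \cref{theo: main theo}, \cref{theo: main theo2} --- is where all the analytic content lives. To repair your argument you would either need to restrict the hypothesis to the strictly smaller class of $K$ for which $(\Sigma(K),\tau)$ genuinely bounds an equivariant $\Z_2$-homology ball with an \emph{involution}, or replace the input inequality by the families version that accepts arbitrary diffeomorphisms, which is what the paper does.
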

\noindent
Note that the proof of Bryan's equivariant 10/8 inequality  \cite{Br98} and of its generalization by Montague \cite{Mo22} uses honest $\Z_{2}$-equivariant involutions (i.e. diffeomorphisms which are order $2$) on the cobordisms and equivariant $K$-theory with $G=\mathrm{Pin}(2) \times_{\Z_2} \Z_4$. In contrast, our argument works for any diffeomorphisms on $\mathbb{Z}_2$-homology cobordisms (without any restrictions on the order) and also uses $\mathrm{Pin}(2)$-equivariant cohomology theory. 
\begin{rem}
    Examples of elements in $\operatorname{Ker} {\bf \Sigma}$ are given as the following: 
   For a strongly negatively amphichiral knot $K$,  
\begin{align}\label{example wt}
   [K \# -K^r] \in \operatorname{Ker} {\bf \Sigma}
   \end{align}
    where $-K$ denotes the concordance inverse of $K$ and $K^r$ is $K$ with the reversed orientation. 
    See for example, \cite{AMMMS20} for a discussion. Note that for a strongly negatively amphichiral knot $K$, $K\# -K^r$ is a 2-torsion in the knot concordance group. This implies $\sigma(K\# -K^r)=0$. Thus all assumptions of \cref{genus application} are satisfied for such knots. As a concrete example, one can take $K=8_{17}$. It is confirmed in \cite{PC99,AMMMS20} that $8_{17} \# -8_{17}^r$ is a non-trivial element in $\operatorname{Ker} {\bf \Sigma}$.
   See \cite{AMMMS20} for more concrete examples. 
\end{rem}
As a corollary of \cref{genus application}, one can conclude $(4_1)_{(2,1)}$ does not lie in $\operatorname{Ker} {\bf \Sigma}$. 
In particular, one can see:
\begin{cor}\label{figure cork}
The double branched cover of $(2,1)$-cable of $4_1$ with the covering involution is a strong cork. 
\end{cor}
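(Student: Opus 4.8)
The plan is to obtain the corollary as a purely formal consequence of the fact, established just above via \cref{genus application}, that $(4_1)_{(2,1)} \notin \operatorname{Ker} {\bf \Sigma}$, by unravelling the definition of the $\Z_2$-homology bordism group of diffeomorphisms $\Theta^{\tau}_{\Z_2}$. Write $Y := \Sigma\bigl((4_1)_{(2,1)}\bigr)$ for the double branched cover and $\tau$ for the covering involution. Since $\det\bigl((4_1)_{(2,1)}\bigr) = |\Delta_{4_1}(1)| = 1$ (the Alexander polynomial of the $(2,1)$-cable is $\Delta_{4_1}(t^2)$), the manifold $Y$ is in fact an integer homology sphere, so $(Y,\tau)$ legitimately represents the class ${\bf \Sigma}([(4_1)_{(2,1)}]) \in \Theta^{\tau}_{\Z_2}$, and by hypothesis this class is nonzero.

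First I would record the standard translation: a class $[(Y,\tau)]$ in $\Theta^{\tau}_{\Z_2}$ is trivial if and only if there is a smooth compact $\Z_2$-homology $4$-ball $W$ with $\partial W = Y$ admitting a diffeomorphism that restricts to $\tau$ on $Y$. Indeed, such a pair $(W,\widetilde\tau)$ is, after deleting an interior ball, exactly an equivariant $\Z_2$-homology cobordism from $(Y,\tau)$ to $(S^3,\id)$, and conversely; this is immediate from the definition of $\Theta^{\tau}_{\Z_2}$, which is modeled on \cite[Definition 2.2]{DHM20}. Consequently, ${\bf \Sigma}([(4_1)_{(2,1)}]) \neq 0$ says precisely that $\tau$ does \emph{not} extend over any $\Z_2$-homology $4$-ball bounded by $Y$.

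It then remains only to observe that $Y$ bounds a contractible $4$-manifold, so that $(Y,\tau)$ meets the running hypotheses in the definition of a strong cork. This is classical: it follows from \cite{dai20222} that $Y$, together with the covering involution $\tau$, is (equivalent to) Gompf's infinite-order cork $\bigl(S^3_{+1}(4_1 \# 4_1),\, \tau_{\mathrm{sf}}\bigr)$, where $\tau_{\mathrm{sf}}$ is the swallow--follow involution; in particular $Y$ bounds a contractible $4$-manifold (alternatively, $4_1$ being amphichiral, $Y$ is a surgery on the slice knot $4_1 \# -4_1$, which bounds a contractible $4$-manifold). Combining this with the previous paragraph shows $(Y,\tau)$ is a strong cork. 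If one prefers not to invoke the contractible filling, the same argument already gives that $(Y,\tau)$ is a strong cork in the weaker sense recorded right after the definition (no contractible bounding manifold required), which suffices for all applications.

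As for difficulty, I do not expect a genuine obstacle here: all the substantive gauge-theoretic work is in \cref{genus application} and in the deduction $(4_1)_{(2,1)} \notin \operatorname{Ker}{\bf \Sigma}$. The corollary is essentially a dictionary entry, and the only non-tautological input is the identification of $\Sigma\bigl((4_1)_{(2,1)}\bigr)$ as the boundary of a contractible manifold, which is moreover needed only for the strict reading of ``strong cork''.
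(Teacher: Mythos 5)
Your proof is correct and takes essentially the same route as the paper's: both derive $(4_1)_{(2,1)} \notin \operatorname{Ker}{\bf\Sigma}$ from \cref{genus application} together with the upper bound $g_{\#_2 \C P^2,(2,6)}((4_1)_{(2,1)}) \leq 9$ of \cite{ACMPS23} (which you invoke implicitly via ``established just above''), and then read off the conclusion from the definition of $\operatorname{Ker}{\bf\Sigma}$. Your added discussion of $Y$ being an integer homology sphere and bounding a contractible manifold is correct supplementary detail, addressing the strict versus sloppy reading of ``strong cork'' that the paper's terse one-line proof glosses over.
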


\begin{proof}
In \cite{ACMPS23}, the authors gave an upper bound $g_{\#_2 \C P^2 , (2,6)} ((4_1)_{(2,1)}) \leq 9$.
So \cref{genus application} implies that $(4_1)_{(2,1)}$ does not lie in $\operatorname{Ker} {\bf \Sigma}$, which implies the assertion from the definition of $\operatorname{Ker} {\bf \Sigma}$. 
\end{proof}

\begin{rem}
   Note that Corollary~\ref{figure cork} was already proven in \cite[Remark 2.2]{dai20222} using involutive Heegaard Floer theory. Following \cite{ACMPS23}, this gives an alternative proof of it using families Seiberg--Witten theory. 
\end{rem}

The bound in Theorem~\ref{genus application} follow from a more general genus bound established in \cref{general genus bounds}, using extendability of diffeomorphisms over $b^{+} \neq 0$ bounds.

\begin{rem}\label{main:torus}
    We also exhibit genus bounds for indefinite 4-manifolds with $b^+ \leq 2$. In Section~\ref{branched section}, 
    using our constraint, we exhibit such bounds for certain torus knots in $S^2 \times S^2$, $\#_2 S^2 \times S^2$. For example, using our constraint, we show:
    \[
    g_{S^2\times S^2, 0 } (T_{3,13}) \geq 9. 
    \]
    This inequality also can be proven by using Manolescu's relative 10/8 inequality \cite{Ma14}, the ``real" 10/8 inequality \cite{KMT21}, and Montague's equivariant 10/8-inequality \cite{Mo22}. All of these methods used equivariant K-theory, but ours uses equivariant cohomology theory. 
\end{rem}

The most general genus bound in our paper is stated in \cref{general genus bound} and is derived from bounds for \cref{general genus bounds} for $\alpha$, $\beta$, $\gamma$  and $\delta$ for the double branched covering spaces of knots. In particular, \cref{general genus bounds} gives us some interesting relations between Manolescu's $\alpha$, $\beta$, $\gamma$ invariants \cite{Ma16} and embedded surfaces in 4-manifolds with boundary. Thus, it might be useful for computations of Manolescu's $\alpha$, $\beta$, $\gamma$ invariants. 

\subsection{Questions and connections: $b^+=1$ bounds and local equivalence class.}
Throughout this article, we use $b^+ \geq 1$ bounds for a $3$-manifold to obtain obstructions to extend a diffeomorphism. Our results indicate that some of these obstructions are morally part of a bigger story that is yet to unfold. We pose them below as questions. 
Seiberg--Witten analogs of such questions will also be discussed in upcoming work by Sasahira and the first author \cite{KS23}. Below we assume that the reader is familiar with the invariants defined from \cite{DHM20}, see Subsection~\ref{HF_intro} for a review.

We begin by observing that if $W$ is a cobordism between two integer homology spheres $Y_1$ and $Y_2$, where $b^+(W) = 1$,  then the Heegaard Floer cobordism map, associated to $F_W$ vanish on the infinity flavor. However, as the readers will see in this article, the existence of certain diffeomorphism on $W$ in some cases hints at a relation between the local equivalence class of the boundaries $Y_i$. Such behaviors are reminiscent of the existence of local maps induced by the cobordism maps $F_W$ and the monotonicity relations \cite[Theorem 1.5]{DHM20}. Since the map $F_W$, in this case, is not local, we can think of the following as positing the existence of a \textit{fake local map}. 

\begin{ques}\label{1_c}
\emph{Suppose that $W$ is a spin $4$-manifold bounded by a $3$-manifold $Y$, where $b^{+}(W)=1$, $b_1(W)=0$, $\sigma(W) < 0$ and $Y$ is an  integer homology sphere. Further, assume that $X$ is equipped with an orientation-preserving diffeomorphism $f$, which restricts to an orientation-preserving diffeomorphism $\phi$ on $Y$. Now if $f$ preserve a spin structure on $W$ and reverse the orientation of $H^{+}(W)$, then does that imply the following about the local equivalence class?
\[
h_{\tau}(Y, \phi) \neq 0, \; h_{\iota \circ \tau}(Y, \iota \circ \phi) = 0.
\]}
\end{ques}
\noindent
In a similar manner, we may also pose a `conjugated' version of the above question, owing to different conditions on the extension of $f$.
\begin{ques}\label{2_c}
\emph{Suppose that $X$ is a spin $4$-manifold bounded by a $3$-manifold $Y$, where $b^{+}(W)=1$, $b_1(W)=0$, $\sigma(W)< 0$ and $Y$ is an integer homology sphere. Further, assume that $X$ is equipped with an orientation-preserving diffeomorphism $f$, which restricts to an orientation-preserving diffeomorphism $\phi$ on $Y$. Now we further assume that $f$ preserve a spin structure on $W$ and the orientation of $H^{+}(W)$, then does that imply the following about the local equivalence class
\[
h_{\tau}(Y, \phi) = 0, \; h_{\iota \circ \tau}(Y, \iota \circ \phi) \neq 0.
\]
}
\end{ques}
\noindent

The questions also point towards a connection between the involutive local equivalence class of Brieskorn homology spheres $\Sigma(p,q,r)$ and the existence of a $b^+=1$ bound. Indeed, for such Brieskorn spheres the action of $\iota$ is known to be the same as the action of complex conjugation diffeomorphism (thinking of $\Sigma(p,q,r)$ as a link of singularity) from the work of \cite{dai2019involutive} and \cite{alfieri2020connected}. Hence in the situation where the action of complex conjugation on $\Sigma(p,q,r)$ extends over a bound satisfying the conditions of Question~\ref{1_c}, we would get that the involutive local class of $\Sigma(p,q,r)$ is non-trivial, i.e.
\[
h_{\iota}(\Sigma(p,q,r)):=[(CF(\Sigma(p,q,r)), \iota)] \neq 0
\]

\begin{rem}
For example, as evidence, we can consider the families $\Sigma(2,3,6n+1)$ and $\Sigma(2,2n+1,4n+3)$, for any odd integer $n$. The discussions in Section~\ref{strong_cork_section} will imply that both of these families (with switched orientation) satisfy the conditions of Question~\ref{1_c}, hence, an affirmative answer to the question will show that all members of these families are locally non-trivial. Now, of course, this is already known from the work of Dai and Manolescu \cite{dai2019involutive}. However, we ask, in Question~\ref{1_c}, whether there is an underlying reasoning behind this phenomenon coming from the existence of extension of a diffeomorphism over a $b^{+}=1$ bound with appropriate $\spinc$ and $H^{+}$ condition.
\end{rem}
While glancing through Sections~\ref{proof_main_exotic} and Section~\ref{strong_cork_section} the readers will realize that arguments presented in those sections are secretly in favor of an affirmative reply to the questions above.

\begin{acknowledgement}
This project began in the program entitled ``Floer homotopy theory" held at  MSRI/SL-Math on Aug-Dec in 2022. Thus this work was supported by the National Science Foundation under Grant No. DMS-1928930. The authors thank the organizers of the program for inviting them. We would like to thank Kristen Hendricks and Tye Lidman for their helpful conversations. Also, we thank Danny Ruberman for explaining his work with Akbulut \cite{AR16}, Irving Dai for many helpful discussions regarding corks, Kouichi Yasui for helpful comments in an earlier draft, David Baraglia for pointing out an omission in an earlier draft, and Ian Zemke for helpful correspondence that positively influenced the proof of Theorem~\ref{exotic embeddings}.   Lastly, portions of this project were completed during a visit by the third author to Rutgers funded by a Sloan fellowship held by Kristen Hendricks; the authors are grateful to the Sloan Foundation for its support. 
In addition, Hokuto Konno was partially supported by JSPS KAKENHI Grant Numbers 17H06461, 19K23412, and 21K13785,
Masaki Taniguchi was partially supported by JSPS KAKENHI Grant Number 20K22319, 22K13921 and RIKEN iTHEMS Program.

\end{acknowledgement}

\section{Main ingredients}

\subsection{A constraint from families Seiberg--Witten theory} 

\label{subsection A constraint from families Seiberg--Witten theory}
We first summarize a few results on diffeomorphisms obtained from \cite{KT22}. Firstly, let us recall  the definition of $H^+(X)$.
Let $X$ be an oriented compact 4-manifold, with or without boundary.
Then the Grassmanian $\mathrm{Gr}^+(X)$ that consists of maximal dimensional positive-definite (with respect to the intersection form) subspaces of $H^2(X;\R)$ is known to be contractible.
(See, such as, \cite[Subsection~3.1]{LL01}.)
A choice of such a subspace is denoted by $H^+(X)$.

Because of the contractibility (more weakly, the 1-connectivity) of $\mathrm{Gr}^+(X)$, we can define the notion that an automorphism $\varphi : H^2(X;\R) \to H^2(X;\R)$ of the intersection form {\it preserves} or {\it reverses} orientation of $H^+(X)$.
Indeed, let us pick a particular choice of $H^+(X)$ and pick an orientation $\mathcal{O}$ of $H^+(X)$.
Then $\varphi(H^+(X))$ also lies in $\mathrm{Gr}^+(X)$.
Choosing an isotopy between $H^+(X)$ and $\varphi(H^+(X))$ in $\mathrm{Gr}^+(X)$, we may compare $\mathcal{O}$ with $\varphi(\mathcal{O})$, and because of that $\pi_1(\mathrm{Gr}^+(X))=0$, the choice of isotopy does not affect this comparison.
We say that $\varphi$ {\it preserves} orientation of $H^+(X)$ if $\mathcal{O}$ coincides with $\varphi(\mathcal{O})$ after the above isotopy deformation between $H^+(X)$ and $\varphi(H^+(X))$, and otherwise say that $\varphi$ {\it reverses} orientation of $H^+(X)$.
As we have seen, these notions are determined only by $\varphi$, independent of choices of $H^+(X)$ in $\mathrm{Gr}^+(X)$ and $\mathcal{O}$.

Given an orientation-preserving diffeomorphism $f : X \to X$, we say that $f$ preserves (resp. reverses) orientation of $H^+(X)$ if the induced map $f^\ast : H^2(X;\R) \to H^2(X;\R)$ preserves (resp. reverses) orientation of $H^+(X)$.

\begin{thm}[{\cite[Theorem~1.1]{KT22}}]
\label{theo: main theo}
Let $Y$ be an oriented rational homology $3$-sphere and $X$ be an oriented compact smooth $4$-manifold bounded by $Y$.
Assume that $b_{1}(X)=0$ and $b^+(X)=1$.
Let $\fraks$ be a spin$^{c}$ structure on $X$ and let $\frakt$ be the spin$^{c}$ structure on $Y$ defined as the restriction of $\fraks$. If there is an orientation-preserving diffeomorphism $f : X \to X$ so that $f^* \fraks = \fraks$, $f$ reverses orientation of $H^+(X)$, and $f|_Y=\id_Y$, then we have 
\begin{align}
\label{eq: main ineq in main thm}
\frac{c_{1}(\fraks)^{2} - \sigma(X)}{8} \leq \delta(Y,\frakt).
\end{align}
\end{thm}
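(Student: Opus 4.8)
The plan is to set up the families Seiberg--Witten moduli space associated to the mapping torus of $f$, and extract a families Fr\o yshov-type inequality from counting solutions against the obstruction coming from $b^+(X)=1$. Since $f$ fixes $Y$ pointwise, one can form the closed-up object as follows: glue $X$ to a cylindrical end $Y \times [0,\infty)$ (or, after picking a negative-definite filling considerations, work directly on the manifold with cylindrical end modeled on the Seiberg--Witten--Floer data of $(Y,\frakt)$). The hypothesis $f^*\fraks = \fraks$ and $f|_Y = \id_Y$ lets us promote $f$ to a diffeomorphism of this cylindrical-end manifold covering the identity on the end, and hence to act on the configuration space of the perturbed Seiberg--Witten equations compatibly with the restriction map to the Floer data of $(Y,\frakt)$.

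\medskip
\noindent\textbf{Step 1: Reduce to a statement about the $S^1$-family of Seiberg--Witten equations.} Because $f$ reverses orientation of $H^+(X)$, the mapping torus construction produces a nontrivial real line bundle (the ``$H^+$-line bundle'') over the $S^1$-family parameter space whose first Stiefel--Whitney class detects this reversal. The expected dimension of the parametrized moduli space for the family over $S^1$ is $d(\fraks) + 1$, where $d(\fraks) = \frac{c_1(\fraks)^2 - \sigma(X)}{4} - \cdots$ is the usual formal dimension (after the $b^+=1$ correction). The orientation reversal of $H^+$ means the wall-crossing/reducible locus cannot be avoided globally over $S^1$: the obstruction bundle for the reducible solution has a nonzero Euler class along the family.

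\medskip
\noindent\textbf{Step 2: Run the Fr\o yshov/Manolescu-type argument relative to $Y$.} Using the boundary data, one compares the count of irreducible solutions in the family with the contribution of the reducible. The invariant $\delta(Y,\frakt)$ enters precisely as the correction term measuring the grading of the reducible in the Seiberg--Witten--Floer homology of $(Y,\frakt)$ (this is its defining property, from Frøyshov's work / the $b^+=1$ Frøyshov inequality as upgraded to families in \cite{KT22}). The inequality $\frac{c_1(\fraks)^2 - \sigma(X)}{8} \le \delta(Y,\frakt)$ then comes from the requirement that the formal dimension of the relevant moduli space be nonnegative (so that the cobordism-type argument produces no contradiction), together with the fact that the orientation-reversal forces one to ``use up'' one extra parameter dimension to cross the wall — this is the mechanism by which the families version strengthens the classical Frøyshov bound by the shift encoded in $\delta$.

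\medskip
\noindent\textbf{Main obstacle.} The hard part will be the transversality and gluing analysis for the $S^1$-parametrized equations near the reducible locus when $b^+(X)=1$: one must show the reducible solution in the family contributes exactly the Euler-class term dictated by the nontriviality of the $H^+$-line bundle, with the correct sign, and that no spurious contributions from the ends (i.e.\ from non-cylindrical trajectories breaking off at $Y$) arise. This is precisely the analytic heart of \cite{KT22}, so in practice I would quote the relevant structural results from that paper and focus the argument on (i) verifying the hypotheses of those results hold here given $f|_Y = \id_Y$ and $f^*\fraks=\fraks$, and (ii) bookkeeping the grading shift to land on the stated inequality with $\delta(Y,\frakt)$ on the right-hand side.
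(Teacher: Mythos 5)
Your reduction is essentially the paper's own proof: form the mapping torus $E\to S^1$ of $f$, observe that reversing orientation of $H^+(X)$ makes $w_1(H^+(E))\neq 0$, lift $f$ to a spin$^c$ automorphism trivial over the boundary family $Y\times S^1$, and invoke \cite[Theorem~1.1]{KT22}. The long sketch of the internal machinery of \cite{KT22} (wall-crossing, reducible contributions, Euler class of the $H^+$-bundle) is extra exposition that you ultimately defer back to \cite{KT22} anyway, and the framing of the inequality as coming from ``formal dimension being nonnegative'' is imprecise — it actually arises from a Borsuk--Ulam/Euler-class divisibility argument — but this does not affect the correctness of the reduction you give.
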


\begin{proof}
Let $E \to S^1$ be the mapping torus of $f$ with fiber $X$.
Then the induced real vector bundle $H^+(E) \to S^1$ with fiber $H^+(X)$ satisfies that $w_1(H^+(E)) \neq 0$ since $f$ reverse orientation of $H^+(X)$.
Also, the structure group of $E$ lifts to the automorphism group of the spin$^c$ 4-manifold $(X,\fraks)$ by picking a lift of $f$ to a spin$^c$ automorphism, which can be taken to be trivial over the trivial family $Y\times S^1 \to S^1$ of the boundary.
Thus the claim of the \lcnamecref{theo: main theo} immediately follows from \cite[Theorem~1.1]{KT22}.
\end{proof}

For spin $4$-manifolds with boundary, we have a refinement of \cref{theo: main theo} using the Manolescu invariants $\alpha, \beta, \gamma$ defined in \cite{Ma16}, instead of $\delta$:

\begin{thm}[{\cite[Theorem~1.2]{KT22}}]
\label{theo: main theo2}
Let $Y$ be an oriented rational homology $3$-sphere and $X$ be an oriented compact smooth $4$-manifold bounded by $Y$.
Assume that $b_{1}(X)=0$.
Let $\fraks$ be a spin structure on $X$ and let $\frakt$ be the spin structure on $Y$ defined as the restriction of $\fraks$.

Then:
\begin{itemize}
\item If $b^+(X)=1$ and there is an orientation-preserving diffeomorphism $f : X \to X$ so that $f^* \fraks = \fraks$, $f$ reverses orientation of $H^+(X)$, and $f|_Y=\id_Y$, then we have
\begin{align}
\label{eq: main ineq in main thm2}
\frac{-\sigma(X)}{8} \leq \gamma(Y,\frakt).
\end{align}
\item If $b^+(X)=2$ and there is an orientation-preserving diffeomorphism $f : X \to X$ so that $f^* \fraks = \fraks$, $f$ reverses orientation of $H^+(X)$, and $f|_Y=\id_Y$, then we have
\begin{align}
\label{eq: main ineq in main thm3}
\frac{-\sigma(X)}{8} \leq \beta(Y,\frakt).
\end{align}
\item If $b^+(X)=3$ and there is an orientation-preserving diffeomorphism $f : X \to X$  so that $f^* \fraks = \fraks$, $f$ reverses orientation of $H^+(X)$, and $f|_Y=\id_Y$, then we have
\begin{align}
\label{eq: main ineq in main thm4}
\frac{-\sigma(X)}{8} \leq \alpha(Y,\frakt).
\end{align}
\end{itemize}
\end{thm}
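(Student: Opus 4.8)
The plan is to run the same reduction as in the proof of \cref{theo: main theo}, keeping track of the extra quaternionic symmetry that an honest spin (rather than merely spin$^c$) structure provides, and then to invoke the $\mathrm{Pin}(2)$-equivariant families Bauer--Furuta inequality. First I would form the mapping torus $E \to S^1$ of $f$ with fiber $X$. Exactly as before, the real vector bundle $H^+(E) \to S^1$ with fiber $H^+(X)$ has $w_1(H^+(E)) \neq 0$ since $f$ reverses orientation of $H^+(X)$, and the condition $f^\ast \fraks = \fraks$ lets us lift $f$ to an automorphism of the spin $4$-manifold $(X,\fraks)$ that is trivial over the product family $Y \times S^1$ because $f|_Y = \id_Y$. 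The only new point compared with \cref{theo: main theo} is that the spinor bundle of a spin structure carries a parallel quaternionic structure $j$ commuting with this lift, so the $S^1$-symmetry of the Seiberg--Witten equations is promoted to a $\mathrm{Pin}(2)$-symmetry compatible with the family. This is precisely the input of \cite[Theorem~1.2]{KT22}, so the task reduces to extracting the three inequalities from it.

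For completeness, here is the mechanism behind \cite[Theorem~1.2]{KT22}. Finite-dimensional approximation over $S^1$ produces a fiberwise stable $\mathrm{Pin}(2)$-equivariant map
\[
\Th(\tilR^{\,b^+(X)}_{\mathrm{tw}} \oplus \underline{V} \to S^1) \;\longrightarrow\; \operatorname{SWF}(Y,\frakt) \wedge \Th(\underline{V'} \to S^1),
\]
where $\underline{V},\underline{V'}$ are trivial finite-dimensional bundles with $\dim_{\H}\underline{V'} - \dim_{\H}\underline{V} = -\sigma(X)/8$ (the index of the family of Dirac operators), and $\tilR^{\,b^+(X)}_{\mathrm{tw}}$ is the rank-$b^+(X)$ real bundle on $S^1$ twisted by $w_1(H^+(E))$. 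The grading bookkeeping records $\sigma(X)$ through the Dirac index and $b^+(X)$ through this twisted summand, exactly as in closed $10/8$-type arguments.

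Finally I would apply $\mathrm{Pin}(2)$-equivariant Borel cohomology with $\F$-coefficients. On the target, $\widetilde{H}^\ast_{\mathrm{Pin}(2)}(\operatorname{SWF}(Y,\frakt);\F)$ is a module over $\F[q,v]/(q^3)$ with $\deg q = 1$, $\deg v = 4$, and $\gamma(Y,\frakt), \beta(Y,\frakt), \alpha(Y,\frakt)$ are read off, up to the standard normalization, from the gradings of the three infinite $v$-towers attached to the powers $q^0, q^1, q^2$. The key point is that, after the Borel construction, smashing the domain with the twisted $\tilR^{\,b^+(X)}$ multiplies the fundamental class by $q^{b^+(X)}$ --- in particular $q^3 = 0$ forces $b^+(X) \leq 3$ --- so the pulled-back class lands on the $q^{b^+(X)}$-tower: the $\gamma$-tower for $b^+(X)=1$, the $\beta$-tower for $b^+(X)=2$, and the $\alpha$-tower for $b^+(X)=3$. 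Comparing the grading shift $-\sigma(X)/8$ carried by the Dirac index on the domain against the vanishing threshold of the relevant tower on the target then yields the three asserted inequalities.

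The hard part is the second paragraph: constructing the parametrized, sign-twisted $\mathrm{Pin}(2)$-equivariant relative Bauer--Furuta invariant rigorously --- finite-dimensional approximation over $S^1$ in the presence of the monodromy and the $w_1 \neq 0$ twist, the gluing with Manolescu's $\operatorname{SWF}(Y,\frakt)$, and the precise bookkeeping matching the two grading shifts with $\sigma(X)$ and with the number of twisted $\tilR$'s --- together with the identification of ``smash with a twisted $\tilR$'' with ``multiplication by $q$'', so that the output lines up with Manolescu's definitions of $\alpha,\beta,\gamma$. All of this is carried out in \cite{KT22}; on the present side one only has to verify that the mapping-torus family built in the first paragraph satisfies the hypotheses of \cite[Theorem~1.2]{KT22}, which is immediate.
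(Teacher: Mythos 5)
Your proposal is correct and follows the same route as the paper: form the mapping torus $E \to S^1$ of $f$, observe that the structure group reduces to the spin automorphism group of $(X,\fraks)$ with the boundary family trivial because $f|_Y = \id_Y$ and that $w_1(H^+(E)) \neq 0$, and then invoke \cite[Theorem~1.2]{KT22}. The paper stops there, treating the cited theorem as a black box, whereas you add an expository sketch of its $\mathrm{Pin}(2)$-equivariant Bauer--Furuta machinery; that extra material is accurate in outline but not part of the argument the paper itself records.
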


\begin{proof}
As in the proof of \cref{theo: main theo}, form the mapping torus $E \to S^1$ of $f$ with fiber $X$.
Then the structure of $E$ reduces to the spin automorphism group of $(X,\fraks)$.
Then the claim of the \lcnamecref{theo: main theo2} immediately follows from \cite[Theorem~1.2]{KT22}.
\end{proof}

For a closed 4-manifold, we can obtain a variant of \cref{theo: main theo} taking into account what is called the charge conjugation symmetry on the set of spin$^c$ structures.
The statement is:

\begin{thm}
\label{thm: charge conj}
Let $X$ be an oriented closed smooth $4$-manifold.
Assume that $b_{1}(X)=0$ and $b^+(X)=1$.
Let $\fraks$ be a spin$^{c}$ structure on $X$. 
If there is an orientation-preserving diffeomorphism $f : X \to X$ such that $f^* \fraks = \bar{\fraks}$ and $f$ preserves orientation of $H^+(X)$, then we have 
\begin{align}
\label{eq: main ineq in main thm conj}
\frac{c_{1}(\fraks)^{2} - \sigma(X)}{8} \leq 0.
\end{align}    
\end{thm}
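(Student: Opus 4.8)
The plan is to run the mapping-torus argument behind \cref{theo: main theo}, with the charge conjugation symmetry on the Seiberg--Witten configuration space taking over the role played there by a reversal of orientation of $H^+$. First I would form the mapping torus $E \to S^1$ of $f$; this is a smooth bundle with fiber $X$ and oriented total space because $f$ is orientation-preserving, and no finite-order hypothesis on $f$ is needed since we work with the family rather than averaging an $f$-invariant metric. Because $f^*\fraks = \bar\fraks$, the diffeomorphism $f$ does not lift to a fiberwise $\spinc$ automorphism of $(X,\fraks)$; instead, a choice of isomorphism of $\spinc$ data $(X,\fraks)\xrightarrow{\ \sim\ }(X,\bar\fraks)$ covering $f$, composed with the charge conjugation isomorphism $(X,\bar\fraks)\xrightarrow{\ \sim\ }(X,\fraks)$, yields a lift covering $f$ that twists the $\spinc$ structure by conjugation. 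Equivalently, pulling $E$ back along the degree-$2$ self-map of $S^1$ produces the mapping torus of $f^2$, which does preserve $\fraks$ (since $(f^2)^*\fraks = f^*\bar\fraks = \fraks$) and hence carries a genuine fiberwise $\spinc$ structure, the deck transformation of the double cover acting on fibers by $f$ and thus by the charge conjugation involution $\iota$ on the $\spinc$ datum. In the language of \cite{KT22} the structure group of $E$ therefore reduces to the conjugation-extended $\spinc$ automorphism group of $(X,\fraks)$; concretely, the constant gauge group $S^1$ together with the class of the conjugation twist generate a $\mathrm{Pin}(2)$-type symmetry acting on the associated finite-dimensional Bauer--Furuta model of the family.

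With this setup in place, I would feed the family into the families Seiberg--Witten package of \cite{KT22}. Here the conjugation symmetry plays exactly the part that the nontriviality of $w_1(H^+(E))$ played in \cref{theo: main theo}: it is the feature that forbids deleting the reducible locus from the parametrized moduli space, and it is compatible with the present hypotheses because $f$ now \emph{preserves} orientation of $H^+(X)$, so that the real line bundle $H^+(E)\to S^1$ is trivial rather than the M\"obius bundle. The family index of the Dirac operator over the base circle contributes the complex number $\ind_{\mathbb{C}} D = \tfrac18\bigl(c_1(\fraks)^2 - \sigma(X)\bigr)$, and the $b^+(X)=1$ wall-crossing, analyzed $\mathrm{Pin}(2)$-equivariantly as in \cite{KT22} (and in the spirit of Manolescu's $10/8$ argument), shows that the families Seiberg--Witten invariant obstructs $\ind_{\mathbb{C}} D > 0$: a positive value would demand a $\mathrm{Pin}(2)$-equivariant map between the relevant representation spheres which cannot exist. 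This yields $\tfrac18\bigl(c_1(\fraks)^2 - \sigma(X)\bigr) \le 0$, that is, \eqref{eq: main ineq in main thm conj}.

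The step I expect to be the main obstacle is the reducible and wall-crossing bookkeeping in the presence of the conjugation symmetry: one must verify that $\iota$ acts on the finite-dimensional approximations of the spinor and $H^+$ factors, and on the unique reducible solution (here one uses $b_1(X)=0$ and $b^+(X)=1$), compatibly with the $\mathrm{Pin}(2)$-structure, and that the resulting obstruction is precisely the nonexistence of a certain $\mathrm{Pin}(2)$-equivariant map with \emph{no} correction term, so that the conclusion is $\le 0$ and not $\le$ some positive quantity. This is the point at which one must invoke the precise statement of the relevant theorem in \cite{KT22} rather than a soft argument; once it is in hand, \eqref{eq: main ineq in main thm conj} follows and the proof is complete.
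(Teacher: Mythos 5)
Your broad plan is aligned with the paper's: form the mapping torus of $f$, use the conjugation symmetry $c:\bar{\fraks}\to\fraks$ to glue the Seiberg--Witten data into a family over $S^1$, take a finite-dimensional Bauer--Furuta approximation, and conclude $\tfrac18(c_1(\fraks)^2-\sigma(X))\le 0$ by an equivariant obstruction. However, the central technical claim of your proposal --- that the family carries a $\mathrm{Pin}(2)$-type equivariance and that the conclusion follows by citing a $\mathrm{Pin}(2)$-equivariant wall-crossing statement from \cite{KT22} --- is incorrect, and this is exactly where the paper has to do genuinely new work.

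Two concrete problems. First, after twisting by $c\circ\tilde{f}^\ast$ the family of monopole maps is \emph{not} $\mathrm{U}(1)$-equivariant (and a fortiori not $\mathrm{Pin}(2)$-equivariant): charge conjugation anti-commutes with multiplication by $i\in\mathrm{U}(1)$ on spinors, so only the subgroup $\Z/2\subset\mathrm{U}(1)$ survives as a fiberwise symmetry of the family. Since $\fraks$ is merely a $\spinc$ structure (not spin), there is no quaternionic $j$ to promote this to an honest $\mathrm{Pin}(2)$-action on the Bauer--Furuta model, and the $j$-like map $c\circ\tilde{f}^\ast$ is not fiber-preserving --- it is already consumed in the gluing over $S^1$. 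Pulling back to the double cover does produce a family with a genuine $\spinc$ structure, but the deck action only furnishes a $\Z/2$-symmetry, not a $\mathrm{Pin}(2)$-equivariant invariant of $X$. Second, because the symmetry group is $\Z/2$ rather than $\mathrm{U}(1)$ or $\mathrm{Pin}(2)$, the theorems of \cite{KT22} cannot be cited directly; the paper instead proves a $\Z/2$-equivariant Borsuk--Ulam theorem (\cref{prop: BU}) and applies it. The decisive point that your sketch gestures at but does not actually establish is the computation of the fiberwise cokernel of the $\Z/2$-invariant part: $\Coker\psi^{\Z/2}\cong H^+(X_f)\otimes\R_f$, where $\R_f\to S^1$ is the nontrivial real line bundle. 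The twist $\R_f$ arises because conjugation acts as $-1$ on $\Omega^+$, and it is this $\R_f$ --- not a $\mathrm{Pin}(2)$ wall-crossing --- that replaces the M\"obius twist of $H^+(E)$ used in \cref{theo: main theo}. Precisely because $f$ preserves orientation of $H^+(X)$, the bundle $H^+(X_f)$ is trivial, so the tensor product with $\R_f$ has nonzero $w_1$; feeding this into \cref{prop: BU} yields $\rank V_1\le\rank V_1'$ and hence the inequality. Without this identification of $\Coker\psi^{\Z/2}$ and without a $\Z/2$-Borsuk--Ulam statement, your argument cannot reach the conclusion.
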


We need a few preliminaries to prove \cref{thm: charge conj}.
First let us recall a standard setup of the Seiberg--Witten equations.
Let $(X,\fraks)$ be a smooth spin$^c$ closed 4-manifold with $b_1(X)=0$.
Fix a Riemannian metric $g$ on $X$.
Let $S^+(\fraks,g)$ and $S^-(\fraks,g)$ denote the positive and negative spinor bundle for $(\fraks,g)$ respectively.
Fix a reference $\mathrm{U}(1)$-connection $A_0$ of the determinant line bundle $L(\fraks,g) \to X$ for $(\fraks,g)$.
Let $\mathcal{A}(\fraks,g)$ denote the space of $\mathrm{U}(1)$-connections of $L(\fraks,g)$.

Set 
\begin{align*}
&\mathcal{C}(X,\fraks,g, A_0) =  (A_0 + \ker{d^\ast_g}) \otimes \Gamma(S^+(\fraks,g)),\\
&\mathcal{D}(X,\fraks,g) = \Omega^+_g(X) \oplus \Gamma(S^-(\fraks,g)).
\end{align*}
Here $\ker{d^\ast_g}$ is the kernel of the adjoint operator $d^\ast_g : \Omega^1(X) \to \Omega^0(X)$ of the exterior derivative with respect to $g$, and 
$\Omega^+_g(X)$ is the space of $g$-self-dual 2-forms.
The Seiberg--Witten equations give rise to a non-linear $\mathrm{U}(1)$-equivariant map
\[
\Psi(X,\fraks,g, A_0) : \mathcal{C}(X,\fraks,g, A_0) \to \mathcal{D}(X,\fraks,g),
\]
which we call the {\it monopole map}.
The domain $\mathcal{C}(X,\fraks,g, A_0)$ is called a {\it global slice} for a framed gauge group.
The Seiberg--Witten moduli space corresponds to the quotient $\Psi(X,\fraks,g, A_0)^{-1}(0)/\mathrm{U}(1)$.

Let us suppose that we have an orientation-preserving diffeomorphism $f : X \to X$ such that $f^\ast \fraks$ is isomorphic to $\bar{\fraks}$.
This condition implies that there is an isomorphism $\tilde{f} : \bar{\fraks} \to \fraks$ of spin$^c$ structures that covers $f : X \to X$.
The isomorphism $\tilde{f}$ gives rise to bijections (denoted by the same symbol $\tilde{f}^\ast$)
\begin{align*}
&{\tilde{f}}^\ast : \mathcal{A}(\fraks,g) \to \mathcal{A}(\bar{\fraks},f^\ast g),\\
&{\tilde{f}}^\ast : \mathcal{C}(X,\fraks,g, A_0) \to \mathcal{C}(X,\bar{\fraks},f^\ast g, f^\ast A_0),\\
&{\tilde{f}}^\ast : \mathcal{D}(X,\fraks,g) \to \mathcal{D}(X,\bar{\fraks},f^\ast g).
\end{align*}

Now let us consider the conjugation symmetry on the spin$^c$ structures, which is a symmetry induced from the complex conjugation of $\mathrm{U}(1)$ in $\mathrm{Spin}^c(4) = (\mathrm{Spin}(4) \times \mathrm{U}(1))/\{\pm1\}$.
Let $\bar{\fraks}$ denote the conjugate spin$^c$ structure of $\fraks$.
The conjugation induces bijective maps (denoted by the same symbol $c$)
\begin{align*}
&c : \mathcal{A}(\bar{\fraks}, f^\ast g) \to \mathcal{A}(\fraks,f^\ast g),\\
&c : \mathcal{C}(X,\bar{\fraks},f^\ast g, f^\ast A_0) \to \mathcal{C}(X,\fraks,f^\ast g, c(f^\ast A_0)),\\
&c : \mathcal{D}(X,\bar{\fraks},f^\ast g) \to \mathcal{D}(X,\fraks,f^\ast g),
\end{align*}
and the monopole maps are compatible with these conjugation maps $c$.

Pick take a path $g_t$ between $g$ and $f^\ast g$ in the space of metrics on $X$, and also a path $A_t$ between $A_0$ and $c(f^\ast A_0)$ in $\bigcup_{t \in [0,1]} \mathcal{A}(\fraks,g_t)$ so that $A_t \in \mathcal{A}(\fraks,g_t)$ for each $t$.
Then we obtain a 1-parameter family of monopole maps
\[
\Psi(X,\fraks,g_t, A_t) : \mathcal{C}(X,\fraks,g_t, A_t) \to \mathcal{D}(X,\fraks,g_t).
\]
Gluing $\mathcal{C}(X,\fraks,g_0, A_0)$ with $\mathcal{C}(X,\fraks,g_1, A_1)$ and $\mathcal{D}(X,\fraks,g_0)$ with $\mathcal{D}(X,\fraks,g_1)$ by $c\circ \tilde{f}^\ast$, we obtain 1-parameter families $\mathcal{C}, \mathcal{D}$ of functional spaces parameterized over $B=S^1 = [0,1]/(0\sim1)$.
Then $\Psi(X,\fraks,g_t, A_t)$ gives rise to a fiberwise map
\[
\xymatrix{
\mathcal{C} \ar[rr]^{\Psi} \ar[dr]& &\mathcal{D}\ar[dl]\\
&B&
}
\]
between these families $\mathcal{C}, \mathcal{D}$ over $B$.

Note that the map $\Psi$ is not $\mathrm{U}(1)$-equivariant anymore.
This is because the complex conjugation does not commute with the multiplication by $i \in \mathrm{U}(1)$ on the spinors.
However, the complex conjugation does commute with $\Z/2 \subset \mathrm{U}(1)$.
Thus $\Psi$ is a $\Z/2$-equivariant map.

Now we consider a finite-dimensional approximation following Furuta~\cite{Fu01} and Bauer--Furuta~\cite{BF04}.
As in the construction of the usual Bauer--Furuta invariant for families (see such as \cite{Szy10,BK19}), we can take a finite-dimensional approximation of $\Psi$, which is a pointed $\Z/2$-equivariant continuous map
\begin{align}
\label{eq: twisted BF inv}
\psi : \mathrm{Th}(V') \to \Th(V),
\end{align}
which we call a {\it twisted finite-dimensional approximation}, satisfying the following properties:
\begin{itemize}
\item[(i)] $V$ and $V'$ are finite-rank vector bundles over $B$ and $\mathrm{Th}(-)$ denotes the Thom space.
\item[(ii)] $V$ and $V'$ are direct sums of two types of vector bundles over $B$, written as
\[
V = V_0 \oplus V_1,\quad
V' = V'_0 \oplus V'_1.
\]
The group $\Z/2$ acts trivially on $V_0$ and $V_0'$, which are obtained as finite-dimensional approximations of families of spaces of differential forms.
On the other hand, $\Z/2$ acts on $V_1$ and $V_1'$ as fiberwise scalar multiplication by $\Z/2=\{1,-1\}$, which are obtained as finite-dimensional approximations of families of spinors.
\item[(iii)] $\rank_{\R}V_1 - \rank_{\R}V'_1 = (c_1(\fraks)^2 - \sigma(X))/4$.
\item[(iv)] The $\Z/2$-invariant part $\psi^{\Z/2} : \mathrm{Th}(V_0) \to \mathrm{Th}(V_0')$ is induced from a fiberwise linear injection $\psi^{\Z/2} : V_0 \to V'_0$.
Note that this condition implies that 
\[
\operatorname{Coker}\psi^{\Z/2} := 
\bigcup_{b \in B}(\operatorname{Coker}\psi^{\Z/2}_b)
\]
forms a vector bundle over $B$, where $\psi_b$ denotes the restriction of $\psi$ over the fibers over $b \in B$.
\item[(v)] $\Coker\psi^{\Z/2}$ is isomorphic to $H^+(X_f) \otimes \R_{f} \to B$.
Here $H^+(X_f) \to B$ is an associated vector bundle with fiber $H^+(X)$ determined by the monodromy action of $f$ on $H^+(X)$, and $\R_{f} \to B$ is the non-trivial real line bundle.
\end{itemize}

Except for the presence of $\R_f$ in the last property (v), the usual families Bauer--Furuta invariant also satisfies the all of the above properties \cite{BK19}, under the restriction of the symmetry from $\mathrm{U}(1)$ to $\Z/2$.
The last property (v) reflects the assumption that $f^\ast \fraks \cong \bar{\fraks}$: the line bundle $\R_f$ appears since the conjugation symmetry $c$ restricted to $\Omega^+_{f^\ast g}(X) \to \Omega^+_{f^\ast g}(X)$ is given by multiplication with $-1$.

\begin{rem}
More generally, suppose that one has a fiber bundle $E \to B$ over a general base space with closed 4-manifold fiber $X$ such that the monodromy action of $E$ preserves the set $\{\fraks, \bar{\fraks}\}$.
Then the fiberwise cokernel of the $\Z/2$-invariant part of a twisted finite-dimensional approximation is given as $H^+(E) \otimes \R_{\rho}$.
Here $H^+(E) \to B$ is a vector bundle with fiber $H^+(X)$ associated with $E$, and $\R_\rho \to B$ is the real line bundle determined by a homomorphism $\rho : \pi_1(B) \to \Z/2$ corresponding to the monodromy action of $E$ on the set $\{\fraks, \bar{\fraks}\}$.
If $\fraks=\bar{\fraks}$, we may choose $\rho$ as we like.
\end{rem}

In general, the following Borsuk--Ulam-type theorem holds for a setup that generalizes the above:

\begin{prop}[{\it cf.} {\cite[Theorem~3.1]{KN20}}]
\label{prop: BU}
Let $B$ be a compact topological space.
Let $V_0, V_0' \to B$ be finite-rank vector bundles acted by $\Z/2$ trivially.
Let $V_1, V_1' \to B$ be finite-rank vector bundles acted by $\Z/2$ as fiberwise scalar multiplication by $\Z/2$.
Set 
\[
V=V_0\oplus V_1,\quad  V'=V_0'\oplus V_1'\quad \text{and}\quad  r=\rank_{\R}V'_0 - \rank_{\R}V_0.
\]
Suppose that there is a pointed $\Z/2$-equivariant continuous map
\[
\psi : \mathrm{Th}(V) \to \mathrm{Th}(V')
\]
such that $\psi^{\Z/2} : \mathrm{Th}(V_0) \to \mathrm{Th}(V_0')$ is induced from a fiberwise linear injection $\psi^{\Z/2} : V_0 \to V'_0$.
Suppose also that the Stiefel--Whitney class
\[
w_{r}(\operatorname{Coker}\psi^{\Z/2}) \in H^r(B;\Z/2)
\]
is non-zero.
Then we have 
\begin{align}
\label{eq: ineq BU}
\rank{V}_1 \leq \rank{V}_1'.
\end{align}
\end{prop}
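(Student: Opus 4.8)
The plan is to deduce the rank inequality by a cohomological obstruction argument, essentially a fiberwise Borsuk--Ulam theorem, following the strategy of \cite[Theorem~3.1]{KN20}. First I would reduce to the situation where $\psi^{\Z/2}$ is not just a fiberwise injection but an isomorphism onto a subbundle: since $\psi^{\Z/2} : V_0 \to V_0'$ is a fiberwise linear injection, its image is a subbundle $\psi^{\Z/2}(V_0) \subseteq V_0'$ with complement $\operatorname{Coker}\psi^{\Z/2}$, and we may split $V_0' \cong \psi^{\Z/2}(V_0) \oplus \operatorname{Coker}\psi^{\Z/2}$; composing with the projection does not change the relevant Stiefel--Whitney class nor the $\Z/2$-fixed-point behavior. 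Write $C = \operatorname{Coker}\psi^{\Z/2}$, a rank-$r$ bundle over $B$ with $w_r(C) \neq 0$.

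Next I would argue by contradiction: suppose $\rank V_1 > \rank V_1'$. The key is to look at the $\Z/2$-fixed point sets and the ``free part'' of $\psi$. The $\Z/2$-fixed point set of $\mathrm{Th}(V)$ is $\mathrm{Th}(V_0)$, and likewise for $V'$; off the fixed points the action is free. The idea is to pass to an associated sphere bundle construction and apply a localization/Euler-class argument. Concretely, I would form the fiberwise one-point compactification and consider the $\Z/2$-equivariant map $\psi$; restricting to the unit sphere bundle $S(V_1)$ inside the fiber over each point (which maps $\Z/2$-freely, i.e. to the ``free part'' $\mathrm{Th}(V') \setminus \mathrm{Th}(V_0')$ after a small perturbation away from the fixed set, using that $\rank V_1 > \rank V_1'$ is exactly what would be needed to guarantee the image avoids the fixed set for dimension reasons is \emph{not} automatic — this is precisely the crux, see below). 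The classifying map of the free $\Z/2$-action gives a map to $S(V_1')\times_{\Z/2} E\Z/2$, and comparing with $C$ via the projection to the $\Z/2$-invariant part yields that $w_r(C)$ must be a polynomial in lower-degree classes forced to vanish, contradicting $w_r(C)\neq 0$.

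More carefully, the cleanest route is the one in \cite{KN20}: consider the Thom space map $\psi$, take the mapping cone / use the Thom isomorphism over $B$ to convert $\psi$ into a stable cohomology statement, and then use that in $H^\ast_{\Z/2}(B_+) = H^\ast(B;\Z/2)[u]$ the Euler class of $V_1$ (the $\Z/2$-equivariant Euler class, which equals $u^{\rank V_1}$ plus lower-order corrections by $w_i(V_1)$) must, by naturality of $\psi$, be divisible in the appropriate sense by the equivariant Euler class of $V_1'$ times $w_r(C)$ coming from the fixed-point part; comparing top powers of $u$ forces $\rank V_1 + r \le \rank V_1' + r$ unless $w_r(C) = 0$. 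I expect the main obstacle to be exactly bookkeeping the interplay between the fixed-point part (where $\operatorname{Coker}\psi^{\Z/2} = C$ contributes its Stiefel--Whitney classes) and the free part (where the $\Z/2$-equivariant cohomology is a module over $H^\ast(B;\Z/2)[u]$): one must ensure that the nonvanishing of $w_r(C)$ genuinely obstructs the existence of $\psi$ when $\rank V_1 > \rank V_1'$, which requires a careful localization argument rather than a naive dimension count, since $B$ is an arbitrary compact space and not a manifold. I would handle this by invoking the equivariant cohomology computation for the free part via the Gysin sequence of $S(V_1') \to \bP(V_1')$ (or directly the Borel construction) and matching it with the Thom class of $C$ on the fixed locus, exactly as in \cite[Theorem~3.1]{KN20}, to which I would ultimately appeal for the technical core.
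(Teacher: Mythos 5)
You have the right circle of ideas---$\Z/2$-equivariant cohomology with $H^\ast_{\Z/2}(B;\Z/2)\cong H^\ast(B;\Z/2)[w]$, the equivariant Thom isomorphism, a comparison of equivariant Euler classes, and $w_r(\Coker\psi^{\Z/2})$ as the obstruction---but the proof as written does not close, and you say so yourself by deferring ``the technical core'' to \cite{KN20}. The detour through sphere bundles and ``perturbation away from the fixed set'' is a dead end for the reason you already note: there is no reason a $\Z/2$-map out of $S(V_1)$ should miss the fixed locus of $\mathrm{Th}(V')$, and $\rank V_1 > \rank V_1'$ gives no control of that kind over an arbitrary compact base. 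That whole paragraph should be discarded.

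The missing structural step in your cohomological sketch is the commutative square obtained by restricting $\psi$ to the $\Z/2$-fixed Thom subspaces: with $i:\mathrm{Th}(V_0)\to\mathrm{Th}(V)$ and $i':\mathrm{Th}(V_0')\to\mathrm{Th}(V')$ the inclusions, one has $i^\ast\circ\psi^\ast=(\psi^{\Z/2})^\ast\circ i'^\ast$ on reduced $\Z/2$-equivariant cohomology. Combining this with the equivariant Thom isomorphism (which furnishes $\alpha$ with $\psi^\ast\tau_{\Z/2}(V')=\alpha\,\tau_{\Z/2}(V)$) and with the restriction formulas $i^\ast\tau_{\Z/2}(V)=e_{\Z/2}(V_1)\tau_{\Z/2}(V_0)$, $i'^\ast\tau_{\Z/2}(V')=e_{\Z/2}(V_1')\tau_{\Z/2}(V_0')$, and $(\psi^{\Z/2})^\ast\tau_{\Z/2}(V_0')=e_{\Z/2}(\Coker\psi^{\Z/2})\tau_{\Z/2}(V_0)$---the last being exactly the cohomological content of the fiberwise injection, so your preliminary splitting of $V_0'$ is unnecessary---yields the divisibility relation
\[
\alpha\, e_{\Z/2}(V_1)\;=\;e_{\Z/2}(V_1')\,e_{\Z/2}\bigl(\Coker\psi^{\Z/2}\bigr)
\]
in $H^\ast(B;\Z/2)[w]$. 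From here everything is elementary: $e_{\Z/2}(V_1)$ is monic of $w$-degree $\rank V_1$, $e_{\Z/2}(V_1')$ is monic of $w$-degree $\rank V_1'$, and $e_{\Z/2}(\Coker\psi^{\Z/2})=w_r(\Coker\psi^{\Z/2})$ has $w$-degree $0$. Since $w_r(\Coker\psi^{\Z/2})\neq0$, the right-hand side is nonzero and has top $w$-degree exactly $\rank V_1'$, while the left-hand side, being nonzero and divisible by the monic $e_{\Z/2}(V_1)$, has top $w$-degree at least $\rank V_1$; hence $\rank V_1\le\rank V_1'$. Your write-up never produces the displayed relation, which is precisely what the proof must supply; and the conclusion ``$\rank V_1+r\le\rank V_1'+r$'' carries an extraneous $r$ on both sides, which suggests a conflation of total cohomological degree with $w$-degree.
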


\begin{proof}
This is essentially given in the proof of \cite[Theorem~3.1]{KN20}, but we give a sketch of the proof for readers' convenience.

In this proof, we work with $\Z/2$-coefficient cohomology.
Let $i : \Th(V_0) \to \Th(V)$ and $i' : \Th(V_0') \to \Th(V')$ denote the maps induced from the inclusions $V_0 \hookrightarrow V$ and $V_0' \hookrightarrow V'$.  
Then we obtain a commutative diagram
\begin{align}
\label{eq: Borsuk Ulam comm diag}
\begin{split}
\xymatrix{
\tilde{H}_{\Z/2}^\ast(\Th(V);\Z/2) \ar[d]^{i^\ast} & \tilde{H}_{\Z/2}^\ast(\Th(V')) \ar[d]^{i'^\ast} \ar[l]_-{\psi^\ast} \\
\tilde{H}_{\Z/2}^\ast(\Th(V_0)) & \tilde{H}_{\Z/2}^\ast(\Th(V_0')). \ar[l]_{(\psi^{\Z/2})^\ast}
}
\end{split}
\end{align}

Let $\tau_{\Z/2}(V') \in \tilde{H}_{\Z/2}^\ast(\Th(V'))$ denote the $\Z/2$-equivariant Thom class of $V'$.
By the $\Z/2$-equivariant Thom isomorphism, there is a cohomology class $\alpha \in H_{\Z/2}^\ast(B)$ such that 
\begin{align}
\label{eq: Borsuk Ulam mapping degree}
\alpha\tau_{\Z/2}(V) = \psi^\ast \tau_{\Z/2}(V').
\end{align}
On the other hand, we have
\begin{align}
\label{eq: Borsuk Ulam restriction}
\begin{split}
&i^\ast \tau_{\Z/2}(V) = e_{\Z/2}(V_1)\tau_{\Z/2}(V_0),\quad 
i'^\ast \tau_{\Z/2}(V') = e_{\Z/2}(V_1')\tau_{\Z/2}(V_0'),\\
&(\psi^{\Z/2})^\ast \tau_{\Z/2}(V'_0) = e_{\Z/2}(\Coker{\psi}^{\Z/2})\tau_{\Z/2}(V_0),
\end{split}
\end{align}
where $e_{\Z/2}$ denotes the $\Z/2$-equivariant Euler class and we used the assumption on $\psi^{\Z/2}$ to derive the last equation.
Combining \eqref{eq: Borsuk Ulam comm diag}, \eqref{eq: Borsuk Ulam mapping degree}, \eqref{eq: Borsuk Ulam restriction} with the $\Z/2$-equivariant Thom isomorphism, we obtain
\begin{align}
\label{eq: Euler divisibility}
\alpha e_{\Z/2}(V_1) = e_{\Z/2}(V_1)e_{\Z/2}(\Coker{\psi}^{\Z/2}).
\end{align}

In general, for a vector bundle $E \to B$ of rank $r$ acted by $\Z/2$ trivially, we have 
\begin{align}
\label{eq: Euler for trivially acted}
e_{\Z/2}(E) = w_{r}(E).
\end{align}
On the other hand, for a vector bundle $F \to B$ of rank $n$ acted by $\Z/2$ as fiberwise scalar multiplication by $\Z/2$, we have 
\begin{align}
\label{eq: Euler for acted by scalar multipl}
e_{\Z/2}(E) = w_0(F)w^{n} + w_1(F)w^{n-1} + \cdots + w_{n}(F),
\end{align}
where $w$ is the generator of $H^1_{\Z/2}(\pt)$.

Thus, if $w_{r}(\operatorname{Coker}\psi^{\Z/2}) \neq 0$, the desired inequality \eqref{eq: ineq BU} follows from \eqref{eq: Euler divisibility}, \eqref{eq: Euler for trivially acted}, \eqref{eq: Euler for acted by scalar multipl}.
\end{proof}

Now we are ready to prove \cref{thm: charge conj}:

\begin{proof}[Proof of \cref{thm: charge conj}]
By the assumption that $f^\ast\fraks \cong \bar{\fraks}$, we may have a twisted finite-dimensional approximation \eqref{eq: twisted BF inv}.
By the assumption that $f$ preserves orientation of $H^+(X)$, the real line bundle $H^+(X_f) \otimes \R_f \to B=S^1$ is non-trivial.
Thus the claim of the \lcnamecref{thm: charge conj} follows from \cref{prop: BU} together with the property (iii) that $\psi$ satisfies.
\end{proof}

\subsection{A constraint from involutive Heegaard Floer theory}\label{HF_intro}
We will now discuss actions of various symmetries on the Heegaard Floer homology and related invariants established in \cite{HM, HMZ, DHM20, DMS, mallick2022knot}.

In \cite{HM} Hendricks and Manolescu studied the $\spinc$-conjugation action $\iota$ on the Heegaard Floer homology  of the pair $(Y,\s_0)$, $HF(Y,\s_0)$, where $\s_0$ is a spin structure on $Y$:
\[
\iota: HF(Y,\s_0) \rightarrow HF(Y,\s_0).
\]
Since then many authors have used this action successfully to study homology cobordism, and knot concordance. Indeed, the formalism of \textit{local equivalence} using the $\iota$-action, by Hendricks, Manolescu and Zemke \cite{HMZ} has propelled involutive Heegaard Floer homology to produce many applications in this realm. Later in \cite{DHM20}, the second author jointly with Dai and Hedden employed a similar formulation for the action of an orientation-preserving diffeomorphism $\tau$ on rational homology spheres, equipped with a spin structure $\s_0$ fixed by $\tau$: 
\[
\tau: HF(Y,\s_0) \rightarrow HF(Y,\s_0).
\]
This lead to an invariant capable of detecting strong corks \cite[Theorem 1]{DHM20}.

In \cite{HM}, Hendricks and Manolescu defined $\underline{d}$, the involutive $d$-(lower) invariant. This is a $2\mathbb{Z}$-valued invariant for $\mathbb{Z}_2$-homology spheres and can be interpreted as a map from the $\mathbb{Z}_2$-homology cobordism group:
\[
\underline{d}: \Theta^3_{\mathbb{Z}_2} \rightarrow 2 \mathbb{Z}.
\]
\noindent
Later in the presence of the action of a symmetry $\tau$ on $(Y, \s_0)$, a straightforward generalization of $\underline{d}$ was given in \cite{DHM20}. We record the definition here for the convenience of the reader:

\begin{defn}\label{defn_d_lower}
Let $(Y,\s_0)$ be the $3$-manifold equipped with a spin structure $\s_0$ and $\tau$ be a diffeomorphism on $Y$, which fixes $\s_0$. The invariant $\underline{d}_{\tau}(Y,\s_0)$ is defined to be 2 more than\footnote{Here we follow the convention that $HF(S^3)= \mathbb{F}[U]_{(-2)}$.} the highest grading of a $U$ non-torsion tower generator $[x] \in HF^{-}(Y, \s_0)$, so that $\tau[x]=[x]$. 
\end{defn}
In this case, as shown in \cite{DHM20}, the invariant $\underline{d}_{\tau}(Y,\s_0)$ can be interpreted as a map from the \textit{equivariant $\mathbb{Z}_2$-homology bordism group of diffeomorphisms} see \cite{DHM20}:
\[
\underline{d}_{\tau}: \Theta^{\tau}_{\mathbb{Z}_2} \rightarrow 2 \mathbb{Z}.
\]
\noindent
We refer to this invariant as the \textit{equivariant-involutive $d$-lower invariant}. There is another invariant of this type $\overline{d}_{\tau}(Y,\s_0,\tau)$ called the \textit{equivariant-involutive $d$-upper invariant}. However, this will not be so relevant to this article. We refer readers to \cite{HM} for a definition.

We now focus on the symmetry of a knot. Firstly, we categorify the types of symmetries on knots that will interest us.

\begin{defn}
A knot $(K, \tau)$ is said to be \textit{strongly invertible} if 
$\tau$ in an orientation-preserving involution on $S^3$, which preserves the knot $K$ setwise and switches the orientation on the knot $K$. On the other hand, a knot $(K, \tau)$ is called \textit{periodic} if there exists an involution $\tau$ of $S^3$, which preserves the knot setwise and the fixed set of $\tau$ does \textit{not} intersect $K$.
\end{defn}
\noindent
Following up on the study of $3$-manifolds with symmetry, the second author studied the action of symmetry of a strongly invertible knot or a periodic knot on the knot Floer homology \cite{mallick2022knot},
\[
\tau_K: CFK(K) \rightarrow CFK(K).
\]
Montesinos showed \cite{montesinos1975surgery} Dehn-surgery on a strongly invertible knot (or a periodic knot) $(K,\tau)$ induces a symmetry $\tau$ on the surgered manifolds. It was shown in \cite{mallick2022knot} that the action of the symmetry on the knot Floer homology is related to the action of the (induced) symmetry on the surgered manifold, via the \textit{equivariant surgery formula}.

In a related work in \cite{DMS}, the second author jointly with Dai and Stoffregen defined various invariants of the equivariant concordance group, using knot Floer homology. These invariants are related to the involutive $d$-invariants mentioned above. One such invariant will be useful for the purpose of this article. Let us denote by $A_0(K)$ the large surgery (sub)-complex of $CFK(K)$. Recall that $A_0(K)$ is generated by the elements of the form
\[
[\bold{x}, i, j] \; \; \mathrm{with} \; i \leq 0 \; \mathrm{and} \; j \leq 0.
\]
We now record the definition of the relevant invariant:
 \begin{defn}
Given a strongly invertible knot $(K,\tau)$. Let $A_0(K)$ denote the the large surgery complex of $CFK(K)$. The invariant $\underline{V}^{\tau}_{0}(K,\tau)$ is defined as:
\[
\underline{V}^{\tau}_{0}(K,\tau):=-\frac{1}{2} \underline{d}_{\tau}(A_0(K),\tau).
\]
\end{defn}
\noindent
It was shown in \cite{DMS} that $\underline{V}^{\tau}_{0}(K,\tau)$ is an invariant of the equivariant knot concordance group $\tilde{\mathcal{C}}$. There is also a similarly defined invariant $\overline{V}^{\tau}_{0}(K,\tau)$ which will not be relevant to us for the purpose of this article. These invariants can be thought of as the involutive analog of the $V_0$ invariant defined by Rasmussen \cite{Rasmussen}. This invariant $V_0$ also satisfies a similar relation as its involutive counterpart:
\begin{align}\label{V=d}
V_0(K)=-\frac{1}{2}d(A_0(K))=-\frac{1}{2}d(S^3_{+1}(K)).
\end{align}
Moreover, the following relation follows from \cite{HM}
\[
\underline{V}^{\tau}_{0}(K,\tau) \geq V_0(K).
\] 
We will now compute these invariants for some knots. 
\begin{prop}\label{trefoil_sum}
Let $T_{2,3} \# T_{2,3}$ be equipped with the strong involution $\tau \# \tau$, where $\tau$ is the unique strong involution on $T_{2,3}$ as in Figure~\ref{trefoil_connect_intro}. Then 
\[
\underline{V}^{\tau}_0(T_{2,3} \# T_{2,3}) > V_0(T_{2,3} \# T_{2,3}) > 0.
\] 
\end{prop}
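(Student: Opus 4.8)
The plan is to compute $V_0$ and $\underline{V}^\tau_0$ of the knot $K = T_{2,3}\#T_{2,3}$ separately and verify the strict inequalities. For the ordinary invariant, the first step is to recall that $T_{2,3}$ is an $L$-space knot with $g(T_{2,3})=1$, so $CFK^\infty(T_{2,3})$ is the standard "staircase" complex (a box with one distinguished generator of Alexander grading $1$). Either by a direct computation of the connected sum staircase, or by using the additivity-type estimates together with the formula $V_0(T_{2,3})=1$ (equivalently $d(S^3_{+1}(T_{2,3}))=-2$), one gets $V_0(T_{2,3}\#T_{2,3}) = 1 > 0$; indeed $g_4(T_{2,3}\#T_{2,3})=2$ and for this connected sum the large surgery complex $A_0$ is readily analyzed to have $d(A_0) = -2$. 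So the right-hand strict inequality $V_0(K)>0$ is immediate.

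The heart of the proposition is the strict inequality $\underline{V}^\tau_0(K) > V_0(K)$, i.e. $\underline{d}_\tau(A_0(K),\tau) < d(A_0(K))$. Here I would invoke the machinery of \cite{DMS}: $\tau$ on $T_{2,3}$ is the unique strong inversion, and its action $\tau_K$ on $CFK(T_{2,3})$ was computed there (or in \cite{mallick2022knot}); for the connected sum the induced action on $CFK(T_{2,3}\#T_{2,3})$ is $\tau_K\otimes\tau_K$ up to the appropriate homotopy/filtered-chain-homotopy correction coming from the connected-sum formula. The key point will be to identify the $U$-nontorsion tower in $H_*(A_0(K))$ — which sits in top grading $-2V_0(K) = -2$ — and show that the generator of that tower at grading $-2$ is \emph{not} fixed (even up to the relevant equivalence) by the induced involution, while some generator two gradings lower \emph{is} fixed. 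Concretely, in the staircase picture of $A_0(T_{2,3}\#T_{2,3})$ the degree-$(-2)$ tower generator is represented by a sum of two "symmetric corner" generators that $\tau_K\otimes\tau_K$ swaps with a sign (or sends to a homologous-but-not-equal cycle), forcing one to descend; this is exactly the phenomenon exploited in \cite{DMS} to show $\underline{V}^\tau_0(T_{2,3})> V_0(T_{2,3})$ fails or holds in specific cases, and the connected sum amplifies it.

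I expect the main obstacle to be the bookkeeping of the $\tau_K$-action under connected sum: the naive guess $\tau_{K_1\#K_2}\simeq \tau_{K_1}\otimes\tau_{K_2}$ is only correct up to filtered chain homotopy, and since $\underline{V}^\tau_0$ is sensitive to the action only up to the correct notion of equivalence, one must be careful that the homotopy does not "rescue" a fixed tower generator in top degree. The clean way around this is to work with the local-equivalence class of $(A_0(K),\tau)$ in the relevant equivariant local equivalence group, reduce to a standard model complex, and compute $\underline{d}_\tau$ there; the additivity of these local classes under connected sum (established in \cite{DMS}) then gives the result directly once the single-trefoil computation $\underline{d}_\tau(A_0(T_{2,3}),\tau)$ is in hand. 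A secondary, more routine obstacle is simply pinning down the precise grading conventions so that the two strict inequalities come out with the claimed signs; this is mechanical once the model complex is fixed.
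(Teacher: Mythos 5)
Your proposal follows essentially the same route as the paper: compute $V_0(T_{2,3}\#T_{2,3})=1$ directly from the $A_0$-complex, identify the $\tau\#\tau$-action with $\tau\otimes\tau$ on $CFK(T_{2,3})\otimes CFK(T_{2,3})$ via the DMS equivariant connected-sum formula, and observe that no generator of the $U$-nontorsion tower in top degree is $\tau\otimes\tau$-fixed (the paper's $[i]$ and $[j]$ being swapped rather than fixed). One caution on your proposed detour through local-equivalence classes: additivity of equivariant local classes lets you identify the local class of the tensor product, but $\underline{V}^\tau_0$ (i.e.\ $\underline{d}_\tau$) is itself not additive, so knowing $\underline{d}_\tau(A_0(T_{2,3}),\tau)$ does not determine $\underline{d}_\tau$ of the connected sum --- you still must examine the product complex directly, and in fact the strict gap $\underline{V}^\tau_0 > V_0$ is a connected-sum phenomenon that a reduction to the single-trefoil computation alone would miss. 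Your worry about the filtered-homotopy correction to $\tau_{K_1\#K_2}\simeq\tau_{K_1}\otimes\tau_{K_2}$ is reasonable but already handled by DMS's connected-sum formula, which furnishes a chain-level model on which $\tau\otimes\tau$ is the literal action; that is exactly the form the paper uses.
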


\begin{proof}
In Figure~\ref{trefoil_connected_complex}, we have described the knot Floer chain complex for $T_{2,3}$ and $T_{2,3} \# T_{2,3}$. Now, recall that using the integer surgery formula \cite{ozsvath2008knot} we can compute the $V_0$ invariant from the $A_0$-complex. We immediately obtain that 
\begin{figure}[h!]
\center
\includegraphics[scale=0.75]{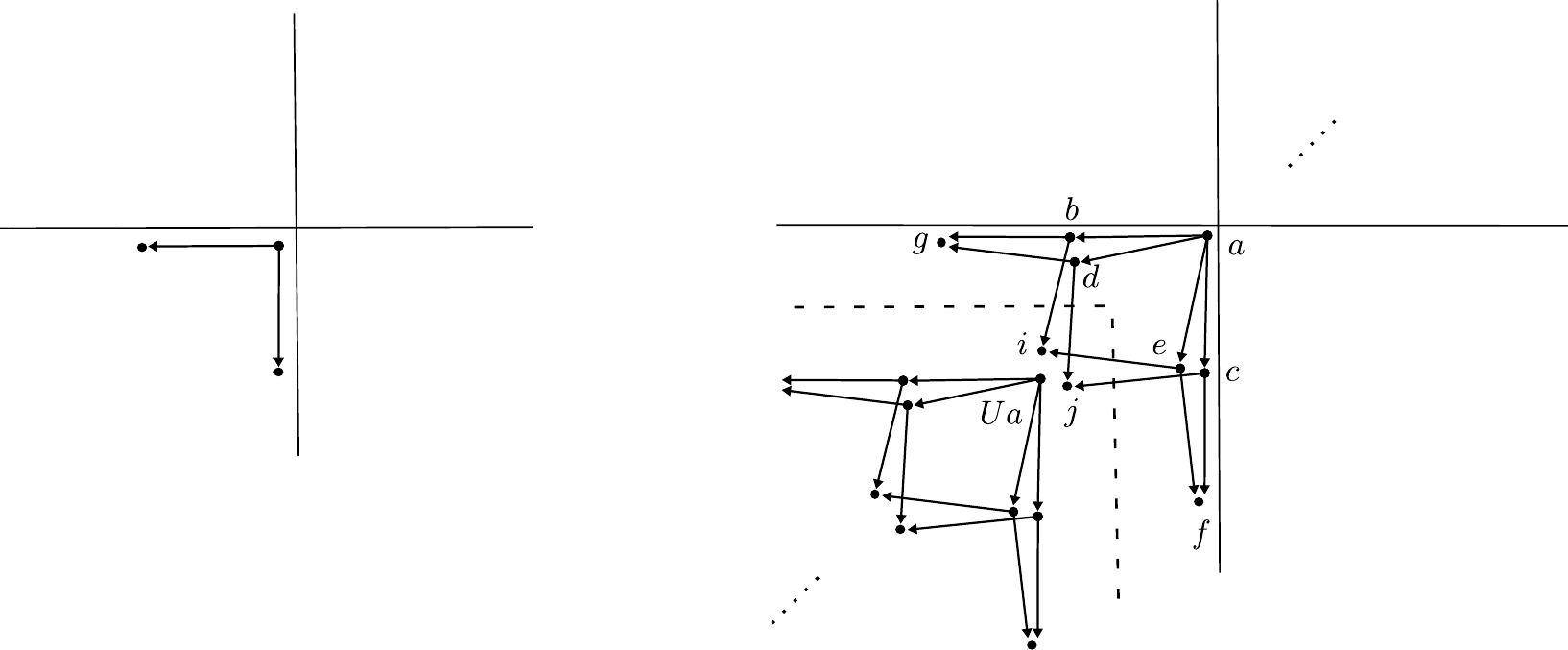}
\caption{Left: Knot Floer complex of $T_{2,3}$, Right:  Knot Floer complex of $T_{2,3} \# T_{2,3}$. The $A_0$-complex is represented by the complex lying south-west of the dotted line.}\label{trefoil_connected_complex}
\end{figure}
\[
V_0(T_{2,3} \# T_{2,3}) = 1.
\]
Now, in order to calculate the $\underline{V}^{\tau}_0(T_{2,3} \# T_{2,3})$ invariant, we first observe that by the equivariant connected sum formula proved in \cite{DMS}, the action of $\tau \# \tau$ on $CFK(T_{2,3} \# T_{2,3})=CFK(T_{2,3}) \otimes CFK(T_{2,3})$ can be identified with $\tau \otimes \tau$. It is then clear that since neither of $[i]$ or $[j]$ are fixed by $\tau \otimes \tau$, we have:
\[
\underline{V}^{\tau}_0(T_{2,3} \# T_{2,3}) > V_0(T_{2,3} \# T_{2,3}).
\]
Readers can in fact check that $\underline{V}^{\tau}_0(T_{2,3} \# T_{2,3})$ is $2$. This completes the proof.
\end{proof}

\begin{prop}\label{odd integer vno}
Let $\tau$ be the unique strong-involution on $T_{2,2n+1}$ torus knot and $n$ be any positive odd integer,
\[
\underline{V}^{\tau}_0(T_{2,2n+1} \# T_{2,2n+1}) > V_0(T_{2,2n+1} \# T_{2,2n+1}) > 0.
\] 
\end{prop}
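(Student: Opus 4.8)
The plan is to mimic the strategy used in \cref{trefoil_sum}: compute the two sides of the inequality via the large surgery complex $A_0$ and the behavior of the strong involution under connected sum. First I would record the knot Floer complex of the torus knot $T_{2,2n+1}$ together with its unique strong involution $\tau$. This complex is the standard staircase of length $2n+1$, and the action of $\tau$ on it is understood from \cite{mallick2022knot, DMS}: on the staircase generators it acts (up to chain homotopy) by the ``obvious'' symmetry that reverses the staircase, and in particular no single generator carrying the relevant Maslov grading is fixed. For the connected sum $K_n = T_{2,2n+1}\# T_{2,2n+1}$ the equivariant connected sum formula of \cite{DMS} identifies $(CFK(K_n), \tau\#\tau)$ with $(CFK(T_{2,2n+1})\otimes CFK(T_{2,2n+1}), \tau\otimes\tau)$, exactly as in the trefoil case.

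Next I would compute $V_0(K_n)$. Since $T_{2,2n+1}$ is an L-space knot with $V_0(T_{2,2n+1}) = \lceil n/2 \rceil$ (equivalently $V_0 = (n+1)/2$ for $n$ odd), and $V_0$ is subadditive but here can be computed directly from the $A_0$-complex of the connected sum staircase, one gets $V_0(K_n) = 2V_0(T_{2,2n+1})$ or can be read off from the tensor product staircase; in any case $V_0(K_n) > 0$ for $n \geq 1$, giving the second strict inequality. For the first inequality $\underline{V}^\tau_0(K_n) > V_0(K_n)$, the point is that $V_0(K_n) = -\frac12 d(A_0(K_n))$ is realized by a $U$-nontorsion tower generator $[x]$ of $HF^-$, but $\tau\otimes\tau$ does \emph{not} fix the class realizing the maximal grading: as in the trefoil argument, the top tower class is represented (up to homotopy) by an element that $\tau\otimes\tau$ sends to a distinct class at the same grading, so the $\tau\otimes\tau$-invariant tower generator of highest grading sits strictly lower, forcing $\underline{d}_\tau(A_0(K_n)) < d(A_0(K_n))$ and hence $\underline{V}^\tau_0(K_n) > V_0(K_n)$.

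The main obstacle — and the only place real work is needed — is the computation of the $\tau\otimes\tau$-action on $HF^-$ (equivalently on the homology of $A_0$) of the tensor product staircase for general odd $n$, and the verification that the highest-graded $\tau\otimes\tau$-fixed tower generator is strictly below the one realizing $V_0$. For the trefoil ($n=1$) this is the explicit picture in Figure~\ref{trefoil_connected_complex}; for general $n$ one must argue structurally, e.g. by tracking the involution on the staircase of $T_{2,2n+1}$ (which permutes the ``vertical'' and ``horizontal'' arrows of the staircase in a known way, cf.\ \cite{DMS}) and then analyzing the induced involution on the tensor square, using the parity of $n$ to ensure that the relevant central generator is not fixed. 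I would isolate this as a lemma about the $\tau$-local equivalence class of the staircase of $T_{2,2n+1}$, perhaps invoking or adapting the computations in \cite{DMS} for $(2,2n+1)$-torus knots, and then deduce the proposition by the connected sum formula exactly as in \cref{trefoil_sum}.
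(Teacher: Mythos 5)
Your proposal follows essentially the same route as the paper: compute $H_*(A_0(T_{2,2n+1}\#T_{2,2n+1}))$, use the equivariant connected sum formula of \cite{DMS} to identify the involution with $\tau\otimes\tau$, and show that no $U$-nontorsion tower class in the top grading is $\tau\otimes\tau$-invariant. The paper carries out exactly this plan; the content you flag as ``the only place real work is needed'' is what Figure~\ref{tau_action_trefoil} records: after a change of basis, $H_*(A_0)$ is a single $U$-nontorsion tower together with $n$ $U$-torsion classes, and $\tau\otimes\tau$ pairs up all but one of the $U$-torsion classes (this uses that $n$ is odd) while sending the tower generator to itself plus the sum of the $U$-torsion classes. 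That explicit description is precisely the structural lemma you propose isolating, so your sketch is in the right direction but stops short of the computation that actually makes the argument go through; as written, there is still a gap at the step you identified.

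One concrete error worth flagging: $V_0$ is not additive under connected sum, so the assertion $V_0(K_n) = 2V_0(T_{2,2n+1})$ (which would give $n+1$ for $n$ odd) is false. The paper computes $V_0(T_{2,2n+1}\#T_{2,2n+1}) = n$ directly from the Maslov gradings in the tensor-square staircase; already for $n=1$ one has $V_0(T_{2,3}\#T_{2,3})=1$, not $2$. Since you only need positivity of $V_0(K_n)$ for the second inequality this does not sink the argument, but the stated formula should be removed or replaced by the correct value.
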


\begin{proof}
The proof is similar to the Proposition~\ref{trefoil_sum}. After a change of basis, it is can be checked that the $H_*(A_0(T_{2,2n+1} \# T_{2,2n+1}))$ and the $\tau$ action on it is as described in Figure~\ref{tau_action_trefoil}.
\begin{figure}[h!]
\center
\includegraphics[scale=0.86]{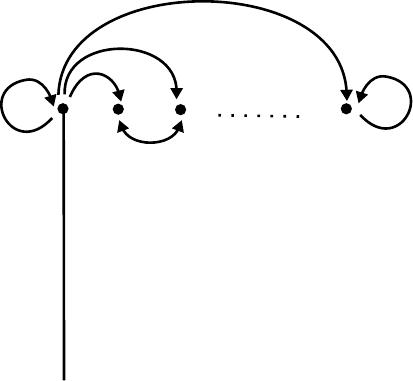}
\caption{The action of $\tau$ on $H_{*}(A_0(T_{2,2n+1} \# T_{2,2n+1})$ up to a change of basis. There are $n$-many $U$-torsion generators and a single $U$-non-torsion tower. The $\tau$ action pairs all but one $U$-torsion generator, while the tower generator maps to itself and all the $U$-torsion generators.}\label{tau_action_trefoil}
\end{figure}
It follows that
\[
V_0^{\tau}(T_{2,2n+1} \# T_{2,2n+1}) > V_0(T_{2,2n+1} \# T_{2,2n+1}).
\]
Moreover, a Maslov-grading computation yields that 
\[
V_0(T_{2,2n+1} \# T_{2,2n+1})= n.
\]
 \end{proof}
\noindent
Finally, we record the behavior of the $\underline{d}_{\tau}$ under the equivariant cobordism. Firstly, we recast the Definition~\ref{spinc_cobordism} with cobordisms in mind, as in \cite{DHM20}.
\begin{defn}
Let $(Y_i,\phi_i)$ be a pair of integer homology spheres $Y_i$ each equipped with an orientation-preserving diffeomorphism $\phi_i$. Let $(W,\s,f)$ be a tuple such that $W$ is a cobordism from $Y_1$ to $Y_2$ and $\s$ is a $\spinc$-structure on it such that $f$ restricts to $\phi_i$ on $Y_i$. We call $(W,\s,f)$ an equivariant $\spinc$-\textit{preserving} (resp. $\spinc$-\textit{reversing}) cobordism if $f (\s)= \s$ (resp. $f (\s) = \bar{\s}$).
\end{defn}

We now record the behavior of the invariants discussed earlier under specific equivariant cobordisms. Firstly we define the following for any cobordism $W$ equipped with a $\spinc$-structure $\s$:
\[
\Delta(W,\s)= \dfrac{c_1(\s)^2 - 2 \chi(W) - 3\sigma(W)}{4}.
\]
\begin{thm}[{\cite[Theorem 1.5]{DHM20}}]\label{equivariant_Cobordism}
Let $(W,\s,f)$ be a negative-definite, equivariant cobordism from $(Y_1,\phi_1)$ to $(Y_2,\phi_2)$ with $b_1(W)=0$ and $\Delta(W,\s)=0$. If $(W,\s,f)$ is $\spinc$-preserving then
\[
\underline{d}_{\tau}(Y_1,\phi_1) \leq \underline{d}_{\tau}(Y_2,\phi_2).
\]
\end{thm}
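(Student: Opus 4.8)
The plan is to obtain the inequality by transporting a $\phi_1$-invariant $U$-non-torsion class in $HF^-(Y_1)$ across the cobordism $(W,\fraks)$ and observing that it lands on a $\phi_2$-invariant $U$-non-torsion class of the same grading in $HF^-(Y_2)$. The three hypotheses --- $\Delta(W,\fraks)=0$; $W$ negative-definite with $b_1(W)=0$; $f$ a self-diffeomorphism of $(W,\fraks)$ restricting to $\phi_i$ on $Y_i$ --- are exactly what make the cobordism map grading-preserving, non-torsion-preserving, and equivariant, respectively.

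First I would record the properties of the cobordism map. Since $b_1(W)=0$, the pair $(W,\fraks)$ induces a $U$-equivariant homomorphism
\[
F^-_{W,\fraks}\colon HF^-(Y_1,\fraks|_{Y_1})\longrightarrow HF^-(Y_2,\fraks|_{Y_2})
\]
shifting the absolute $\Q$-grading by $\frac{c_1(\fraks)^2-2\chi(W)-3\sigma(W)}{4}=\Delta(W,\fraks)$; the hypothesis $\Delta(W,\fraks)=0$ says exactly that $F^-_{W,\fraks}$ is grading-preserving. Because $W$ is negative-definite (so $b^+(W)=0$) and $b_1(W)=0$, the induced map on $HF^\infty$ is an isomorphism (Ozsv\'ath--Szab\'o); as the $U$-non-torsion classes in $HF^-$ are precisely those with nonzero image in $HF^\infty$, it follows that $F^-_{W,\fraks}$ carries $U$-non-torsion classes to $U$-non-torsion classes.

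Next I would invoke naturality of Heegaard Floer homology under diffeomorphisms (Juh\'asz--Thurston--Zemke). Since $f\colon W\to W$ satisfies $f^*\fraks=\fraks$ and restricts to $\phi_i$ on $Y_i$, working over $\F$ yields a commuting square
\[
F^-_{W,\fraks}\circ(\phi_1)_*=(\phi_2)_*\circ F^-_{W,\fraks}.
\]
Now pick a $U$-non-torsion class $x\in HF^-(Y_1,\fraks|_{Y_1})$ with $(\phi_1)_*x=x$ and $\gr(x)=\underline{d}_\tau(Y_1,\phi_1)-2$, realizing the invariant, and set $y:=F^-_{W,\fraks}(x)$. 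Then $\gr(y)=\gr(x)$, the class $y$ is $U$-non-torsion, and
\[
(\phi_2)_*y=(\phi_2)_*F^-_{W,\fraks}(x)=F^-_{W,\fraks}((\phi_1)_*x)=F^-_{W,\fraks}(x)=y.
\]
Hence $HF^-(Y_2,\fraks|_{Y_2})$ carries a $\phi_2$-invariant $U$-non-torsion class in grading $\underline{d}_\tau(Y_1,\phi_1)-2$; since the maximal such grading is $\underline{d}_\tau(Y_2,\phi_2)-2$ by Definition~\ref{defn_d_lower}, we conclude $\underline{d}_\tau(Y_2,\phi_2)\geq\underline{d}_\tau(Y_1,\phi_1)$.

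The step I expect to be the main obstacle is the equivariance square: making the Heegaard Floer cobordism map genuinely compatible with the diffeomorphism $f$ requires the full naturality package for Heegaard Floer homology together with $f^*\fraks=\fraks$ (the $\F$-coefficient hypothesis is used to kill sign ambiguities in the cobordism maps). One should also check that the class $y$ counts as a ``tower generator'' in the sense of Definition~\ref{defn_d_lower}, not merely as a non-torsion element; this is automatic, since $\phi_2$ acts as the identity on $HF^-(Y_2)$ modulo torsion (being a grading-preserving $\F[U]$-module automorphism of $\F[U]$), so the top-graded $\phi_2$-invariant non-torsion class generates the maximal $\phi_2$-invariant sub-tower. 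A secondary point is simply citing the negative-definite $HF^\infty$-isomorphism statement and the grading-shift formula for $F^-_{W,\fraks}$ in the required generality (integer homology sphere ends, $b_1(W)=0$).
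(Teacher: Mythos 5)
The paper states this result as an import from \cite{DHM20} and gives no proof of its own, so the comparison must be with the source. Your argument is correct, and it captures the essential content of the DHM20 proof, but in a somewhat more elementary register. DHM20 prove the inequality by showing that $(W,\fraks,f)$ induces a \emph{local map of $\tau$-complexes}: the cobordism map is grading-preserving (since $\Delta(W,\fraks)=0$), induces an isomorphism on $HF^\infty$ (since $W$ is negative-definite with $b_1=0$), and homotopy-commutes with the $\tau$-actions (their Lemma~2.12, resting on Zemke's naturality plus $f^*\fraks=\fraks$); monotonicity of $\underline{d}_\tau$ and of the sharper local-equivalence invariant $h_\tau$ then follows formally, because $\underline{d}_\tau$ is defined there via the mapping cone $\operatorname{Cone}\bigl(Q(1+\tau)\bigr)$. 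You instead work with the paper's homology-level restatement of $\underline{d}_\tau$ in Definition~\ref{defn_d_lower} and transport a $\tau_*$-fixed non-torsion class across $F^-_{W,\fraks}$. For $\underline{d}_\tau$ the two viewpoints coincide --- a non-torsion, non-$\operatorname{im}(Q)$ cycle $(a,Qb)$ of the mapping cone corresponds exactly to a non-torsion $[a]\in HF^-$ with $\tau_*[a]=[a]$ --- though it would have been worth making that identification explicit, and your version does not directly recover the local-equivalence statement that DHM20 actually prove. The two load-bearing facts you flag (the equivariance square and the negative-definite $HF^\infty$-isomorphism) are precisely the ones used; the small point you worry about at the end, that the top-graded $\phi_2$-invariant non-torsion class is a tower generator, is handled correctly, since any grading-preserving $\F[U]$-automorphism of $HF^-/\text{torsion}\cong\F[U]$ is the identity and hence every non-torsion class is $\phi_2$-invariant modulo torsion.
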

\noindent
Lastly, since we do not explicitly use the local equivalence class formalism to phrase most of our results, we choose to omit an explicit description for it. However, in some discussions,  familiarity with the formalism will be helpful. We request readers to refer to \cite{HMZ} and \cite{DHM20} for an introduction to the local invariants $h_{\iota}$, $h_{\tau}$ and $h_{\iota \circ \tau}$. These are enhanced versions of the involutive $d$-lower invariants discussed earlier. 

\subsection{A constraint from instanton theory} 

As a constraint from instanton Floer theory, we use the following result: 
\begin{thm}[{\cite[Theorem 1.8]{NST19}}] \label{NST}
For any knot $K$ in $S^3$, if $h(S_{+1}^3(K))<0$, then $\{S^3_{1/n}(K)\}_{n \in \Z_{>0}}$ is a linearly independent set in the homology cobordism group. 
\end{thm}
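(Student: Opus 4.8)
Following \cite{NST19}, the plan is to prove the statement using the family of real-valued $\Z$-homology cobordism invariants $\{r_s\}_{s\in(0,\infty]}$ coming from filtered instanton Floer homology --- invariants that refine Fr\o yshov's $h$ --- and to run an extremal argument over the filtration parameter $s$. Throughout, write $Y_n:=S^3_{1/n}(K)$, so that $Y_1=S^3_{+1}(K)$.

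The properties of $r_s$ I would use are the following. Each $r_s:\Theta^3_\Z\to(0,\infty]$ is a $\Z$-homology cobordism invariant with $r_s(S^3)=\infty$; the family is monotone in $s$, and $\lim_{s\to\infty}r_s(Y)<\infty$ holds precisely when $h(Y)<0$, which is the single place where the hypothesis $h(S^3_{+1}(K))<0$ enters. There is a connected-sum inequality of the shape $r_{s_1+s_2}(Y'\#Y'')\ge\min\{r_{s_1}(Y'),r_{s_2}(Y'')\}$. Finally, for a negative-definite cobordism $W:Y'\to Y''$ with $b_1(W)=0$ there is an inequality $r_s(Y')\le r_{\lambda(W)\,s}(Y'')$ with a positive scaling factor $\lambda(W)$ controlled by the minimal Chern--Simons value attached to $W$.

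Next I would feed in the standard negative-definite, $b_2=1$ cobordism of \cref{thm_1_fig}, which goes from $S^3_{1/n}(K)$ to $S^3_{1/(n-1)}(K)$ by a single $(-1)$-framed $2$-handle. Stacking these expresses $Y_n$ from $Y_1$ by $n$ handle attachments; tracking Chern--Simons values --- which for irreducible flat $SU(2)$-connections on $1/n$-surgeries shrink like $1/n$ --- shows that the profiles $s\mapsto r_s(Y_n)$ and $s\mapsto r_s(-Y_n)$ are essentially the profiles for $n=1$ reparametrized by a factor comparable to $1/n$. In particular, $h(Y_1)<0$ makes $r_s(Y_n)$ finite for $s$ in an $n$-dependent range, while $r_s(-Y_n)$ stays $=\infty$ for $s$ below a (different) $n$-dependent threshold coming from the negative-definite fillings; the $1/n$-scaling can be used to make all of this critical data pairwise distinct across $n$ and across the two orientations. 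Granting this, suppose $\sum_{i=1}^{k}a_i[Y_{n_i}]=0$ in $\Theta^3_\Z$ with $n_1<\dots<n_k$ and all $a_i\ne0$. Then the connected sum $Z$ made of $|a_i|$ copies of $\varepsilon_iY_{n_i}$, $\varepsilon_i=\sign(a_i)$, is $\Z$-homology cobordant to $S^3$, so $r_s(Z)=\infty$ for every $s$; but at a value of $s$ tuned to the critical data of the single ``extremal'' summand, the connected-sum inequality forces the finite value of $r_s$ on that summand to dominate --- no other summand, including every $-Y_{n_i}$, contributes a finite value there --- so $r_s(Z)<\infty$, a contradiction. Hence all $a_i=0$.

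The hard part will be the quantitative content behind these inequalities: proving the connected-sum and negative-definite cobordism estimates for $r_s$, comparing the filtered invariant with Fr\o yshov's $h$, and --- most delicately --- pinning down the behavior under orientation reversal and the precise $n$-dependence of the minimal Chern--Simons values of irreducible flat $SU(2)$-connections on $S^3_{1/n}(K)$, so that the ``windows'' in which the various $r_s(\pm Y_n)$ are finite genuinely separate. This analysis is the technical core of \cite{NST19}.
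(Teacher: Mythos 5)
The paper does not prove this statement; it is quoted verbatim from \cite[Theorem~1.8]{NST19} and used as a black box (and the authors say so explicitly via the bracketed citation in the theorem environment and again in the proof of \cref{main:exotic}, where they simply invoke \cref{NST}). The only internal hint about how the proof goes is \cref{distinguish more precisely}, which says that the argument in \cite{NST19} runs through the filtered-instanton invariant $r_0(Y)\in(0,\infty]$. So there is no proof in this paper for your sketch to be measured against; what one can do is assess whether your reconstruction is a faithful account of \cite{NST19}.

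At the level of tools and overall shape, you have it right: the family $r_s$ of filtered-instanton homology cobordism invariants with $r_s(S^3)=\infty$, the connected-sum inequality, a cobordism inequality adapted to negative-definite $b_2=1$ cobordisms, and the fact that the hypothesis $h(S^3_{+1}(K))<0$ enters only through forcing finiteness of $r_s$ at the first surgery. The strategy of exhibiting a relation $\sum a_i[Y_{n_i}]=0$ as a connected sum with $r_s=\infty$, then contradicting this by isolating an extremal summand where $r_s$ is finite and provably dominant, is indeed the spirit of \cite{NST19}. A few specifics are imprecise and would need to be fixed to turn this into an actual proof: the parameter range you use ($s\in(0,\infty]$, letting $s\to\infty$) does not match the convention in \cite{NST19}, where the filtration parameter lives in $[-\infty,0]$; the negative-definite cobordism estimate is stated there as an additive/threshold inequality tied to Chern--Simons values of flat connections extending over the cobordism, not a multiplicative rescaling by a factor $\lambda(W)$; and the asymmetry between $Y_n$ and $-Y_n$ comes from one side bounding a definite manifold (so $r_s=\infty$ identically) rather than a ``reparametrized profile.'' The heuristic that minimal Chern--Simons values on $S^3_{1/n}(K)$ decrease roughly like $1/n$ is correct and is indeed what makes the thresholds separate across $n$. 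You flag the technical core yourself, which is fair --- but since the paper treats this result purely as a citation, a one-line reference to \cite[Theorem~1.8]{NST19} would be the matching level of detail.
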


Note that $h(-)$ denotes the instanton Fr\o yshov invariant introduced in \cite{Fr02}, which can be seen as an instanton analog Heegaard Floer $d$-invariant or equivalently Seiberg--Witten Fr\o yshov invariant. However, compared with Heegaard Floer $d$-invariant, the techniques of computing instanton Fr\o yshov invariant have not been developed yet. Recently, the following ``tau-like invariants" of knots:
\begin{itemize}
    \item[(i)] instanton tau-invariant $\tau^\#$ defined using framed instanton homology \cite{BS21}, 
    \item[(ii)] instanton tau-invariant $\tau_I$ defined using sutured instanton homology for knots \cite{GLW19}, 
    \item[(iii)]
    s-tilde invariant $\wt{s}$ \cite{DISST22}
\end{itemize}
are introduced and these enable us to calculate the instanton Fr\o yshov invariant partially. These are knot concordance invariants. 

It is proven that $\tau^\#=\tau_I$ in \cite{GLW19}. Also, these invariants are known to be {\it slice torus invariants}, i.e. functions $f$ from the knot concordance group $\mathcal{C}$ to $\R$ satisfying the following conditions: 

\begin{itemize}
    \item $f$ is a homomorphism, 
    \item $f(K) \leq g_4(K)$, and 
    \item $f(T_{p,q}) = g_4(T_{p,q})$.
\end{itemize}

For any slice torus invariant $f$, it is proven in \cite{Liv04} that $f(K) = -\frac{1}{2} \sigma(K) $ for any alternating knot and $f(K) = g_4(K) $ for any quasi-positive knot. 

\begin{rem}
It is conjectured that 
\[
\tau(K)= \wt{s}(K) = \tau^\# (K)
\]
in \cite{DISST22}. It is true for any quasi-positive knot and alternating knot. 
\end{rem}

The following relations are proven recently: 

\begin{thm} [\cite{DISST22, BS22}] \label{relation s with h} 
Let $K $ be a knot in $S^3$. 
Suppose one of the following two conditions are satisfied: 
\begin{itemize}
    \item[(i)] $\tau^\#(K)>0$ or equivalently $\tau^I(K)>0$, 
    \item[(ii)] $\wt{s}(K)>0$.
\end{itemize}
Then we have $h(S^3_{+1}(K)) <0$.
\end{thm}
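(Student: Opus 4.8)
The plan is to read off the sign of $h(S^3_{+1}(K))$ from the structure of the framed instanton homology of $+1$-surgery, using the positivity hypothesis to pin down that structure, and to treat cases (i) and (ii) in parallel. Both $\tau^\#(K)$ and $\wt s(K)$ are extracted from a filtration on (a possibly Lee-type deformation of) the framed instanton chain complex of $S^3$ attached to $K$, so ``$\tau^\#(K)>0$'' and ``$\wt s(K)>0$'' each say that a distinguished cycle for $K$ sits at strictly positive filtration level. On the other side, for an integral homology sphere $Y$ the inequality $h(Y)<0$ is (up to normalization) exactly the statement that the bottom of the Fr\o yshov filtration on the equivariant instanton homology of $Y$ is shifted downward relative to $S^3$ --- equivalently it is detected by Daemi's $\Gamma$-invariants. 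So it suffices to show that a distinguished cycle of positive filtration level for $K$ forces such a downward shift when $Y=S^3_{+1}(K)$.

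First I would assemble the surgery input: a surgery exact triangle, or a large-$N$ mapping-cone formula, relating $I^\#(S^3_{+1}(K))$, $I^\#(S^3)$ and the filtered complex of $K$, together with the cobordism map $I^\#(S^3)\to I^\#(S^3_{+1}(K))$ induced by the $(+1)$-framed $2$-handle cobordism, all compatible with the grading that governs $h$. The next step is the propagation lemma: if the distinguished cycle for $K$ has positive filtration level, then through the maps of this triangle/cone its image lands strictly above the ``$S^3$-level'' in $I^\#(S^3_{+1}(K))$, i.e.\ the minimal grading supporting the image of the equivariant theory (the analogue of the bottom of a $U$-tower) is strictly shifted. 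Comparing with the description of $h$ recalled above then yields $h(S^3_{+1}(K))<0$.

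For case (ii) there is an extra preliminary step: one must check that the Lee-type deformation underlying $\wt s$ has the expected qualitative behaviour over $S^3$ --- correct total rank, a canonical generator, and a filtration whose extremal level is $\wt s$ --- so that the mapping-cone argument runs verbatim with the deformed complex; this is where the analogy with Rasmussen's $s$-invariant (Lee homology and its canonical classes) is actually used. I expect the main obstacle to be the bridge between the two instanton formalisms: $\tau^\#$ and $\wt s$ live in framed instanton homology, whereas $h$ is defined via $SO(3)$-instantons with an admissible bundle, so one needs a precise, \emph{grading-aware} comparison of these theories along surgeries on $K$ --- essentially a Fr\o yshov-graded surgery formula together with compatibility of cobordism maps. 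Making it quantitative enough to extract the sign of $h$, rather than mere nonvanishing of some group, is the technical heart; once that is in place the positivity hypotheses feed in formally.
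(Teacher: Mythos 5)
The statement you are asked to prove appears in the paper only as a citation to \cite{DISST22} and \cite{BS22}; the paper gives no proof of its own. So there is no in-paper argument to compare against, and your proposal must be judged as an attempt to reconstruct the proofs from those external sources.

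Read that way, what you have written is a plausible \emph{blueprint}, not a proof, and you yourself flag the two steps that carry all of the mathematical weight as unresolved. The ``propagation lemma'' --- that a distinguished cycle for $K$ sitting at strictly positive filtration level forces the bottom of the relevant tower for $S^3_{+1}(K)$ to shift strictly downward --- \emph{is} the theorem; stating it as something one would need to prove, without proving it, does not advance the argument. More seriously, the bridge between the two instanton formalisms is not a bookkeeping matter: $\tau^\#$ and $\wt{s}$ are built in framed instanton homology $I^\#$ (with auxiliary base points / admissible bundles on closures), whereas $h$ is Fr\o yshov's invariant extracted from the equivariant instanton Floer theory of the homology sphere itself with the trivial $SO(3)$-bundle. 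Passing between them along a surgery on $K$, in a way that controls the $\Z$-grading tightly enough to read off the sign of $h$, is exactly where \cite{DISST22} and \cite{BS22} do their work. Writing ``once that is in place the positivity hypotheses feed in formally'' concedes the gap rather than closing it. As a further calibration point, I do not believe the cited proofs proceed by a surgery exact triangle or mapping-cone filtration analysis as you set it up: the $\wt{s}$ case in \cite{DISST22} goes through the equivariant theory with holonomy parameters and Daemi's $\Gamma$-invariants (using that finiteness of $\Gamma$ forces $h<0$), and the $\tau^\#$ case runs through the companion invariant $\nu^\#$ and (non)vanishing of the $2$-handle cobordism map on $I^\#$. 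Until you state and prove (or cite precisely) the grading-aware comparison result you invoke, the argument is incomplete.
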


In particular, as examples, one can compute the followings: 

\begin{ex}\label{instanton ex}
Since $K_n = T_{2,2n+1} \# T_{2,2n+1}$ is a non-slice strongly quasi-positive knot for every $n$, we have 
\[
\tau^\# ( K_n) = \wt{s} (K_n) = g_4 (K_n) = g_3(K_n) >0,  
\]
where $g_4$ and $g_3$ denote the four and three genus of knots respectively. 
Therefore, we know $h(S_{+1}^3 (K)) <0 $ from  \cref{relation s with h} .
\end{ex}

\begin{rem}\label{distinguish more precisely}
The proof of the linear independence in \cref{NST} uses the homology cobordism invariant $r_0(Y)\in (0, \infty]$ of homology 3-spheres. 
If we assume 
\[
r_0(S_{1/n}^3(K)) \neq r_0(S_{1/m}^3(K')), 
\]
we can also distinguish $M_n(K)$ and $M_m(K')$ for different knots $K$, $K'$ and positive integers $n$ and $m$, where $M_n(K)$ and $M_m(K')$ are exotic 4-manifolds obtained in \cref{main:exotic}. 
For example, since $S^3_{1/2}(T_{2,2n+1}\# T_{2,2n+1} ) = \Sigma (\operatorname{Wh} (T_{2,2n+1}))  $, it is discussed \cite{NST19} that 
\[
r_0 (\Sigma (\operatorname{Wh} (T_{2,2n+1}))) > r_0 (\Sigma (\operatorname{Wh} (T_{2,2(n+1)+1}))),  
\]
where $\operatorname{Wh}$ denotes the satellite operation of the positively cusped Whitehead double. 
Thus, for $K_n = T_{2,2n+1}\# T_{2,2n+1}$,  one can actually distinguish $\partial M_2(K_n)$ and $\partial M_2(K_m)$ in \cref{main:exotic} if $m \neq n$.
\end{rem}

\section{Small exotic manifolds}\label{proof_main_exotic}

This section will be devoted to giving a proof of \cref{main:exotic} providing many examples of small exotic manifolds.

\subsection{Proof of \cref{main:exotic}}

Before going into the proof, since we use two different theories: Heegaad Floer theory and Seiberg--Witten Floer theory, we recall the relation between Fr\o yshov type invariants: 

\begin{rem}
As pointed out in \cite[Remark1.1]{LRS18}, 
the equality
\begin{align}\label{d=delta}
\frac{1}{2} d ( Y, \F  ) =  \delta (Y)
\end{align}
holds for every oriented homology 3-sphere $Y$.
This is a consequence of
the isomorphism between the Heegaard Floer homology and the monopole Floer homology \cite{KLTI}, \cite{KLTII}, \cite{KLTIII}, \cite{KLTIV}, \cite{KLTV} by Kutluhan, Lee and Taubes, or alternatively, \cite{CGHI} \cite{CGHII} \cite{CGHIII} by Colin, Ghiggini, and Honda and \cite{Ta10} by Taubes, also combined with \cite{LM18} by Lidman and Manolescu. 
(For the comparison of $\Q$-gradings, see \cite{RG18}, \cite{CD13} and \cite{HR17}.)
\end{rem}

In order to prove \cref{main:exotic}, we use Akbulut--Ruberman's technique \cite{AR16} stated below: 

\begin{thm}[\cite{AR16}, Theorem 
 1.1]\label{AR exotic}
Let $W$ be a compact smooth $4$-manifold with boundary. Suppose that there is a homeomorphism $F : W \to W$ such that $F|_{\partial W } $ does not extend to a diffeomorphism on $W$. 
Then, there are two smooth compact codimension-$0$ submanifolds $M$ and $M'$ in $W $ satisfying the following conditions: 

\begin{enumerate}[label=(\alph*)]
 \item $M$ and $M'$ are exotic, i.e. $M$ and $M'$ are homeomorphic but not diffeomorphic, 
    \item $ \partial M$ and $\partial M'$ are diffeomorphic to each other, and they are smoothly homology cobordant to $\partial W$, and
    \item $M$ and $M'$ are homotopy equivalent to $W$.
\end{enumerate}
\end{thm}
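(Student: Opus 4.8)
Since no gauge theory enters here, the plan is a pure cut-and-paste argument whose goal is to convert the \emph{relative} exotic datum --- a boundary diffeomorphism $f:=F|_{\partial W}$ that extends over $W$ topologically (via $F$) but, by hypothesis, not smoothly --- into an \emph{absolute} one. The logical skeleton is the contrapositive: I will build codimension-$0$ submanifolds $M,M'\subset W$ so that a diffeomorphism $M\cong M'$ would ``cap off'' to a self-diffeomorphism of $W$ restricting to $f$ on $\partial W$. Since the hypothesis forbids such an extension, $M$ and $M'$ cannot be diffeomorphic, whereas the homeomorphism between $M$ and $M'$ and conclusions $(b)$, $(c)$ will be transparent from the construction.

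First I would fix an auxiliary ``plug'': a compact smooth \emph{invertible homology cobordism} $Z$ from $\partial W$ to some $3$-manifold $Y'$, meaning $\partial Z=(-\partial W)\sqcup Y'$, the cobordism is a homology cobordism ($H_*(Z,\partial W)=H_*(Z,Y')=0$), there is a homology cobordism $V$ from $Y'$ to $\partial W$ with $Z\cup_{Y'}V\cong \partial W\times[0,1]$, and $\partial W\hookrightarrow Z$, $Y'\hookrightarrow Z$ induce isomorphisms on $\pi_1$ so that capping by $Z$ does not change homotopy type. Realize a product collar of $\partial W$ in $W$ as $Z\cup_{Y'}V$ and set $W_0:=\overline{W\setminus(Z\cup_{Y'}V)}$, a copy of $W$ pushed inward. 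Define the two submanifolds of $W$ by regluing only the $Z$-part,
\[
M:=W_0\cup_{\partial W}Z,\qquad M':=W_0\cup_{\partial W,\,f^{-1}}Z,
\]
the second gluing being twisted by $f^{-1}\colon\partial W\to\partial W$. Because $Z$ is a $\pi_1$-isomorphism homology cobordism, the inclusions $W_0\hookrightarrow M$ and $W_0\hookrightarrow M'$ are homotopy equivalences, so both $M$ and $M'$ are homotopy equivalent to $W_0\cong W$, giving $(c)$; and $\partial M=\partial M'=Y'$ is smoothly homology cobordant to $\partial W$ through $Z$, giving $(b)$. The map that equals $F$ on $W_0$ and $\id$ on $Z$ is a well-defined homeomorphism $M\to M'$ --- along $\partial W$ the two descriptions agree precisely because $F|_{\partial W}=f$ undoes the $f^{-1}$-twist --- so $M$ and $M'$ are homeomorphic.

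It remains to prove $M$ and $M'$ are not diffeomorphic, and this is the crux. Suppose $\Psi\colon M\to M'$ were a diffeomorphism. Gluing $V$ to both sides and invoking invertibility, $M\cup_{Y'}V\cong W$ canonically, while $M'\cup_{Y'}V\cong W_0\cup_{\partial W,\,f^{-1}}(\partial W\times[0,1])$, which is in turn diffeomorphic to $W$ by a diffeomorphism whose restriction to $\partial W$ is $f$. If $\Psi|_{Y'}$ extended over $V$ --- for instance as the identity --- composing these three identifications would produce a self-diffeomorphism of $W$ restricting to $f$ on $\partial W$, contradicting the hypothesis. The point is that $\Psi|_{Y'}$ need not extend over $V$, and disposing of this discrepancy is the main obstacle. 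The device for it is an Eilenberg--Mazur swindle: replace $Z\cup_{Y'}V$ by an infinite telescope of copies of itself inside $\partial W\times[0,\infty)$, build the corresponding ``twisted'' and ``untwisted'' ends, and let a hypothetical diffeomorphism $M\cong M'$ propagate down the telescope so that the boundary discrepancy $\Psi|_{Y'}$ is pushed out to infinity; from the resulting infinite diffeomorphism one then extracts a compactly supported modification, i.e.\ an honest self-diffeomorphism of $W$ agreeing with $f$ on $\partial W$ up to isotopy. Either way one contradicts the hypothesis, so $M\not\cong M'$.

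The two steps where I expect genuine work are: (i) producing the invertible homology cobordism $Z$ with the required $\pi_1$-control, where one must call on the relevant $3$- and $4$-manifold topology ensuring $\partial W$ admits such a plug; and (ii) making the swindle in the previous paragraph rigorous --- turning ``the discrepancy is pushed to infinity'' into a precise statement about compact manifolds. This second step is the technical heart of the Akbulut--Ruberman argument, and it is exactly why the hypothesis is stated as non-extension of $f$ over $W$ rather than in terms of isotopy classes.
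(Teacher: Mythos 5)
Your framework is right---an invertible homology cobordism plug $Z$ from $\partial W$ to some $Y'$, regluing $Z$ by $f^{-1}$ to get $M'$, the topological identification $M\approx M'$ being immediate from $F$, and the crux being that a hypothetical diffeomorphism $\Psi:M\to M'$ need not restrict on $Y'$ to anything that extends over the inverse cobordism $V$. But the resolution you propose at exactly that crux is not the Akbulut--Ruberman argument and, as written, is not a proof. A Mazur swindle produces a non-compact object, whereas $M$ and $M'$ must be compact; stacking infinitely many copies of $Z\cup_{Y'}V$ inside a finite collar of $\partial W$ does not yield a smooth compact manifold, and ``extracting a compactly supported modification'' from a diffeomorphism of ends has no established meaning here. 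Worse, the swindle is designed to cancel a discrepancy by pushing it to infinity, but the boundary discrepancy $\Psi|_{Y'}$ would simply be transported along the telescope rather than annihilated.

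What Akbulut and Ruberman actually prove, and what replaces your swindle, is a lemma of a completely different flavour: every compact orientable $3$-manifold $Y$ admits an invertible homology cobordism (with the requisite $\pi_1$-control) to a hyperbolic $3$-manifold $Y'$ whose isometry group is trivial. By Mostow rigidity together with Gabai's theorem that $\mathrm{Diff}(Y')\simeq\mathrm{Isom}(Y')$ for closed hyperbolic $Y'$, such a $Y'$ has $\pi_0\,\mathrm{Diff}(Y')=1$. With that choice of $Z$, any $\Psi|_{Y'}$ is isotopic to the identity and hence extends over $V$ by a diffeomorphism that is the identity near $\partial W$; gluing $V$ to both $M$ and $M'$ and composing with the canonical identifications $M\cup_{Y'}V\cong W$ and $M'\cup_{Y'}V\cong W$ then manufactures a self-diffeomorphism of $W$ restricting to $f$ on $\partial W$, the desired contradiction. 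Two smaller gaps: you should justify that the abstractly glued $M'=W_0\cup_{f^{-1}}Z$ is realized as a smooth codimension-$0$ submanifold of $W$ (e.g.\ via a diffeomorphism $M'\cup_{Y'}V\cong W$), since the theorem asserts $M,M'\subset W$; and a $\pi_1$-isomorphism on both ends of $Z$ is not by itself enough for conclusion (c)---you need the inclusion $\partial W\hookrightarrow Z$ to be a genuine homotopy equivalence, which the Akbulut--Ruberman lemma also arranges.
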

\noindent
In order to apply Theorem~\ref{AR exotic} to our situation, we shall construct a diffeomorphism $\tau' : Y'_ n = - S^3_{1/n}(K) \# S^3_{1/(n-1)}(K) \to Y'_{n}$ such that 
\begin{itemize}
    \item[(i)] $\tau'$ extends to $W_{2n}$ as a homeomorphism, and 
    \item[(ii)] $\tau'$ does not extend to $W_{2n}$ as a diffeomorphism. 
\end{itemize}
\noindent
The first statement follows from \cite[Theorem 1.5, Addendum]{Fr82}. 

\begin{proof}[Proof of \cref{main:exotic}]

We define
\[
\tau' := \tau \# \id : Y'_n =  - S^3_{1/n}(K) \# S^3_{1/(n-1)}(K) \to - S^3_{1/n}(K) \# S^3_{1/(n-1)}(K)= Y'_n, 
\]
where $\tau$ is a diffeomorphism on $S^3_{1/n}(K)$ obtained as an extension of the strong involution of $K$, as constructed in \cite[Lemma~5.2]{DHM20}. 

In order to apply \cref{AR exotic} to $(W_{2n}, \tau')$, we claim the following: 
\begin{claim}\label{claim*}
The diffeomorphism $\tau'$ does not extend to $W_{2n}$ smoothly (as a diffeomorphism). 
\end{claim}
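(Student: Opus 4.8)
The plan is to argue by contradiction. Suppose $\tau'$ extends to a diffeomorphism $g\colon W_{2n}\to W_{2n}$. Recalling that $W_{2n}$ is the $(-1)$-framed $2$-handle cobordism of \cref{thm_1_fig} with a tubular neighbourhood of an arc $\gamma$ joining $S^3_{1/(2n)}(K)$ to $S^3_{1/(2n-1)}(K)$ deleted, and that $\tau'=\tau\#\id$ is by construction the identity near the connected-sum sphere, I would first isotope $g$ rel $\partial W_{2n}$ to be the identity on a collar of $\partial\nu(\gamma)$ and extend it across $\nu(\gamma)$ by the identity, producing a diffeomorphism $\bar g$ of the cobordism of \cref{thm_1_fig} with $\bar g|_{S^3_{1/(2n)}(K)}=\tau$ and $\bar g|_{S^3_{1/(2n-1)}(K)}=\id$ up to isotopy. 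The aim is then to assemble $\bar g$ into a diffeomorphism of a spin$^c$ $4$-manifold to which one of the families Seiberg--Witten constraints applies — \cref{theo: main theo}, or the non-extension statement underlying \cref{strong_cork} and \cref{strong_cork_conjugation}, or its conjugate \cref{thm: charge conj} — in such a way that the conclusion of that constraint is numerically violated.

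Concretely, I would glue the cobordism of \cref{thm_1_fig} (and, where needed, its orientation reversal, to introduce a positive-definite summand) along its boundary components to: (i) the stack of analogous $2$-handle cobordisms capping the $S^3_{1/(2n-1)}(K)$-end down to $S^3_{+1}(K)$, over which $\id$ extends by the identity; and (ii) an equivariant spin$^c$ filling of $S^3_{1/(2n)}(K)$ over which $\tau$ extends — the cobordism realising the $\underline V^\tau_0$-invariant, built from \cite[Lemma~5.2]{DHM20} and \cite{DMS} — together with standard negative-definite caps. Choosing the spin$^c$-structures so that the resulting $(X,\s)$ has $b^+(X)=1$, $b_1(X)=0$ and $\Delta=0$ on the negative-definite pieces, the assembled extension $G$ (built from $\bar g$, identities, and the equivariant map) should restrict to the identity, or to a controlled diffeomorphism, on a homology-sphere boundary $Y'$, and — this is where the asymmetry between $\tau$ and $\id$ must be exploited — realise exactly one of the two sign configurations allowed by the families Seiberg--Witten theorems ($H^+$-reversing and $\spinc$-preserving, or $H^+$-preserving and $\spinc$-reversing).

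It then remains to check the arithmetic. Using $\delta(Y)=\tfrac12 d(Y,\F)$, the surgery identity $d(S^3_{1/m}(K))=-2V_0(K)$ for $m\ge1$, and $\underline d_\tau(S^3_{1/(2n)}(K),\tau)=-2\underline V^\tau_0(K)$ via the equivariant surgery formula, the hypotheses $V_0(K)>0$ and $\underline V^\tau_0(K)>V_0(K)$ should translate, after computing $\sigma(X)$ and $c_1(\s)^2$ from the handle data, into the strict inequality
\[
\frac{c_1(\s)^2-\sigma(X)}{8} > \delta(Y'):
\]
here $V_0(K)>0$ is what makes capping raise the relevant index, while the strictness $\underline V^\tau_0(K)>V_0(K)$ is what pushes it strictly past the bound. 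This contradicts the families Seiberg--Witten constraint, so no such $g$ exists and \cref{claim*} follows. I expect the genuine difficulty to lie entirely in the middle step: producing $X$ with $b^+(X)=1$ and verifying that $g$ really does extend over the reinserted handle and over the equivariant filling with $G$ landing in one of the two permitted $H^+/\spinc$ configurations — the Heegaard Floer hypotheses $\underline V^\tau_0(K)>V_0(K)>0$ (rather than the instanton hypothesis $h(S^3_1(K))<0$, which is reserved for the linear-independence part of the theorem) are precisely what is needed there, and the identification $\underline d_\tau(S^3_{1/(2n)}(K),\tau)=-2\underline V^\tau_0(K)$ for these small surgeries is a secondary technical point.
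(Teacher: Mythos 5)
Your reduction step is the same as the paper's: isotoping the hypothesized extension to be the identity near the puncture arc, reinserting the deleted $1$-handle neighbourhood, and producing a diffeomorphism of the full $2$-handle cobordism $\widetilde W_{2n}(K)$ restricting to $\tau$ on $S^3_{1/(2n)}(K)$ and to $\id$ on $S^3_{1/(2n-1)}(K)$. After that point, however, there is a real gap that your phrase ``realise exactly one of the two sign configurations'' glosses over. Any extension $g$ to $\widetilde W_{2n}(K)$ acts on $H^2(\widetilde W_{2n}(K))\cong\Z$ as $+1$ or $-1$, and this dichotomy governs everything: the proof must split into cases, and only one of the two cases is amenable to families Seiberg--Witten theory. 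When $g_*=+1$ you do exactly what you describe --- cap $Y_{2n}(K)$ with the spin $b^+=1$ filling $X_{2n}(K)$ of \cref{kirby_cobordism} over which $\tau$ extends $H^+$-reversing and spin-preserving, and \cref{theo: main theo} applied to $Z_{2n}=X_{2n}(K)\cup\widetilde W_{2n}(K)$ gives $0\le\delta(Y_{2n-1}(K))=-V_0(K)$, contradicting $V_0(K)>0$. But when $g_*=-1$ the composite diffeomorphism on $Z_{2n}$ is simultaneously $H^+$-reversing (from $f$ on $X_{2n}(K)$) and $\spinc$-conjugating (from $g$ on $\widetilde W_{2n}(K)$), which is precisely the ``wrong'' pairing: neither \cref{theo: main theo} nor \cref{thm: charge conj} applies, and there is no way to salvage a Seiberg--Witten argument here because every $\spinc$ structure on $\widetilde W_{2n}(K)$ is conjugated by $g$. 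This is why the paper switches tools in that case: composing with the built-in equivariant $\spinc$-reversing diffeomorphism $h$ of $\widetilde W_{2n}(K)$ turns $h\circ g$ into a $\spinc$-preserving equivariant cobordism from $(Y_{2n}(K),\id)$ to $(Y_{2n-1}(K),\tau)$, and then \cref{equivariant_Cobordism} together with the chain of negative-definite equivariant cobordisms $W'_{2n-1}(K),\dots$ and the equivariant large-surgery map produce $-2V_0(K)=\underline d_\tau(Y_{2n}(K),\id)\le -2\underline V^\tau_0(K)$, contradicting $\underline V^\tau_0(K)>V_0(K)$.

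Two further points. First, your interpretation that ``$V_0>0$ makes the capping raise the index, and $\underline V^\tau_0>V_0$ pushes it strictly past the bound'' is not how the two hypotheses interact in the paper: each is used \emph{separately}, one to kill the $g_*=+1$ case and the other to kill the $g_*=-1$ case, rather than being combined in a single numerical estimate. Second, ``the cobordism realising the $\underline V^\tau_0$-invariant'' is not a well-defined object to feed into a Seiberg--Witten inequality; $\underline V^\tau_0$ enters the paper only as the terminus of a monotonicity chain of $\underline d_\tau$-inequalities along equivariant Heegaard Floer cobordism maps, an inherently Heegaard Floer phenomenon. Your proposal would be complete if you (a) introduce the case split on $g_*$, (b) carry out the $g_*=+1$ case with $X_{2n}(K)$ and \cref{theo: main theo} as you indicate, and (c) in the $g_*=-1$ case abandon Seiberg--Witten theory entirely and invoke the involutive Heegaard Floer machinery of \cite{DHM20,DMS} via \cref{equivariant_Cobordism}.
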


From now on, for simplicity, we will denote $S^{3}_{1/n}(K)$ as $Y_n(K)$. We begin by observing that for any $n \geq 1$, there exists a $4$-manifold $X^{4}_{n}(K)$ such that $b^{+}(X_{n}(K))=1$ with $\partial X_{n}(K)= Y_{n}(K)$. This is obtained from the Kirby diagram for the $Y_n(K)$, as in Figure~\ref{kirby_cobordism}.
Note that if $n$ is even $X_n(K)$ is spin. Now rest of this paragraph uses the analysis in \cite[Section 5]{DHM20}, so we will be terse. Firstly, $\tau$ on $Y_n(K)$ corresponds to the diffeomorphism induced from the strong involution on the boundary of $X_n$, as depicted in Figure~\ref{kirby_cobordism}. 
Then by $\tau$ extending it to the two-handlebody, we obtain an extension of $\tau$ to $X_n$, say $f$. Note that $H^2(X_{n}(K)) \cong \mathbb{Z} \oplus \mathbb{Z}$, and under the isomorphism $H^2(X_n(K)) \cong H_2(X_{n}(K), \partial X_{n}(K))$, the generators of the second cohomology correspond to the cocore of the two $2$-handles attached. Again it follows from \cite[Section 5]{DHM20} that the $f$ switches the orientation of the cocores, and hence acts as $-1$ on $H^2(X_{n}(K))$ in both coordinates. This in turn implies that $f$ conjugates any $\spinc$-structure.
In particular, when $n$ is even, there exists a unique spin structure $\mathfrak{s}_0$ so that $f^{*}(\mathfrak{s}_0)= \mathfrak{s}_0$.
We now claim that $f$ switches the orientation of $H^{+}(X_{n}(K))$.
To see this, let $a$ and $b$ be the generators of $H^{2}(W)$ obtained as cocores of 2-handles given in \cref{kirby_cobordism} for given orientations of attaching spheres. Then it can be checked that $(a+b)$ generates $H^{+}(X_{n}(K))$, in particular $H^{+}(X_{n}(K))$ is 1-dimensional,  and hence $f$ switches its orientation. Let us now define a negative-definite cobordism $\widetilde{W}_{n}(K)$ from $Y_n(K)$ to $Y_{n-1}(K)$, by following the Kirby diagram described the Figure~\ref{kirby_cobordism}.

\begin{figure}[h!]
\center
\includegraphics[scale=0.95]{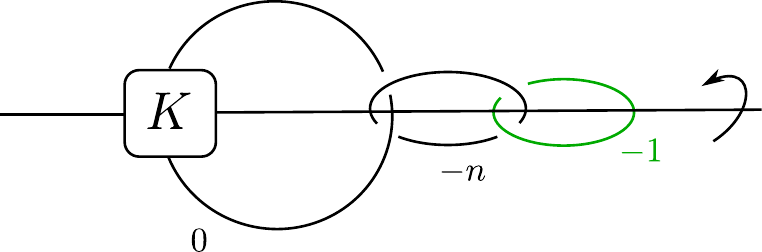}
\caption{In black: The $4$-manifold $X_n$ given by the two $2$-handles, with $\partial X_n = Y_n$. $X_n$ inherits a diffeomorphism induced from the symmetry depicted. In green: The equivariant cobordism $\widetilde{W}_n(K)$, given by the trace of the $(-1)$-framed green $2$-handle.}\label{kirby_cobordism}
\end{figure}
It follows that the cobordism $\widetilde{W}_{n}(K)$ is equivariant with respect to $\tau$ action in both boundary component, i.e. there exists a diffeomorphism $h$ of $\widetilde{W}_{n}(K)$ which restricts to the strong involution $\tau$ in each components. Note that the intersection form of $\widetilde{W}_{n}(K)$ is $(-1)$ and its generated by the meridian of $-1$-framed $2$-handle. Again from \cite[Section 5]{DHM20} it follows that $h$ acts multiplication by $-1$ on $H_2(\widetilde{W}_{n}(K),\partial \widetilde{W}_{n}(K))$, which in turn implies that $h$ is $\spinc$-conjugating. That is, let $\mathfrak{s}$ be the $\spinc$-structure such that $c_1(\mathfrak{s})$ generates $H^2(\widetilde{W}_n(K))$, then $h^{*}(\mathfrak{s})=\bar{\mathfrak{s}}$. 
In particular, $(\widetilde{W}_n(K), \s , h)$ is $\spinc$-reversing in the sense of Definition~\ref{spinc_cobordism}.

Towards contradiction, assume that Claim~\ref{claim*} is false. Then there is a diffeomorphism $\wt{\tau}' : W_n(K) \to W_n(K)$ extending $\tau'$. Now, since around the neck of the connected sum $- S^3_{1/n}(K) \# S^3_{1/(n-1)}(K)$, $\tau' = \tau \# \mathrm{id}$ was identity, one can suppose $\wt{\tau}'$ is identity on some small open neighborhood of a point in the neck in $W_n(K)$. 
By adding a $1$-handle and extending $\wt{\tau}'$ by identity, we obtain an equivariant cobordism from  $(S^3_{1/n}(K), \tau )$ to $(S^3_{1/(n-1)}(K), \id)$. Note that here the underlying cobordism is $\widetilde{W}_n(K)$. Hence, in order to prove Claim~\ref{claim*}, it is enough to prove the following claim:
\begin{claim}\label{claim**}
    There is no diffeomorphism on $\widetilde{W}_{2n}(K)$ which restricts to $\tau$ on $Y_{2n}(K)$ and the identity on $Y_{2n-1}(K)$. 
\end{claim}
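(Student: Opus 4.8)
The plan is to argue by contradiction: assume there is a diffeomorphism $g : \widetilde{W}_{2n}(K) \to \widetilde{W}_{2n}(K)$ with $g|_{Y_{2n}(K)} = \tau$ and $g|_{Y_{2n-1}(K)} = \id$, and derive a contradiction by capping off the $Y_{2n}(K)$-end and playing the families Seiberg--Witten constraint of \cref{theo: main theo} against the involutive cobordism inequality of \cref{equivariant_Cobordism}. First I would form $Z := X_{2n}(K) \cup_{Y_{2n}(K)} \widetilde{W}_{2n}(K)$. Since $b^+(X_{2n}(K)) = 1$ and $b_1(X_{2n}(K)) = 0$, and $\widetilde{W}_{2n}(K)$ is negative definite with $b_1 = 0$ and intersection form $\langle -1 \rangle$, Mayer--Vietoris gives $b^+(Z) = 1$, $b_1(Z) = 0$, $\sigma(Z) = \sigma(X_{2n}(K)) + \sigma(\widetilde{W}_{2n}(K)) = 0 + (-1) = -1$, and $\partial Z = Y_{2n-1}(K)$. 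Because $f$ and $g$ both restrict to $\tau$ on $Y_{2n}(K)$, they glue to a diffeomorphism $F := f \cup g : Z \to Z$ with $F|_{Y_{2n-1}(K)} = \id$, and $F$ reverses orientation of $H^+(Z)$ since $H^+(Z)$ is carried by the summand $H^2(X_{2n}(K);\mathbb{R})$ (the negative-definite piece $\widetilde{W}_{2n}(K)$ contributing nothing) and $f$ reverses orientation of $H^+(X_{2n}(K))$.

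Next I would split into two cases according to whether $g^\ast$ acts on $H^2(\widetilde{W}_{2n}(K);\Z) \cong \Z$ as $+\id$ or as $-\id$; these are the only possibilities since the form is $\langle -1\rangle$ and, as $b_1 = 0$, spin$^c$ structures are determined by their first Chern classes. If $g^\ast = +\id$, then gluing the spin structure $\mathfrak{s}_0$ on $X_{2n}(K)$ to the spin$^c$ structure $\mathfrak{s}$ on $\widetilde{W}_{2n}(K)$ with $c_1(\mathfrak{s})$ a generator of $H^2(\widetilde{W}_{2n}(K);\Z)$ yields a spin$^c$ structure $\mathfrak{t}$ on $Z$ with $c_1(\mathfrak{t})^2 = c_1(\mathfrak{s})^2 = -1$, and $F^\ast\mathfrak{t} = \mathfrak{t}$ because $f^\ast\mathfrak{s}_0 = \mathfrak{s}_0$ and $g^\ast\mathfrak{s} = \mathfrak{s}$. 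Then $(Z,\mathfrak{t},F)$ satisfies all hypotheses of \cref{theo: main theo}, so
\[
0 = \frac{c_1(\mathfrak{t})^2 - \sigma(Z)}{8} = \frac{-1 - (-1)}{8} \leq \delta(Y_{2n-1}(K)) = \frac{1}{2}\, d(S^3_{1/(2n-1)}(K)) = -V_0(K),
\]
where the last equality uses \eqref{V=d} and the fact that $d(S^3_{1/m}(K)) = -2V_0(K)$ for every $m \geq 1$; this contradicts $V_0(K) > 0$.

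If instead $g^\ast = -\id$, then $F$ conjugates every spin$^c$ structure on $Z$, so the families Seiberg--Witten constraint does not apply (\cref{theo: main theo} requires a preserved spin$^c$ structure), and I would turn to involutive Heegaard Floer theory. Let $h : \widetilde{W}_{2n}(K) \to \widetilde{W}_{2n}(K)$ be the equivariant extension of the strong involution of $K$ constructed in \cite[Section~5]{DHM20}, which restricts to $\tau$ on both ends and acts by $-\id$ on $H^2(\widetilde{W}_{2n}(K);\Z)$. Then $G := h \circ g^{-1}$ restricts to $\id$ on $Y_{2n}(K)$ (using $\tau^2 = \id$) and to $\tau$ on $Y_{2n-1}(K)$, with $G^\ast = (-\id)\circ(-\id) = +\id$, hence $G$ preserves $\mathfrak{s}$. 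Thus $(\widetilde{W}_{2n}(K),\mathfrak{s},G)$ is a negative-definite, spin$^c$-preserving equivariant cobordism from $(Y_{2n}(K),\id)$ to $(Y_{2n-1}(K),\tau)$ with $b_1 = 0$ and
\[
\Delta(\widetilde{W}_{2n}(K),\mathfrak{s}) = \frac{c_1(\mathfrak{s})^2 - 2\chi(\widetilde{W}_{2n}(K)) - 3\sigma(\widetilde{W}_{2n}(K))}{4} = \frac{(-1) - 2\cdot 1 - 3\cdot(-1)}{4} = 0.
\]
Applying \cref{equivariant_Cobordism}, together with $\underline{d}_{\tau}(Y_{2n}(K),\id) = d(S^3_{1/(2n)}(K)) = -2V_0(K)$ and the computation $\underline{d}_{\tau}(S^3_{1/(2n-1)}(K),\tau) \leq -2\,\underline{V}^{\tau}_{0}(K)$ for the induced involution on the odd surgery (as in \cite[Section~5]{DHM20} and \cite{DMS}), one gets $-2V_0(K) \leq -2\,\underline{V}^{\tau}_{0}(K)$, i.e. $\underline{V}^{\tau}_{0}(K) \leq V_0(K)$, contradicting $\underline{V}^{\tau}_{0}(K) > V_0(K)$. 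This proves \cref{claim**}.

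I expect the case $g^\ast = +\id$ to be routine once the Mayer--Vietoris bookkeeping for $Z$ is set up. The main obstacle will be the case $g^\ast = -\id$: pinning down the precise involutive input $\underline{d}_{\tau}(S^3_{1/(2n-1)}(K),\tau) \leq -2\underline{V}^{\tau}_{0}(K)$, including its dependence on the parity of the surgery coefficient --- which is presumably the reason the statement is phrased for $\widetilde{W}_{2n}(K)$, whose $Y_{2n}(K)$-end is an \emph{even} surgery bounding the \emph{spin} manifold $X_{2n}(K)$ over which $\tau$ extends $H^+$-reversingly --- together with verifying that $G$ is genuinely spin$^c$-preserving with $\Delta = 0$.
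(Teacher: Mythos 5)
Your argument is essentially the paper's. The $g^* = +\id$ case matches the paper nearly verbatim: glue $X_{2n}(K)$ to $\widetilde{W}_{2n}(K)$ along $Y_{2n}(K)$, concatenate the diffeomorphisms and the $\spinc$-structures, apply \cref{theo: main theo}, and compare with $\delta(Y_{2n-1}(K)) = -V_0(K)$ to contradict $V_0(K) > 0$. In the $g^* = -\id$ case you also correctly reduce, via the composed diffeomorphism, to the equivariant-cobordism inequality $\underline{d}_\tau(Y_{2n}(K), \id) \leq \underline{d}_\tau(Y_{2n-1}(K), \tau)$ from \cref{equivariant_Cobordism}; the paper composes $h\circ g$ where you use $h\circ g^{-1}$, but since $\tau$ has order two these coincide on the boundary and both give $G^*=+\id$.

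The step you flag as the main obstacle is indeed where the bulk of the paper's work lives: the inequality $\underline{d}_\tau(Y_{2n-1}(K), \tau) \leq -2\underline{V}^{\tau}_{0}(K)$ is not a ready-made citation from \cite{DHM20} or \cite{DMS} but must be proved, and the paper does so in two stages. First, it builds a chain of equivariant, $\spinc$-preserving, negative-definite cobordisms $W'_{2j-1}(K)$ with $\Delta = 0$ (Figure~\ref{thm_proof_cobordism_2}) and applies \cref{equivariant_Cobordism} repeatedly to get $\underline{d}_\tau(Y_{2n-1}(K), \tau) \leq \underline{d}_\tau(Y_{1}(K), \tau)$. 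Second, it constructs a $\tau$-equivariant, grading-preserving local map $F = \Gamma_{n,0}\circ f_{W',\s_0} : CF(Y_1(K)) \to A_0(K)$ by composing the equivariant trace-cobordism map to $CF(S^3_{+m}(K))$ for large odd $m$ with the large-surgery homotopy equivalence $\Gamma_{n,0}$ of \cite{ozsvath2008knot}, which intertwines with $\tau$ by the equivariant surgery formula of \cite{mallick2022knot}; this gives $\underline{d}_\tau(Y_1(K),\tau) \leq \underline{d}_\tau(A_0(K),\tau) = -2\underline{V}^{\tau}_{0}(K)$. Your parity intuition is also correct: $2n$ even is what makes $X_{2n}(K)$ spin, needed in the $g^*=+\id$ branch, while the involutive chain down to $Y_1(K)$ handles the odd index on the other end.
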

\noindent
For the ease of notation, from this point onward, we will denote   $\widetilde{W}_{n}(K)$ as $W_n(K)$. Towards contradiction, we assume that there is such an extension, say $g$. We break this up in two different cases depending on how $g$ acts on $H^{2}(W_{2n}(K))$. 
\\
{\bf Proof of \cref{claim**} under $g_*=1$}
Firstly, suppose that $g$ acts as identity on $H^{2}(W_{2n}(K))$. Let us now consider 
\[
Z_{2n}(K):= X_{2n}(K) \cup_{Y_{2n}(K)} W_{2n}(K),
\] 
equipped with the diffeomorphism $\tilde{g}$ obtained by concatenating $f$ on $X_{2n}(K)$ and $g$. Let $\tilde{\s}$ be the $\spinc$-structure on $Z_{2n}(K)$ also obtained be concatenating $\s_0$ and $\s$. Note that $\partial Z_{2n}= Y_{2n-1}(K)$ and $b^{+}(Z_{2n})=1$, moreover $\tilde{g}$ preserves the $\spinc$-structure $\tilde{\s}$ on $Z_{2n}(K)$. Hence $(Z_{2n}, \tilde{g}, \tilde{s})$ satisfies the hypothesis for Theorem~\ref{theo: main theo}, which implies 
\begin{align}\label{delta_ineq}
\delta(Y_{2n-1}(K)) \geq  \frac{c_{1}(\tilde{\s})^{2} - \sigma(Z_{2n}(K))}{8} =0.
\end{align}
Now it follows from \eqref{d=delta}, that 
\[
d(Y_{2n-1}(K))=2 \delta(Y_{2n-1}(K)).
\]
Note that $d$-invariant of $+(1/n)$-surgery on a knot is same as the $d$-invariant of $(+1)$ surgery on it. Hence it follows from (\ref{V=d}) that
\[
d(Y_{2n-1}(K))= d(S^{3}_{+1}(K)) = -2V_{0}(K)
\]
Together with Equation~\ref{delta_ineq} contradicts the assumption $V_0(K) > 0$. 
\\
{\bf Proof of \cref{claim**} under $g_*=-1$}

Let us now assume that $g$ acts as multiplication by $(-1)$ on $H^2(W_{2n}(K))$. Consider $h \circ g$ on $W_{2n}(K)$, and note that $h \circ g$ restricted to $Y_{2n}(K)$ is identity and $Y_{2n-1}(K)$ is $\tau$.
Moreover, $h \circ g$ now acts trivially on $H_2(W_{2n}(K))$. Hence it follows that the following diagram commute up to chain homotopy:
\[\begin{tikzcd}[ampersand replacement=\&,sep=scriptsize]
	{CF(Y_{2n}(K))} \&\& {CF(Y_{2n-1}(K))} \\
	\\
	{CF(Y_{2n}(K))} \&\& {CF(Y_{2n-1}(K)).}
	\arrow["{F_{W,\mathfrak{s}}}", from=1-1, to=1-3]
	\arrow["{\mathrm{id}}"', from=1-1, to=3-1]
	\arrow["\tau", from=1-3, to=3-3]
	\arrow["{F_{W,\mathfrak{s}}}"', from=3-1, to=3-3]
\end{tikzcd}\]
Hence, we obtain from Theorem~\ref{equivariant_Cobordism} that
\begin{align}\label{ineq_1}
\underline{d}_{\tau}(Y_{2n}(K), \mathrm{id}) \leq \underline{d}_{\tau}(Y_{2n-1}(K), \tau).
\end{align}
Moreover, Definition~\ref{defn_d_lower} implies that $\underline{d}_{\tau}(Y_{2n}(K), \mathrm{id})= d(Y_{2n}(K))$. Now, we construct a cobordism $W'_{2n-1}(K)$ from $Y_{2n-1}(K)$ to $Y_{2n-3}(K)$ given in Figure~\ref{thm_proof_cobordism_2}:
\begin{figure}[h!]
\center
\includegraphics[scale=1.1]{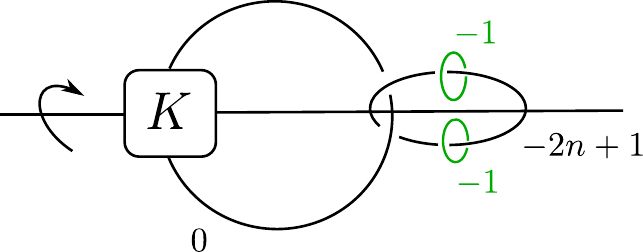}
\caption{The equivariant cobordism $W'_{2n-1}(K)$, induced by the green $(-1)$-framed $2$-handles.}\label{thm_proof_cobordism_2}
\end{figure}
\noindent
In the proof of \cite[Theorem 1.6]{DHM20}, it was shown that $W'_{2n-1}(K)$ is an equivariant, $\spinc$-preserving (for a certain $\spinc$-structure, say $\s^{\prime}$), negative-definite cobordism with $\Delta(W'_{2n-1}(K), \s^{\prime})=0$, from $(Y_{2n-1},\tau)$ to $(Y_{2n-3},\tau)$. In particular, again by Theorem~\ref{equivariant_Cobordism} we obtain:
\begin{align}\label{ineq_2}
\underline{d}_{\tau}(Y_{2n-3}(K), \tau) \leq \underline{d}_{\tau}(Y_{2n-1}(K), \tau).
\end{align}
We then apply a similar equivariant cobordism repeatedly to obtain
\begin{align}\label{ineq_3}
\underline{d}_{\tau}(Y_{2n-1}(K), \tau) \leq \underline{d}_{\tau}(Y_{1}(K), \tau).
\end{align}
Now, we claim that
\[
\underline{d}_{\tau}(Y_{1}(K), \tau) \leq \underline{d}_{\tau}(A_0(K), \tau).
\]
To see this, following \cite{dai20222} we construct an $U$-equivariant map, 
\[
F: CF(Y_{1}(K)) \rightarrow A_0(K)
\]
that induce isomorphism in homology after localizing with respect to $U$ and intertwines with the induced action of $\tau$ on $CF(Y_{1}(K)$ and $A_0(K)$ up to homotopy. This map is constructed as a composition of two maps. Firstly, we define an $\tau$-equivariant, $\spinc$-preserving cobordism map $f_W^{\prime}$
\[
f_{W^{\prime}, \s_0}: CF(Y_{1}(K)) \rightarrow CF(S^3_{+m}(K)).
\]
Here $W^{\prime}$ is given by the Figure~\ref{thm_proof_cobordism_3}, $m$ is a large positive integer and $\s_0$ on $W$ is the unique spin structure on $W^{\prime}$ (note that $W^{\prime}$ has no torsion hence it has a unique spin structure) \footnote{We also take $m$ to be odd, so that there is a unique spin structure on $S^3_{+m}(K)$}.
\begin{figure}[h!]
\center
\includegraphics[scale=1.1]{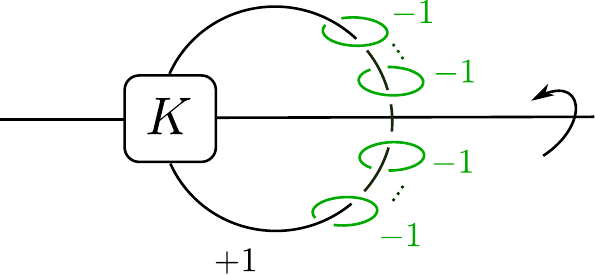}
\caption{The equivariant cobordism $W'$, induced by the green $(-1)$-framed $2$-handles.}\label{thm_proof_cobordism_3}
\end{figure}
It follows from the Figure~\ref{thm_proof_cobordism_3} that $W^{\prime}$ is $\tau$-equivariant, and since $\s_0$ is the unique spin structure, $W^{\prime}$ is $\spinc$-preserving. 
Now we consider the large surgery homotopy equivalence $\Gamma_{n,0}$ from \cite{ozsvath2008knot}:
\[
\Gamma_{n,0}: CF(S^3_{+m}(K), [0]) \rightarrow A_0(K).
\]
It follows from the equivariant surgery formula \cite{mallick2022knot} that $\Gamma_{n,0}$ intertwines with $\tau$ actions on $CF(S^3_{+n}(K), [0])$ and  $A_0(K)$ up to homotopy ($\tau$ induce an action on $A_0(K)$).
We now define the local map $F$ as:
\[
F:= \Gamma_{n,0}   \circ  f_{W^{\prime}, \s_0}.
\]
By construction, this map is grading preserving and it intertwines with the action of $\tau$ on both ends. The existence of $F$ implies (see the proof of \cite[Proposition 4.10]{DHM20})
\begin{align}\label{ineq_4}
\underline{d}_{\tau}(Y_{1}(K), \tau) \leq \underline{d}_{\tau}(A_0(K),\tau).
\end{align}
Combining the above inequalities (\ref{ineq_1}),(\ref{ineq_2}),(\ref{ineq_3}),(\ref{ineq_4}) yields:
\[
-2V_0(K)=d(Y_{2n}(K))=\underline{d}_{\tau}(Y_{2n}(K), \mathrm{id}) \leq -2 \underline{V}^{\tau}_{0}(K).
\]
This contradicts the hypothesis that 
\[
\underline{V}^{\tau}_{0}(K) > V_0(K) > 0.
\]
Hence, we have shown that there is no diffeomorphism $g$ on $W_{2n}(K)$, which restricts to $\tau$ on $Y_{2n}(K)$ and to the identity on $Y_{2n-1}(K)$. 

This completes the proof of \cref{claim**} thus we have \cref{claim*}, i.e. there is no smooth diffeomorphism extension of $\tau': Y_{2n}' = - S^3_{1/2n}(K) \# S^3_{1/(2n-1)}(K)  \to Y_{2n}'$ to $W_{2n}$. 
On the other hand, we have an extension of $\tau'$ as a homomorphism $F_{2n}$ on $W_{2n}$. We now apply \cref{AR exotic} to $(W_{2n}, F_{2n}, \tau' )$ and obtain smooth compact codimension-$0$ $4$-manifolds $M_n$ and $M'_n$ in $W_{2n}$ satisfying the following conditions: 
\begin{itemize}
    \item $M_n$ and $M'_n$ are exotic, 
    \item $ \partial M_n = \partial M'_n$ is a smoothly homology cobordant to $\partial W_{2n}$, and
    \item $M_n$ and $M'_n$ are homotopy equivalent to $W_{2n}$.
\end{itemize}
\noindent
Since we are assuming $h(S_1^3(K)) <0$, by \cref{NST} we know that $\{ S_{1/n}^{3}(K) \}$ is a linearly independent set in the homology cobordism group. 
Thus, in particular, $\partial W_{2n}$ and $\partial W_{2n'}$ are not smooth homology cobordant each other if $  n \neq n'$. 
Thus, in particular, $\partial M_n$ and $\partial M_{n'}$ are not diffeomorphic, equivalently not homeomorphic. 
\end{proof}

\subsubsection{Proof of Example~\ref{example exotic}}
We note our concrete examples given in \cref{example exotic} satisfy all assumptions of \cref{main:exotic}: 
\begin{proof}[Proof of \cref{example exotic}]
For odd integer $n$, it is confirmed in \cref{odd integer vno} that 
\[
(K_n  := T_{2,2n+1} \# T_{2,2n+1}, \tau \# \tau)
\]
satisfies 
\[
\underline{V}^{\tau}_0(T_{2,2n+1} \# T_{2,2n+1}) > V_0(T_{2,2n+1} \# T_{2,2n+1}) > 0.
\] 
On the other hand, it is confirmed in \cref{instanton ex} that one has 
\[
h(S^3_1(K_n)) <0. 
\]
This completes the proof. 
\end{proof}

Let us end this section with a few remarks. Firstly, note that in the proof of Theorem~\ref{main:exotic} the only input from the manifold $W_n$ we used was that it was an equivariant, negative-definite cobordism from $(S^{3}_{1/2n}(K), \tau)$ to $(S^{3}_{1/(2n-1)}(K), \tau)$ with intersection form $(-1)$, which acted as $-\mathrm{id}$ on $H_2$ . Hence we immediately obtain the following straightforward generalization of Claim~\ref{claim*}.
\begin{prop}\label{non extension}
Let $n$ be any positive integer and $(K,\tau)$ be a strongly invertible knot in $S^3$ satisfying the following conditions 
\[
\underline{V}^{\tau}_{0}(K) > V_0(K) > 0 \text{ and } h(S^3_1(K))<0.
\]
Let $W_{2n}$ be any $4$-manifold with $\partial W_{2n}:= S^{3}_{1/2n}(K) \# S^{3}_{1/(2n-1)}(K)$ with intersection form $(-1)$. If the diffeomorphism $\tau \# \tau$ extends over $W_{2n}$ acting as $-\mathrm{id}$ on $H_2(W_{2n})$ then the diffeomorphism $\tau \# \mathrm{id}$ does not extend over $W_{2n}$.

\end{prop}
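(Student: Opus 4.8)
The plan is to follow the proof of \cref{claim**} essentially verbatim: as remarked above, the only way the specific cobordism $\widetilde W_{2n}(K)$ entered that argument was as a negative-definite equivariant cobordism from $(S^3_{1/2n}(K),\tau)$ to $(S^3_{1/(2n-1)}(K),\tau)$ with intersection form $(-1)$ on which the equivariant diffeomorphism acts by $-\id$ on $H_2$, and these are precisely the properties now imposed on $W_{2n}$. Writing $Y_m := S^3_{1/m}(K)$, the first step is to pass from the connected-sum set-up to the cobordism set-up. I would isotope $\tau$ on each $Y_m$ to be the identity near a small ball, perform the connected sum $Y_{2n}\#Y_{2n-1}$ inside such balls so that both $\tau\#\tau$ and $\tau\#\id$ are the identity near the connect-sum neck $S^2\times[0,1]\subset\partial W_{2n}$, and then form $\widetilde W_{2n}:=W_{2n}\cup_{S^2\times[0,1]}(D^3\times[0,1])$, a cobordism from $Y_{2n}$ to $Y_{2n-1}$. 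Since $Y_{2n}$ and $Y_{2n-1}$ are homology spheres the neck $2$-sphere is null-homologous in $W_{2n}$, so filling it in changes neither $H_2$ nor the intersection form; using also $b_1(W_{2n})=0$ (implicit in the hypothesis), $\widetilde W_{2n}$ is a negative-definite cobordism with $b_1=0$, intersection form $(-1)$, and $\chi=1$, hence $\Delta(\widetilde W_{2n},\mathfrak s)=0$ for the $\spinc$ structure $\mathfrak s$ with $c_1(\mathfrak s)^2=-1$. The given extension of $\tau\#\tau$ extends by the identity over the glued piece to a diffeomorphism $h$ of $\widetilde W_{2n}$ restricting to $\tau$ on both ends with $h_\ast=-\id$ on $H_2(\widetilde W_{2n})\cong\Z$, and — assuming toward a contradiction that $\tau\#\id$ extends over $W_{2n}$ — the same procedure yields a diffeomorphism $g$ of $\widetilde W_{2n}$ restricting to $\tau$ on $Y_{2n}$ and to $\id$ on $Y_{2n-1}$.

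It then remains to derive a contradiction from $g$, which I would do by splitting on $g_\ast=\pm\id$ exactly as in \cref{claim**}. If $g_\ast=\id$: glue the auxiliary $4$-manifold $X_{2n}(K)$ built from $K$ in the proof of \cref{main:exotic} — together with its diffeomorphism $f$ restricting to $\tau$, reversing $H^+$, and fixing the unique spin structure $\mathfrak s_0$ (here $2n$ even is used) — onto $\widetilde W_{2n}$ along $Y_{2n}$, obtaining $(Z_{2n},\tilde g,\tilde{\mathfrak s})$; this satisfies $b^+(Z_{2n})=1$, $b_1(Z_{2n})=0$, $\partial Z_{2n}=Y_{2n-1}$, $\tilde g|_{\partial Z_{2n}}=\id$, $\tilde g$ reverses $H^+(Z_{2n})$, $\tilde g^\ast\tilde{\mathfrak s}=\tilde{\mathfrak s}$, and $c_1(\tilde{\mathfrak s})^2-\sigma(Z_{2n})=0$, so \cref{theo: main theo} gives $\delta(Y_{2n-1})\ge 0$; but $\delta(Y_{2n-1})=\tfrac12 d(Y_{2n-1})=\tfrac12 d(S^3_{+1}(K))=-V_0(K)<0$, a contradiction. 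If $g_\ast=-\id$: then $h\circ g$ acts by $\id$ on $H_2$, is the identity on $Y_{2n}$, and equals $\tau$ on $Y_{2n-1}$ (using $\tau^2=\id$), so $(\widetilde W_{2n},\mathfrak s,h\circ g)$ is a negative-definite, $\spinc$-preserving equivariant cobordism with $\Delta=0$; \cref{equivariant_Cobordism} then gives $d(Y_{2n})=\underline d_\tau(Y_{2n},\id)\le\underline d_\tau(Y_{2n-1},\tau)$, and concatenating the equivariant cobordisms $W'_{2n-1}(K),W'_{2n-3}(K),\dots$ with the local map $F\colon CF(Y_1)\to A_0(K)$ from the proof of \cref{claim**} (all built from $K$ alone) gives $\underline d_\tau(Y_{2n-1},\tau)\le\underline d_\tau(A_0(K),\tau)=-2\underline V^\tau_0(K)$; combining, $-2V_0(K)=d(Y_{2n})\le-2\underline V^\tau_0(K)$, contradicting $\underline V^\tau_0(K)>V_0(K)$. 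Either way $\tau\#\id$ cannot extend over $W_{2n}$. (Note that the hypothesis $h(S^3_1(K))<0$ is not needed here; in \cref{main:exotic} it served, via \cref{NST}, only to distinguish the boundary $3$-manifolds.)

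The step I expect to demand the most care is the translation in the first paragraph: one must verify that ``$\tau\#\tau$ (resp.\ $\tau\#\id$) extends over $W_{2n}$'' is genuinely equivalent to the corresponding extension question over the cobordism $\widetilde W_{2n}$, and that filling in the connect-sum neck preserves all the homological invariants ($b_1$, $b_2$, $\sigma$, $\chi$) feeding into \cref{theo: main theo} and \cref{equivariant_Cobordism} — this is exactly where one uses that the surgered manifolds are integer homology spheres. A secondary point is to upgrade ``$g^\ast$ (resp.\ $(h\circ g)^\ast$) fixes $c_1(\mathfrak s)$'' to ``fixes $\mathfrak s$ itself'', which is automatic when $H^2(\widetilde W_{2n})$ has no $2$-torsion, in particular when $W_{2n}$ is simply connected, the case of interest. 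Once these bookkeeping points are settled, the remainder is word-for-word the proof of \cref{claim**}.
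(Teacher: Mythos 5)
Your proof is correct and is precisely the argument the paper intends: the paper only remarks that \cref{non extension} follows by the same reasoning as \cref{claim*}, and your write-up is exactly the careful unwinding of that claim, including the passage from the connected-sum boundary to the cobordism $\widetilde W_{2n}$ by filling the neck, the case split on $g_\ast=\pm\id$, the gluing with $X_{2n}(K)$ and application of \cref{theo: main theo}, and the chain of $\underline{d}_{\tau}$ inequalities ending in $-2V_0(K)\le-2\underline{V}^{\tau}_{0}(K)$. You are also right that the hypothesis $h(S^3_1(K))<0$ is unused in this statement, and that $b_1(W_{2n})=0$ (needed so that $b_1(Z_{2n})=0$ when invoking \cref{theo: main theo}) and the absence of $2$-torsion in $H^2(W_{2n};\Z)$ (to upgrade fixing $c_1(\fraks)$ to fixing $\fraks$) must be read as implicit in the proposition, as they are in the paper's one-line justification.
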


Lastly, for the readers familiar with involutive Heegaard Floer homology, we remark that it is possible to use involutive Heegaard Floer homology to produce examples of exotic manifolds with $b_2=1$. Indeed, in light of the discussion in the proof of Theroem~\ref{main:exotic}, it suffices to demonstrate a tuple $(Y, W, \tau)$, such that $Y$ is an integer homology sphere, bounding a simply-connected $4$-manifold $W$ and $Y$ is equipped with a diffeomorphism $\tau$ so that $\tau$ does not smoothly extend over $W$. It is possible to use the invariants developed in \cite{DHM20} to produce such a tuple $(Y,\tau, W)$. For example, if  the $\tau$ and $\iota \circ \tau$-class of $(Y,\tau)$ is non-trivial for $(Y,\tau)$ that would suffice for this purpose. However, as far as the authors are aware, constructions of such a pair $(Y,\tau)$ will require $W$ to be a boundary-connected sum of a cork (a contractible manifold) and $(\pm 1)$-surgery on a knot. In contrast, the $4$-manifolds $W$ we consider, namely $W_{2n}$ do not admit any such obvious splitting as a boundary-connected sum of two $4$-manifolds (with non-$S^3$ boundary). For example, for the pairs $(Y,\tau)$ in Thereom~\ref{main:exotic}, the lack of equivariant mapping cone formula implies we cannot yet compute their $\iota \circ \tau$-class.

\section{Strong cork detection}\label{strong_cork_section}
In this section, we use the families Seiberg--Witten theory to detect strong corks. We begin by establishing the tools.
\subsection{Strong cork detection tools}

 \begin{proof}[Proof of \cref{strong_cork}]
Let $(Y,\phi)$ and $(X, \s, \Phi)$ be as in the hypothesis. Now towards a contradiction, let us assume that $(Y,\phi)$ is not a strong cork. In particular, there exists $\mathbb{Z}_2$-homology sphere $W_0$, such that $\phi$ extends over $W_0$. Let $\s_0$ be the unique spin-structure on $W_0$, hence it follows that the extension (say) $\tilde{\Phi}$ of $\phi$ to $W_0$ preserves $\s_0$.
We now glue $W_0$ to $X$ along $Y$. Let us denote the resulting $4$-manifold by $W^{\prime}:= X \cup_{Y} W_{0}$. We also concatenate the $\spinc$-structure $\s$ on $X$ with $\s_0$ to obtain a $\spinc$-structure $\s^{\prime}$. Finally, let us puncture $W_0 \subset W^{\prime}$, so that $\tilde{\Phi}$ fixes the puncture. We then apply Theorem~\ref{theo: main theo} to the tuple $(W^{\prime}, \s^{\prime}, \Phi \cup \tilde{\Phi})$, but $\delta (S^3)=0$, so we immediately obtain a contradiction. Hence $(Y,\tau)$ is in fact a strong cork.
\end{proof}

\begin{proof}[Proof of \cref{strong_cork_conjugation}]
The proof is verbatim to the proof of Theorem~\ref{strong_cork}. $(Y,\phi)$ and $(X, \s, \Phi)$ be as in the hypothesis. Now towards a contradiction, let us assume that $(Y,\phi)$ is not a strong cork. As before we construct  $(W^{\prime}, \s^{\prime}, \Phi \cup \tilde{\Phi})$, and now apply Theorem~\ref{thm: charge conj} to it to get a contradiction. Hence $(Y,\tau)$ is in fact a strong cork.

\end{proof} 

\subsection{(Non)-Extendability of diffeomorphims over $b^{+}=1$ bounds}\label{extend_diffeo}
In this subsection, we will show that our formalism sometimes obstructs extensions of certain diffeomorphisms on a $3$-manifold to its bounding $4$-manifold. Notably, these bounds will have $b^{+}=1$. The results established in this subsection will be useful for us to construct examples of strong corks later.

 \begin{thm}
\label{thm: nonext b+1}
Let $n$ be any odd positive integer, and let $\tau$ be the involution on
\begin{enumerate}
\item $-\Sigma(2,3,6n+1)$,

\item $-\Sigma(2,2n+1,4n+3)$.
\end{enumerate}
shown in Figure~\ref{equivariant_diagram}. Then $\tau$ does not extend over \textit{any} spin $4$-manifold $X$ bound with $b^{+}(X)=1$, $b_1(X)=0$ and $\sigma(X)<0$, preserving the orientation of $H^+(X)$.
\end{thm}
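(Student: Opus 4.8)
I would argue by contradiction. Suppose $\tau$ extends to an orientation-preserving diffeomorphism $\Phi\colon X\to X$ over a spin $4$-manifold $X$ with $b^+(X)=1$, $b_1(X)=0$, $\sigma(X)<0$, and with $\Phi$ preserving the orientation of $H^+(X)$. The plan is to cap $Y$ off on the opposite side by an explicit \emph{equivariant negative-definite} filling, thereby building a \emph{closed} spin$^c$ $4$-manifold with $b^+=1$ equipped with a diffeomorphism that preserves $H^+$ and conjugates a spin$^c$ structure with $\tfrac18(c_1^2-\sigma)>0$; the charge-conjugation inequality \cref{thm: charge conj} then yields a contradiction.

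\emph{The companion filling.} Here I would use that $-Y$ -- the Brieskorn sphere $\Sigma(2,3,6n+1)$ in case (1), $\Sigma(2,2n+1,4n+3)$ in case (2), with its standard orientation, which are $(-1/n)$-surgery on $T_{2,3}$ and $(-1)$-surgery on $T_{2,2n+1}$ respectively -- bounds a compact smooth negative-definite $4$-manifold $Z$ with $b_1(Z)=0$ over which $\tau$ extends to a diffeomorphism $\Psi$; here $\tau$ is, following \cite{DHM20}, the involution of \cref{equivariant_diagram}, induced by the strong inversion of the torus knot. Concretely $Z$ is an explicit handlebody from the surgery diagram -- the trace of the $(-1)$-framed $2$-handle along $T_{2,2n+1}$, with intersection form $\langle-1\rangle$, in case (2), and a negative-definite chain of $2$-handles realizing $-1/n$ in case (1) -- and, exactly as in the analysis around \cref{kirby_cobordism} in \cref{proof_main_exotic} (see also \cite[Section~5]{DHM20}), the relevant curves can be drawn symmetrically so that $\Psi$ acts as $-\mathrm{id}$ on $H^2(Z;\Z)$. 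Hence $\Psi^*\mathfrak{s}_Z=\overline{\mathfrak{s}}_Z$ for every spin$^c$ structure $\mathfrak{s}_Z$ on $Z$, and I would choose $\mathfrak{s}_Z$ with $c_1(\mathfrak{s}_Z)^2\ge\sigma(Z)$; this is automatic in case (2) (the form is $\langle-1\rangle$), and in general is the standard lower bound on the minimal characteristic square of a negative-definite unimodular lattice.

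\emph{The closed manifold.} Choose a spin structure $\mathfrak{s}_X$ on $X$, so $\Phi^*\mathfrak{s}_X=\mathfrak{s}_X=\overline{\mathfrak{s}}_X$. Set $N:=X\cup_Y Z$ (using $\partial X=Y$, $\partial Z=-Y$) and $f:=\Phi\cup\Psi$; since $\Phi|_Y=\tau=\Psi|_Y$ this is a well-defined orientation-preserving diffeomorphism of the closed manifold $N$. Because $Y$ is an integer homology sphere, $b_1(N)=0$, $b^+(N)=b^+(X)+b^+(Z)=1$, $\sigma(N)=\sigma(X)+\sigma(Z)$ (Novikov additivity), and $H^+(N)\cong H^+(X)$, on which $f$ acts as $\Phi^*$ and so preserves orientation; moreover gluing $\mathfrak{s}_X$ to $\mathfrak{s}_Z$ gives a spin$^c$ structure $\mathfrak{s}_N$ on $N$ with $f^*\mathfrak{s}_N=\overline{\mathfrak{s}}_N$ and $c_1(\mathfrak{s}_N)^2=c_1(\mathfrak{s}_Z)^2$. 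Applying \cref{thm: charge conj} to $(N,\mathfrak{s}_N,f)$ then gives
\[
0\ \ge\ \frac{c_1(\mathfrak{s}_N)^2-\sigma(N)}{8}\ =\ \frac{\bigl(c_1(\mathfrak{s}_Z)^2-\sigma(Z)\bigr)-\sigma(X)}{8}\ \ge\ \frac{-\sigma(X)}{8}\ >\ 0,
\]
a contradiction; hence no such $X$ exists.

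The families Seiberg--Witten input (\cref{thm: charge conj}) is already available and the closed-manifold bookkeeping is routine, so the main obstacle is the second step, the companion filling. One has to extract from \cref{equivariant_diagram} (equivalently, from the Montesinos/Kirby picture underlying \cite{DHM20}) that the displayed $\tau$ is the strong-inversion-induced involution, that the surgery curves defining $Z$ can be made symmetric so that $\tau$ extends over \emph{every} $2$-handle of $Z$, and that the extension reverses each cocore, hence acts by $-\mathrm{id}$ on $H^2(Z;\Z)$; should the last statement fail in part, one would instead need a characteristic class lying in the $(-1)$-eigenspace of $\Psi^*$ and of square $\ge\sigma(Z)$. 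The restriction to odd $n$ is inherited from this setup in \cite{DHM20}, where the displayed involution is guaranteed to have precisely these properties.
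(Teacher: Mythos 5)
Your argument is correct and takes essentially the same route as the paper's proof: one caps $Y$ off by an equivariant negative-definite piece over which $\tau$ extends $\spinc$-conjugatingly with a characteristic class of square equal to its signature, concatenates the resulting $\spinc$ structure with the spin structure on $X$, and applies \cref{thm: charge conj}. The only bookkeeping difference is that the paper takes the capping piece to be the equivariant cobordism to $\pm S^3$ drawn from \cite[proof of Lemmas~7.7 and 7.8]{DHM20}, whereas you close it up to a genuine filling $Z$ (deriving the $\spinc$-conjugation from the $(-\mathrm{id})$-action on $H^2(Z)$), which makes the glued-up $4$-manifold $N$ honestly closed so that \cref{thm: charge conj} applies verbatim.
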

\begin{figure}[h!]
\center
\includegraphics[scale=0.4]{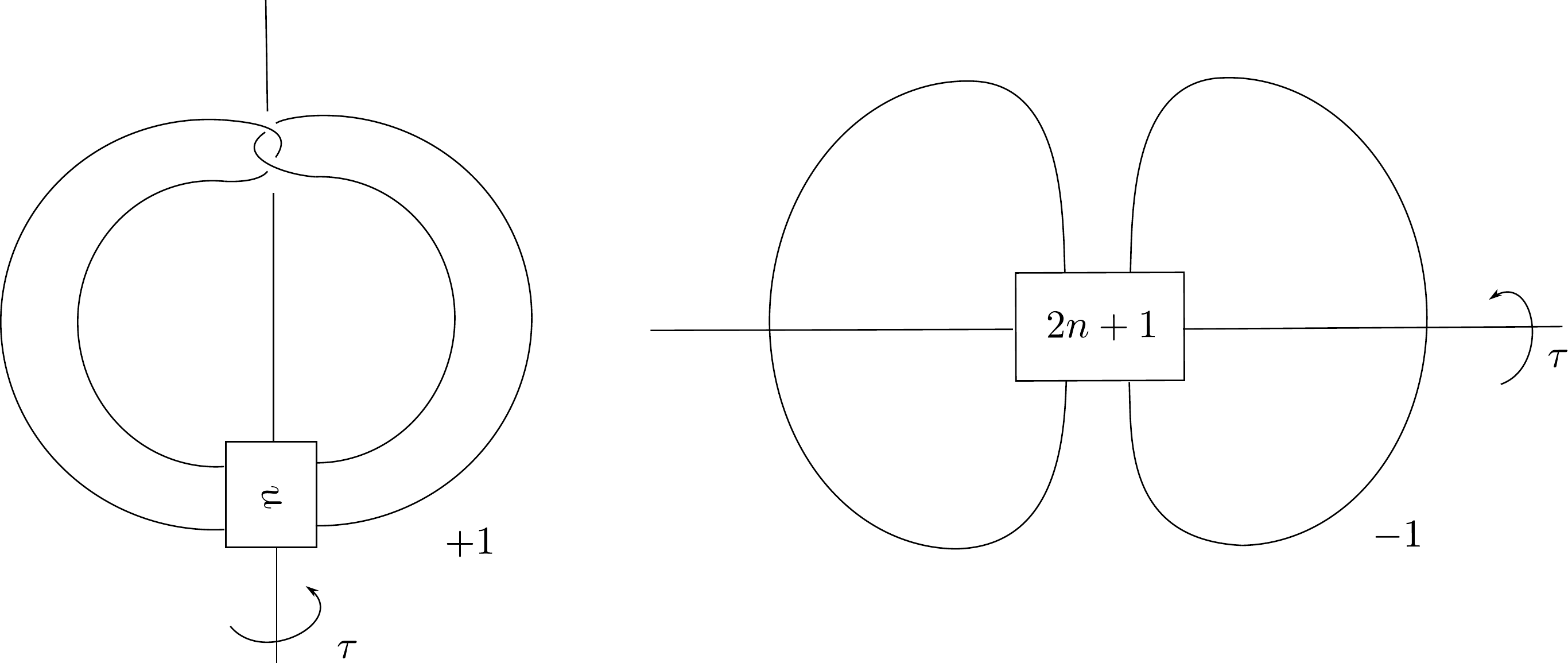}
\caption{Left: $\Sigma(2,3,6n+1)$ represented as surgery on twist-knots, together with $\tau$. Right:  $\Sigma(2,2n+1,4n+3)$ represented as surgery on torus knot, together with $\tau$.}\label{equivariant_diagram}
\end{figure}

\begin{proof}
Firstly, for each member of the two families of Brieskron spheres (with the specified orientation), in  \cite[proof of Lemma 7.7 and 7.8]{DHM20} the authors construct a negative-definite, $\spinc$-conjugating cobordism $W$ with intersection form $(-1,-1, \cdots, -1)$ (odd many) to $-S^3$ that is equivariant with respect to $\tau$ on the Brieskorn sphere and id on $-S^3$. To be specific, these cobordisms are gotten from inverting those constructed in \cite[proof of Lemma 7.7 and 7.8]{DHM20}. We denote the relevant diffeomorphism on $W$ by $f$ and the $\spinc$-structure which is conjugated by it as $\s$. Towards contradiction, suppose that some member among the above two families, say $\Sigma_0$, do admit a spin bound with $b^{+}(X)=1$ and $\sigma(X) < 0$, so that $\tau$ extends over it as $\tilde{\tau}$, preserving the orientation of $H^{+}(X)$. Let us define $Z:= X \cup_{\Sigma_0} W$. By concatenating the $\s$ with the spin structure from $X$, we obtain a $\spinc$-structure $\tilde{\s}$ on $Z$, which is conjugated by $\tilde{\tau} \cup f$. Hence by applying Theorem~\ref{thm: charge conj} to the tuple $(Z,\tilde{\s},\tilde{\tau} \cup f)$, we obtain a contradiction:
\[
\delta(S^3) \geq  \frac{c_{1}(\tilde{\s})^{2} - \sigma(Z)}{8} > 0.
\]

\end{proof}

We now state a similar result for another element of the mapping class group of the families Brieskorn spheres.
\begin{thm}\label{thm: nonext b+1_2}
Let $n$ be any odd positive integer, and let $\sigma$ be the involution on
\begin{enumerate}
\item $-\Sigma(2,3,6n+1)$,

\item $-\Sigma(2,2n+1,4n+3)$.
\end{enumerate}
shown in Figure~\ref{equivariant_diagram_2}. Then $\sigma$ does not extend over \textit{any} spin $4$-manifold $X$ bound with $b^{+}(X)=1$, $b_1(X)=0$ and $\sigma(X)<0$, reversing the orientation of $H^+(X)$.
\end{thm}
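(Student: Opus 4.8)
The plan is to run the argument of the proof of \cref{thm: nonext b+1} in its ``$\spinc$-preserving'' incarnation. Since $\sigma$ \emph{reverses} orientation of $H^+$ (rather than preserving it, as $\tau$ does in \cref{thm: nonext b+1}), after capping off with a suitable equivariant cobordism the glued diffeomorphism will be $H^+$-reversing and $\spinc$-\emph{preserving}, so the relevant obstruction is \cref{theo: main theo} (the families Fr\o yshov inequality) rather than \cref{thm: charge conj}.

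First I would record the input from \cite{DHM20}. For each of the two Brieskorn spheres $\Sigma_0\in\{-\Sigma(2,3,6n+1),\,-\Sigma(2,2n+1,4n+3)\}$, with the orientation in the statement and the involution $\sigma$ of \cref{equivariant_diagram_2}, one gets (mirroring \cite[proof of Lemma 7.7 and 7.8]{DHM20}, and inverting) a negative-definite cobordism $W$ from $\Sigma_0$ to $-S^3$, with $b_1(W)=0$ and intersection form $(-1)^{\oplus k}$ for some odd $k=k(n)$, equivariant with respect to $\sigma$ on $\Sigma_0$ and $\id$ on $-S^3$, and which is $\spinc$-\emph{preserving}: it carries a $\spinc$-structure $\s_W$ restricting to the unique spin structure on each boundary component, with $c_1(\s_W)^2=\sigma(W)=-k$, fixed by the equivariant diffeomorphism $f_W$ of $W$. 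This is the $\sigma$-counterpart of the $\spinc$-conjugating cobordism used for $\tau$ in \cref{thm: nonext b+1}; the two involutions of \cref{intro_corks_3} are distinguished precisely by whether the associated cobordism to $-S^3$ is $\spinc$-preserving or $\spinc$-reversing, and correspondingly by whether one invokes \cref{theo: main theo} or \cref{thm: charge conj}.

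Next, arguing by contradiction, suppose some such $\Sigma_0$ bounds a spin $4$-manifold $X$ with $b^+(X)=1$, $b_1(X)=0$, $\sigma(X)<0$, over which $\sigma$ extends to a diffeomorphism $\tilde\sigma$ reversing orientation of $H^+(X)$. Set $Z:=X\cup_{\Sigma_0}W$, so $\partial Z=-S^3$, $b_1(Z)=0$ and $b^+(Z)=b^+(X)+b^+(W)=1$. Gluing the spin structure of $X$ to $\s_W$ gives a $\spinc$-structure $\tilde\s$ on $Z$ with $c_1(\tilde\s)^2=c_1(\s_W)^2=-k$, and gluing $\tilde\sigma$ to $f_W$ gives an orientation-preserving diffeomorphism $\tilde f:=\tilde\sigma\cup f_W$ of $Z$ with $\tilde f|_{-S^3}=\id$. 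Since $\tilde\sigma$ fixes the spin structure of $X$ and $W$ is $\spinc$-preserving, $\tilde f^\ast\tilde\s=\tilde\s$; since $W$ is negative-definite we may take $H^+(Z)=H^+(X)$, on which $\tilde f$ acts as $\tilde\sigma$ and hence reverses orientation. Thus $(Z,\tilde\s,\tilde f)$ satisfies the hypotheses of \cref{theo: main theo}, giving
\[
\frac{c_1(\tilde\s)^2-\sigma(Z)}{8}\le \delta(-S^3)=0.
\]
But $\sigma(Z)=\sigma(X)+\sigma(W)=\sigma(X)-k$ and $c_1(\tilde\s)^2=-k$, so the left-hand side equals $-\sigma(X)/8>0$, a contradiction. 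Hence no such $(X,\tilde\sigma)$ exists.

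The one step demanding real care — and the main potential obstacle — is the first: correctly extracting from \cite{DHM20}, for the \emph{second} involution $\sigma$ (not $\tau$), the equivariant negative-definite cobordism to $-S^3$, verifying that it is $\spinc$-preserving with odd-rank diagonal $(-1)$-form (this is presumably also where the oddness of $n$ enters, paralleling the case distinction in \cref{cork_detection}(d)), so that the glued diffeomorphism lands exactly in the hypotheses of \cref{theo: main theo} and the grading shift $(c_1(\tilde\s)^2-\sigma(Z))/8$ comes out strictly positive. Everything downstream is the same orientation- and intersection-form bookkeeping as in the proof of \cref{thm: nonext b+1}.
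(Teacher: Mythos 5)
Your proof is correct and follows exactly the route the paper takes: extract the $\spinc$-preserving, negative-definite, equivariant cobordism $W$ from $\Sigma_0$ to $-S^3$ (from \cite[Lemma 7.7, 7.8]{DHM20}), glue to $X$, observe that the glued diffeomorphism is $\spinc$-preserving and $H^+$-reversing since $b^+(W)=0$, and apply \cref{theo: main theo} to contradict $\sigma(X)<0$. One small point worth noting: the paper's proof as written contains a typo where it says ``so that $\sigma$ extends over it as $\tilde\sigma$, preserving the orientation of $H^+(X)$''---this should read ``reversing'', as you correctly wrote; otherwise the hypotheses of \cref{theo: main theo} would not be met.
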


\begin{proof}
As before, for each member of the two families of Brieskorn spheres, in \cite[proof of Lemma 7.7 and 7.8]{DHM20} the authors construct a negative-definite, $\spinc$-preserving cobordism $W$ with intersection form $(-1,-1, \cdots, -1)$ (odd many) to $-S^3$ that is equivariant with respect to $\sigma$ on the Brieskorn sphere and id on $-S^3$.
\begin{figure}[h!]
\center
\includegraphics[scale=0.4]{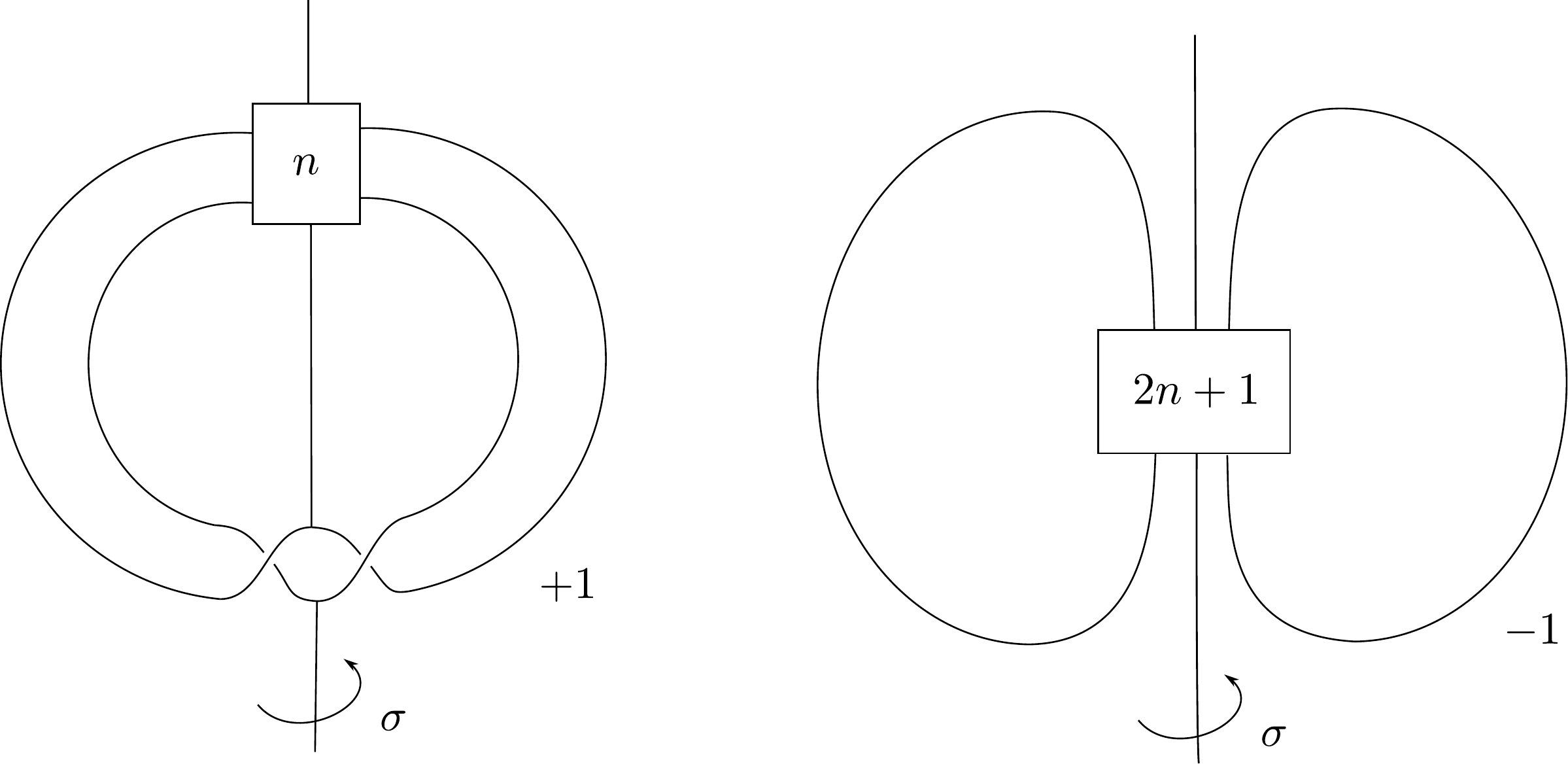}
\caption{Left: $\Sigma(2,3,6n+1)$ with $\sigma$. Right: $\Sigma(2,2n+1,4n+3)$ with $\sigma$.}\label{equivariant_diagram_2}
\end{figure}
Again towards a contradiction, suppose that some member among the above two families, say $\Sigma_0$, do admit a spin bound with $b^{+}(X)=1$ and $\sigma(X) < 0$, so that $\sigma$ extends over it as $\tilde{\sigma}$, preserving the orientation of $H^{+}(X)$. We define the tuple $(Z,\tilde{\s}, \tilde{\sigma} \cup f)$, exactly as in the proof of Theorem~\ref{thm: nonext b+1}. The only difference here is $f$ preserves $\s$, hence $\tilde{\sigma} \cup f$ preserves $\tilde{\s}$. We then apply Theorem~\ref{theo: main theo} to the tuple $(Z,\tilde{\s}, \tilde{\sigma} \cup f)$ to obtain a contradiction:
\[
\delta(S^3) \geq  \frac{c_{1}(\tilde{\s})^{2} - \sigma(Z)}{8} > 0.
\]

\end{proof}

\begin{ex}\label{example_bound}
We observe that both families of Brieskorn spheres from Theorem~\ref{thm: nonext b+1} and \ref{thm: nonext b+1_2} do bound $4$-manifolds with $b^{+}(X)=1$, $b_1(X)$ and $\sigma(X)< 0$. Indeed, Saveliev in \cite{Sa98} showed $-\Sigma(2,q,2qk+1)$ bounds a simply connected, spin $4$-manifold $X_{q,k}$ with intersection form
\[
\left(\dfrac{q+1}{4} \right) (-E_8) \oplus \begin{bmatrix}
0 & 1\\
1 & 0 
\end{bmatrix}. 
\]
Here, $E_8$ is the positive definite non-degenerate bilinear form of rank $8$. In particular, for putting $q=3$ with $k=n$ and $q=2n+1$ with $k=1$ in $-\Sigma(2,q,2qk+1)$ yields corresponding bounds for the Brieskorn spheres $-\Sigma(2,3,6n+1)$ and $-\Sigma(2,2n+1,4n+3)$ respectively.
\end{ex}
\noindent
Of course, we can modify the hypothesis of Theorem~\ref{thm: nonext b+1} and \ref{thm: nonext b+1_2} to produce a more general statement. We include it below for completeness.
\begin{thm}
Let $(Y,\phi)$ be a pair of an integer homology sphere and a diffeomorphism $\phi$ on it. Suppose that there is an equivariant, $\spinc$-preserving, negative-definite cobordism $W$ from $(Y,\phi)$ to $(Y_0,\mathrm{id})$, where $Y_0$ is some integer homology sphere. Then $\phi$ does not extend over \textit{any} spin $4$-manifold $X$ bound with $b^{+}(X)=1$, $b_1(X)=0$ and $\sigma(X)<0$, reversing the orientation of $H^+(X)$. If $W$ was $\spinc$-reversing instead, then $\phi$ does not extend over $X$, preserving the orientation of $H^+(X)$.
\end{thm}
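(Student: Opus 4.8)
The plan is to carry out the arguments of \cref{thm: nonext b+1} and \cref{thm: nonext b+1_2} in the abstract, with $W$ playing the role of the cobordisms that were extracted from \cite{DHM20} there. Suppose first that $W$ is $\spinc$-preserving and, for contradiction, that $\phi$ extends to an orientation-preserving diffeomorphism $\widetilde{\phi}$ over a spin $4$-manifold $X$ with $\partial X=Y$, $b^+(X)=1$, $b_1(X)=0$, $\sigma(X)<0$, such that $\widetilde{\phi}$ reverses orientation of $H^+(X)$. Since $Y$ is an integer homology sphere it carries a unique $\spinc$ structure, and we may assume $\widetilde{\phi}$ fixes the spin structure $\mathfrak{s}_X$ of $X$ (automatic when $X$ is simply connected, as in all the applications).

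Form $Z:=X\cup_Y W$, with $\partial Z=Y_0$. Because $W$ is a negative-definite cobordism with $b_1(W)=0$, one has $b^+(Z)=b^+(X)=1$, $b_1(Z)=0$, and a maximal positive-definite subspace of $H^2(Z;\R)$ may be taken equal to $H^+(X)$. Concatenating $\mathfrak{s}_X$ with the $\spinc$ structure $\mathfrak{s}_W$ on $W$ — they agree over $Y$ — yields a $\spinc$ structure $\widetilde{\mathfrak{s}}$ on $Z$; writing $f$ for the diffeomorphism of $W$ realizing the equivariance, $\Psi:=\widetilde{\phi}\cup_Y f$ is an orientation-preserving diffeomorphism of $Z$ with $\Psi^\ast\widetilde{\mathfrak{s}}=\widetilde{\mathfrak{s}}$, with $\Psi|_{\partial Z}=\mathrm{id}_{Y_0}$ (here we use that $W$ is a cobordism \emph{to} $(Y_0,\mathrm{id})$), and with $\Psi$ reversing orientation of $H^+(Z)=H^+(X)$. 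As $Y$ is a rational homology sphere, the intersection form of $Z$ splits orthogonally, so $\sigma(Z)=\sigma(X)+\sigma(W)$ and $c_1(\widetilde{\mathfrak{s}})^2=c_1(\mathfrak{s}_W)^2$. Applying \cref{theo: main theo} to $(Z,\widetilde{\mathfrak{s}},\Psi)$ gives
\[
\frac{\bigl(c_1(\mathfrak{s}_W)^2-\sigma(W)\bigr)+\bigl(-\sigma(X)\bigr)}{8}=\frac{c_1(\widetilde{\mathfrak{s}})^2-\sigma(Z)}{8}\leq\delta(Y_0).
\]
For the cobordisms of \cite{DHM20} the intersection form of $W$ is $(-1)^{\oplus b_2(W)}$ with $\mathfrak{s}_W$ characteristic, so $c_1(\mathfrak{s}_W)^2-\sigma(W)=0$ (equivalently $\Delta(W,\mathfrak{s}_W)=0$); since $\sigma(X)<0$ and $\delta(Y_0)\leq0$ — in particular $Y_0=S^3$, the case occurring in all applications, has $\delta(S^3)=0$ — this is a contradiction. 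The $\spinc$-reversing case is identical: $\Psi$ now satisfies $\Psi^\ast\widetilde{\mathfrak{s}}=\overline{\widetilde{\mathfrak{s}}}$ and preserves orientation of $H^+(Z)$, and one invokes the charge-conjugation constraint \cref{thm: charge conj} in place of \cref{theo: main theo} — applied after capping $\partial Z$ off with a $4$-ball when $Y_0=S^3$, and via a boundary-enhanced version of \cref{thm: charge conj} (with $\delta(Y_0)$ appearing on the right-hand side, proved exactly as in the closed case using \cref{prop: BU}) in general — to reach the same contradiction.

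Thus the proof is essentially a formal consequence of \cref{theo: main theo}, \cref{thm: charge conj}, and the gluing construction; all of the analysis is imported. The one point that requires care — the main obstacle, such as it is — is the numerical accounting in the displayed inequality: one must ensure that the index contribution $c_1(\mathfrak{s}_W)^2-\sigma(W)$ of the auxiliary cobordism is nonnegative (this is where a normalization such as $\Delta(W,\mathfrak{s}_W)=0$ on $W$ enters) so that it does not absorb the strictly positive term $-\sigma(X)/8$ coming from the spin bound, and that $Z$ genuinely has $b^+=1$ and $b_1=0$ so that \cref{theo: main theo} applies.
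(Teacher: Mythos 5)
Your proof is correct and is the argument the paper intends (the paper's own proof is just ``Similar to the proof of Theorem~\ref{thm: nonext b+1} and \ref{thm: nonext b+1_2}''): glue $Z = X\cup_Y W$, concatenate the spin$^c$ structures and diffeomorphisms, observe $b^+(Z)=b^+(X)$ and $\sigma(Z)=\sigma(X)+\sigma(W)$ by Mayer--Vietoris and Novikov additivity across the homology sphere $Y$, and invoke \cref{theo: main theo} (resp.\ \cref{thm: charge conj}).

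Your closing remarks put your finger on a genuine imprecision in the theorem \emph{statement}, not in your proof. The displayed inequality gives $\tfrac{c_1(\fraks_W)^2-\sigma(W)-\sigma(X)}{8}\le \delta(Y_0)$, and a contradiction requires $c_1(\fraks_W)^2-\sigma(W)-\sigma(X)>8\delta(Y_0)$; this does \emph{not} follow from ``$W$ negative definite'' and ``$\sigma(X)<0$'' alone. Indeed $c_1(\fraks_W)^2-\sigma(W)\le 0$ always holds for a negative-definite $W$ with characteristic $c_1$, and $\delta(Y_0)$ is unconstrained in the statement. The argument works in the paper's applications precisely because there $W$ has diagonal $(-1)$ intersection form with $c_1=(\pm1,\dots,\pm1)$, giving $c_1(\fraks_W)^2-\sigma(W)=0$ (equivalently $\Delta(W,\fraks_W)=0$), and $Y_0=\pm S^3$ so $\delta(Y_0)=0$; the statement should carry a hypothesis to this effect. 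Likewise $b_1(W)=0$ is tacitly used (so that $b_1(Z)=0$ and \cref{theo: main theo} applies) but not stated. Finally, you are right that \cref{thm: charge conj} is only formulated for closed 4-manifolds, so the $\spinc$-reversing branch as written requires either capping $Y_0=S^3$ with a 4-ball (which suffices for every example in the paper) or a relative version of \cref{thm: charge conj}, which the paper does not state but which follows from \cref{prop: BU} exactly as you say.
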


\begin{proof}
Similar to the proof of Theorem~\ref{thm: nonext b+1} and \ref{thm: nonext b+1_2}.
\end{proof}

\subsection{Examples of strong corks}
We now use our obstruction to demonstrate examples of strong corks. Before diving into the proof, we recall that the mapping class group of Brieskorn homology spheres of the form $\Sigma(p,q,r)$ is $\mathbb{Z}/2\mathbb{Z}$ \cite{BO, MS}, unless the sphere is $S^3$ or $\Sigma(2,3,5)$. We will now establish several Lemma for the ease of the argument.
\begin{lem}\label{lemma_1}
Let $Y_n$ be any member of families of Brieskorn spheres from Theorem~\ref{thm: nonext b+1} and equipped with the diffeomorphism $\tau$. Then $\tau$ is the non-trivial element in the mapping class group of $Y_n$.
\end{lem}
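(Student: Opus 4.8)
The plan is to argue by contradiction, playing the non-extension statement of \cref{thm: nonext b+1} against the existence of a bound supplied by \cref{example_bound}. Recall that each $Y_n$ occurring in \cref{thm: nonext b+1} is $-\Sigma(2,3,6n+1)$ or $-\Sigma(2,2n+1,4n+3)$ with $n$ odd, and none of these is $S^3$ or $\pm\Sigma(2,3,5)$; hence $\pi_0\Diff(Y_n)\cong\mathbb{Z}/2\mathbb{Z}$ by \cite{BO, MS} (using that $\pi_0\Diff(-Z)\cong\pi_0\Diff(Z)$). Since $\tau$ is an involution, $[\tau]$ is either trivial or the unique non-trivial class, so it suffices to prove that $\tau$ is not smoothly isotopic to $\id_{Y_n}$.

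First I would invoke \cref{example_bound}: each such $Y_n$ bounds a smooth, simply connected spin $4$-manifold $X$, namely Saveliev's $X_{q,k}$ with $(q,k)=(3,n)$ or $(2n+1,1)$, whose intersection form consists of copies of $-E_8$ together with one hyperbolic summand; in particular $b^+(X)=1$, $b_1(X)=0$ and $\sigma(X)<0$. Then, toward a contradiction, suppose there is a smooth isotopy $\{\tau_t\}_{t\in[0,1]}$ with $\tau_0=\id_{Y_n}$ and $\tau_1=\tau$. Fixing a collar embedding $c\colon Y_n\times[0,1]\hookrightarrow X$ of $\partial X=Y_n$, define $\Phi\colon X\to X$ by $\Phi=\id$ on the complement of the open collar and $\Phi(c(y,s))=c(\tau_{1-s}(y),s)$ on the collar. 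This is an orientation-preserving diffeomorphism of $X$ restricting to $\tau$ on $\partial X$, and rescaling the isotopy parameter exhibits $\Phi$ as smoothly isotopic to $\id_X$; in particular $\Phi^\ast$ is the identity on $H^2(X;\R)$, so $\Phi$ preserves the orientation of $H^+(X)$. Thus $\tau$ extends over a spin $4$-manifold with $b^+=1$, $b_1=0$ and $\sigma<0$ in an $H^+$-preserving manner, contradicting \cref{thm: nonext b+1}. Hence $\tau\not\simeq\id_{Y_n}$, and $[\tau]$ is the non-trivial element of the mapping class group.

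I do not anticipate a substantive obstacle: the only points requiring a little care are the verification that the collar construction yields a diffeomorphism isotopic to the identity (so that it acts trivially on cohomology and therefore preserves $H^+$-orientation) and keeping orientation conventions consistent, so that the Saveliev bounds attached to $-\Sigma(2,3,6n+1)$ and $-\Sigma(2,2n+1,4n+3)$ genuinely have the negative signature required by \cref{thm: nonext b+1}. Both are routine.
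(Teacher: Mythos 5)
Your argument is correct and is essentially the paper's proof, fleshed out: both assume $\tau\simeq\id$, extend over the Saveliev bound in an $H^+$-orientation-preserving way, and contradict Theorem~\ref{thm: nonext b+1}. The one added observation you make (that $\pi_0\Diff(Y_n)\cong\Z/2$ so non-isotopy to the identity suffices) is implicit in the paper's statement, and you also silently correct a slip in the paper's wording, which calls the Example~\ref{example_bound} bound ``negative-definite'' even though it has $b^+=1$.
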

\begin{proof}
If $\tau$ is isotopic to identity in $Y_n$, then $\tau$ would admit an extension to the negative-definite, simply connected, spin bound (say $Z_n$) described in Example~\ref{example_bound}, preserving the orientation of $H^+(Z_n)$. This contradicts Theorem~\ref{thm: nonext b+1}.
\end{proof}

\begin{lem}\label{lemma_2}
Let $X_n$ be defined by the plumbing diagram given in Figure~\ref{plumb}. Then $\partial X_n= -\Sigma(2,3,6n+1)$, for odd positive integers $n$. Moreover, the strong involution $\tau$ depicted in Figure~\ref{plumb} extends over $X_n$, reversing the orientation of $H^{+}(X_n)$.
\end{lem}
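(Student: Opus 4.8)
The plan is to extract both statements directly from the plumbing diagram of Figure~\ref{plumb}, carrying the drawn symmetry through the calculus at every stage; throughout, $n$ is a fixed odd positive integer. \emph{Identifying the boundary.} I would first translate the weighted graph of Figure~\ref{plumb} into a framed link and simplify it by the usual plumbing calculus --- blow-downs of $\pm1$-framed unknots, slam-dunks, and Rolfsen twists --- until it becomes a known surgery presentation of $\Sigma(2,3,6n+1)$, e.g.\ the surgery-on-twist-knots presentation of Figure~\ref{equivariant_diagram} or, equivalently, its star-shaped Seifert presentation after matching continued fractions along the arms. Keeping track of orientations throughout the reduction then yields $\partial X_n = -\Sigma(2,3,6n+1)$. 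Since $n$ enters only through the length of one arm of the graph, this is a single computation parametrized by $n$.

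\emph{Extending $\tau$.} The involution drawn in Figure~\ref{plumb} is an honest symmetry of the plumbing data: a reflection of the graph, possibly combined with the standard involution of the relevant $D^2$-bundles over $S^2$. By the equivariant plumbing construction it is therefore realized by a smooth involution of $X_n$, which I again denote $\tau$, and its restriction to $\partial X_n = -\Sigma(2,3,6n+1)$ is an orientation-preserving involution. To see that $\tau|_{\partial X_n}$ coincides up to isotopy with the involution of Figure~\ref{equivariant_diagram}, I would redo the reduction of the previous paragraph equivariantly: each blow-down, slam-dunk, and Rolfsen twist can be arranged to respect the drawn symmetry, the modified components being either fixed or interchanged in pairs, so the symmetry on the terminal diagram is precisely the $\tau$ of Figure~\ref{equivariant_diagram}. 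As a consistency check one may invoke $\mathrm{MCG}(\Sigma(2,3,6n+1)) \cong \Z/2$ (\cite{BO, MS}): it then suffices to know $\tau|_{\partial X_n}$ is not isotopic to the identity, and this identifies it with the nontrivial class, which by Lemma~\ref{lemma_1} is represented by the involution of Figure~\ref{equivariant_diagram}. In particular that involution extends over $X_n$.

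\emph{Action on $H^+$.} From Figure~\ref{plumb} one reads off the basis of $H_2(X_n;\Z)$ given by the plumbing spheres, the intersection form (the weighted adjacency matrix), and the signed permutation by which $\tau^*$ acts on this basis; a finite computation with this matrix shows $b^+(X_n) = 1$. Since $\tau^*$ is an involutory isometry of the form, $H_2(X_n;\R)$ splits orthogonally into its $(+1)$- and $(-1)$-eigenspaces, and it remains to check that the one-dimensional positive part lies in the $(-1)$-eigenspace --- equivalently, that there is a class $e \in H_2(X_n;\R)$ with $e \cdot e > 0$ and $\tau^* e = -e$ (for instance the antisymmetric combination of the two sphere classes interchanged by $\tau$, or the class of a single plumbing sphere whose base $2$-sphere is reversed by $\tau$). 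Granting this, take $H^+(X_n) = \R e$ oriented by $e$; then $\tau^*(H^+(X_n)) = H^+(X_n)$ while $\tau^* e = -e$, so $\tau$ reverses the orientation of $H^+(X_n)$, which is the assertion.

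\emph{The main obstacle.} The bookkeeping in the first and third paragraphs is routine once the diagram is fixed; the genuinely delicate point is the equivariant reduction in the second paragraph, i.e.\ verifying that the involution one extends over $X_n$ restricts on the boundary to exactly (up to isotopy) the $\tau$ of Figure~\ref{equivariant_diagram}. This amounts to running the plumbing calculus compatibly with the drawn symmetry at every step, in the spirit of the equivariant Kirby calculus already used in \cite{DHM20}; the $\mathrm{MCG} \cong \Z/2$ argument above is the safety net that makes the bookkeeping of the symmetry unnecessary beyond detecting non-triviality.
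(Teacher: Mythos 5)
Your treatment of the first two assertions — identifying $\partial X_n$ via the plumbing calculus (the paper cites the Saveliev construction \cite{Sa98}) and extending $\tau$ equivariantly over the $2$-handlebody (the paper cites the reasoning of \cite[Section 5]{DHM20}) — is essentially the paper's route. The divergence is in how you establish that the extension reverses the orientation of $H^+(X_n)$. You propose to read off the signed permutation matrix of $\tau^\ast$ on $H_2(X_n;\Z)$ from Figure~\ref{plumb} and then exhibit a class $e$ with $e\cdot e > 0$ and $\tau^\ast e = -e$, but you explicitly defer this verification (``Granting this \ldots''); for a $-E_8\oplus H$ plumbing the positive-definite line lives in a linear combination of sphere classes rather than a single one, and whether it lands in the $(-1)$-eigenspace of $\tau^\ast$ is precisely the content you have not checked. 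The paper sidesteps the computation entirely with a contradiction: having shown $\tau|_{\partial X_n}$ is the nontrivial mapping class (via the equivariant blowdowns realizing it as complex conjugation, cf.\ \cite{BO,MS}), hence isotopic to the involution of Theorem~\ref{thm: nonext b+1} by Lemma~\ref{lemma_1} and $\mathrm{MCG}\cong\Z/2$, one observes that if the extension preserved orientation of $H^+(X_n)$ then $X_n$ itself (spin, $b^+ = 1$, $b_1 = 0$, $\sigma = -8 < 0$) would be a bound violating Theorem~\ref{thm: nonext b+1}. You already invoke the $\mathrm{MCG}\cong\Z/2$ identification as a ``safety net,'' so you have the pieces in hand; feeding them into the gauge-theoretic obstruction gives the $H^+$-reversal for free and removes the one step in your argument that is currently an IOU. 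Your direct approach could also succeed, but only after the deferred linear algebra on the plumbing is actually supplied.
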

\begin{proof}
The first assertion follows from \cite{Sa98}, see Example~\ref{example_bound}. The fact that $\tau$ extends over the $2$-handlebody follows by a similar reasoning given in \cite[Section 5]{DHM20}. 
\begin{figure}[h!]
\center
\includegraphics[scale=0.7]{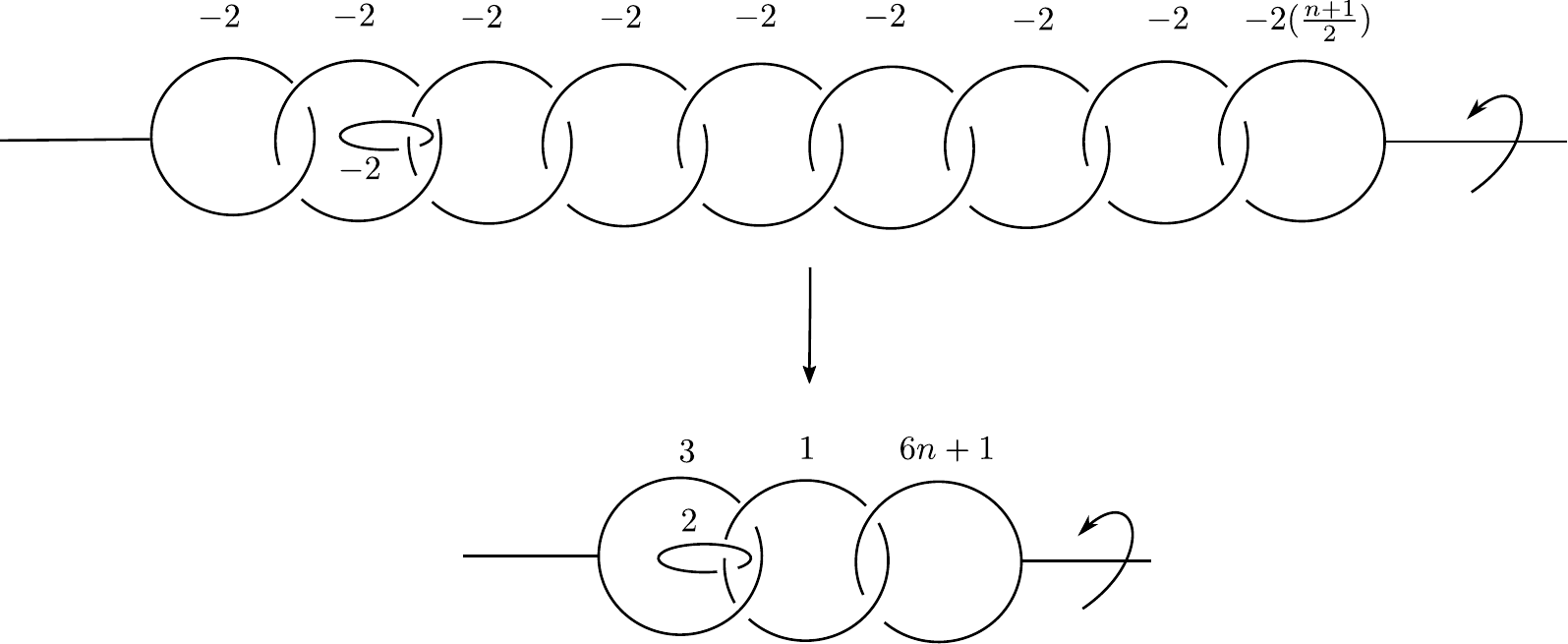}
\caption{Top: The plumbing diagram for $X_n$ with boundary $-\Sigma(2,3,6n+1)$. Bottom: Diagram for $X_n$ after equivariant blowups and blowdowns.}\label{plumb}
\end{figure}
We now claim that $\tau$ on $-\Sigma(2,3,6n+1)$ is the non-trivial mapping class element. This can be seen in many different ways. For example, after a combination of equivariant blow-ups and blow-downs, it can be seen $\tau$ is in fact induced from the complex conjugation action (after identifying the Brieskorn sphere with a link of singularity ) on $-\Sigma(2,3,6n+1)$, which is known to be the non-trivial element of the mapping class group of the Brieskorn spheres by \cite{BO,MS}, see Figure~\ref{plumb}. Now suppose that extension of $\tau$ to $X_n$ preserves the orientation of $H^+(X_n)$. Then we can isotope $\tau$ on $-\Sigma(2,3,6n+1)$, so that it agrees with $\tau$ from Lemma~\ref{lemma_1} and apply Theorem~\ref{thm: nonext b+1} to obtain a contradiction. 
\end{proof}

\begin{lem}\label{lemma_3}Let $\sigma$ be the diffeomorphism on $-\Sigma(2,3,6n+1)$ from Theorem~\ref{thm: nonext b+1_2}, for $n$ odd. Then $\sigma$ is the trivial element in the mapping class group of $-\Sigma(2,3,6n+1)$.
\end{lem}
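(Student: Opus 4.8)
The plan is to combine the known computation of the mapping class group with the non-extendability statement \ref{thm: nonext b+1_2}. Since $n$ is odd, $-\Sigma(2,3,6n+1)$ is neither $S^3$ nor $\Sigma(2,3,5)$, so by \cite{BO, MS} its mapping class group is $\Z/2\Z$; and by Lemma~\ref{lemma_1} (applied with $Y_n=-\Sigma(2,3,6n+1)$, which is one of the families appearing in Theorem~\ref{thm: nonext b+1}) the involution $\tau$ represents the non-trivial class. Hence $\sigma$ is isotopic either to $\id$ or to $\tau$, and it suffices to rule out $\sigma\simeq\tau$.

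To do that I would argue by contradiction and feed everything into Theorem~\ref{thm: nonext b+1_2}. By Lemma~\ref{lemma_2}, the non-trivial mapping class extends over the Saveliev bound $X_n$ of Example~\ref{example_bound} by a diffeomorphism $\widetilde\tau$ that reverses the orientation of $H^+(X_n)$; and $X_n$, having intersection form $(-E_8)$ plus a hyperbolic summand, is simply connected, spin, with $b^+(X_n)=1$, $b_1(X_n)=0$ and signature $-8<0$ --- precisely the hypotheses of Theorem~\ref{thm: nonext b+1_2}. If $\sigma$ were isotopic to $\tau$, I would promote $\widetilde\tau$ to an extension of $\sigma$: choose an isotopy $\{\psi_t\}_{t\in[0,1]}$ of $-\Sigma(2,3,6n+1)$ from $\id$ to $\sigma\circ\tau^{-1}$, realize it in a collar $\partial X_n\times[0,1]\subset X_n$ as a diffeomorphism $g$ of $X_n$ supported in the collar with $g|_{\partial X_n}=\sigma\circ\tau^{-1}$, and set $\widetilde\sigma:=g\circ\widetilde\tau$. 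Then $\widetilde\sigma|_{\partial X_n}=\sigma$, and since $g$ is isotopic to $\id_{X_n}$ we have $\widetilde\sigma^\ast=\widetilde\tau^\ast$ on $H^2(X_n;\R)$, so $\widetilde\sigma$ still reverses the orientation of $H^+(X_n)$. This contradicts Theorem~\ref{thm: nonext b+1_2}, so $\sigma$ must be trivial.

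The only point that needs any care --- and the step I would flag as the main obstacle, though it is routine --- is the collar-promotion: checking that an isotopy of $\partial X_n$ can be pushed into a collar to yield a diffeomorphism of $X_n$ realizing $\sigma\circ\tau^{-1}$ on the boundary and acting as the identity on cohomology, so that precomposing $\widetilde\tau$ with it leaves the behavior on $H^+(X_n)$ unchanged. Everything else is bookkeeping: confirming that the Saveliev bound from Example~\ref{example_bound} meets the hypotheses of Theorem~\ref{thm: nonext b+1_2}, and invoking the $\Z/2\Z$ mapping class group together with Lemmas~\ref{lemma_1} and \ref{lemma_2}.
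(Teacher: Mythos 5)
Your argument is correct and is essentially the paper's proof: assume $\sigma$ is the non-trivial class, isotope it to $\tau$ using Lemma~\ref{lemma_1} and the $\Z/2$ mapping class group, conclude from Lemma~\ref{lemma_2} that $\sigma$ would then extend over $X_n$ reversing the orientation of $H^+(X_n)$, and derive a contradiction with Theorem~\ref{thm: nonext b+1_2}. The paper states the isotopy step without detail; your explicit collar-promotion filling in that a collar-supported diffeomorphism realizing the boundary isotopy acts trivially on $H^2(X_n;\R)$ is exactly the standard verification the paper leaves implicit.
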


\begin{proof}Suppose it is the non-trivial mapping class element. Then we can isotope it so that it equals $\tau$ from Lemma~\ref{lemma_2}. Then by Lemma~\ref{lemma_2} $\sigma$ extends over $X_n$, reversing the orientation of $H^+(X)$, contradicting Theorem~\ref{thm: nonext b+1_2}.
\end{proof}

\begin{rem}Both Lemma~\ref{lemma_2} and Lemma~\ref{lemma_3} also holds for the family $-\Sigma(2,2n+1,4n+3)$, with similar proofs. We omit them for brevity.
\end{rem}
\noindent
We are now in place to furnish examples of strong corks.

\begin{proof}[Proof of Theorem~\ref{cork_detection}]
We find it useful to include a sketch of the proof to convey the main idea. Suppose that we want to prove $(Y, \phi)$ is a strong cork.
\begin{figure}[h!]
\center
\includegraphics[scale=0.7]{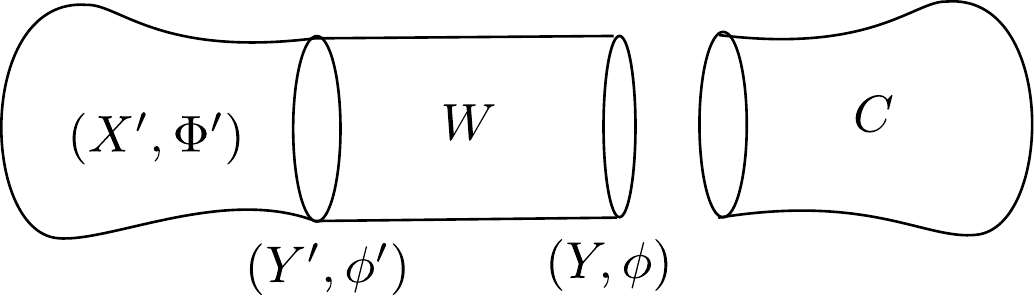}
\caption{Schematic description of the $4$-manifold $X$ used in the proof.}\label{sketch}
\end{figure}
Our strategy for constructing a tuple $(X, \s, \Phi)$ for $(Y, \phi)$ satisfying the hypothesis of Theorem~\ref{strong_cork} or Theorem~\ref{strong_cork_conjugation} will be as follows. We will start by finding a suitable pair $(Y^{\prime}, \phi^{\prime})$ such that $Y^{\prime}$ bounds a $4$-manifold $X^{\prime}$, which satisfies the following conditions:

\begin{enumerate}[label=(\alph*)]

\item There exists a diffeomorphism $\Phi^{\prime}$ on $X^{\prime}$ which restricts to $\phi^{\prime}$ on the boundary,

\item $X^{\prime}$ has a unique spin structure $\s^{\prime}$. 

\item ${\Phi^{\prime}}$ reverses (or preserves) orientation of $H^+(X^{\prime})$,

\item $\frac{c_1(\fraks^{\prime})^2-\sigma(X^{\prime})}{8}= \frac{-\sigma(X^{\prime})}{8} > 0$.

\end{enumerate}
After finding such a pair $(Y^{\prime},\phi^{\prime})$ and a tuple $(X^{\prime}, \s^{\prime}, \Phi^{\prime})$, we will then construct an equivariant, negative-definite, $\spinc$-preserving (or reversing) cobordism $W$ from  $(Y^{\prime},\phi)$ to $(Y,\phi)$ and glue it to $X^{\prime}$ along $Y^{\prime}$. Now let us define $X:= X^{\prime} \cup_{Y^{\prime}} W$. It follows that $X$ will satisfy the hypothesis of Theorem~\ref{strong_cork} (or \ref{strong_cork_conjugation}). A schematic diagram of this is depicted in Figure~\ref{sketch}. This completes our sketch.

Hence it is enough to give an explicit construction for the aforementioned $4$-manifolds $X^{\prime}$ and $W$ for each of the candidate strong corks $(Y,\phi)$. Depending on the case, $X^{\prime}$ will be one of the members of the families of Brieskorn spheres from Theorem~\ref{thm: nonext b+1} or \ref{thm: nonext b+1_2}, while $W$ will be constructed using various equivariant cobordisms used in \cite{DHM20}.

\textit{Proof of $(a)$}: Let $(Y,\phi)$ represent any of surgered $3$-manifold with its specified involution. Let us take $Y^{\prime}$ as $-\Sigma(2,3,7)$ and $X^{\prime}$ as the $-E_8 \oplus H$-bound for $-\Sigma(2,3,7)$ from Example~\ref{example_bound}. Here we think of $-\Sigma(2,3,7)$ as coming from the boundary of the plumbing diagram in Lemma~\ref{lemma_2}. We do not quite specify the involution on $Y^{\prime}$ yet. Now in \cite[Figure 32]{DHM20} the authors constructed an equivariant, negative-definite cobordism $W^{\prime}$ from $(Y,\phi)$ to $\Sigma(2,3,7)$, equipped with some diffeomorphism $\mathfrak{f}$. Now following the algorithm described in \cite{saveliev2002invariants}, we can identify $\mathfrak{f}$ as covering involution of a knot in $S^3$. It can then be checked that when $W^{\prime}$ is $\spinc$-preserving, $\mathfrak{f}$ represent the non-trivial element of the mapping class group of $\Sigma(2,3,7)$. If however, $W^{\prime}$ was $\spinc$-reversing then $\mathfrak{f}$ on $\Sigma(2,3,7)$ is the trivial element of the mapping class group. In both cases, we will revert $W^{\prime}$ to obtain $W$, a $\spinc$-preserving/reversing, equivariant, negative-definite cobordism from $(-\Sigma(2,3,7), \mathfrak{f})$ to $(-Y,\phi)$. For a specific $(Y,\phi)$, let us first assume that $W^{\prime}$ is $\spinc$-preserving. Then we can isotope $\mathfrak{f}$ so that it equals $\tau$ on $\partial X^{\prime}$ coming from Lemma~\ref{lemma_2}. Hence $\mathfrak{f}$ admit an extension to $X^{\prime}$. As explained in the sketch above, applying Theorem~\ref{strong_cork} to $X:= X^{\prime} \cup_{Y^{\prime}} W$, we get that $(-Y,\phi)$ and hence $(Y,\phi)$ is a strong cork.

If $W^{\prime}$ was $\spinc$-conjugating instead, we can isotope $\mathfrak{f}$ on $-\Sigma(2,3,7)$ to identity. In particular, $\mathfrak{f}$ extends to $X^{\prime}$ preserving the orientation of $H^{+}(X^{\prime})$. Again by applying Theorem~\ref{strong_cork_conjugation} to $X:= X^{\prime} \cup_{Y^{\prime}} W$, we get that $(Y,\phi)$ is a strong cork.

\textit{Proof of $(b)$}: In \cite[proof of Theorem 1.12]{DHM20} the authors constructed an equivariant, negative-definite, $\spinc$-preserving cobordism $W^{\prime}$ from $(M_n,\tau)$ to $(\Sigma(2,3,7),\tau)$ (here $\Sigma(2,3,7)$ and $\tau$ refers to the description from Theorem~\ref{thm: nonext b+1}). Hence, we invert $W^{\prime}$ to get $W$ and pick $X^{\prime}$ as the $-E_8 \oplus H$ bound for $-\Sigma(2,3,7)$. A combination of Lemma~\ref{lemma_1} and \ref{lemma_2} again implies that, this choice is sufficient for the conclusion.

\textit{Proof of $(c)$}: In \cite[proof of Theorem 1.13]{DHM20} the authors constructed an equivariant, negative-definite, $\spinc$-preserving cobordism $W$ from $(W_n,\tau)$ to $(\Sigma(2,2n+1,4n+3),\tau)$. Again from the argument similar to that in Lemma~\ref{lemma_1} and \ref{lemma_2} $\tau$ on $-\Sigma(2,2n+1,4n+3)$ extends to $X_n$ reversing the orientation of $H^{+}(X_n)$, for $n$ odd.

\textit{Proof of $(d)$}: In \cite[proof of Theorem 1.10]{DHM20} the authors constructed the following:
\begin{enumerate}

\item For $n$ odd, an equivariant, negative-definite cobordism from $(S^{3}_{+1} (\overline{K}_{-n,n+1}, \tau)$ to $(\Sigma(2,3,6n+1), \sigma)$ which is $\spinc$-conjugating.

\item For $n$ odd, an equivariant, negative-definite cobordism from $(S^{3}_{+1} (\overline{K}_{-n,n+1}, \sigma)$ to $(\Sigma(2,3,6n+1), \tau)$ which is $\spinc$-preserving.

\item For $n$ even, an equivariant, negative-definite cobordism from $(S^{3}_{+1} (\overline{K}_{-n,n+1}, \tau)$ to $(\Sigma(2,3,6(n+1)+1), \tau )$ which is $\spinc$-preserving.

\item For $n$ even, an equivariant, negative-definite, from $(S^{3}_{+1} (\overline{K}_{-n,n+1}, \sigma)$ to $(\Sigma(2,3,6(n+1)+1), \sigma)$ which is $\spinc$-conjugating.

\end{enumerate} 
\noindent
Here $\tau$ and $\sigma$ on $\Sigma(2,3,6n+1)$ are as in Theroem~\ref{thm: nonext b+1} and \ref{thm: nonext b+1_2}. By arguing similarly as above, we obtain that the following are all strong corks.
\[
\begin{cases}
(S^3_{1}(\overline{K}_{-n, n+1}), \tau) \; \text{and} \; (S^3_{1}(\overline{K}_{-n, n+1}), \sigma) &\text{if } n \text{ is odd}\\
(S^3_{1}(K_{-n, n+1}),\tau) \; \text{and} \; (S^3_{1}(K_{-n, n+1}),\sigma) &\text{if } n \text{ is even}.
\end{cases}
\]
The proof now follows from the observation that there is an equivariant, negative-definite cobordism from $(S^{3}_{1/(k+2)}(K),\tau / \sigma)$ to $(S^{3}_{1/k}(K),\tau / \sigma)$ which can be taken to be both $\spinc$-preserving and reversing, see \cite[Theorem 1.6]{DHM20}. By suitably attaching the above cobordism to our previous argument we obtain that for any odd $k$, $1/k$-surgery on the above knots with corresponding involutions are also strong corks.
\end{proof}

\begin{rem}\label{comparison}
Lastly, we end this section with a comparison of our method of detecting strong corks with that from \cite{DHM20}. Both methods rely on the intermediate step of constructing an equivariant, negative-definite cobordism from the cork to a different manifold (often a Seifert-homology sphere). In \cite{DHM20} this step was helpful because the non-triviality of the invariant for the outgoing end (after an explicit computation of the invariant) of the cobordism implied non-triviality of the incoming end. While, in our case, the intermediate step allows us to construct a suitable bound for the cork, over which the cork-twist extends, which in turn obstructs the extension of the cork-twist to any $\mathbb
{Z}_2$-homology spheres. This is again reminiscent of our main slogan. The effectiveness of both methods depends on building candidate outgoing ends for prospective corks. Since for us the outgoing ends only have to satisfy certain topological conditions (as opposed to an explicit computation of the Floer-theoretic invariant, which is significantly harder for non-Seifert homology spheres), sometimes we are able to produce candidate outgoing ends more easily. For example, in Figure~\ref{plumb}, we may replace any number of unknots with any collection of strongly invertible knots, and the resulting $4$-manifold will still be a candidate outgoing manifold $(X^{\prime}, \Phi^{\prime})$, in the language of the proof of Theorem~\ref{cork_detection}. In Figure~\ref{plumb_2} below, we depicted such a case, however we do not have any illuminating example of a cork for which admit an equivariant, negative-definite cobordism to it, when $K$ is not an unknot.
\begin{figure}[h!]
\center
\includegraphics[scale=1.0]{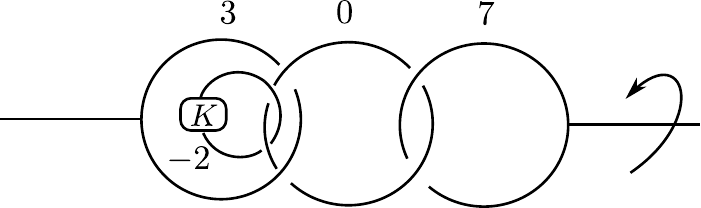}
\caption{A candidate for $(X^{\prime}, \Phi^{\prime})$, here $K$ is any strongly invertible knot. This is obtained by replacing the $(-2)$-framed unknot in the similar position as $K$ is, on the top part of Figure~\ref{plumb}, by $K$, and following the equivariant moves (blow-ups and blow-downs).}\label{plumb_2}
\end{figure}

\end{rem}

\section{Exotic embeddings into small $4$-manifolds}
In this section, we concentrate on results regarding exotic embeddings of 3-manifolds into small 4-manifolds.
\subsection{Proof of \cref{exotic embeddings}}

\cref{exotic embeddings} is derived from the following general result: 

\begin{thm}\label{general embedding}
Let $Y$ be an oriented homology 3-sphere. Suppose that there exist three compact 4-manifolds $X_1$ and $X_2$ with $\partial X_1=Y$ and $\partial X_2= -Y $ satisfying the following properties: 
\begin{itemize}
    \item $b^+(X_1)=0$ and $X_1$ admits a handle decomposition consists of only one 2-handle, 
     \item $b^+(X_2)=1$, $b_1(X_2)=0$, $X_2$ is spin, and $\sigma(X_2)<0$. 
\end{itemize}
Then, there is a pair of exotic embeddings of $Y$ into $ \C P^2 \# -\C P^2$. 
Moreover, these are still exotic after taking the connected sum of any connected smooth 4-manifold, attached outside the images of the embeddings.
\end{thm}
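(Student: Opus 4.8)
The plan is to realize $Y$ as the separating equator of $\C P^2 \# -\C P^2$ in two ways that agree topologically but not smoothly, in the spirit of the paper's slogan with the $b^+=0$ side played by $X_1$ and the $b^+=1$ side by $-X_1$, while $X_2$ supplies the Seiberg--Witten obstruction. First I would observe that, since $b^+(X_1)=0$, $b_1(X_1)=0$ and $\partial X_1$ is an integral homology sphere, the single $2$-handle of $X_1$ must be attached to $B^4$ along a knot $K\subset S^3$ with framing $-1$; thus $X_1$ is the trace of $(-1)$-surgery on $K$, $Y=S^3_{-1}(K)$, and the standard fact that the double of the trace of a $(\pm1)$-framed surgery on any knot is diffeomorphic to $\C P^2\#-\C P^2$ gives $X_1\cup_Y(-X_1)\cong \C P^2\#-\C P^2$. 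Let $\iota_0\colon Y\hookrightarrow \C P^2\#-\C P^2$ be this equator; the side $X_1$ has $b^+=0$ and the side $-X_1$ has $b^+=1$, $b_1=0$ and intersection form $\langle 1\rangle$.

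Next, for an orientation-preserving diffeomorphism $\phi\colon Y\to Y$ (to be chosen), I would set $Z_\phi:=X_1\cup_\phi(-X_1)$ and use Freedman's and Boyer's theorems --- exactly the input used for the analogous point in the proof of \cref{main:exotic} --- to extend $\phi$ to a self-homeomorphism of $-X_1$ inducing the identity on $H_2$ (possible because $\phi$ is orientation-preserving and $-X_1$ is simply connected with homology-sphere boundary). This makes $Z_\phi$ homeomorphic, hence --- being simply connected with the same odd rank-$2$ form --- diffeomorphic to $\C P^2\#-\C P^2$, through a diffeomorphism $\Xi$ which may be chosen to act trivially on $H_2$. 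Transporting the equator of $Z_\phi$ through $\Xi$ defines the second embedding $\iota_1$. By construction $\iota_0(Y)$ and $\iota_1(Y)$ have diffeomorphic complements (each a disjoint union of a copy of $X_1$ and a copy of $-X_1$), and the homeomorphism assembled above, together with the Freedman--Quinn description of $\pi_0\Homeo(\C P^2\#-\C P^2)$, provides a topological ambient isotopy carrying $\iota_1$ onto $\iota_0$. So (i) and (iii) of \cref{def: exo submfd} hold for any such $\phi$, and everything reduces to choosing $\phi$ with $\iota_0\not\simeq_{\mathrm{sm}}\iota_1$.

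For the smooth non-isotopy I would argue as follows. If $\iota_0\simeq_{\mathrm{sm}}\iota_1$, then isotopy extension gives a diffeomorphism $Z_\phi\to \C P^2\#-\C P^2$ carrying the equator of $Z_\phi$ to $\iota_0(Y)$; cutting along the equator yields self-diffeomorphisms of $X_1$ and of $-X_1$ whose boundary restrictions $\gamma_1,\gamma_2\in\pi_0\Diff(Y)$ satisfy $\phi=\gamma_2\gamma_1^{-1}$. Hence it suffices to produce a $\phi$ lying outside the product subgroup $\mathrm{Ext}_{\mathrm{sm}}(-X_1)\cdot\mathrm{Ext}_{\mathrm{sm}}(X_1)$ of $\pi_0\Diff(Y)$, where $\mathrm{Ext}_{\mathrm{sm}}(\cdot)$ denotes boundary values of self-diffeomorphisms. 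This is where $X_2$ is used: I would take $\phi$ that extends smoothly over $X_2$ and show that a factorization $\phi=\gamma_2\gamma_1^{-1}$ would let one assemble, out of copies of $X_1$, $-X_1$ and $X_2$ --- the one-handle structure of $X_1$ being what keeps the boundary identifications and the value of $b^+$ under control --- a self-diffeomorphism of a $4$-manifold with $b_1=0$, $b^+=1$ that is the identity on a homology-sphere boundary component, reverses the orientation of $H^+$, and fixes (or conjugates) a spin$^c$ structure $\fraks$ built from the spin structure of $X_2$ and a characteristic class on the $\langle\pm1\rangle$-summand, with $(c_1(\fraks)^2-\sigma)/8>0$ since $\sigma(X_2)<0$ --- contradicting \cref{theo: main theo} (respectively \cref{theo: main theo2} or the charge-conjugation inequality \cref{thm: charge conj}, according to the action on $H^2$ and on $\fraks$). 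The stability clause is then automatic: a connected sum performed away from $\iota_0(Y)\cup\iota_1(Y)$ can be isotoped into the $b^+=0$ side $X_1$, leaving $-X_1$ and $X_2$ untouched, so the same obstruction applies verbatim to the stabilized embeddings.

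The crux, and where I expect the real difficulty, is this last step: pinning down the diffeomorphism $\phi$ and carrying out the bookkeeping that converts a hypothetical factorization $\phi=\gamma_2\gamma_1^{-1}$ into a self-diffeomorphism of a $b^+=1$ spin$^c$ $4$-manifold which is the identity on an auxiliary homology-sphere boundary, reverses $H^+$, and carries a spin$^c$ structure of positive $(c_1^2-\sigma)/8$ --- in particular, deciding which copies of $X_1$ and $X_2$ to glue and in which orientations so that $b^+$ remains $1$ and so that one of \cref{theo: main theo}--\cref{thm: charge conj} genuinely applies. By comparison, the double-of-trace identification, the topological isotopy via Freedman, and the stabilization are routine.
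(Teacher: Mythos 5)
Your proposal takes a genuinely different route from the paper's, and I see gaps in it that are not easy to close.

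The central problem is the identification of $Z_\phi = X_1\cup_\phi(-X_1)$ with $\C P^2\#-\C P^2$ as a \emph{smooth} manifold. Your justification --- ``being simply connected with the same odd rank-$2$ form, hence diffeomorphic'' --- is not a theorem: homeomorphic simply-connected closed $4$-manifolds need not be diffeomorphic, and whether $\C P^2\#-\C P^2$ admits an exotic copy is an open problem. So the diffeomorphism $\Xi$ is not available, and the second embedding $\iota_1$ is not constructed. The paper sidesteps this entirely by never regluing: both embeddings land in the \emph{same} smooth manifold $D(X_1)\cong\C P^2\#-\C P^2$. One embedding is the equator inclusion $\iota$, and the other is $g\circ\iota$ where $g$ is the connected sum of the complex conjugations of $\C P^2$ and $-\C P^2$, an explicit self-diffeomorphism of the ambient manifold acting as $-1$ on $H_2$. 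The topological isotopy then comes from building a homeomorphism $h$ of $D(X_1)$ that is $-1$ on $H_2$ and the identity on the equator (via Freedman's classification of $X_1$ as $\pm\C P^2_{(\mathrm{fake})}\#C$ plus \cref{lem: topological isotopy fixed disk}), and invoking Perron--Quinn to topologically isotope $g$ to $h$.

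There is also a conceptual gap in your mechanism for smooth non-isotopy. Your criterion tracks only the boundary mapping class $\phi=\gamma_2\gamma_1^{-1}$, but the paper's obstruction is not about $\phi$ at all. A hypothetical smooth isotopy between $\iota$ and $g\circ\iota$ yields (after cutting) a self-diffeomorphism $g'$ of $X_1$ with $g'|_{Y}=\id$ and $(g')^*=-1$ on $H^2(X_1)$, and it is this $H^2$-action, not the boundary restriction, that is fed into \cref{thm: charge conj} after gluing $X_2$ and extending $g'$ by the identity. In your framework $\phi=\id$ always factors trivially ($\gamma_1=\gamma_2=\id$), so your criterion cannot see anything with the $\phi$ that actually arises; you would need to track the induced action on $H^2$ of the cutting diffeomorphisms alongside their boundary values, and you do not. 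Finally, the step you flag as ``the crux'' is indeed the content of the proof, and it is left unspecified --- whereas in the paper it is one short paragraph: choose $\s$ on $X_1\cup_Y X_2$ with $c_1(\s)$ a generator on the $X_1$-factor and zero on the $X_2$-factor, note that $(c_1(\s)^2-\sigma)/8=-\sigma(X_2)/8>0$, that the glued diffeomorphism conjugates $\s$, and that it preserves the orientation of $H^+$ (which lives entirely in $X_2$, where it is the identity), and apply \cref{thm: charge conj}.
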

\begin{proof}
Let us first explain how to construct two embeddings.
    Since $X_1$ can be regarded as the trace of a knot surgery, 
    we see that the double of $X_1$ has the form: 
    \[
    D(X_1) \cong \C P^2 \# -\C P^2. 
    \]
     Now one embedding of $Y$ can be obtained as the inclusion $\iota: Y \to D(X_1)$. 
  In order to construct another embedding, we consider the connected sum of complex conjugations on $\mathbb{C}P^2 \# -\mathbb{C}P^2$. This gives us an orientation-preserving diffeomorphism $g : D(X_1) \to D(X_1)$ which acts as $(-1)$ on the second homology. Now the second embedding is obtained as $g \circ \iota: Y \to  D(X_1)$. We claim that there is a homeomorphism $h' : X_1 \to X_1$ such that 
\begin{itemize}
    \item[(i)] $(h')^* = -1 : H^2( X_1 ) \to H^2(X_1)$, 
    \item[(ii)] $h'|_{\partial (X_1) }=\id $.
\end{itemize}
First, by the classification theorem for topological 4-manifolds with boundary \cite{B86,B93}, we have a homeomorphism 
\[
X_2 \cong 
\begin{cases} 
-\mathbb{C}P^2 \# C \text{ if } \mu (Y) =0, \\ 
-\mathbb{C}P^2_{\operatorname{fake}} \# C \text{ if } \mu (Y) =1,
\end{cases}
\]
where $C$ is a topological contractible 4-manifold bounded by $Y$ and $\mathbb{C}P^2_{\operatorname{fake}}$ is the fake $-\mathbb{C}P^2$.
By \cite
[p.371, Theorem 1.5, Addendum]{Fr82}, any automorphism of the intersection form of a simply-connected, closed 4-manifold can be realized by a homeomorphism, so the $(-1)$-multiplication on $H_2(-\C P^2)$ and $H_2(-\C P_{\mathrm{fake}})$ can be realized by a homeomorphism $h_0 : -\C P^2 \to  -\C P^2$ and by a homeomorphism $h_0 : -\C P^2_{\mathrm{fake}} \to  -\C P^2_{\mathrm{fake}}$, respectively (the former one can be realized by just the complex conjugation).
Thus it suffices to prove the following: for a closed, oriented topological 4-manifold $M$ and an orientation-preserving homeomorphism $\varphi : M \to M$, there is a homeomorphism $\psi : M \to M$ that is topologically isotopic to $\varphi$ that pointwise fixes a locally flat 4-disk in $M$.
Indeed, once we prove this, we may isotope $h_0$ so that it has a pointwise fixed 4-disk in $-\C P^2$ or $-\C P^2_{\mathrm{fake}}$, and then extend it by the identity of $C$ to get the desired $h' : X_1 \to X_1$.
We shall prove the fact to get a fixed disk in \cref{lem: topological isotopy fixed disk}.
Now we have seen the claim.

Now, by pasting two copies of $h'$ along the boundary, we obtain a homeomorphism $h : D(X_1)\to D(X_1)$ such that $h^* = -1$. 
By a result of Perron~\cite{P86} and Quinn~\cite{Q86}, we see that $h$ and $g$ are topologically isotopic. In particular, $\iota$ and $g \circ \iota$ are topologically isotopic. 

Now, we see that $\iota$ and $g \circ \iota$ are not smoothly isotopic. If so, by composing isotopy, one can suppose that the diffeomorphism $g$ preserves the components $X_1$ and $-X_1$.  Then, as restriction of $g$, we obtain a diffeomorphism $g' $ on $X_1$
 such that 
 \begin{itemize}
     \item $(g')^* = -1$, 
     \item $g' |_ {\partial X_1=Y}= \id $. 
 \end{itemize}
  We now take a spin$^c$ structure $\mathfrak{s}$ on $X_1 \cup X_2$ corresponding to 
  \[
  ( \operatorname{(a\  generator)}, 0) \in H^2(X_1; \Z) \oplus  H^2(X_2; \Z).
  \]
Then, we have $g^* \frak{s} \cong \overline{\frak{s}}$.  Note that all assumptions of \cref{thm: charge conj} are satisfied. Thus, by the use of \cref{thm: charge conj}, we obtain 
\[
0<\frac{c_1(\mathfrak{s})^2-\sigma(X_1\cup X_2 ) }{8} \leq 0,
\]
which is a contradiction. This completes the proof of the exoticness of $\iota$ and $g \circ \iota$. 

Now, we also prove that these smooth embeddings are still exotic after taking the connected sum with a connected smooth 4-manifold $M$. Since $M$ is connected, there are two cases
\[
D(X_1) \# M = \begin{cases}
(M \# X_1) \cup_{Y} -X_1,\\
 X_1 \cup_{Y} (-X_1\#M). 
\end{cases}
\]
For the second case, our proof works without any change. For the first case, one can just use $-X_1$ with the opposite orientation for our previous discussion since $D(X_1)$ has a symmetry exchanging $X_1$ and $-X_1$. This completes the proof. 
\end{proof}

We now prove \cref{exotic embeddings} by using \cref{general embedding}:
\begin{proof}[Proof of \cref{exotic embeddings}]
We define 
    \[
    Y = \Sigma (2,3,7) = S^3_{+1} (4_1)  = S^3_{-1}(\mathrm{T_{2,3}} )
    \]
    and $X_2$ is a compact spin 4-manifold bounded by $-Y= - \Sigma (2,3,7) $ and whose intersection from is isomorphic to $-E_8 \oplus H$, where $H$ is the intersection form of $S^2\times S^2$.
    For the existence of such a bound, see \cite{Sa98} for example. 
   Then, there is a negative-definite $b^+=0$ and $b^-=1$, 4-manifold bounded by  $\Sigma(2,3,7)$, which is obtained as the trace of the surgery along the right-handed trefoil and we put $X_1$ as it.
   Then, all assumptions of \cref{general embedding} are satisfied. This completes the proof. 
\end{proof}

We give the proof of the following fact used in the proof of \cref{general embedding}:

\begin{lem}
\label{lem: topological isotopy fixed disk}
Let $M$ be a closed, connected oriented topological 4-manifold and $\varphi : M \to M$ be an orientation-preserving homeomorphism.
Then there is a homeomorphism $\psi : M \to M$ that is topologically isotopic to $\varphi$ and that pointwise fixes some locally flat 4-disk $D^4$ embedded in $M$.
\end{lem}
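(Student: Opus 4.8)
The plan is to reduce the statement to a standard fact about topological isotopies and then invoke mobility of locally flat disks. First I would recall that, by the work of Quinn on topological $4$-manifolds, every closed connected topological $4$-manifold admits a smooth structure in the complement of a point, or at least that there is a well-developed topological handlebody/transversality theory in dimension $4$; more importantly, a locally flat embedded $4$-disk in $M$ is ambiently unique up to topological isotopy. This is the $4$-dimensional disk theorem in the topological category, which follows from Quinn's work \cite{Q86} together with the annulus theorem. So if $D^4_1$ and $D^4_2$ are any two locally flat embedded $4$-disks in the connected manifold $M$, there is an ambient isotopy of $M$ carrying one onto the other.

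With that in hand, here is the argument I would give. Fix once and for all a locally flat $4$-disk $D \subset M$ (one exists: take a small standard chart). Consider the image $\varphi(D)$, which is again a locally flat $4$-disk in $M$. By the topological disk theorem above, there is an ambient isotopy $\{G_t\}_{t\in[0,1]}$ of $M$ with $G_0 = \id_M$ and $G_1(\varphi(D)) = D$, and moreover one can arrange $G_1$ to agree with $\varphi \circ (\text{a self-homeomorphism of } D)^{-1}$ suitably on $D$ — more precisely, one first isotopes so that $\varphi(D)$ lands on $D$, and then, since any orientation-preserving self-homeomorphism of $D^4$ fixing $\partial D^4$ is isotopic rel boundary to the identity (Alexander trick in the topological category, valid in all dimensions), one further isotopes so that the composite restricts to the identity on $D$. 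Concretely: set $\psi := G_1 \circ \varphi$. Then $\psi$ is topologically isotopic to $\varphi$ (compose the isotopy $G_t \circ \varphi$), and after the additional adjustment on $D$ using the Alexander trick we may assume $\psi|_D = \id_D$. This $\psi$ is the desired homeomorphism, pointwise fixing the locally flat disk $D$.

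The one point requiring a little care, and the step I expect to be the main obstacle, is the passage from "$\varphi(D)$ can be ambiently isotoped onto $D$" to "the resulting homeomorphism can be further isotoped to restrict to $\id_D$." This needs the topological disk theorem in the strong form that the isotopy can be taken to carry $\varphi(D)$ onto $D$ respecting a chosen collar, so that the leftover ambiguity is exactly a self-homeomorphism of the pair $(D^4,\partial D^4)$; then the topological Alexander trick finishes it. Both ingredients are standard but the former genuinely uses Quinn's handle/isotopy machinery \cite{Q86} for topological $4$-manifolds (this is why smoothing is not needed and the statement holds for fake $\mathbb{C}P^2$ as well), and I would cite that rather than reprove it. I would also remark that connectedness of $M$ is used only to know that any two embedded disks are isotopic; no hypothesis on $\pi_1$ or on the intersection form is needed.
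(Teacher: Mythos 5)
Your argument follows essentially the same route as the paper: ambiently isotope the disk $\varphi(D)$ onto $D$ using the topological disk/annulus theorem (the paper reproves this from connectedness plus Quinn's annulus theorem rather than citing it as a black box), then kill the remaining self-homeomorphism of the disk by the Alexander trick. The one small imprecision is in the final step: the Alexander trick applies to homeomorphisms of $D^4$ that fix $\partial D^4$ \emph{pointwise}, not merely to homeomorphisms of the pair $(D^4,\partial D^4)$; to pass from the latter to the former one needs $\pi_0(\Homeo^+(S^3))=1$, which the paper invokes explicitly and which your "respecting a chosen collar" remark gestures at but does not quite supply.
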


\begin{proof}
The proof is similar to the proof that connected sum is a well-defined operation on topological 4-manifolds (see, such as \cite[Subsection 5.2]{FNOP19}).

Let $D$ be a locally flat 4-disk embedded in $M$. 
Since $M$ is connected,
we may find a path between points in $D$ and in $\varphi(D)$.
Moving $D$ along this path and scaling down $D$ in a chart near $\varphi(D)$, we may find a homeomorphism $f : M \to M$ such that $f$ is topologically isotopic to the identity and $f(D) \subsetneqq \varphi(D)$.
By the 4-dimensional annulus theorem in the topological category by Quinn~\cite{Q82}, $\varphi(D) \setminus f(D)$ is homeomorphic to the annulus $D^4 \times [0,1]$.
Thus we may find a homeomorphism $f' : M \to M$ such that $f'$ is topologically isotopic to the identity and $f(D) = \varphi(D)$.

We claim that any orientation-preserving homeomorphism $g : D^4 \to D^4$ is topologically isotopic to a homeomorphism $g' : D^4 \to D^4$ that admits a pointwise fixed 4-disk inside the interior of $D^4$.
This claim completes the proof of the \lcnamecref{lem: topological isotopy fixed disk}.
Indeed, if we admit the claim, we may take $\psi : M \to M$ to be $f'^{-1} \circ \varphi$.

To show the claim, first note that there is a 4-disk $D'$ inside the interior of $D^4$ such that $g(D')=D'$ and $g|_{\del D'}$ is the identity of $\del D'$.
This follows from the fact that $\pi_0(\Homeo^+(S^3))=1$, where $\Homeo^+$ denotes the orientation-preserving homeomorphism group.
Thus the claim follows once we know that $\pi_0(\Homeo(D^4, \del))=1$, where $\Homeo(D^4, \del)$ denotes the group of homeomorphisms that fix the boundary pointwise.
However, this is the direct consequence of the Alexander trick.
This completes the proof of the claim and the \lcnamecref{lem: topological isotopy fixed disk}.
\end{proof}

We now give an alternative obstruction to construct exotic embeddings using involutive Heegaard Floer theory. This obstruction is posed in terms of local equivalence formulation from the work of Hendricks-Manolescu \cite{HM} and Hendricks-Manolescu and Zemke \cite{HMZ}. In particular, we refer readers to \cite[Section 6.7]{HM} for the definition of $\overline{V}$-invariant.

\begin{thm}\label{inv exotic emb}
Let $K$ be a knot such that:
\[
\overline{V}_0(\overline{K}) < 0.
\]
Then, $S^{3}_{-1}(K)$ admits a pair of exotic embeddings into $\C P^2 \# -\C P^2$.
Moreover, these are still exotic after taking the connected sum of any connected smooth 4-manifold, attached outside the images of the embeddings.
\end{thm}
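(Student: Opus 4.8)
The plan is to run the argument of \cref{general embedding} almost verbatim, substituting involutive Heegaard Floer functoriality for the final appeal to \cref{thm: charge conj}. First I would set $Y=S^3_{-1}(K)$ and let $X_1$ be the trace of $(-1)$-framed surgery on $K$ with its $S^3$-end capped by a $4$-disk; this is a simply-connected negative-definite $4$-manifold with intersection form $(-1)$, built from a single $2$-handle, with $\partial X_1=Y$, so $D(X_1)\cong\C P^2\#-\C P^2$. The two embeddings are the inclusion $\iota\colon Y\hookrightarrow D(X_1)$ and $g\circ\iota$, where $g\colon D(X_1)\to D(X_1)$ is the connected sum of the complex conjugations on the two summands, acting as $-\mathrm{id}$ on $H_2$. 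That $\iota$ and $g\circ\iota$ are topologically isotopic is proven word-for-word as in \cref{general embedding} (classification of simply-connected topological $4$-manifolds with homology-sphere boundary, Freedman's realization of automorphisms of the intersection form, \cref{lem: topological isotopy fixed disk}, and Perron--Quinn), so I would just cite that part.

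The new content is smooth non-isotopy. Assuming $\iota$ and $g\circ\iota$ are smoothly isotopic, I would compose with the ambient isotopy so that $g$ preserves the two halves of $D(X_1)=X_1\cup_Y\overline{X_1}$, and set $g'=g|_{X_1}$: an orientation-preserving diffeomorphism of $X_1$ with $(g')^\ast=-\mathrm{id}$ on $H^2(X_1;\Z)\cong\Z$ and $g'|_Y=\mathrm{id}$. Removing a small $g'$-invariant $4$-disk and using $\pi_0\Diff(S^3)=1$ to straighten $g'$ near the new boundary, I would view $(X_1,g')$ as an equivariant cobordism $(W,g')$ from $(S^3,\mathrm{id})$ to $(S^3_{-1}(K),\mathrm{id})$ that is the identity on both ends, with $b_1(W)=0$. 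Taking a $\spinc$-structure $\s$ on $W$ with $c_1(\s)$ a generator of $H^2(W;\Z)$ gives $c_1(\s)^2=-1$ and $(g')^\ast\s\cong\bar\s$, so $(W,\s,g')$ is a negative-definite, $\spinc$-reversing, equivariant cobordism, and $\Delta(W,\s)=\bigl(c_1(\s)^2-2\chi(W)-3\sigma(W)\bigr)/4=(-1-2+3)/4=0$.

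Next I would feed this into involutive functoriality. Naturality of the cobordism maps under conjugation, $F_{W,\bar\s}\circ\iota\simeq\iota\circ F_{W,\s}$, together with $F_{W,\s}\simeq F_{W,(g')^\ast\s}=F_{W,\bar\s}$ (since $g'$ is a diffeomorphism restricting to the identity on $\partial W$), shows that $F_{W,\s}\colon CF^-(S^3)\to CF^-(S^3_{-1}(K))$ commutes with $\iota$ up to chain homotopy; negative-definiteness with $b_1=0$ makes it an isomorphism after inverting $U$, and $\Delta(W,\s)=0$ makes it grading-preserving, so it is a local map of $\iota$-complexes---a mild modification of \cite[Theorem~1.5]{DHM20} in the spirit of \cite{HMZ}. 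Hence $[(CF^-(S^3),\iota)]\le[(CF^-(S^3_{-1}(K)),\iota)]$, so in particular $\underline d(S^3_{-1}(K))\ge\underline d(S^3)=0$. On the other hand $S^3_{-1}(K)=-S^3_{+1}(\overline K)$, so by the behavior of the involutive correction terms under orientation-reversal and \cite[Section~6.7]{HM}, $\underline d(S^3_{-1}(K))=-\overline d(S^3_{+1}(\overline K))=2\overline V_0(\overline K)<0$, contradicting the previous line. The stabilization claim then follows exactly as in \cref{general embedding}, writing $D(X_1)\#M$ as $X_1\cup_Y(\overline{X_1}\#M)$ (where the above applies directly) or reducing to this case via the self-diffeomorphism of $D(X_1)$ exchanging the two halves.

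The main obstacle I anticipate is making the involutive functoriality precise: \cite[Theorem~1.5]{DHM20} is stated for $\spinc$-\emph{preserving} equivariant cobordisms with the ordinary symmetry $\tau$, whereas here the cobordism is $\spinc$-\emph{reversing} and the relevant commuting symmetry is the conjugation $\iota$ itself, so one must verify carefully that $F_{W,\s}$ is genuinely $\iota$-equivariant and local with grading shift $\Delta(W,\s)$, and that the resulting inequality is on $\underline d$ (not $\overline d$). A secondary, purely bookkeeping, point is pinning down the orientation-reversal conventions that identify $\underline d(S^3_{-1}(K))$ with $2\overline V_0(\overline K)$; everything else is a transcription of \cref{general embedding}.
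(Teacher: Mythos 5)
Your proof follows the paper's argument verbatim: same choice of $X_1$ as the $(-1)$-surgery trace, same two embeddings $\iota$ and $g\circ\iota$, same reduction to an equivariant $\spinc$-reversing cobordism $(W,\s,g')$ from $(S^3,\mathrm{id})$ to $(Y,\mathrm{id})$, and the same appeal to involutive functoriality to conclude $\underline d(Y)\geq 0$, contradicting $\overline V_0(\overline K)<0$. You fill in more detail than the paper (explicitly checking $\Delta(W,\s)=0$, spelling out the $\iota$-equivariance of $F_{W,\s}$, and tracing the orientation-reversal identity $\underline d(S^3_{-1}(K))=2\overline V_0(\overline K)$), and you correctly flag the same subtlety the paper waves at with ``a modification of \cite[Theorem 1.5]{DHM20}.''
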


\begin{proof}
The basic strategy to prove \cref{inv exotic emb} is similar to that of \cref{general embedding}.  We shall do the same discussion given in the proof of \cref{general embedding} replacing $X_1$ with $X$, where $X$ is the negative-definite $4$-manifold obtained by the trace of the surgery $S^{3}_{-1}(K)$. As before, we consider the double $D(X)$ and one embedding is given as the inclusion $i: Y:=S^{3}_{-1}(K) \to D(X)$ and the other embedding is given by $g \circ i: Y \to D(X)$, where $g$ is a diffeomorphism of $D(X) $ given by the connected sum of complex conjugations on each component of $\C P^2 \# - \C P^2 \cong D(X)$ and $i$ is the inclusion. Then, the proof of \cref{general embedding} implies that $i$ and $g \circ i$ are topologically isotopic. Suppose that $i$ and $g \circ i$ are smoothly isotopic. Then, again the proof of \cref{general embedding} implies that there is an equivariant, spin$^c$-reversing cobordism from $(S^3, \id)$ to $(Y, \id)$. Then, functoriality in involutive Heegaard Floer theory (for example, a modification of \cite[Theorem 1.5]{DHM20}) implies 
\[
0 \leq  \underline{d}(Y).
\]
But since $\overline{V}_0(\overline{K}) < 0$, we get $ \underline{d}(Y) < 0$, which is a contradiction.  The proof for stabilization result is the same as that of \cref{general embedding}.

\end{proof}
\noindent
Note that \cref{exotic embeddings} can be also derived from \cref{inv exotic emb} by taking $K=T_{2,3}$. In fact, the condition on $\overline{V}_{0}(\overline{K}) <  0$ holds for any positive torus knot $K$.

\section{Non-smoothable homeomorphisms preserving orientations of $H^+$}
In this section, we provide non-smoothable homeomorphisms using \cref{thm: charge conj}. Our construction {\it does} assume the orientation of $H^+$ is preserved by non-smoothable homeomorphisms. 
\subsection{Proof of \cref{thm: non-realizable homeo}}

\begin{proof}[Proof of \cref{thm: non-realizable homeo}]
The intersection form of $X$ can be decomposed into 
\begin{align}
\label{eq: decomp nonrealizable proof}
-E_8 \oplus H \oplus (-1)^{n-1}.
\end{align}
Let $A \in \operatorname{Aut}(H_2(X;\Z))$ be the automorphism given by the matrix
\[
\id_{-E_8 \oplus H} \oplus (-1)^{n-1}
\]
along the above decomposition.
By the subjectivity of the natural map 
\[
\pi_0(\Homeo(X)) \to \operatorname{Aut}(H_2(X;\Z))
\]
\cite[Theorem 1.5, Addendum]{Fr82}, the automorphism $A$ is realized by a homeomorphism $f$ of $X$.

The homeomorphism $f$ preserves orientation of $H^+(X)$,
since $H^+(X)$ is generated by an element of $H$.
Thus it suffices to prove that $f$ is not realized by a diffeomorphism.
Let $c \in H^2(X;\Z)$ be the characteritic given by 
\[
c = 0 \oplus 0 \oplus (-1)^{n-1} 
\]
along the decomposition \eqref{eq: decomp nonrealizable proof}.
Let $\fraks$ be the spin$^c$ structure on $X$ that corresponds to $c$.
Then we have $f^\ast \fraks \cong \bar{\fraks}$.
Thus, if $f$ was realized by a diffeomorphism, we have a contradiction from \cref{thm: charge conj}.
This completes the proof.
\end{proof}

\section{Relative genus bounds from diffeomorphisms}
\label{branched section}
In this section, we shall make use of the families Fr\o yshov inequality to give lower bounds of genera of surfaces in 4-manifolds. We start by stating some general results, followed by establishing certain bounds for the $\alpha, \beta, \gamma$ and $\delta$ invariants, and the genus bounds.

\subsection{General results} 
For a given knot $K$ in $S^3$(more generally, in an oriented homology 3-sphere),
the double branched covering space $\Sigma(K)$ has a unique spin structure. Associated with this spin structure, we have knot concordance (homology concordance) invariants 
\[
\alpha(K) := \alpha (\Sigma (K)), \beta(K) := \beta (\Sigma (K)), \gamma(K) := \gamma(\Sigma (K)), \delta (K) := \delta (\Sigma (K)). 
\]
Here $\alpha, \beta, \gamma, \delta$ are Fr{\o}yshov-type invariants of spin rational homology 3-spheres introduced by Manolescu~\cite{Ma16}, which are in fact invariant under spin rational homology cobordisms.
If $K$ and $K'$ are smoothly concordant, then $\Sigma(K)$ and $\Sigma(K')$ are spin rational homology cobordant. Thus we have 
\[
\alpha(K) =\alpha(K'), \ \beta(K)= \beta(K') , \  \gamma(K)= \gamma(K') , \ \delta (K) = \delta(K'). 
\]
Thus, these give functions 
\[
\alpha, \beta, \gamma, \delta: \mathcal{C} \to \Z. 
\]
The map $\delta : \mathcal{C} \to \Z$ has been studied by Manolescu and Owens in \cite{MO07}. 

By just combining relative versions of Theorem A, B, C \cite{Fr10, FLin17, KT22} and relative 10/8 inequality \cite{Ma14} to double branched covers, 
we have the following result, which will be used to obtain our genus bounds:
\begin{prop}
\label{non family genus bound}
Let $K$ be a knot in $S^3$. 
Let $X$ be an oriented smooth closed 4-manifold with $b_1(X)=0$ and $S$ be a smoothly and properly embedded surface in $X \setminus \operatorname{Int} D^4$ bounded by $K$ such that $[S] $ is divisible by $2$. 
\begin{itemize}
    \item 
Suppose $ 2 b^+(X) + g(S) -\frac{1}{4}[S]^2 +  \frac{1}{2} \sigma(K) = 0$. Then for any spin$^c$ structure $\frak{s}$ on the double branched covering space $\Sigma(S)$ along $S$, we have 
\begin{align}\label{pt Froyshov ineq}
    \frac{1}{8} (c_1(\frak{s})^2  -2\sigma(X) +\frac{1}{2}[S]^2-  \sigma(K))  \leq \delta  (K).  
\end{align}

\item Suppose $\operatorname{PD}[S] \equiv w_2(X)$ and $H_1(X ;\Z)=0$. Then, we have the following: 

\begin{itemize}
  \item[(I)] Suppose $2b^+(X)+ g(S) -\frac{1}{4}[S]^2 +  \frac{1}{2} \sigma(K) = 0$, then 
    \begin{align}\label{gamma ineq}
         -\frac{1}{8} ( 2\sigma(X) -\frac{1}{2}[S]^2+ \sigma(K))  \leq \gamma  (K). 
    \end{align} 

    \item[(II)] Suppose $  2b^+(X) + g(S) -\frac{1}{4}[S]^2 +  \frac{1}{2} \sigma(K) = 1$, then 
    \[
    -\frac{1}{8} ( 2\sigma(X) -\frac{1}{2}[S]^2+ \sigma(K))  \leq \beta (K).  
    \]

      \item[(III)] Suppose $ 2b^+(X) + g(S) -\frac{1}{4}[S]^2 +  \frac{1}{2} \sigma(K) = 2$, then 
    \[
    -\frac{1}{8} ( 2\sigma(X) -\frac{1}{2}[S]^2+ \sigma(K))  \leq \alpha (K).  
    \]  
  
\item[(IV) ] We have 
\[
 -\frac{1}{4}\sigma (X) + \frac{5}{16 }[S]^2 - \frac{5}{8}\sigma (K)  - \kappa_M (K)  \leq 2 b^+(X) + g(S), 
 \]
 where $\kappa_M (K)$ is Manolescu's kappa invariant \cite{Ma14} for the double branched covering space of $K$ with a unique spin structure. 

\end{itemize}
\end{itemize}

\end{prop}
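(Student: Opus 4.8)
The plan is to reduce each inequality to a \emph{relative Fr\o yshov} (or relative $10/8$) \emph{inequality applied to the double branched cover} of $S$. Write $Z := X\setminus\operatorname{Int}D^4$ and let $\widetilde X := \Sigma(S)$ be the double cover of $Z$ branched along $S$; this exists precisely because $[S]$ is divisible by $2$ in $H_2(X;\Z)\cong H_2(Z;\Z)$, and $\partial\widetilde X = \Sigma(K)$, the double branched cover of $S^3$ along $K$, which is a rational homology $3$-sphere carrying the restriction of the (unique) spin structure when $\widetilde X$ is spin.

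First I would compute the invariants of $\widetilde X$ by the standard branched-cover formulas. A transfer argument from $b_1(X)=0$ gives $b_1(\widetilde X)=0$. The $G$-signature theorem (equivalently a Gordon--Litherland-type computation, the correction term appearing because $\partial S = K\neq\emptyset$) gives
\[
\sigma(\widetilde X) = 2\sigma(X) - \tfrac12[S]^2 + \sigma(K),
\]
and multiplicativity of the Euler characteristic, $\chi(\widetilde X) = 2\chi(Z) - \chi(S)$ together with $\chi(Z) = 1 + b_2(X)$ and $\chi(S) = 1 - 2g(S)$, gives $b_2(\widetilde X) = 2b_2(X) + 2g(S)$, hence
\[
b^+(\widetilde X) = \tfrac12\bigl(b_2(\widetilde X) + \sigma(\widetilde X)\bigr) = 2b^+(X) + g(S) - \tfrac14[S]^2 + \tfrac12\sigma(K).
\]
Thus each numerical hypothesis in the statement is just a normalization of $b^+(\widetilde X)$: the hypothesis of \eqref{pt Froyshov ineq} and of item (I) say $b^+(\widetilde X)=0$, item (II) says $b^+(\widetilde X)=1$, and item (III) says $b^+(\widetilde X)=2$. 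Finally, one checks from the formula for $w_2$ of a branched cover that the hypotheses $\operatorname{PD}[S]\equiv w_2(X)\pmod 2$ and $H_1(X;\Z)=0$ force $\widetilde X$ to be spin with $b_1(\widetilde X)=0$, so that $\gamma,\beta,\alpha,\kappa_M$ of $\Sigma(K)$ are defined.

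Next I would apply the appropriate relative inequality to the pair $(\widetilde X,\Sigma(K))$ and substitute the two displayed formulas, using the paper's notation $\delta(K)=\delta(\Sigma(K))$ etc. For \eqref{pt Froyshov ineq}: the relative Fr\o yshov inequality for an arbitrary spin$^c$ structure in the $b^+=0$ case \cite{Fr10, KT22} gives $\tfrac18\bigl(c_1(\fraks)^2-\sigma(\widetilde X)\bigr)\le\delta(\Sigma(K))$, which is the claim after plugging in $\sigma(\widetilde X)$. For (I), (II), (III): the refinement of the Fr\o yshov inequality for spin $4$-manifolds using Manolescu's $\gamma,\beta,\alpha$ \cite{Fr10, FLin17, KT22} gives, for spin $\widetilde X$ with $b^+(\widetilde X)=0,1,2$ respectively, $-\tfrac18\sigma(\widetilde X)\le\gamma(\Sigma(K))$, $\le\beta(\Sigma(K))$, $\le\alpha(\Sigma(K))$, which are the three claims. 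For (IV): Manolescu's relative $10/8$ inequality \cite{Ma14}, $-\tfrac18\sigma(\widetilde X)\le b^+(\widetilde X)+\kappa_M(\Sigma(K))$, becomes exactly the stated bound after substituting and rearranging. Each substitution is a mechanical rearrangement of the two formulas in the previous paragraph.

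The only real content is the branched-cover bookkeeping: pinning down the sign and normalization of the $\sigma(K)$ correction in the signature formula, verifying $b_1(\widetilde X)=0$ from $b_1(X)=0$ (so that the Fr\o yshov inequalities apply), and checking that $\operatorname{PD}[S]\equiv w_2(X)$ makes $\widetilde X$ spin. Once $\widetilde X$ and its characteristic numbers are in place, the proposition follows immediately from the cited relative gauge-theoretic inequalities with no further input.
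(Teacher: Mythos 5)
Your proposal is correct and is essentially the paper's own proof: form the double branched cover $\widetilde X=\Sigma(S)$ of the punctured $X$, compute $\sigma(\widetilde X)=2\sigma(X)-\tfrac12[S]^2+\sigma(K)$ and $b^+(\widetilde X)=2b^+(X)+g(S)-\tfrac14[S]^2+\tfrac12\sigma(K)$ (the paper cites \cite[Lemma~4.2]{KMT21} for exactly these formulas), check that $\widetilde X$ is spin with $b_1=0$ under the stated hypotheses, and plug into the relative Fr\o yshov/Manolescu $\delta,\gamma,\beta,\alpha$ inequalities and the relative $10/8$ inequality. The only difference is cosmetic: for item~(IV) the paper simply cites \cite[Theorem~6.1]{KMT21} rather than re-deriving the bound from Manolescu's inequality, so if you do re-derive it you should pin down the precise constant in the relative $10/8$ statement you invoke, but this does not change the argument.
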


The inequality (IV) will not be used to prove our general results but, for comparison, we have stated the constraint. 
\begin{proof}[Proof of \cref{non family genus bound}]
We apply Theorem A, B, C, and 10/8 inequality for 4-manifolds with boundary \cite{Ma14, FLin17, KT22} which are obtained as double branched covering spaces along surfaces in punctured 4-manifolds. In the proof, we use \cite[Lemma 4.2]{KMT21} to describe the signature and $b^+$ of these branched covers in terms of surfaces and base 4-manifolds.  The inequality (IV) is already stated in \cite[Theorem 6.1]{KMT21}. 
\end{proof}

In order to state our results, we will use the following subgroup of $\mathcal{C}$:
\begin{defn} We define 
    \[
 \mathcal{C}^{wt}:= \{ [K] \in \mathcal{C} | \text{ There is an oriented $\Z_2$-homology 3-sphere}
 \]
 \[
 \text{ $Y'$ such that }[(Y', \id_{Y'})] = {\bf \Sigma} ([K]) \in  \Theta^{3, \tau} _{\Z_2} \}. 
    \]
   We call the group $\mathcal{C}^{wt}$ the {\it subgroup of weakly trivial concordance classes}. 
\end{defn}
\noindent
Recall that Subsection~\ref{intro_genus} we defined the homomorphism $\bf \Sigma$. Note that we have the inclusions:
\[
\operatorname{Ker} {\bf \Sigma} \subset  \mathcal{C}^{wt} \subset \mathcal{C}. 
\]
\subsubsection{Examples of elements in $\mathcal{C}^{wt}$}
In this section, we provide several concordance classes of examples lying in $\mathcal{C}^{wt}$. 
\begin{ex}
Let $T_{p,q}$ be a torus knot of type $(p,q)$ for a coprime pair of integers. Then the double branched cover of $T_{p,q}$ is a Brieskorn homology 3-sphere $\Sigma(2,p,q) = \{ (x,y,z) \in \C^3 | x^2 + y ^p + z^q =0\} \cap S^5 $. The covering involution is realized as $(-1)$-multiplication on the $x$-coordinate. This can be realized as the restriction of the Seifert $S^1$-action on $\Sigma(2,p,q)$. This implies that this $(-1)$-multiplication is smoothly isotopic to the identity. Thus we have
\[
[T_{p,q}] \in \mathcal{C}^{wt}. 
\]
This actually shows $\Z^\infty \subset \mathcal{C}^{wt}$.
\end{ex}


\begin{ex}\label{example negatively amphi}
A strongly negative amphichiral knot $(K, \sigma)$ is a smooth knot $K\subset S^3$ along with a smooth {\it orientation-reversing involution} $\sigma: S^3 \to S^3$ such that $\sigma(K) = K$ and $\sigma$ has exactly two fixed points on $K$.

It is pointed out in \cite{AMMMS20} that for a given strongly negative amphichiral knot $K'$, 
\[
K := K' \# (-K')^r
\]
gives an element in $\operatorname{Ker} {\bf \Sigma}$ (see the discussion around Question 1.4 in \cite{AMMMS20}). Also, several other concrete examples $\{K_{m,n}\}$ lying in $\operatorname{Ker} {\bf \Sigma}$ are given in \cite{AMMMS20}. In particular, \cite[Theorem 1.2, comments after Question 1.4]{AMMMS20} implies 
\[
\Z_2 ^5 \subset \operatorname{Ker} {\bf \Sigma}. 
\]
\end{ex}
\noindent
At the moment, the authors do not know how large the subgroup $\operatorname{Ker} {\bf \Sigma}$ is. So we pose:
\begin{ques}\label{ques}
  How big is $\operatorname{Ker} {\bf \Sigma}$? 
\end{ques}
\noindent
The knots given in \cite{Li22} might be candidates of infinitely many elements. Since $\mathcal{C}^{wt}$ is a subgroup in the knot concordance group, any linear combination of the above three examples is also an element in $\mathcal{C}^{wt}$. 
\noindent

\subsection{Bounds for Manolescu's $\alpha$, $\beta$, $\gamma$ and Fr\o yshov's $\delta$}
We will now produce various bounds for $\alpha, \beta, \gamma, \delta$ invariants derived from families Seiberg--Witten theory for knots in $\mathcal{C}^{wt}$.
\begin{thm}\label{general genus bounds}
Let $K$ be a knot in $S^3$ with $[K] \in \mathcal{C}^{wt}$. 
Let $X$ be an oriented smooth closed 4-manifold with $b_1(X)=0$ and $S$ be a smoothly and properly embedded surface in $X \setminus \operatorname{Int} D^4$ bounded by $K$ such that $[S] $ is divisible by $2$. 
\begin{itemize}
    \item 
Suppose $ g(S) -\frac{1}{4}[S]^2 +  \frac{1}{2} \sigma(K) = 1$ and $b^+(X)=0$. Then for any spin$^c$ structure $\frak{s}$ on the double branched covering space $\Sigma(S)$ along $S$ satisfying $\tau^* \frak{s} \cong \frak{s}$, we have 

\begin{align}\label{family delta}
    \frac{1}{8} (c_1(\frak{s})^2  -2\sigma(X) +\frac{1}{2}[S]^2-  \sigma(K))  \leq \delta  (K).  
\end{align}
\noindent
Moreover, if $[K] \in \operatorname{Ker} {\bf \Sigma}$, under the assumptions $ g(S) -\frac{1}{4}[S]^2 +\frac{1}{2} \sigma(K) = 1$ and $b^+(X)=0$, then we have 
\[
   c_1(\frak{s})^2  -2\sigma(X) +\frac{1}{2}[S]^2 - \sigma(K) \leq 0.  
    \]

\item Suppose $\operatorname{PD}[S] \equiv w_2(X)$ and $H_1(X ;\Z)=0$. Then, we have the following: 

\begin{itemize}
  \item[(I)] Suppose $ g(S) -\frac{1}{4}[S]^2 +  \frac{1}{2} \sigma(K)  = 1$ and $b^+(X)=0$, then 
    \begin{align}\label{family gamma}
            -\frac{1}{8} ( 2\sigma(X) -\frac{1}{2}[S]^2+ \sigma(K))  \leq \gamma  (K).  
    \end{align}
    \noindent
    Moreover if $[K] \in \operatorname{Ker} {\bf \Sigma}$, under the assumtpions $ g(S) -\frac{1}{4}[S]^2  +\frac{1}{2} \sigma(K)= 1$ and $b^+(X)=0$ we have 
\[
    -2\sigma(X) +\frac{1}{2}[S]^2 + \sigma(K) \geq 0.  
    \]
    \item[(II)] Suppose $  g(S) -\frac{1}{4}[S]^2 +  \frac{1}{2} \sigma(K) = 0$ and $b^+(X)=1$, then 
 \begin{align}\label{family beta}
            -\frac{1}{8} ( 2\sigma(X) -\frac{1}{2}[S]^2+ \sigma(K))  \leq \beta  (K).  
    \end{align}
      Moreover if $[K] \in \operatorname{Ker} {\bf \Sigma}$, under the assumtpions $ g(S) -\frac{1}{4}[S]^2  = 0$, we have 
\[
      -2\sigma(X) +\frac{1}{2}[S]^2 + \sigma(K) \geq 0.  
    \]

      \item[(III)] Suppose $ 2b^+(X) + g(S) -\frac{1}{4}[S]^2 +  \frac{1}{2} \sigma(K) = 3$ and $b^+(X)=2$ or $0$, then 
 \begin{align}\label{family alpha}
            -\frac{1}{8} ( 2\sigma(X) -\frac{1}{2}[S]^2+ \sigma(K))  \leq \alpha  (K).  
    \end{align}
  Moreover if $[K] \in \operatorname{Ker} {\bf \Sigma}$, under the assumtpions $ 2b^+(X)+  g(S) -\frac{1}{4}[S]^2+\frac{1}{2} \sigma(K)  = 3$ and $b^+(X)=0 $ or $2$ we have 
\[
   -2\sigma(X) +\frac{1}{2}[S]^2  + \sigma(K)\geq 0.  
    \]

\end{itemize}
\end{itemize}

\end{thm}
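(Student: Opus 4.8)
The plan is to reduce everything to the relative Frøyshov/Manolescu inequalities of \cref{theo: main theo} and \cref{theo: main theo2} applied to suitable double branched covers, just as \cref{non family genus bound} reduces to the non-family versions. The key new ingredient over \cref{non family genus bound} is that, because $[K]\in\mathcal{C}^{wt}$, the covering involution $\tau$ on $\Sigma(K)$ is smoothly isotopic (after passing to a $\Z_2$-homology cobordism) to the identity on some $Y'$, and this is exactly what lets us produce a diffeomorphism of the relevant $4$-manifold that restricts to the identity on the boundary — the hypothesis needed to invoke \cref{theo: main theo}/\cref{theo: main theo2}. So the first step is to build, from $K$ and the embedded surface $S\subset X\setminus\operatorname{Int}D^4$, the double branched cover $\Sigma(S)\to X$, which is a smooth closed $4$-manifold carrying the deck involution $\iota_S$; one computes via \cite[Lemma 4.2]{KMT21} that $\sigma(\Sigma(S)) = 2\sigma(X) - \tfrac12[S]^2$ (after correcting for the puncture, $+\sigma(K)/\,\cdot\,$ as in \cref{non family genus bound}) and $b^+(\Sigma(S)) = 2b^+(X) + g(S) - \tfrac14[S]^2 + \tfrac12\sigma(K) - (\text{boundary term})$, together with $\partial \Sigma(S) = \Sigma(K)$.

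Second, I would remove from $\Sigma(S)$ a tubular neighborhood of the branch surface's preimage appropriately so that the resulting manifold $Z$ has boundary $\Sigma(K)$, with $b_1(Z)=0$ and $b^+(Z)$ equal to the indicated value ($0$, $1$, or $2$ in the three cases), and carries the deck involution $\Phi$ restricting to $\tau$ on $\Sigma(K)$. Now use $[K]\in\mathcal{C}^{wt}$: there is a $\Z_2$-homology cobordism $V$ from $\Sigma(K)$ to some $Y'$ carrying a diffeomorphism restricting to $\tau$ on one end and $\id_{Y'}$ on the other (this is the content of the definition of $\mathcal{C}^{wt}$ via $\bf\Sigma$). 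Gluing $V$ onto $Z$ along $\Sigma(K)$ and extending $\Phi$, we obtain a spin/spin$^c$ $4$-manifold $\widehat Z$ bounded by $Y'$, with the same $\sigma$, $b^+$, $b_1$, carrying a diffeomorphism $\widehat\Phi$ with $\widehat\Phi|_{Y'} = \id$ and $\widehat\Phi$ reversing orientation of $H^+(\widehat Z)$ — the orientation-reversal on $H^+$ follows because the deck transformation acts as $-1$ on the "new" part of $H^2$ coming from the branching, which is exactly where $H^+$ sits when $b^+$ of the base is small (this parallels the computation in the proof of \cref{main:exotic} that $f$ reverses orientation of $H^+$). Then \cref{theo: main theo} (for the $b^+=1$ case leading to $\delta$), respectively \cref{theo: main theo2} (for $b^+=1,2$ leading to $\gamma,\beta$; note the indexing shift when $b^+(X)=0$ means $b^+$ of the cover is $1$ or $2$), applied to $(\widehat Z, \widehat\Phi)$, together with $\delta(Y',\frakt)=\gamma(Y',\frakt)=\beta(Y',\frakt)=0$ for the trivial end when $[K]\in\operatorname{Ker}{\bf\Sigma}$ (and $=\delta(K)$ etc.\ in general, using homology-cobordism invariance of these Frøyshov-type invariants under the cobordism $V$), yields precisely the displayed inequalities \eqref{family delta}, \eqref{family gamma}, \eqref{family beta}, \eqref{family alpha} and their sharpened forms.

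The bookkeeping of which value of $b^+$ of the base produces which invariant of the cover is the routine but error-prone part: one must track that $b^+(\Sigma(S))$ equals $2b^+(X)+g(S)-\tfrac14[S]^2+\tfrac12\sigma(K)$ up to the fixed correction, so that the constraint "$g(S)-\tfrac14[S]^2+\tfrac12\sigma(K)=1$, $b^+(X)=0$" forces $b^+$ of (the relevant piece of) the cover to be $1$, invoking \cref{theo: main theo}/the $\gamma$-case of \cref{theo: main theo2}; the constraint "$=0$, $b^+(X)=1$" forces it to be $2$, invoking the $\beta$-case; and "$2b^+(X)+g(S)-\tfrac14[S]^2+\tfrac12\sigma(K)=3$ with $b^+(X)\in\{0,2\}$" forces it to be $3$, invoking the $\alpha$-case. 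The main obstacle I anticipate is not any one inequality but the careful construction of $\widehat\Phi$ as an honest diffeomorphism of a \emph{closed-up} manifold restricting to the identity on the new boundary $Y'$ — one needs $\mathcal{C}^{wt}$ (not merely $\operatorname{Ker}{\bf\Sigma}$) precisely to arrange the identity-on-boundary condition, and one must check that the spin$^c$ structure $\frak{s}$ with $\tau^*\frak{s}\cong\frak{s}$ extends over the cobordism $V$ compatibly; once that is in place the inequalities are immediate from the cited theorems. For the "$\operatorname{Ker}{\bf\Sigma}$" sharpenings one additionally uses that the trivial end $Y'$ can be taken so that $\delta(Y')=\gamma(Y')=\beta(Y')=\alpha(Y')=0$, giving the clean "$\le 0$" / "$\ge 0$" statements.
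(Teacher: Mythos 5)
Your proposal follows essentially the same route as the paper's proof. Both form $M = \Sigma(S) \cup_{\Sigma(K)} W$ where $\Sigma(S)$ is the double branched cover of the punctured $X$ (with deck involution $\wt\tau$) and $W$ is an equivariant $\Z_2$-homology cobordism from $(\Sigma(K),\tau)$ to $(Y',\id)$ furnished by the hypothesis $[K]\in\mathcal{C}^{wt}$; both compute $b^+(\Sigma(S))$ and $\sigma(\Sigma(S))$ via \cite[Lemma 4.2]{KMT21}; and both apply \cref{theo: main theo}/\cref{theo: main theo2} to the pair $(M, \wt\tau\cup_\tau f)$, using spin $\Z_2$-homology cobordism invariance of $\delta,\alpha,\beta,\gamma$ to pass from $Y'$ to $\Sigma(K)$, with $Y'=S^3$ handling the $\operatorname{Ker}{\bf\Sigma}$ sharpening. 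Two small imprecisions worth flagging: (i) there is nothing to "remove" from $\Sigma(S)$ -- the branched cover of the punctured $X$ already has boundary $\Sigma(K)$; (ii) the $H^+$-orientation reversal is not merely that the deck map acts by $-1$ on the branching part of $H^2$, but rather that $\dim H^+(M)_{-1} = b^+(M)-b^+(X)$ (by Mayer--Vietoris) is \emph{odd} in each of the listed cases; this parity check is exactly why case (III) allows $b^+(X)\in\{0,2\}$ but excludes $b^+(X)=1$, a point your sketch should make explicit.
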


\begin{proof}
Let us write the double branched cover of $S$ as $\Sigma(S)$ with covering involution $\wt{\tau}$. Now, since we are assuming that $\wt{\tau}|_{\partial (\Sigma(S)) = \Sigma(K) }$ is $\Z_2$-equivariantly $\Z_2$-homology cobordant to $(Y', \id)$, we take an equivariant cobordism $(W, f)$ from $(\Sigma(K), \tau)$ to $(Y', \id)$ and glue 
    \[
    (M, g) : = (\Sigma(S) \cup_{\Sigma(K)} W , \wt{\tau} \cup_{\tau} f). 
    \]
    Note that we have 
    \begin{align*}
        b^+(M)&= b^+(\Sigma(S)) =  2b^+(X) + g(S) -\frac{1}{4} [S]^2   + \frac{1}{2} \sigma (K) \\
        \sigma(M)&= \sigma(\Sigma(S))=2 \sigma (X) - \frac{1}{2} [S]^2)+ \sigma (K) 
        \end{align*}
        from \cite[Lemma 4.2]{KMT21}. 

We shall apply \cref{theo: main theo} and \cref{theo: main theo2} to $(M, g)$.
Let us confirm that the assumptions in \cref{theo: main theo} and \cref{theo: main theo2} are satisfied for $(M,g)$. 
First, we focus on the inequality for $\delta$. 
Since $g^* : H^+(M) \to H^+(M)$ is order $2$, we consider the eigenvalue decomposition
\[
H^+(M) = H^+(M)_{+1} \oplus H^+(M)_{-1} 
\]
with respect to $g^\ast$.
Here $H^+(M)_{\pm1}$ are the eigenspaces associated with the eigenvalues $\pm1$ respectively. 
Since we are assuming that $\dim H^+(M)=1$, it is sufficient to show $H^+(M)= H^+(M)_{-1}$. On the other hand, a standard Mayer--Vietoris argument shows $\dim H^+(M)_{+1}= b^+(X)$. Thus, if we assume $b^+(X)=0$, all assumptions of \cref{theo: main theo} are satisfied for a fixed spin$^c$ structure $\frak{s}$ so that $g^*  \frak{s} = \frak{s}$. 
Then from $ g(S) -\frac{1}{4}[S]^2 +  \frac{1}{2} \sigma(K) = 1$, we have 
\[
b^+(M)=   2b^+(X) + g(S) -\frac{1}{4} [S]^2   + \frac{1}{2} \sigma (K) =1. 
\]
Thus, we apply \cref{theo: main theo} to the branched cover $\Sigma (S)$ and obtain 
\[
\frac{c_{1}(\fraks)^{2} - \sigma(\Sigma (S))}{8} \leq \delta(\Sigma(K) ,\frakt)
\]
which is equivalent to \eqref{family delta} from \cite[Lemma 4.2]{KMT21}, where $\frakt$ is the unique spin structure on $\Sigma(K)$. This completes the proof of \eqref{family delta}.

For invariants $\alpha$, $\beta$ and $\gamma$, we need a spin structure on $\Sigma(S)$ which is preserved by the covering involution. 
 The condition $\operatorname{PD}[S] \equiv w_2(X)$ is equivalent to the existence of a spin structure on $\Sigma(S)$. Also, if we suppose $H_1(X ;\Z)=0$, one can check $H^1(\Sigma(S); \Z_2)=0$. Thus, there is a unique spin structure on $\Sigma(S)$. In particular, the isomorphism class of the spin structure on $M$ is preserved by the diffeomorphism $g$.
The proofs of the inequalities for $\alpha, \beta$, and $\gamma$ are similar. Therefore, we only explain the proof for $\beta$. 
Also, again, we consider the eigenvalue decomposition
\[
H^+(M) = H^+(M)_{+1} \oplus H^+(M)_{-1}. 
\]
From assumption , we have $1=b^+(X) = \dim H^+(M)$, thus 
\[
b^+(M)=   2b^+(X) + g(S) -\frac{1}{4} [S]^2   + \frac{1}{2} \sigma (K) =2
\]
In particular, we have $\dim H^+(M)_{-1} =1$.  
Then, we see $g^*$ reverses an orientation of $H^+(M)$. 
Now we apply \cref{theo: main theo2}(II) and obtain 
\[
\frac{ - \sigma(\Sigma (S))}{8} \leq \beta(\Sigma(K) ,\frakt). 
\]
 Combined with \cite[Lemma 4.2]{KMT21}, this completes the proof. 
\end{proof}

\subsection{Genus bounds}

The following theorem is the most general theorem for genus bounds in this paper:
\begin{thm}\label{general genus bound}
    Let $K$ be a knot with $[K] \in \mathcal{C}^{wt}$. Let $X$ be an oriented smooth closed 4-manifold with $b_1(X)=0$ and $S$ be a smoothly and properly embedded surface in $X \setminus \operatorname{Int} D^4$ bounded by $K$ such that $[S] $ is divisible by $2$ and $\operatorname{PD}[S]/2 \equiv w_2(X)$ and $H_1(X ;\Z)=0$. 
    We define 
    \[
    i(K, [S], X): = \# \{ i \in \{ \min \{ \gamma (K), \delta(K)\}, \beta(K), \alpha(K) \} | \text{ $i$ satisfies \eqref{key ineq}}  \}, 
    \]
    where \eqref{key ineq} is given by 
     \begin{align}\label{key ineq}
     [S]^2 - 2 \sigma(K) > 4\sigma(X) + 16 i. 
     \end{align}
     Suppose $  i(K, [S], X)>0$.
    Then we have the following: 
\begin{itemize}
    \item If $b^+(X)=0$, then 
    \[
    g(S) -\frac{1}{4}[S]^2+\frac{1}{2} \sigma(K) \geq  \begin{cases}
         2 \text{ if }  i(K, [S], X) =1,2 \\
         4 \text{ if }  i(K, [S], X) =3.  
    \end{cases} . 
    \]
    \item If $b^+(X)=1$, then 
    \[
    g(S) -\frac{1}{4}[S]^2+\frac{1}{2} \sigma(K) \geq  
         1 \text{ if }  i(K, [S], X) \geq 2  .
    \]
    \item If $b^+(X)=2$, then
      \[
    g(S) -\frac{1}{4}[S]^2+\frac{1}{2} \sigma(K) \geq  0 \text{ if }  i(K, [S], X)=3. 
    \]
\end{itemize}

\end{thm}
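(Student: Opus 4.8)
The plan is to abbreviate $n := g(S) - \frac14[S]^2 + \frac12\sigma(K)$ and to prove each asserted lower bound on $n$ by contradiction, reducing everything to the Fr\o yshov‑type inequalities already available. First I would record the bookkeeping: since $[S]$ is divisible by $2$ and knot signatures are even, $n$ is an integer; since $\operatorname{PD}[S]/2 \equiv w_2(X)$ and $H_1(X;\Z)=0$, the double branched cover $\Sigma(S)$ of $X\setminus\operatorname{Int}D^4$ along $S$ carries a unique spin structure $\mathfrak s$, necessarily preserved by the covering involution $\wt\tau$ and with $c_1(\mathfrak s)=0$; and by \cite[Lemma~4.2]{KMT21} one has $b^+(\Sigma(S))=2b^+(X)+n$ and $\sigma(\Sigma(S))=2\sigma(X)-\frac12[S]^2+\sigma(K)$. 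Substituting $c_1(\mathfrak s)=0$ and this signature formula, every inequality I shall use — \eqref{pt Froyshov ineq} and \eqref{gamma ineq} from \cref{non family genus bound}, and the outputs of \cref{theo: main theo}/\cref{theo: main theo2} obtained below — collapses into the single uniform shape
\[
[S]^2 - 2\sigma(K)\ \le\ 4\sigma(X) + 16\,\nu,\qquad \nu\in\{\min\{\gamma(K),\delta(K)\},\ \beta(K),\ \alpha(K)\},
\]
whose strict reverse for a given $\nu$ is precisely an instance of \eqref{key ineq}. Since $\min\{\gamma(K),\delta(K)\}\le\beta(K)\le\alpha(K)$, the conditions $i(K,[S],X)\ge1$, $\ge2$, $=3$ are respectively equivalent to \eqref{key ineq} holding strictly for $\min\{\gamma(K),\delta(K)\}$, for $\beta(K)$, and for $\alpha(K)$.

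Second, following the proof of \cref{general genus bounds}, I would glue the equivariant $\Z_2$-homology cobordism $W$ from $(\Sigma(K),\tau)$ to $(Y',\id)$ supplied by $[K]\in\mathcal C^{wt}$ to form $(M,g):=(\Sigma(S)\cup_{\Sigma(K)}W,\ \wt\tau\cup_\tau f)$; then $b^+(M)=b^+(\Sigma(S))$, $\sigma(M)=\sigma(\Sigma(S))$, $\partial M = Y'$ has $\delta,\alpha,\beta,\gamma$ equal to those of $\Sigma(K)$, hence of $K$, by $\Z_2$-homology cobordism invariance, and the Mayer--Vietoris computation $\dim H^+(M)_{+1}=b^+(X)$ shows that $g^\ast$ reverses orientation of $H^+(M)$ exactly when $\dim H^+(M)_{-1}=b^+(X)+n$ is odd. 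If a given conclusion fails then $b^+(M)=2b^+(X)+n$ always lies in $\{0,1,2,3\}$ (negative values are impossible, and larger values already force the asserted bound). When $1\le b^+(M)\le3$ and $g^\ast$ reverses orientation of $H^+(M)$, applying \cref{theo: main theo} and \cref{theo: main theo2} to $(M,g,\mathfrak s)$ yields the uniform inequality with $\nu=\min\{\gamma(K),\delta(K)\}$ for $b^+(M)=1$ (using both theorems), $\nu=\beta(K)$ for $b^+(M)=2$, and $\nu=\alpha(K)$ for $b^+(M)=3$; in the remaining situations — $b^+(M)=0$, or $b^+(M)\in\{1,2\}$ with $b^+(X)+n$ even — the non‑equivariant \cref{non family genus bound} applies directly to $\Sigma(S)$ (its hypothesis $2b^+(X)+g(S)-\frac14[S]^2+\frac12\sigma(K)\in\{0,1,2\}$ matching $b^+(\Sigma(S))$) and gives $\nu=\min\{\gamma(K),\delta(K)\}$, $\beta(K)$, $\alpha(K)$ for $b^+(\Sigma(S))=0,1,2$ respectively.

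Third I would run the finitely many boundary cases. For $b^+(X)=0$: if $n\le1$ then $n\in\{0,1\}$ (as $b^+(\Sigma(S))=n\ge0$) and both values give a $\min\{\gamma,\delta\}$‑inequality contradicting $i\ge1$, so $n\ge2$ when $i\in\{1,2\}$; if moreover $n\in\{2,3\}$ then $b^+(M)=2$ (here $b^+(X)+n$ is even, so \cref{non family genus bound}) or $b^+(M)=3$ ($g^\ast$ reverses, so \cref{theo: main theo2}) gives an $\alpha$‑inequality contradicting $i=3$, so $n\ge4$ when $i=3$. For $b^+(X)=1$: if $n\le0$ then $n\in\{-2,-1,0\}$, with $n=0$ giving $b^+(M)=2$ and a $\beta$‑inequality, $n=-1$ giving $b^+(M)=1$ with $b^+(X)+n$ even and the non‑equivariant $\beta$‑inequality, and $n=-2$ giving $b^+(M)=0$ and a $\min\{\gamma,\delta\}$‑inequality; the first two contradict $i\ge2$ and the third $i\ge1$, so $n\ge1$ when $i\ge2$. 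For $b^+(X)=2$: if $n\le-1$ then $n\in\{-4,-3,-2,-1\}$, with $n=-1,-2$ giving $b^+(M)=3$ (equivariant) and $b^+(M)=2$ (non‑equivariant) and $\alpha$‑inequalities contradicting $i=3$, and $n=-3,-4$ giving $b^+(M)=1$ (equivariant) and $b^+(M)=0$ and $\min\{\gamma,\delta\}$‑inequalities contradicting $i\ge1$, so $n\ge0$ when $i=3$. The step demanding the most care is precisely this last tabulation: one must check in each boundary case that the parity of $b^+(X)+n$ consistently selects between the equivariant inputs \cref{theo: main theo}/\cref{theo: main theo2} and the non‑equivariant \cref{non family genus bound}, that $b^+(M)$ never exceeds $3$, and that the invariant $\nu$ thereby controlled is exactly the one whose strict inequality \eqref{key ineq} the hypothesis $i(K,[S],X)>0$ guarantees. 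Everything else is the routine substitution of the signature formula together with $c_1(\mathfrak s)=0$.
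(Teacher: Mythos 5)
Your proposal follows the same overall route as the paper's: pass to the double branched cover $\Sigma(S)$, glue on the equivariant $\Z_2$-homology cobordism $W$ to form the pair $(M,g)$, and run a ladder argument on $n := g(S) - \tfrac14[S]^2 + \tfrac12\sigma(K)$ using the Fr\o yshov-type inequalities from \cref{non family genus bound} and the families inequalities \cref{theo: main theo}, \cref{theo: main theo2}. Where you differ is in how you decide which input to apply at each rung. You read off from the Mayer--Vietoris splitting that $g^\ast$ reverses orientation of $H^+(M)$ precisely when $\dim H^+(M)_{-1}=b^+(X)+n$ is odd, and use that parity to switch between the equivariant theorems (odd) and the non-equivariant \cref{non family genus bound} (even). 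This is a clean, uniform mechanism, and it eliminates the paper's ``add a small $1$-handle'' trick entirely (the paper bumps $n$ up by one so the desired hypothesis becomes available; your parity split makes one of the two inputs available at every $n$). Your preliminary remark that, with $c_1(\mathfrak s)=0$, every inequality invoked collapses to the single template $[S]^2-2\sigma(K)\le 4\sigma(X)+16\nu$ --- whose strict negation is exactly an instance of \eqref{key ineq}, and whose $\nu$ ranges over $\min\{\gamma,\delta\}\le\beta\le\alpha$ so that $i\ge1$, $i\ge2$, $i=3$ correspond to \eqref{key ineq} for $\min\{\gamma,\delta\}$, $\beta$, $\alpha$ respectively --- is a tidy reformulation that the paper leaves implicit, and it incidentally sidesteps a miscitation in the paper's proof (where \cref{non family genus bound}(II) is invoked for $b^+(\Sigma(S))=2$).

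There is one genuine flaw in the tabulation. You bound $n$ from below using only $b^+(\Sigma(S))=2b^+(X)+n\ge 0$, which admits cases such as $n=-3,-4$ when $b^+(X)=2$ and $n=-2$ when $b^+(X)=1$. For those values $\dim H^+(M)_{-1}=b^+(X)+n$ is negative, so no such branched cover exists, and the equivariant inequalities you claim to apply there have hypotheses ($g^\ast$ reversing orientation of a one-dimensional $H^+(M)$, say, with $\dim H^+(M)_{+1}=b^+(X)>b^+(M)$) that literally cannot be satisfied. You should instead invoke the locally flat genus bound \eqref{topological estimate}, i.e.\ $n\ge -b^+(X)$ --- equivalently $\dim H^+(M)_{-1}\ge 0$ --- at the outset, as the paper does. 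Once that is in place, the only surviving cases in each ladder are exactly those for which your parity mechanism legitimately selects an applicable input theorem, and the rest of the argument goes through as written.
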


\begin{proof}[Proof of \cref{general genus bound}]
In \cite{CN20}, the following estimate is proven: 
 Let $S  \subset  X \setminus \operatorname{Int}D^4$ be a locally flat, properly embedded surface of genus $g$ bounded by a knot $K \subset S^3$. If the homology class $[S ] \in  H_ 2(X;\Z) $ is divisible by $2$, then 
 \[
 \left| \sigma (K) + \sigma (X) -\frac{1}{2} [S]^2 \right| \leq b_2(X) + 2g (S)
 \]
 holds. In particular, 
\begin{align}\label{topological estimate}
b^+(X) + g(S) -\frac{1}{4}[S]^2+\frac{1}{2} \sigma(K) \geq 0 
 \end{align}
holds.  
Let us first suppose that $b^+(X)=0$ and $i(K)\geq 1$. 
Then a straightforward computation shows that \eqref{topological estimate} is sharp if and only if $b^+(\Sigma(S)) =0$.
Then, from \eqref{pt Froyshov ineq} and \eqref{gamma ineq}, we have  
\[
 \frac{1}{8} (c_1(\frak{s})^2  -2\sigma(X) +\frac{1}{2}[S]^2-  \sigma(K))  \leq \delta  (K)  
\]
and
\[
 -\frac{1}{8} ( 2\sigma(X) -\frac{1}{2}[S]^2+ \sigma(K))  \leq \gamma  (K)
\]
which contradict with \eqref{key ineq} under $i(K)\geq 1$.
Thus \eqref{topological estimate} cannot be sharp.
Thus we have 
\begin{align}\label{topological estimate1}
g(S) -\frac{1}{4}[S]^2+\frac{1}{2} \sigma(K) \geq 1. 
 \end{align}
 We again claim \eqref{topological estimate1} cannot be sharp. Suppose that \eqref{topological estimate1} is the equality.  
 Then \eqref{family delta} and \eqref{family gamma} imply
\[
 \frac{1}{8} (c_1(\frak{s})^2  -2\sigma(X) +\frac{1}{2}[S]^2-  \sigma(K))  \leq \delta  (K)  
\]
and 
\[
 -\frac{1}{8} ( 2\sigma(X) -\frac{1}{2}[S]^2+ \sigma(K))  \leq \gamma  (K)
\]
which again contradict with \eqref{key ineq} under $i(K)\geq 1$.
Thus we have 
\begin{align}\label{topological estimate2}
g(S) -\frac{1}{4}[S]^2+\frac{1}{2} \sigma(K) \geq 2. 
 \end{align}
 We again claim \eqref{topological estimate2} cannot be sharp if $i(K)\geq 3$. (Here, note that $i(K)\geq 2$ is {\it not} enough. )  Suppose \eqref{topological estimate2} is an equality. 
 For a technical reason, we add a small 1-handle to $S$ and assume $g(S)  -\frac{1}{4}[S]^2 + \frac{1}{2} \sigma(K) =3$.
 Then one can use \eqref{family alpha} and obtain 
 \[
 -\frac{1}{8} ( 2\sigma(X) -\frac{1}{2}[S]^2+ \sigma(K))  \leq \alpha  (K), 
\]
which contradicts with \eqref{key ineq} under $i(K)\geq 3$.
Thus we have that \eqref{topological estimate2} cannot be sharp. A similar discussion also shows 
\[
g(S) -\frac{1}{4}[S]^2+\frac{1}{2} \sigma(K) \geq  4 
\]
by using \eqref{family gamma} when $ i(K)=3$.
 This completes the proof in the case of $b^+(X)=0$.

Next, we suppose $b^+(X)=1$. Again, we assume the inequality for locally flat surfaces
\begin{align}\label{eee1}
g(S) -\frac{1}{4}[S]^2+\frac{1}{2} \sigma(K) \geq -1 = -b^+(X)
\end{align}
is sharp. Then, one can see $b^+(\Sigma(S)) = 2b^+(X) + g(S) -\frac{1}{4}[S]^2+\frac{1}{2} \sigma(K) =1$. Suppose $i(K)=2$. Again add a small 1-handle to $S$ and assume $b^+(\Sigma(S))=2$
Then one can use \cref{non family genus bound} (II) which contradicts with \eqref{key ineq} under $i(K)\geq 2$. Thus, we have 
\[
g(S) -\frac{1}{4}[S]^2+\frac{1}{2} \sigma(K) \geq 0. 
\]
Then again one can use \cref{non family genus bound} (II) which contradicts with \eqref{key ineq} under $i(K)\geq 2$. 
Again suppose $g(S) -\frac{1}{4}[S]^2+\frac{1}{2} \sigma(K) = 0$. In this case, we have $b^+(\Sigma(S))=2$. Again one can use \cref{non family genus bound} (II) which contradicts with \eqref{key ineq} under $i(K)\geq 2$.

Finally, we focus on the case $b^+(X)=2$. In this case, the topological genus bound is 
\[
 g(S) -\frac{1}{4}[S]^2+\frac{1}{2} \sigma(K) \geq -3.  
\]
Suppose $g(S) -\frac{1}{4}[S]^2+\frac{1}{2} \sigma(K)= -3$. Then, $b^+(\Sigma(S))=1$. Thus, one can use \eqref{non family genus bound} (II) and obtain a contradiction with \eqref{key ineq} under $i(K)\geq 2$. Similarly, one can see 
\[
 g(S) -\frac{1}{4}[S]^2+\frac{1}{2} \sigma(K) \geq -2
\]
by using \eqref{non family genus bound} (III) under $i(K)\geq 3$.

Next, we suppose 
$
 g(S) -\frac{1}{4}[S]^2+\frac{1}{2} \sigma(K) = -1$ 
 In this case, one has $b^+ (\Sigma(S)) = 3$. Thus one can use \eqref{family gamma} to get a contradiction with \eqref{key ineq} under $i(K)\geq 3$.
This completes the proof. 
\end{proof}

We shall also give another genus bounds derived from \cref{thm: charge conj}.
\begin{thm}
    Let $K$ be a knot in $S^3$ with $[K ]\in \operatorname{
Ker} {\bf \Sigma}$. Let $X$ be an oriented smooth closed 4-manifold with $b_1(X)=0$ and $b^+(X)=1$ and $S$ be a smoothly and properly embedded surface in $X \setminus \operatorname{Int} D^4$ bounded by $K$ such that $[S] $ is divisible by $2$. Suppose there is a spin$^c$ structure $\frak{s}$ on the branched cover $\Sigma(S)$ 
satisfying $\tau^* \frak{s} \cong  \overline{\frak{s}}$ and 
    \begin{align}\label{real family Fr ineq}
c_{1}(\fraks)^{2} - 2 \sigma (X) + \frac{1}{2} [S]^2- \sigma (K)> 0.
\end{align}
Then, we have 
\[
 g(S)  \geq  \frac{1}{4} [S]^2  -\frac{1}{2} \sigma (K) . 
\]

\end{thm}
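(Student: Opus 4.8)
The plan is to argue by contradiction and reduce to the charge-conjugation Fr\o yshov inequality of \cref{thm: charge conj}, applied to a closed $4$-manifold assembled from the double branched cover $\Sigma(S)$ of $X\setminus\operatorname{Int}D^4$ along $S$. Suppose $g(S) < \frac14[S]^2 - \frac12\sigma(K)$. Since $[S]$ is divisible by $2$ and knot signatures are even, the quantity $g(S)-\frac14[S]^2+\frac12\sigma(K)$ is an integer, so the assumption forces it to be $\le -1$; combined with the topological genus bound of \cite{CN20} (inequality \eqref{topological estimate}), which with $b^+(X)=1$ reads $g(S)-\frac14[S]^2+\frac12\sigma(K)\ge -1$, it must equal $-1$. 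By \cite[Lemma 4.2]{KMT21} this pins down $b^+(\Sigma(S)) = 2b^+(X) + \bigl(g(S)-\frac14[S]^2+\frac12\sigma(K)\bigr) = 1$ and $\sigma(\Sigma(S)) = 2\sigma(X)-\frac12[S]^2+\sigma(K)$.

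Next I would use the hypothesis $[K]\in\operatorname{Ker}{\bf\Sigma}$ to pick a smooth $\Z_2$-equivariant $\Z_2$-homology cobordism $(W,f)$ from $(\Sigma(K),\tau)$ to $(S^3,\id)$, glue it to $\Sigma(S)$ along $\Sigma(K)$, and cap off the resulting $S^3$ boundary with $D^4$, obtaining a closed, oriented, smooth $4$-manifold $M := \Sigma(S)\cup_{\Sigma(K)}W\cup_{S^3}D^4$ with the orientation-preserving diffeomorphism $g := \wt\tau\cup f\cup\id$, where $\wt\tau$ is the covering involution of $\Sigma(S)$. Since a $\Z_2$-homology cobordism is automatically a $\Q$-homology cobordism, a Mayer--Vietoris argument gives $b_1(M)=0$, $b^+(M)=b^+(\Sigma(S))=1$, $\sigma(M)=\sigma(\Sigma(S))$, and an isomorphism $H^2(M;\Q)\cong H^2(\Sigma(S);\Q)$ intertwining $g^*$ with $\wt\tau^*$. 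I would extend $\fraks$ to a spin$^c$ structure $\wt\fraks$ on $M$ with $g^*\wt\fraks\cong\overline{\wt\fraks}$ and $c_1(\wt\fraks)^2 = c_1(\fraks)^2$; this is possible because $W$ and $D^4$ carry no rational second cohomology (so there is no obstruction to extending and torsion classes contribute nothing to the self-intersection), and the bookkeeping is the same as in the proof of \cref{general genus bounds}, only with the conjugation condition in place of the preservation condition.

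It then remains to check that $g$ preserves orientation of $H^+(M)$. Decomposing $H^2(\Sigma(S);\Q)$ into the $(\pm1)$-eigenspaces of $\wt\tau^*$ and using $\Sigma(S)/\wt\tau = X\setminus\operatorname{Int}D^4$, the $(+1)$-eigenspace is $H^2(X;\Q)$, whose positive part has dimension $b^+(X)=1=b^+(\Sigma(S))$; hence the $(-1)$-eigenspace contributes nothing to $b^+$, so $H^+(M)=H^+(\Sigma(S))$ lies in the $(+1)$-eigenspace and $g$ acts as the identity there, in particular preserving its orientation. Now \cref{thm: charge conj} applied to $(M,\wt\fraks,g)$ yields $\frac{c_1(\wt\fraks)^2-\sigma(M)}{8}\le 0$, i.e. $c_1(\fraks)^2 - 2\sigma(X) + \frac12[S]^2 - \sigma(K)\le 0$, contradicting \eqref{real family Fr ineq}. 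This gives $g(S)\ge \frac14[S]^2-\frac12\sigma(K)$.

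The step I expect to be the main obstacle is the spin$^c$ bookkeeping across the homology cobordism — confirming that $\fraks$ on the branched cover extends to an $\wt\fraks$ on the closed manifold $M$ for which $g$ is still conjugating and which carries the same $c_1^2$ — together with the observation that one genuinely needs $[K]\in\operatorname{Ker}{\bf\Sigma}$, not merely $[K]\in\mathcal{C}^{wt}$: the cobordism must terminate at $S^3$ so that $M$ can be made closed, since \cref{thm: charge conj} is available only in the closed case. Everything else runs in parallel with the proof of \cref{general genus bounds}.
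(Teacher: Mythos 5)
Your proof is correct and follows the same route as the paper: form the closed $4$-manifold $M = \Sigma(S)\cup_{\Sigma(K)} W$ with the self-diffeomorphism $g=\wt\tau\cup f$ built from the covering involution and the equivariant $\Z_2$-homology ball furnished by $[K]\in\operatorname{Ker}{\bf \Sigma}$, observe via Mayer--Vietoris that the $+1$-eigenspace of $g^*$ on $H^+(M)$ has dimension $b^+(X)=1=b^+(M)$ so that $g$ preserves the orientation of $H^+(M)$, and apply \cref{thm: charge conj} to exclude $b^+(\Sigma(S))=1$. The only cosmetic difference is how $b^+(\Sigma(S))=0$ is ruled out --- you invoke the locally flat genus bound \eqref{topological estimate} together with integrality, whereas the paper applies the ordinary (non-families) Fr\o yshov/Donaldson inequality to the resulting closed negative-definite $M$ --- but both give the same conclusion, and the remaining bookkeeping (extension of $\fraks$ across the $\Z_2$-homology ball, the signature and $b^+$ formulas from \cite[Lemma 4.2]{KMT21}) is identical.
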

\begin{proof}
As in the proof of \cref{general genus bounds}, we define 
    \[
    (M, g)  = (\Sigma(S) \cup_{\Sigma(K)} W  , \wt{\tau} \cup_{\tau} f)
    \]
 and have 
    \begin{align}
        b^+(M)&= b^+(\Sigma(S)) =  2b^+(X) + g(S) -\frac{1}{4} [S]^2   + \frac{1}{2} \sigma (K) \label{eq: bplus M conj}\\
        \sigma(M)&= \sigma(\Sigma(S))=2 \sigma (X) - \frac{1}{2} [S]^2+ \sigma (K) .\nonumber
        \end{align}
        From the assumption $[K] \in \operatorname{Ker} {\bf \Sigma }$, we can take $W$ so that $\partial W = \Sigma(K)$, so $M$ is a closed 4-manifold. 
Now, we assume $b^+(M)=0$. Then, the usual Fr\o yshov inequality implies 
\[
c_{1}(\fraks)^{2} - 2 \sigma (X) + \frac{1}{2} [S]^2- \sigma (K)\leq 0,
\]
which contradicts with \eqref{pt Froyshov ineq}.

Then, next, we suppose $b^+(\Sigma(S))=1$. Again, since $g^* : H^+(M) \to H^+(M)$ is order $2$, we consider the eigenvalue decomposition
\[
H^+(M) = H^+(M)_{+1} \oplus H^+(M)_{-1}. 
\]
From assumptions, we have
\[
\dim H^+(M)_{+1}  =1 \text{ and } H^+(M)_{-1} = \{0\}
\]
which imply all assumptions of \cref{thm: charge conj}. Thus we have
\[
c_{1}(\fraks)^{2} - 2 \sigma (X) + \frac{1}{2} [S]^2- \sigma (K)\leq 0,
\] 
which is a contradiction. Thus we obtain $b^+(\Sigma (S)) \geq 2$. This combined with \eqref{eq: bplus M conj} completes the proof. 
\end{proof}

\subsubsection{Genus bounds in spin 4-manifolds}

This section focuses on the lower bounds of H-slice genera in $S^4$, $S^2 \times S^2$, and $\#_2 S^2 \times S^2$. Recall that we denote by 
$g_{X,x} (K)$ the relative smooth 4-genus of $K$ in a closed smooth 4-manifold $X$ with the homology class $x \in H_2(X;\Z)$. For $x=0$, we write $g_{X,x} (K)$ by $g_{X} (K)$ as in the introduction.  As a corollary of \cref{general genus bound}, by putting $X$ as $S^4$, $S^2\times S^2$ and $\#_2 S^2\times S^2$, we obtain the following genus bounds. 
\begin{cor}\label{genus spin bound}
    Let $K$ be a knot with $[K] \in \mathcal{C}^{wt}$.  We use
    \[
    i(K) = \# \{ i \in \{ \min \{ \gamma (K), \delta(K)\}, \beta(K), \alpha(K) \} |  \sigma(K) <  -8 i \}.  
    \]

    Then we have the following: 

    \begin{itemize}
    \item If $b^+(X)=0$, then 
    \[
    g(S)  \geq  -\frac{1}{2} \sigma(K) + \begin{cases}
         2 \text{ if }  i(K) =1,2 \\
         4 \text{ if }  i(K) =3.  
    \end{cases}  
    \]
    \item If $b^+(X)=1$, then 
    \[
    g(S) \geq  -\frac{1}{2} \sigma(K)  + 
         1 \text{ if }  i(K) \geq 2  .
    \]
    \item If $b^+(X)=2$, then
      \[
    g(S) \geq  -\frac{1}{2} \sigma(K)  \text{ if }  i(K)=3. 
    \]
\end{itemize}

\end{cor}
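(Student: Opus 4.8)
The plan is to obtain \cref{genus spin bound} as a direct specialization of \cref{general genus bound}, taking the ambient manifold $X$ to be $S^4$, $S^2\times S^2$, or $\#_2 S^2\times S^2$ according to whether $b^+(X)$ equals $0$, $1$, or $2$. Each of these is a closed, oriented, smooth, spin $4$-manifold with $b_1(X)=0$ and $\sigma(X)=0$, so the hypotheses of \cref{general genus bound} that refer only to $X$ hold automatically.

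First I would fix a knot $K$ with $[K]\in\mathcal{C}^{wt}$, pick the relevant $X$, and let $S\subset X\setminus\operatorname{Int}D^4$ be any smoothly and properly embedded surface bounded by $K$ that represents the class $0\in H_2(X;\Z)$ (these are exactly the surfaces whose minimal genus computes $g_X(K)$). Since $[S]=0$ it is divisible by $2$, we have $[S]^2=0$, and $\operatorname{PD}[S]/2=0\equiv w_2(X)$ because $X$ is spin; moreover $H_1(X;\Z)=0$. Hence all remaining hypotheses of \cref{general genus bound} are met for the triple $(K,[S],X)$.

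Next I would identify the counting quantity of \cref{general genus bound} with the one appearing in \cref{genus spin bound}. Substituting $[S]^2=0$ and $\sigma(X)=0$ into the defining inequality \eqref{key ineq}, the condition $[S]^2-2\sigma(K)>4\sigma(X)+16i$ reduces to $\sigma(K)<-8i$, so
\[
i(K,[S],X)=\#\{\,i\in\{\min\{\gamma(K),\delta(K)\},\beta(K),\alpha(K)\}\ \mid\ \sigma(K)<-8i\,\}=i(K).
\]
With this identification, plugging $[S]^2=0$ into each conclusion of \cref{general genus bound} turns $g(S)-\tfrac14[S]^2+\tfrac12\sigma(K)\ge c$ into $g(S)\ge -\tfrac12\sigma(K)+c$, with $c$ equal to $2$, $4$, $1$, or $0$ in the respective regimes for $i(K)$ and $b^+(X)$; taking the infimum over all admissible $S$ then yields the stated bounds on $g_X(K)$. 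There is essentially no obstacle here: the argument is pure bookkeeping, and the only points that warrant a line of justification are the spin condition $\operatorname{PD}[S]/2\equiv w_2(X)$ (immediate from $[S]=0$ and $X$ spin) and the observation that when $i(K)=0$ the corollary asserts nothing, which is precisely why the hypothesis $i(K,[S],X)>0$ of \cref{general genus bound} is the one we need.
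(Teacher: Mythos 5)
Your proposal is correct and matches the paper's proof exactly: the paper simply says to take $X = S^4$, $S^2\times S^2$, or $\#_2 S^2\times S^2$ and apply \cref{general genus bound}, and your write-up fills in the straightforward bookkeeping ($[S]=0$, $\sigma(X)=0$, $X$ spin, $H_1(X;\Z)=0$, and the resulting identification $i(K,[S],X)=i(K)$) that the paper leaves implicit.
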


\begin{proof}
   We put $S^4 , S^2\times S^2$ and $\#_2S^2\times S^2$ as $X$ and apply \cref{general genus bound}.  
\end{proof}

\begin{ex}\label{computation for torus knots}
We consider torus knots of the type 
$T_{3,q}$. We put lists of computations of $\alpha, \beta, \gamma$, and $\sigma$.
\begin{center}
\begin{tabular}{| c | |  c | c | c | c | c | c | c | c| }
\hline
torus knot & {$\alpha$} & {$\beta$} & {$\gamma$} & {$\sigma$} &$i(K)$  \\
\hline
\hline
$T_{3,12k-5}$  & $1$ & $-1$ & $-1$ & $-8(2k-1)$ & $2(k=1)$ \\
\hline
$T_{3,12k-1}$  & $2$ & $0$ & $0$ & $-16k$ & $2(k=1)$ \\
\hline
$T_{3,12k+1}$  & $0$ & $0$ & $0$ & $-16k$  & $3$ \\
\hline
$T_{3,12k+5}$  & $1$ & $1$ & $1$ & $-8(2k+1)$ & $0 (k=0)$ \\
\hline
\end{tabular}
\end{center}
These computations of $\alpha, \beta$ and $\gamma$ were done in \cite{Ma16} and the computations of the signatures are well-known.  
Then, using \cref{genus spin bound}, we have the following genus bounds: 
\begin{itemize}
    \item $6 \leq g_{S^4} (T_{3,7}) $, $10 \leq g_{S^4} (T_{3,11}) $, $11\leq g_{S^4}(T_{3,13})$. 
    \item $5 \leq g_{S^2\times S^2} (T_{3,7}) $, $9 \leq g_{S^2\times S^2} (T_{3,11}) $, $9\leq g_{S^2\times S^2}(T_{3,13})$. 
    \item  $8\leq g_{\#_2S^2\times S^2}(T_{3,13})$.
\end{itemize}
Note that the Milnor conjecture for knots $T_{3,7}$ and $T_{3,11}$ can be confirmed also from our inequality.
Above bounds are true even for homology $S^4$, $S^2\times S^2$ and $\#_2 S^2 \times S^2$ respectively. Note that we can also change knot concordance classes by taking the connected sum with $K' \# (-K')^r$, where $K'$ is a strongly negative amphichiral knot. The resulting genus bounds are the same as the genus bounds for the above torus knots. The inequality given in \cref{main:torus} is a part of the above inequalities. 
\end{ex}




\subsubsection{Genus bounds in non-spin 4-manifolds}
We also produce genus bounds in non-spin 4-manifolds. We consider $\#_2\C P^2$ first and give a proof of \cref{genus application}. 

\begin{proof}[Proof of \cref{genus application}]
We set $X= \#_2\C P^2$ and $S = (2,6) \in H_2(X;\Z)$.
For a knot $K$ with 
    $
   [K] \in \operatorname{Ker} {\bf \Sigma}  $, one can see 
    \[
    \alpha(K ) = \beta (K) = \gamma (K) = \delta(K) =0
    \]
    since $\Sigma(K)$ is $\Z_2$-homology cobordant to $S^3$.
    We need to check $i(K, [S], X)=3$. It follows from 
    \[
    [S]^2 = 40, \sigma(K)=0, \alpha(K ) = \beta (K) = \gamma (K) = \delta(K) =0 \text{ and } \sigma(X)=2
    \]
    Then the desired inequality follows from \cref{general genus bound}. 
\end{proof}

Let us now concentrate on a non-spin indefinite 4-manifold $\C P^2 \# - \C P^2$.
\begin{cor}\label{non-spin bound}
Let $K$ be a knot with $[K] \in \mathcal{C}^{wt}$.  Let $a$ and $b$ be integers with $a/2 , b/2 \in 2\Z +1$.
We use 
\[
    i(K,a,b): = \# \{ i \in \{ \min \{ \gamma (K), \delta(K)\}, \beta(K), \alpha(K) \} |  a^2 - b^2 - 2 \sigma(K) > 16 i \}.
    \]  
Let $x$ be the homology class $x\in H_2(\C P^2 \# - \C P^2)$ corresponding to $(a [\C P^1], b[\C P^1])$. Then, we have 

 \[
    g_{\C P^2 \# - \C P^2, x}  (K)\geq  1+ \frac{1}{4}(a^2-b^2) - \frac{1}{2} \sigma(K) \text{ if } i(K,a,b) \geq 2.
    \]
    
     \end{cor}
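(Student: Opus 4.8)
The plan is to obtain \cref{non-spin bound} as a direct specialization of the general genus bound \cref{general genus bound}, taking $X = \C P^2 \# -\C P^2$ and letting $S$ range over smoothly and properly embedded surfaces in $(\C P^2 \# -\C P^2) \setminus \operatorname{Int} D^4$ bounded by $K$ and representing the prescribed homology class $x$.

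First I would check that the hypotheses of \cref{general genus bound} are met here. One has $b_1(\C P^2 \# -\C P^2) = 0$, $b^+(\C P^2 \# -\C P^2) = 1$, $\sigma(\C P^2 \# -\C P^2) = 0$, and $H_1(\C P^2 \# -\C P^2;\Z) = 0$. Writing $H_2(\C P^2 \# -\C P^2;\Z) = \Z h \oplus \Z e$ with $h^2 = 1$ and $e^2 = -1$, the class $x$ corresponds to $a h + b e$, so any surface $S$ representing $x$ has $[S]^2 = a^2 - b^2$. The assumption $a/2, b/2 \in 2\Z + 1$ forces $a \equiv b \equiv 2 \pmod 4$; in particular $a$ and $b$ are even, so $[S]$ is divisible by $2$, and $\operatorname{PD}[S]/2$ reduces mod $2$ to $h + e$, which is the characteristic element, i.e.\ $\operatorname{PD}[S]/2 \equiv w_2(\C P^2 \# -\C P^2)$. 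The hypothesis $[K] \in \mathcal{C}^{wt}$ is inherited verbatim, so \cref{general genus bound} is applicable.

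Next I would identify the integer $i(K, [S], X)$ from \cref{general genus bound} with the integer $i(K,a,b)$ in the statement: its defining inequality \eqref{key ineq}, namely $[S]^2 - 2\sigma(K) > 4\sigma(X) + 16 i$, becomes $a^2 - b^2 - 2\sigma(K) > 16 i$ since $[S]^2 = a^2 - b^2$ and $\sigma(X) = 0$, so $i(K, [S], X) = i(K,a,b)$. Assuming $i(K,a,b) \geq 2$, the $b^+(X) = 1$ case of \cref{general genus bound} yields $g(S) - \frac{1}{4} [S]^2 + \frac{1}{2} \sigma(K) \geq 1$, that is,
\[
g(S) \geq 1 + \frac{1}{4}(a^2 - b^2) - \frac{1}{2} \sigma(K).
\]
Since this holds for every $S$ representing $x$, passing to the infimum over such surfaces gives the asserted lower bound for $g_{\C P^2 \# -\C P^2, x}(K)$. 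There is no genuine obstacle: the only point requiring attention is the elementary verification that the congruence conditions on $a$ and $b$ supply precisely the divisibility ($2\mid[S]$) and characteristic-class ($\operatorname{PD}[S]/2 \equiv w_2(X)$) hypotheses needed to invoke \cref{general genus bound}, while all the analytic content is already packaged in the earlier theorems.
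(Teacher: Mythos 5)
Your proposal is correct and is exactly the intended argument: the paper states \cref{non-spin bound} as an immediate specialization of \cref{general genus bound} with no separate written proof, and your verification of the hypotheses (in particular that $a/2, b/2$ odd forces $[S]$ to be even and $\operatorname{PD}[S]/2 = (a/2, b/2)$ to be a characteristic vector, i.e.\ congruent to $w_2(\C P^2 \# -\C P^2)$, together with $[S]^2 = a^2 - b^2$, $\sigma = 0$, $b^+ = 1$) matches what is needed before applying the $b^+(X)=1$, $i \geq 2$ case of that theorem.
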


     \begin{ex}
     For simplicity, we assume $a^2=b^2$.
         Then, using \cref{non-spin bound}, we have the following genus bounds: 
\[
5 \leq g_{\C P^2 \# - \C P^2, x} (T_{3,7}) ,\  9 \leq g_{\C P^2 \# - \C P^2, x} (T_{3,11}) 
\]
\[
9\leq g_{\C P^2 \# - \C P^2, x}(T_{3,13}).
\]
\end{ex}

\bibliographystyle{amsalpha}
\bibliography{tex}

\end{document}